\documentclass[twoside,centertags,reqno,openany]{amsart}

\usepackage[english]{babel}

\usepackage[T1]{fontenc}			

\usepackage{cite}

\usepackage{hyperref}
\usepackage[dvipsnames]{xcolor}
\definecolor{darkbrown}{rgb}{0.7,0.2,0.1}
\definecolor{darkgreen}{rgb}{0.2,0.6,0.2}
\colorlet{linkcolor}{red!50!black}
\colorlet{citecolor}{red!50!black}
\colorlet{urlcolor}{red!50!black}
\hypersetup{
  colorlinks,
  linkcolor=Magenta,
  citecolor=Plum,
  urlcolor=Thistle
}
\usepackage{amsmath}
\usepackage{amsthm}
\usepackage[nameinlink]{cleveref}
\usepackage{amssymb}
\usepackage{mathtools}
\usepackage{microtype}
\usepackage{enumitem}
\usepackage{xcolor}
\usepackage{mathrsfs,stmaryrd,color}
\usepackage{relsize}
\usepackage{relsize}
\usepackage[bbgreekl]{mathbbol}
\usepackage{amsfonts}
\usepackage{graphicx}
\graphicspath{ {./images/} }

\usepackage{tikz}\usetikzlibrary{cd} 		
\usetikzlibrary{babel}	
\usetikzlibrary{decorations.pathmorphing}
\usetikzlibrary{decorations.markings}
\makeatletter
\tikzcdset{
open/.code={\tikzcdset{hook, circled};},
closed/.code={\tikzcdset{hook, slashed};},
open'/.code={\tikzcdset{hook', circled};},
closed'/.code={\tikzcdset{hook', slashed};},
circled/.code={\tikzcdset{markwith={\draw (0,0) circle (.375ex);}};},
slashed/.code={\tikzcdset{markwith={\draw[-] (-.4ex,-.4ex) -- (.4ex,.4ex);}};},
markwith/.code={
\pgfutil@ifundefined{tikz@library@decorations.markings@loaded}%
{\pgfutil@packageerror{tikz-cd}{You need to say %
\string\usetikzlibrary{decorations.markings} to use arrow with markings}{}}{}%
\pgfkeysalso{/tikz/postaction={/tikz/decorate,
/tikz/decoration={
markings,
mark = at position 0.5 with
{#1}}}}},
}
\makeatother

\numberwithin{equation}{section}
\numberwithin{equation}{subsection}
\newtheoremstyle{resultStyle}
	{}
	{}
	{\itshape}
	{}
	{\bfseries}
	{.}
	{.4em}
	{}
	
\newtheoremstyle{defStyle}
  {}
  {}
  {\upshape}
  {}
  {\bfseries}
  {.}
  {.3em}
  {}
  
\newtheoremstyle{remStyle}
{}
{}
{\upshape}
{}
{\itshape}
{.}
{.3em}
{}

\addto\captionsenglish{} 

\theoremstyle{resultStyle}
\newtheorem{thm}{Theorem}[subsection]
\newtheorem*{nonothm}{Theorem}
\newtheorem{prop}[thm]{Proposition}
\newtheorem{cor}[thm]{Corollary}
\newtheorem*{nonocor}{Corollary}

\theoremstyle{defStyle}
\newtheorem{defn}[thm]{Definition}
\newtheorem{lem}[thm]{Lemma}
\newtheorem{constr}[thm]{Construction}
\newtheorem{non}[thm]{}

\newtheorem{vari}[thm]{Variant}

\theoremstyle{remStyle}
\newtheorem{ex}[thm]{Example}
\newtheorem{rem}[thm]{Remark}
\newtheorem{nota}[thm]{Notation}

\makeatletter
\def\blfootnote{\xdef\@thefnmark{}\@footnotetext}

\DeclareSymbolFontAlphabet{\mathbb}{AMSb} 
\DeclareSymbolFontAlphabet{\mathbbl}{bbold}

\newcommand{\notehelper}[3]{\textcolor{#3}{$\blacksquare$}\marginpar{\ifodd\thepage\raggedright\else\raggedleft\fi\color{#3}\tiny \textbf{#2:} #1}}

\newcommand{\iso}{\simeq}
\newcommand{\from}{\colon}
\newcommand{\isoto}{\xrightarrow{\sim}}

\DeclareMathOperator{\colimit}{colim}
\DeclareMathOperator{\limit}{lim}

\newcommand{\Solid}{\scriptscriptstyle{\square}}

\newcommand{\Prism}{{\mathlarger{\mathbbl{\Delta}}}}
\newcommand{\Nyg}{\mathcal{N}}
\newcommand{\DR}{\text{dR}}
\newcommand{\Witt}{W}
\newcommand{\QCoh}{\mathcal{D}}
\newcommand{\DSolid}{\text{Solid}}
\newcommand{\Perf}{\text{Perf}}
\newcommand{\QCohF}{\mathcal{DF}}
\newcommand{\Mod}{\text{Mod}}
\newcommand{\CAlg}{\text{CAlg}}
\newcommand{\Sheaves}{\text{Sh}}
\newcommand{\Normal}{\mathcal{N}}
\newcommand{\Oh}{\mathcal{O}}
\newcommand{\E}{\mathcal{E}}
\newcommand{\F}{\mathcal{F}}
\newcommand{\C}{\mathcal{C}}
\newcommand{\BS}{\mathcal{B}}
\newcommand{\Line}{\mathcal{L}}
\newcommand{\Rees}{\mathcal{R}}
\DeclareMathOperator{\CoTan}{\mathcal{L}}

\newcommand{\Znumb}{\mathbf{Z}}
\newcommand{\Qnumb}{\mathbf{Q}}
\newcommand{\Nnumb}{\mathbf{N}}
\newcommand{\IntProjP}{\mathbf{P}}
\newcommand{\Unit}{\textbf{1}}
\newcommand{\DualSh}{\text{D}}
\newcommand{\CoDualSh}{\text{P}}
\newcommand{\Fp}{\mathbf{F}_{p}}
\DeclareMathOperator{\IntHom}{\underline{Hom}}
\DeclareMathOperator{\Hom}{\text{Hom}}
\DeclareMathOperator{\Fun}{\text{Fun}}
\newcommand{\Spf}{\text{Spf}}
\newcommand{\Spec}{\text{Spec}}
\newcommand{\Spa}{\text{Spa}}
\newcommand{\Spv}{\text{Spv}}
\newcommand{\SmSpec}{\text{SmSpec}}
\newcommand{\A}{\mathbf{A}}
\newcommand{\Disc}{\mathbf{D}}

\newcommand{\Th}{\mathbf{Th}}
\newcommand{\ProS}{\mathbf{P}}
\newcommand{\Bl}{\mathbf{Bl}}
\newcommand{\Coh}{\mathbf{C}}
\newcommand{\V}{\mathbf{V}}
\newcommand{\BV}{B\mathbf{V}}

\DeclareMathOperator{\GaHodge}{\mathbf{G}_{a}^{Hodge}}
\DeclareMathOperator{\GadR}{\mathbf{G}_{a}^{dR}}
\DeclareMathOperator{\Gasharp}{\mathbf{G}_{a}^{\#}}
\DeclareMathOperator{\GaPrism}{\mathbf{G}_{a}^{\Prism}}
\DeclareMathOperator{\GaPrismA}{(\mathbf{G}_{a}/A)^{\Prism}}
\DeclareMathOperator{\GaHT}{\mathbf{G}_{a}^{HT}}
\DeclareMathOperator{\GaNyg}{\mathbf{G}_{a}^{\Nyg}}
\DeclareMathOperator{\BGasharp}{B\mathbf{G}_{a}^{\#}}
\DeclareMathOperator{\Gm}{\mathbf{G}_{m}}
\DeclareMathOperator{\Ga}{\mathbf{G}_{a}}
\DeclareMathOperator{\Gln}{\mathbf{Gl}_{n}}
\DeclareMathOperator{\Grn}{\mathbf{Gr}_{n}}
\DeclareMathOperator{\BGm}{B\mathbf{G}_{m}}
\DeclareMathOperator{\Vect}{\mathbf{Vect}}
\DeclareMathOperator{\BGln}{B\mathbf{Gl}_{n}}
\DeclareMathOperator{\BHom}{B\text{Hom}}
\DeclareMathOperator{\AHadmodGm}{\widehat{\mathbf{A}^{1}}/\Gm}
\DeclareMathOperator{\AmodGm}{\mathbf{A}^{1}/\Gm}
\newcommand{\fSch}{\text{fSch}}
\newcommand{\Sm}{\text{Sm}}
\newcommand{\Top}{\text{Top}}
\newcommand{\SemPerfd}{\text{SemPerfd}}
\newcommand{\AnStack}{\text{AnStack}}
\newcommand{\MS}{\text{MS}}
\newcommand{\FGauge}{\text{F-Gauge}}
\newcommand{\Span}{\text{Span}}
\newcommand{\PrL}{\text{Pr}^{L}}

\newcommand{\Ainf}{\mathbb{A}_{\text{inf}}}
\newcommand{\Kernels}{\mathcal{K}}
\newcommand{\Et}{\text{\'Et}}
\newcommand{\Smoo}{\text{Sm}}

\title{A six-functor formalism for syntomic cohomology}
\author{Niklas Kipp}
\address{CNRS and Laboratoire de Math\'ematiques d’Orsay}
\email{niklas.kipp (at) universite-paris-saclay.fr}

\begin{document}

\maketitle

We construct a six-functor formalism for syntomic cohomology of $p$-adic formal schemes. 
This, in particular, generalizes Poincaré duality to general smooth morphisms. 
 
\tableofcontents

\section*{Introduction}

Grothendieck suggested that a cohomology theory should come together with its counterparts, homology, compactly 
supported cohomology, and Borel-Moore homology. Furthermore, these invariants are supposed to interact with one 
another in terms of certain maps and module structures, and all of this structure should have a fibered interpretation. 
Nowadays this structure has a precise interpretation and runs under the name \emph{six-functor formalism} (see for example 
\cite{HeyerMann6FF},\cite{Scholze6FF} and \cite{CisinskiTriangulatedCO}). In this text we construct this structure 
for syntomic cohomology of $p$-adic formal schemes as defined in \cite{Bhatt2019PrismsAP} \cite{Bhatt2018TopologicalHH} 
and \cite{BhattLurieAPC}. 

\begin{nonothm}[\textbf{A} see \ref{Main theorem}]
    There is a six-functor formalism 
       $$X\mapsto \FGauge_{\Prism}^{\Solid}(X)$$
    on the category of (derived) $p$-adic formal schemes satisfying the following:
       \begin{itemize}
        \item[(A)] Morphisms locally of finite type are 
                   $!$-able. 
        \item[(B)] \'Etale morphisms are cohomologically \'etale. 
        \item[(C)] Any finite composition of closed immersions and proper morphisms\footnote{In this note proper morphisms are locally 
                   of finite presentation.} is weakly cohomologically proper. 
        \item[(D)] The functor $(\_)^{\text{syn},\Solid}$ preserves étale covers and sends Zariski covers to 
                   open covers of analytic stacks. In particular the functors $\FGauge_{\Prism}^{\Solid}(\_)^{\ast}$ and 
                   $\FGauge_{\Prism}^{\Solid}(\_)^{!}$ are \'etale sheafs. 
        \item[(E)] It admits Tate twists. That is for any $p$-adic formal scheme $X$ 
                   the object 
                      $$\Oh_{X^{\text{syn}}}(-1):= cof(\Oh_{X^{\text{syn}}} \to f_{\ast}\Oh_{(\ProS^{1})^{\text{syn}}})$$
                   is $\otimes$-invertible and its inverse identifies with 
                      $$\Oh_{X^{\text{syn}}}(1)\iso \Oh_{X^{\text{syn}}}\{1\}[-2]$$
                   a shift of the Breuil-Kisin twist. 
        \item[(F)] Any smooth morphism is cohomologically smooth. Furthermore, for such a smooth 
                   morphism $f\from X\to S$, we have an identification 
                      $$f^{!}\Oh_{S^{\text{syn}}}:= \omega_{f} \iso \Oh_{X^{\text{syn}}}(d)\iso \Oh_{X^{\text{syn}}}\{d\}[-2d]$$
                    of the dualizing sheaf, where $d$ is the relative dimension 
                    of $f$. 
        \item[(G)] The dualizable\footnote{There is also a version containing the full category of prismatic F-Gauges \ref{Classsical syntomification theorem}} objects identify 
                      $$(\FGauge_{\Prism}^{\Solid}(\_))^{\text{dual}}\iso \Perf((\_)^{\text{syn}})$$
                    with perfect F-Gauges as defined in \cite{FGauges}[6.1].
                   In particular, there is a functorial identification 
                      $$R\Gamma_{\text{syn}}^{\text{BMS}}(\_,\Znumb_{p}(n))\iso \Hom_{(\_)^{\text{syn}}}(\Oh_{(\_)^{\text{syn}}},\Oh_{(\_)^{\text{syn}}}\{n\})$$
                   of the mapping spectrum with the syntomic cohomology of $p$-adic formal schemes as defined in \cite{BhattLurieAPC}.
       \end{itemize}
\end{nonothm}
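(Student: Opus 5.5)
The plan is to realize $\FGauge_{\Prism}^{\Solid}(X)$ as the category of solid quasi-coherent sheaves on an analytic stack $X^{\text{syn},\Solid}$, an analytic refinement of the Bhatt--Lurie syntomification. The six-functor formalism is then pulled back from the one on analytic stacks built from $\DSolid$ in the sense of Clausen--Scholze and Heyer--Mann \cite{HeyerMann6FF,Scholze6FF}. Concretely, I would first define $X\mapsto X^{\text{syn},\Solid}$ for affine $X=\Spf R$. I would glue the analytic prismatization $X^{\Prism,\Solid}$ and the Nygaard-filtered prismatization $X^{\Nyg,\Solid}$ along the two open immersions $j_{\DR}$ and $j_{\text{HT}}$, the latter twisted by Frobenius, and then extend by Kan extension and descent. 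The formalism on $p$-adic formal schemes is obtained by composing $X\mapsto X^{\text{syn},\Solid}$ with $\QCoh^{\Solid}$, restricted to the class $E$ of morphisms whose syntomification is $!$-able. Part (A) then reduces to showing that the class $E$ contains the locally finite type maps. By the Heyer--Mann extension and locality results, it suffices to treat closed immersions, étale maps and $\A^{1}_{S}\to S$, since every locally finite type map is locally a composite of these.

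For (B) and (D), I would check that for an étale $f$ the induced map on prismatizations is a base change of $f$ itself, since prismatization is étale-local (Bhatt--Lurie). Hence $f^{\text{syn}}$ is étale in the analytic sense, and in particular cohomologically étale. Zariski opens go to open substacks, so descent for $\ast$ follows from descent in analytic stacks, and $!$-descent follows by duality. For (C), a closed immersion $i$ gives a map $i^{\text{syn}}$ which is "proper" in the sense that $i_{\ast}$ satisfies the projection formula and base change, which can be checked on prismatic points. For a proper map I would use Noetherian approximation and Chow-type reductions to projective space, together with the computation of $\ProS^{n,\text{syn}}$ via the projective bundle formula for prismatic cohomology. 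This gives weak cohomological properness.

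Parts (E) and (F) form the heart of the argument. I would first compute $f_{\ast}\Oh$ for $f\colon\ProS^{1}_{X}\to X$ using the prismatic projective bundle formula, which gives $f_{\ast}\Oh\iso \Oh\oplus \Oh\{-1\}[-2]$ with the splitting coming from the first Chern class. It follows that $\Oh(-1)$ is the invertible object $\Oh\{-1\}[-2]$. Smoothness of $\A^{1}\to S$ would then follow from the Heyer--Mann criterion, which says that a map is cohomologically smooth if it admits a "$\ProS^{1}$-compactification" with $\infty$-section and the relevant trace map is an isomorphism. This is the motivic argument of Ayoub and Cisinski--Déglise: smoothness of $\A^{1}$ reduces to $\ProS^{1}$-stability, meaning invertibility of $\Oh(1)$, plus a localization sequence for $\A^{1}\subset \ProS^{1}\supset \infty$. Étale-locally, a general smooth map is étale over $\A^{d}$, and composition gives $\omega_f\iso\Oh(d)$. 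I expect this step to be the main obstacle. The localization and excision needed here require identifying $(\ProS^{1})^{\text{syn}}$ relative to $(\A^{1})^{\text{syn}}$ and the point, and syntomification is not Zariski-local in the naive sense on the target side in characteristic $p$ geometry. I would first try to verify the trace isomorphism after pullback to prismatic points, where it becomes the classical prismatic Poincaré duality.

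Finally, for (G), I would use that dualizable objects in $\QCoh^{\Solid}$ of a stack built by gluing from (derived) $p$-complete affines are exactly the perfect complexes; this follows since both notions satisfy descent and agree on affinoids. This identifies them with $\Perf(X^{\text{syn}})$ as in \cite{FGauges}. The comparison with $R\Gamma_{\text{syn}}(X,\Znumb_p(n))$ is then the Bhatt--Lurie theorem computing $R\Gamma(X^{\text{syn}},\Oh\{n\})$.
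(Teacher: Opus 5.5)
Your overall architecture agrees with the paper: the analytic syntomification glued from $X^{\Nyg,\Solid}$ and two copies of $X^{\Prism,\Solid}$, six functors pulled back from analytic stacks, (E) from the projective bundle formula, and (G) from descent of dualizable objects plus the affinoid computation. The two central steps, (F) and (C), however rest on tools that are not available here.

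For (F), the Ayoub/Cisinski--Déglise argument needs a localization sequence $j_{!}j^{\ast}\to\mathrm{id}\to i_{\ast}i^{\ast}$ for $\A^{1}\subset\ProS^{1}\supset\{\infty\}$. Solid F-gauges are quasi-coherent-type objects and have no such sequence. The analytic closed complement of the open $(\A^{1})^{\text{syn},\Solid}$ is a tube-like neighbourhood of $\infty$, not $\{\infty\}^{\text{syn},\Solid}$. Indeed, $i_{\ast}\Oh$ is not even an idempotent algebra, because $\{\infty\}$ has nontrivial derived self-intersection (compare the paper's closing remark on the failure of excision). Checking a trace isomorphism ``on prismatic points'' cannot replace this, since cohomological smoothness is an adjunction in the category of kernels, not a pointwise duality of cohomology.

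The missing idea is the paper's Theorem~C (\ref{dualizing complex theorem}). By Zavyalov's criterion (\ref{Bogdans criterium}) it suffices to build a trace-cycle theory for $\ProS^{1}\to B$, whose structure map and diagonal are proper, so no extension by zero from an open ever enters.
\begin{itemize}
\item The trace comes from the projective bundle formula (\ref{Projective bundle formula for syn proposition}).
\item The diagonal cycle class is Tang's construction via deformation to the normal cone and Thom spaces, which needs only blow-up excision (\ref{blowup excision syn}, from \cite[9.4.6]{BhattLurieAPC}).
\item The triangle identities hold because the first Chern classes from the prismatic logarithm underlie these cycle classes (\ref{chern classes underly cycle classes}).
\end{itemize}
Your local identifications $\omega_{f}\iso\Oh(d)$ via étale maps to $\A^{d}$ also do not glue by themselves. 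The paper obtains the global one from the globally defined cycle class of the diagonal and the compatibility of cycle classes with composition.

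For (C), weak cohomological properness ($\Unit$ being $f$-prim, plus the diagonal condition) is governed by the analytic ring structures. For affinoids it amounts to integrality of the $+$-rings up to topological nilpotents (\ref{integral affine maps are proper}). It therefore cannot be read off from a computation of $f_{\ast}\Oh$ such as the projective bundle formula, which says nothing about whether $f_{!}\iso f_{\ast}$. A Chow-type reduction would also require proper birational surjections to induce descendable covers of syntomifications, and nothing in your plan provides that.

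The paper argues as follows. The diagonal is a closed immersion, hence affine and weakly proper on syntomifications (\ref{closed immersions are proper}). This reduces the claim to co-smoothness of the locally \textsuperscript{$+$}finite type map $X^{\Nyg,\Solid}\to S^{\Nyg,\Solid}$, which by \ref{From formal to analytic prop}(D),(E) may be checked on a Zariski stratification and modulo $(p,I)$. Over a perfectoid the strata become Hodge--Tate and Hodge stacks. The map $X^{HT}\to X\times_{S}S^{HT}$ is a gerbe banded by $\V(\CoTan_{f}\{1\})^{\sharp}$ (\ref{Hodge Tate map is a gerbe}), whose solidification is weakly proper (\ref{Properties of Sharpgerbes}). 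Finally, $X^{\Solid}\to S^{\Solid}$ is weakly proper by reduction mod $p$ to discrete adic spaces (\ref{Proper maps of formal schemes are proper}).

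Less seriously, ``Kan extension and descent'' hides the key construction input: a topology whose covers become effective epimorphisms of analytic stacks. This is why the paper introduces the infinite $p$-root topology and proves \ref{Inducing descendable covers}(c) via descendability of the relative Nygaard filtration. Likewise, (A) needs an actual argument that $(\A^{1})^{\text{syn},\Solid}\to S^{\text{syn},\Solid}$ is $!$-able. The paper gets this from the cover $\Spf(R\langle x^{1/p^{\infty}}\rangle)\to\A^{1}_{R}$ together with \ref{From formal to analytic prop}(C).
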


One reason such a theorem is interesting is that, for any smooth morphism $f\from X\to S$ of 
relative dimension $d$, it produces a relative compactly supported cohomology 
   $$f_{!}\Oh_{X^{\text{syn}}} \in \FGauge_{\Prism}^{\Solid}(S)$$
which gives a Poincaré duality pairing. That is, we obtain the following corollary.

\begin{nonocor}
   The identification in (F) induces a trace map 
      $$tr \from f_{!}\Oh_{X^{\text{syn}}}\{d\}[-2d] \to \Oh_{S^{\text{syn}}}$$
    such that the composition 
\[\begin{tikzcd}
	{f_{\ast}\Oh_{X^{\text{syn}}}\otimes_{S}f_{!}\Oh_{X^{\text{syn}}}\{d\}[-2d]} & {f_{!}\Oh_{X^{\text{syn}}}\{d\}[-2d]} & {\Oh_{S^{\text{syn}}}}
	\arrow["\cup", from=1-1, to=1-2]
	\arrow["tr", from=1-2, to=1-3]
\end{tikzcd}\]
   is adjoint to an isomorphism 
      $$f_{\ast}\Oh_{X^{\text{syn}}} \iso (f_{!}\Oh_{X^{\text{syn}}}\{d\}[-2d])^{\vee}$$
   in $\FGauge_{\Prism}^{\Solid}(S)$. 
\end{nonocor}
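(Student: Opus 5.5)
The plan is to derive the corollary formally from properties (A)–(F) of the six-functor formalism in Theorem A, using only the standard adjunctions and the projection formula.

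\textbf{The trace.} Write $\omega_f := f^{!}\Oh_{S^{\text{syn}}}$. By (F), $f$ is cohomologically smooth and $\omega_f \iso \Oh_{X^{\text{syn}}}\{d\}[-2d]$. The counit of the adjunction $f_{!}\dashv f^{!}$ gives a map $f_{!}\omega_f = f_{!}f^{!}\Oh_{S^{\text{syn}}} \to \Oh_{S^{\text{syn}}}$. Precomposing with the identification of (F) yields
\[tr\colon f_{!}\Oh_{X^{\text{syn}}}\{d\}[-2d] \to \Oh_{S^{\text{syn}}}.\]

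\textbf{The cup product.} The projection formula $f_{!}(f^{\ast}M\otimes N)\iso M\otimes f_{!}N$ is part of the formalism. Take $M = f_{\ast}\Oh_{X^{\text{syn}}}$ and $N = \omega_f$. The counit $f^{\ast}f_{\ast}\Oh_{X^{\text{syn}}}\to \Oh_{X^{\text{syn}}}$ then gives
\[f_{\ast}\Oh_{X^{\text{syn}}}\otimes f_{!}\omega_f \iso f_{!}(f^{\ast}f_{\ast}\Oh_{X^{\text{syn}}}\otimes \omega_f) \to f_{!}\omega_f.\]
This is the map $\cup$.

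\textbf{The duality isomorphism.} The key input is the internal Hom version of the $f_{!}\dashv f^{!}$ adjunction,
\[\IntHom_S(f_{!}N, K)\iso f_{\ast}\IntHom_X(N, f^{!}K),\]
which holds in any six-functor formalism. To see it, map in a test object $T$ and use the projection formula together with both adjunctions. Apply it with $N=\omega_f$ and $K=\Oh_{S^{\text{syn}}}$:
\[(f_{!}\omega_f)^{\vee}\iso f_{\ast}\IntHom_X(\omega_f,\omega_f).\]
By (F), $\omega_f\iso \Oh_{X^{\text{syn}}}\{d\}[-2d]$. The Breuil–Kisin twist is $\otimes$-invertible, by (E) or directly as a line bundle on $X^{\text{syn}}$. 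Hence $\IntHom_X(\omega_f,\omega_f)\iso \Oh_{X^{\text{syn}}}$, and therefore $(f_{!}\omega_f)^{\vee}\iso f_{\ast}\Oh_{X^{\text{syn}}}$.

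\textbf{Main obstacle.} The only real work is checking that this abstract isomorphism is the map adjoint to $tr\circ\cup$. To do this, unwind the identification $\Hom(T, f_{\ast}\Oh_{X^{\text{syn}}})\iso \Hom(T\otimes f_{!}\omega_f,\Oh_{S^{\text{syn}}})$. The chain of isomorphisms is: projection formula, then the adjunction $f_{!}\dashv f^{!}$, then the adjunction $f^{\ast}\dashv f_{\ast}$, then the isomorphism $\IntHom(\omega_f,\omega_f)\iso \Oh$. Apply this chain to $T=f_{\ast}\Oh_{X^{\text{syn}}}$ and the identity map, and compare the result with the composite $tr\circ\cup$. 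The verification is a diagram chase of triangle identities. It relies on compatibility of the projection-formula isomorphism with the counits, which is built into the construction of the six-functor formalism (it comes from the lax symmetric monoidal structure on the span category), so no new input is expected.
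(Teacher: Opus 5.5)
Your proposal is correct and matches the paper's implicit argument. The paper gives no separate proof: it states this corollary in the introduction as a formal consequence of (F), namely that $f$ is cohomologically smooth with $f^{!}\Oh_{S^{\text{syn}}}\iso\Oh_{X^{\text{syn}}}\{d\}[-2d]$ being $\otimes$-invertible. Your deduction via $\IntHom_S(f_{!}N,K)\iso f_{\ast}\IntHom_X(N,f^{!}K)$, the projection formula, and the Yoneda check that the resulting equivalence is adjoint to $tr\circ\cup$ is exactly that formal consequence; the check only needs naturality of the projection-formula isomorphism in the pulled-back variable.
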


Another reason this theorem is interesting lies in the nature of Syntomic cohomology, which we now recall very briefly.

One way to understand syntomic cohomology is, following Bhatt-Lurie \cite{BhattLurieAPC} \cite{BhattLuriePrismatisationofpadicformalschemes} and Drinfeld \cite{Drinfeld2020Prismatization}, 
via the stacky approach. That is, to a $p$-adic formal scheme $X$, one associates a stack $X^{\text{syn}}$, such that 
quasi-coherent sheaves on this stack recover syntomic cohomology. 

In order to obtain our theorem, we interpret these stacks in the world of analytic 
stacks introduced by Clausen-Scholze \cite{AStacksLecture}. 
That is to a $p$-adic formal scheme $X$ 
we now associate an analytic stack 
   $$X^{\text{syn},\Solid} \in \AnStack,$$
which we call the \emph{solid Syntomification} of $X$, such that its category of quasi-coherent 
sheaves recovers syntomic cohomology. This has (among others) two advantages: 

One is that on quasi-coherent sheaves of analytic stacks it is easier\footnote{Maybe possible would be the better wording here.} to 
define a well-behaved compactly supported cohomology. This is crucial to obtain the above theorem. 

Another advantage is that the category of analytic stacks is quite huge and contains a priori very 
different looking stacks. In particular, in this world, it is easier to find relations between different looking geometries. 

This latter advantage could be particularly useful for the syntomification, as it philosophically 
is supposed to parametrize well-behaved $p$-adic cohomology theories\footnote{See for example \cite{Lahoti2025CohomologyTI} for some precise statement.}
and \textbf{Theorem A} gives a way to apply this idea. Namely, given a map 
   $$\BS \to \Znumb_{p}^{\text{syn},\Solid}$$
of analytic stacks. Then it is not hard to see that assigning to a $p$-adic formal scheme $X$
the category 
  $$\QCoh(\BS\times_{\Znumb_{p}^{\text{syn},\Solid}}X^{\text{syn},\Solid})$$
can be extended to a six-functor formalism that satisfies (A)-(F). We record one example of this. 

\begin{nonothm}[\textbf{B} see \ref{Etale locus theorem}]
   There is a six-functor formalism 
      $$X \mapsto \QCoh(X_{\eta}^{\text{ét},\Solid})$$
   on $p$-adic formal schemes over $\Znumb_{p}^{\text{cyc}}$, which satisfies 
   analogous assertions to (A)-(F). Furthermore, the dualizable objects naturally identify 
      $$\QCoh((\_)_{\eta}^{\text{ét},\Solid})^{\text{dual}}\iso \QCoh_{\text{lisse}}^{\text{b}}((\_)_{\eta},\Znumb_{p})$$
   with the category of lisse étale sheaves on the generic fiber. 
\end{nonothm}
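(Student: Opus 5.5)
The approach is to obtain Theorem B as a base change of the six-functor formalism of Theorem A along a suitable map of analytic stacks $\BS \to \Znumb_{p}^{\text{syn},\Solid}$, with $\BS$ the "étale locus". Everything then reduces to identifying the resulting categories with quasi-coherent sheaves on the analytic étale stack of the generic fiber.

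\textbf{Step 1: the base-change formalism.} Work over $\Znumb_{p}^{\text{cyc}}$. Consider the open locus of $\Znumb_{p}^{\text{cyc},\text{syn},\Solid}$ where the Hodge--Tate divisor (equivalently the prism ideal $I$) is inverted, together with $p$-completion. Following the usual description of étale realisation, $\Ainf[1/\mu]$ with its Frobenius gives a map
$$\BS:=\Spa(\Ainf[1/\mu]^{\wedge}_{p})/\varphi^{\Znumb}\to \Znumb_{p}^{\text{cyc},\text{syn},\Solid}.$$
Set $X_{\eta}^{\text{ét},\Solid}:=\BS\times_{\Znumb_{p}^{\text{syn},\Solid}}X^{\text{syn},\Solid}$. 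As remarked in the introduction, base change of analytic stacks along a fixed map is compatible with the construction of the six functors: $!$-able maps, cohomologically étale maps, cohomologically proper maps and cohomologically smooth maps are all stable under pullback in $\AnStack$. Hence (A)--(F) transfer directly. For (D), covers pull back to covers. Tate twists and dualizing sheaves pull back, using that $!$-pullback commutes with base change for cohomologically smooth maps.

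\textbf{Step 2: identifying dualizable objects.} By (G), the dualizable objects of $\FGauge_{\Prism}^{\Solid}(X)$ are perfect F-gauges. One has to show that the dualizable objects on the fiber product identify with perfect $\varphi$-modules over $\Ainf(X)[1/\mu]^{\wedge}_{p}$, or more precisely over the étale/Laurent prismatic sheaf $\Oh_{\Prism}[1/I]^{\wedge}_{p}$. This uses two facts. First, perfect complexes satisfy descent for the analytic topology, so one can compute locally. Second, on the locus where $I$ is invertible, the Nygaard filtration and the de Rham/Hodge--Tate data become trivial, so the fiber product collapses to $X_{\Prism}$-Laurent F-crystals. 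Then invoke the Bhatt--Scholze theorem identifying Laurent F-crystals with $\QCoh_{\text{lisse}}^{\text{b}}(X_{\eta},\Znumb_{p})$ (perfect complexes of étale $\Znumb_{p}$-local systems on the generic fiber).

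\textbf{Main obstacle.} The hard part is Step 2: proving that dualizability in solid quasi-coherent sheaves on the fiber product coincides with being a perfect Laurent F-crystal. Dualizable objects in the solid world could a priori be larger than perfect complexes, and the quotient by $\varphi^{\Znumb}$ must be handled via descent of dualizable objects, as equalizer-type limits of categories. I would first reduce to $X$ affine and quasiregular semiperfectoid via (D) and quasisyntomic descent of dualizable objects. There the stack is $\Spa(A)/\varphi^{\Znumb}$ for an explicit ring $A$, and dualizable solid modules over a discrete-type $p$-complete ring are perfect; this is the analogue of the argument behind (G). Finally, functoriality must be checked so that the identification glues.
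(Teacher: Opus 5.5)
Your Step 1 is exactly the paper's argument (base change along a map to $\Znumb_{p}^{\text{syn},\Solid}$, as in \ref{realizations construction}). The shape of your Step 2 also matches the paper: reduce to a basis, describe the local \'etale locus as a Frobenius quotient, identify dualizable objects with $\varphi$-fixed perfect complexes, and conclude by Bhatt--Scholze.

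The gap is at the step you yourself single out as the main obstacle. You claim dualizable objects are perfect ``by the analogue of the argument behind (G)'' over a ``discrete-type'' ring, and that argument does not apply here.

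Any base over the syntomification on which $I$ becomes invertible receives a map of analytic rings from $\Prism_{R}$ (resp.\ $\Ainf$) with its $(p,I)$-adic topology. So on that base $I$ is both invertible and topologically nilpotent. In the paper the base is $Q^{\text{perf}}\iso\colimit_{n}\Spa(\Prism_{R,\text{perf}}[\frac{1}{I}]/p^{n},\Prism_{R,\text{perf}})$. Modulo $p$ one is therefore dealing with Tate Huber pairs; for $\Znumb_{p}^{\text{cyc}}$ this is the completed perfection of $\Fp((t))$ with its $t$-adic topology and ring of integers $\Ainf/p$.

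This is forced. Suppose $\Ainf[\frac{1}{\mu}]^{\wedge}_{p}$ carried only the $p$-adic topology with all elements solid, as your notation $\Spa(\Ainf[\frac{1}{\mu}]^{\wedge}_{p})$ suggests. Then there is no map to $\Spa(\Ainf)$ at all: $d$ is topologically nilpotent in $\Ainf$, but it is a unit in $\Ainf[\frac{1}{\mu}]^{\wedge}_{p}$ and hence not topologically nilpotent $p$-adically.

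Over Tate rings the proof of (G) breaks down. That proof reduces modulo powers of an ideal of definition to discrete rings, where dualizable objects are nuclear and hence discrete. Here the ideal of definition has been inverted, so the reduction is unavailable. Moreover, nuclear modules over a Tate ring include all Banach modules, so nuclearity gives no discreteness. The missing input is Andreychev's theorem that for analytic Huber pairs the dualizable solid modules are exactly the perfect complexes \cite[5.50]{GregPseudocoherentAP}. The paper uses it to get $\Perf(\Ainf(R)[\frac{1}{d}]^{\wedge}_{p})^{\varphi=\mathrm{id}}$ on a perfectoid $R$, after which tilting and Artin--Schreier (Bhatt--Scholze) finish as you intended.

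Your reduction step also needs adjusting, in three places.

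\emph{Descent on the solid side.} The solid stacks are only shown to turn infinite $p$-root covers and \'etale covers into effective epimorphisms; this is precisely why the paper uses the $\sqrt{\infty}$-\'et topology rather than the quasisyntomic one. So ``quasisyntomic descent of dualizable objects'' is not available on the solid side. Use instead that $X^{\text{syn},\Solid}$ is by construction a colimit over semiperfectoids in the $\sqrt{\infty}$-\'et site, and that passing to dualizable objects commutes with limits \cite[4.6.1.11]{HA}.

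\emph{Descent on the lisse side.} You never address it. You must also descend $\QCoh^{\text{b}}_{\text{lisse}}((\_)_{\eta},\Znumb_{p})$ to the same basis. The paper does this via arc-descent for constructible \'etale sheaves, using that perfectoids form a basis of the arc topology.

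\emph{Non-perfectoid semiperfectoids.} For a semiperfectoid $R$ that is not perfectoid, $\varphi$ is not an automorphism of $\Prism_{R}[\frac{1}{I}]^{\wedge}_{p}$. So before writing the local stack as $(\ldots)/\varphi^{\Znumb}$ you must first pass to $\Prism_{R,\text{perf}}$, as in \ref{computation of etale locus of semiperfectoid}.
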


The way we prove the Poincaré duality is somewhat disconnected from these geometric ideas, in the 
sense that we give a general strategy to prove this duality in six-functor formalisms. 
This should be applicable in many other situations and is highly motivated by the work of 
Zavyalov \cite{ZavyalovPoincareDI}, Tang \cite{Tang2026TheG} \cite{TangGysin} and Annala-Hoyois-Iwasa 
\cite{AnnalaAlgebraicCob} \cite{AnnalaAtiyahDual}. That is, we explain the following theorem. 

\begin{nonothm}[\textbf{C} see \ref{dualizing complex theorem}]
    Given a six-functor formalism $\QCoh$ on the category of schemes, which satisfies the 
    following (and admits the following structures):
       \begin{itemize}
        \item All étale morphisms of schemes are cohomologically étale.
        \item All proper morphisms are (weakly) cohomologically proper.
        \item $\QCoh^{\ast}$ is a Nisnevich sheaf. 
        \item It admits an additive orientation. 
        \item It satisfies elementary blow-up excision. 
       \end{itemize}
    Then any smooth morphism $f\from X\to S$ of relative dimension $d$ will be cohomologically smooth and we have a 
    canonical identification 
       $$f^{!}(\Unit_{S}) \iso \Unit_{X}(d)$$
    of the dualizing sheaf with the Tate twist\footnote{Here the Tate twist is given by the homology of pointed $\ProS^{1}$. In the text we will switch to $(d)[-2d]$.}. 
\end{nonothm}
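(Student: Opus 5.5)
We prove Theorem C by reducing to projective space and then to the deformation to the normal cone. This follows the strategy of Zavyalov and of Annala--Hoyois--Iwasa.

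\textbf{Step 1: the dualizing object for $\ProS^{d}$.} We first treat $\pi\from \ProS^{d}_{S}\to S$, which is proper and hence weakly cohomologically proper, so that $\pi_{!}=\pi_{\ast}$. The additive orientation gives a first Chern class of $\Oh(1)$ in $\Hom(\Unit,\Unit(1)[2])$. Using this class, we prove the projective bundle formula
   $$\pi_{\ast}\Unit_{\ProS^{d}}\iso \bigoplus_{i=0}^{d}\Unit_{S}(-i)[-2i]$$
by induction on $d$. For the induction step we apply elementary blow-up excision to the square $\ProS^{d-1}\subset\ProS^{d}\supset \A^{d}$, and we also use $\A^{1}$-invariance. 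We expect $\A^{1}$-invariance to follow from the blow-up excision together with the orientation, as in Annala--Hoyois--Iwasa.

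From the projective bundle formula we build a trace $\pi_{!}\Unit(d)[2d]\to\Unit$ by projecting onto the top summand. We then check that the pairing $\pi_{\ast}\Unit\otimes\pi_{!}\Unit(d)[2d]\to\Unit$ is perfect; this is the triangularity of cup products of powers of $c_{1}$. The standard criterion (as in Scholze's or Heyer--Mann's notes) says that for proper $\pi$ with such a perfect pairing, stable under base change, $\pi$ is cohomologically smooth with $\pi^{!}\Unit\iso\Unit(d)[2d]$. Up to the paper's normalization of twists, this is the identification $f^{!}\Unit_{S}\iso\Unit_{X}(d)$ in the statement.

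\textbf{Step 2: from $\ProS^{d}$ to general smooth morphisms.} Let $j\from\A^{d}\hookrightarrow\ProS^{d}$ be the inclusion. Open immersions are étale, hence cohomologically étale, so $j^{!}=j^{\ast}$. Step 1 then gives $\A^{d}_{S}\to S$ smooth with $f^{!}\Unit\iso\Unit(d)[2d]$.

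A general smooth $f\from X\to S$ of relative dimension $d$ is Zariski-locally of the form étale followed by $\A^{d}$. Composition of cohomologically smooth maps is cohomologically smooth, and étale maps contribute trivial dualizing object. Hence $f$ is locally cohomologically smooth with $f^{!}\Unit\iso\Unit(d)$ locally. Cohomological smoothness is local on the source for the Nisnevich topology, because $\QCoh^{\ast}$ is a Nisnevich sheaf and étale maps are cohomologically étale. So $f$ is cohomologically smooth, and $\omega_{f}$ is invertible.

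\textbf{Step 3 (main obstacle): gluing $\omega_{f}\iso\Unit_{X}(d)$ canonically.} The local identifications depend on chosen charts, so we need one that is independent of them. We intend to use the diagonal $\Delta\from X\to X\times_{S}X$. There is a canonical identification $\omega_{f}^{-1}\iso\Delta^{\ast}\Delta_{!}\Unit$ twisted appropriately, so it suffices to compute $\Delta^{!}$ of a regular closed immersion of codimension $d$ canonically. We plan to do this with the deformation to the normal cone $D\to\A^{1}$. Using blow-up excision, the orientation and $\A^{1}$-invariance, we reduce to the zero section of the normal bundle $N=T_{X/S}$. There a canonical Thom class comes from the orientation (Chern classes of $N$ on $\ProS(N\oplus\Oh)$). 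This gives $\Th(N)\iso\Unit(d)[2d]$ canonically, which is the needed purity isomorphism.

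We expect the hardest part to be checking that the deformation isomorphism is well defined, meaning independent of choices and compatible with composition. We would first establish it for the universal case $\A^{d}\subset\ProS^{d}$ and then propagate it by Nisnevich descent.
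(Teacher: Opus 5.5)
There is a genuine gap. Your argument depends on $\A^{1}$-invariance, which the hypotheses neither assume nor imply. Blow-up excision plus an additive orientation give $\ProS^{1}$-homotopy invariance (\ref{deflatability proposition}) and weighted $\A^{1}$-invariance, but not $\A^{1}$-invariance; that is the whole point of the Annala--Hoyois--Iwasa framework. The paper's main example, syntomic cohomology, satisfies every hypothesis and is not $\A^{1}$-invariant: its weight-$0$ part mod $p$ on $\Fp$-schemes is \'etale $\Fp$-cohomology, and $\A^{1}_{\Fp}$ carries infinitely many Artin--Schreier classes. This breaks the induction for the projective bundle formula in Step 1. That induction was unnecessary anyway, since the formula is part of the definition of an additive orientation, and $\ProS^{d-1}\subset\ProS^{d}\supset\A^{d}$ is an open--closed decomposition, not a blow-up square.

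It also breaks the Morel--Voevodsky-style purity in Step 3. That argument further needs a localization sequence identifying $i_{!}i^{!}\Unit$ with the fibre of $\Unit\to j_{\ast}j^{\ast}\Unit$, to pass from $V/(V-0)$ to $\Delta^{!}\Unit$. Localization is not assumed and fails in the intended examples. Note also that the relevant identity is $\Delta^{!}\Unit\iso\omega_{f}^{-1}$, not a statement about $\Delta^{\ast}\Delta_{!}\Unit$.

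Finally, the smoothness criterion at the end of Step 1 is insufficient. A perfect pairing on $\pi_{\ast}$, even after every base change, only says that $\pi_{T,\ast}$ of the comparison map $\Unit(d)\to\pi_{T}^{!}\Unit$ is an isomorphism. Suaveness of $\Unit$ is a statement on $X\times_{S}X$: it needs a diagonal class $\Delta_{!}\Unit\to p_{2}^{\ast}\omega$ satisfying both triangle identities.

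The paper is built to avoid both $\A^{1}$-invariance and localization. Cohomological smoothness comes from Zavyalov's trace-cycle criterion (\ref{Bogdans criterium}), which reduces everything to $\ProS^{1}\to B$. The diagonal class is Tang's cycle class (\ref{cycle class map constr}). It is built from the $\ProS^{1}$-deformation $\Bl_{Z\times\{0\}}(X\times\ProS^{1})$, using blow-up excision, deflatability, and the Thom-space trivialization coming from the projective bundle formula. The key input is \ref{chern classes underly cycle classes}: for an effective Cartier divisor, this cycle class restricts to $c_{1}(\Oh(D))$. That comparison is what lets Zavyalov's argument produce a trace-cycle theory for $\ProS^{1}$.

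The canonicity problem you flag in Step 3 is not solved by gluing local purity isomorphisms. Instead, $\text{cl}_{\Delta}\from\Delta_{!}\Unit_{X}\to\Unit_{X\times_{S}X}(d)$ is defined globally, by factoring $\Delta$ as a regular immersion into an open $U$ followed by $U\subset X\times_{S}X$. Being the unit of an adjunction $\Unit_{X}\dashv\Unit_{X}(d)$ is then a \emph{property}, which can be checked \'etale-locally. It reduces to $\A^{1}$ and then $\ProS^{1}$ via compatibility of Tang's cycle classes with composition, which uses $\MS(B)\to\QCoh(B)$ (\ref{functor from MS}). Your Step 2 reduction is fine once $\A^{d}$ is known to be smooth, and it matches \ref{smoothnes without the dualizing sheaf}. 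But it only yields a non-canonical identification.
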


Here an additive orientation is given by a theory of first Chern classes together with a projective bundle formula isomorphism. 
This in practice reduces to a simple computation of cohomology of projective spaces. Also elementary 
blow-up excision reduces to a computation of cohomology of blow-ups of zero-sections in affine space and in 
practice should often even reduce to the projective bundle formula. 

In the text \textbf{Theorem C} is not only formulated for six-functor formalisms on the category 
of schemes. Instead, we tried to axiomatize what properties smooth morphisms need to satisfy to make the 
theorem work. In particular, this should logically be applicable to geometric settings like 
complex analytic spaces, rigid spaces or Berkovich spaces as well.

\subsection*{Organization of the Text}

In \ref{Remark on Smoothness} we prove \textbf{Theorem C} and give the necessary 
axiomatizations and constructions. 

In \ref{formal} we explain how to understand formal stacks in terms of analytic stacks. 
Most importantly, we give a criterion for properness of analytic stacks coming from 
formal stacks \ref{From formal to analytic prop}. 

In \ref{solid Syntomic} we recall some aspects of the stacky approach to 
syntomic cohomology and define the solid Syntomification. 

In \ref{theSIXFF} we collect the proof of \textbf{Theorem A}.

\subsection*{Notations and conventions}

We will use the following notations and conventions. 

\begin{itemize}
    \item We will use the term \emph{category} to mean what, often in the literature, is called an $\infty$-category.
    \item A ring will be an animated commutative ring.
    \item Many topoi in this text are "big topoi". That is, they are formally not topoi because of size issues. 
          We will systematically ignore this issue, as we will never use that they are topoi but just that 
          they practically behave like those in the following sense. To deal with this issue, one defines a $\kappa$-small 
          version of this topos for each uncountable strong limit cardinal $\kappa$ and then takes 
          the colimits over those $\kappa$ in categories. In particular, most arguments will happen in the $\kappa$-small subcategory 
          for some $\kappa$ and thus in some topos. 
    \item We will use the homological conventions to speak about (co)homology groups, $t$-structures and suspensions.  
\end{itemize}

\subsection*{Acknowledgments} A version of this paper was handed in as my PhD thesis, and first and most of all 
I would like to thank my advisor, Marc Hoyois, for (among other things) his incredible talent for teaching mathematics. 
I also would like to thank the mathematics department at the University of Regensburg and everybody who was involved 
in the amazing time I had there. Finally, I would like to thank Johannes Anschütz, Ko Aoki,
Tess Bouis, Denis-Charles Cisinski, Bastiaan Cnossen, Benjamin Dünzinger, Elden Elmanto, Markus Fuchs, Ryomei Iwasa, Han-Ung Kufner,
Sil Linskens, Deven Manam, Matthew Morrow, Vova Sosnilo, Sebastian Wolf and Bogdan Zavyalov for helpful discussion, which one 
way or another found their way into this paper. During the writing process, I was supported by the SFB 1085 "Higher Invariants", and this project has
received funding from the European Research Council (ERC) under the
European Union’s Horizon 2020 research and innovation program (grant
agreement No. 101001474).
\section{A remark on smoothness in six-functor formalisms}\label{Remark on Smoothness}

\subsection{Recollections on six-functor formalisms}

We recall some facts and definitions in the context of six-functor formalisms. 
More detailed discussions can be found in \cite{HeyerMann6FF} \cite{GaitsgoryDAG1} \cite{CisinskiTriangulatedCO} \cite{Scholze6FF}.

\begin{non}
   Consider a pair $(\C,\C_{E})$ where $\C$ is a category and $\C_{E}\subset \C$
   a wide subcategory satisfying the following: 
     \begin{itemize}
        \item[(a)] Morphisms in $\C_{E}$ are closed under pullbacks along morphisms in $\C$.
        \item[(b)] $\C_{E}$ admits pullbacks and the inclusion $\C_{E}\subset \C$ preserves those.
     \end{itemize}
   Such data is called a \emph{geometric setup} and given such data 
   we can construct the category 
      $$\Span(\C,\C_{E})$$
    which informally can be described as follows (see \cite{HeyerMann6FF}[2.2] for an honest 
    construction):
       \begin{itemize}
        \item Objects are given by the objects in $\C$.
        \item A morphism from $X$ to $Y$ is given by a span 
\[\begin{tikzcd}
	& Z \\
	X && Y
	\arrow[from=1-2, to=2-1]
	\arrow[from=1-2, to=2-3]
\end{tikzcd}\]
             where the right leg lives in $\C_{E}$. To compose such spans, one 
             takes fiber products of the inner cospan. 
       \end{itemize} 
    Assuming that $\C$ admits finite products, the category $\Span(\C,\C_{E})$ can be equipped 
    with a symmetric monoidal structure induced by the cartesian product in $\C$. 
\end{non}

The following definition is taken from \cite{HeyerMann6FF}.

\begin{defn}
    Given a geometric setup $(\C,\C_{E})$, such that $\C$ admits finite products. A \emph{six-functor formalism} 
    on $(\C,\C_{E})$ is a lax symmetric monoidal 
    functor 
       $$\QCoh \from \Span(\C,\C_{E}) \to \PrL.$$
    Where $\PrL$ denotes the category of presentable categories with colimit-preserving functors as morphisms and equipped with the Lurie-tensor product.  
\end{defn}

\begin{rem}
    Using \cite{HeyerMann6FF}[3.4] we can and will assume that $\C$ is a topos, such that $\QCoh^{\ast}$
    defines a sheaf on this topos and $\QCoh^{!}$ descends along effective epimorphisms 
    in $\C_{E}$.
\end{rem}

\begin{non}[\textbf{The category of Kernels}]
    Given a six-functor formalism and an object $X\in \C$, we obtain a restricted six-functor 
    formalism 
       $$\QCoh\from \Span(\C(X)) \to \PrL$$
    where we write $\Span(\C(X)):= \Span((C_{E})_{/X},(C_{E})_{/X})$.
    Furthermore, $\QCoh$ gives a commutative algebra object in 
    $\Fun(\Span(\C(X)),\PrL)$ where the later category is equipped with the day convolution 
    symmetric monoidal structure. We then write 
       $$\Kernels_{X} := \Mod_{\QCoh}(\Fun(\Span(\C(X)),\PrL))$$
    and call it the \emph{category of kernels} of $X$. 
    
    This can be interpreted as a $2$-category, where any object $X\in \C_{E}$
    give rise to an object $[X]\in \Kernels_{\ast}$, $1$-morphisms $[X]\to [Y]$
    are described by object in $\QCoh(X\times Y)$ and $2$-morphism between those are described by 
    morphisms in $\QCoh(X\times Y)$.
\end{non}

\begin{rem}
    In this text, the category of kernels will only be used implicitly. For a more detailed 
    description and construction, we refer to \cite{HeyerMann6FF}[4] \cite{Scholze6FF}[V].
\end{rem}

We will use the following notions \cite{HeyerMann6FF}[4] \cite{AnalyticDeRahm}[3.1.]. 

\begin{defn}
   Let $f\from X\to S$ be a morphism in $\C_{E}$ and $\E\in \QCoh(X)$.
     \begin{itemize}
        \item[(a)] $\E$ is called \emph{f-suave}, if $\E\from [X]\to [S]$ is left adjoint in $\Kernels_{S}$.
        \item[(b)] $\E$ is called \emph{f-prim}, if $\E\from [S]\to [X]$ is left adjoint in $\Kernels_{S}$.
        \item[(c)] $f$ is called \emph{cohomologically smooth}, if $\Unit_{X}$ is $f$-suave and $\DualSh_{f}(\Unit_{X})=f^{!}(\Unit_{S})$
                   is $\otimes$-invertible.
        \item[(d)] $f$ is called \emph{cohomologically co-smooth}, if $\Unit_{X}$ is $f$-prim and $p_{2,\ast}\Delta_{!}(\Unit_{X})=\CoDualSh_{f}(\Unit_{X})$
                   is $\otimes$-invertible.
        \item[(e)] $f$ is called \emph{cohomologically étale}, if it is $n$-truncated for some finite $n$, $\Unit_{X}$ is $f$-suave and the 
                   diagonal $\Delta_{f}$ is cohomologically étale or an isomorphism.  
        \item[(f)] $f$ is \emph{weakly cohomologically proper}, if $\Unit_{X}$ is $\Delta_{f}$-prim, there exist some (non canonical) 
                   isomorphism $\CoDualSh_{\Delta_{f}}(\Unit_{X})\iso \Unit_{X}$ and $\Unit_{X}$ is $f$-prim. 
     \end{itemize} 
\end{defn}

\begin{rem}
    All of the above notions admit dual versions, which will not appear in this text. 
    In particular, all "étale" morphisms we will consider will be truncated. In contrast, almost no "proper" morphisms we will 
    consider will be truncated. 
\end{rem}

\begin{rem}
    All the above notions are stable under base change and composition. 
\end{rem}

We will use the following notions of coverings. Recall the notion of descendable 
algebra from \cite{MathewGaloisgroupsofstablehomotopy}[3.3.].

\begin{defn}
   Consider a morphism $f\from X\to S$ in $\C_{E}$. Then we will say:
      \begin{itemize}
        \item[(a)] $f$ is a smooth cover, if it is cohomologically smooth and $f^{\ast}$
                   is conservative. 
        \item[(b)] $f$ is a descendable cover, if it is cohomologically co-smooth and 
                     $$f_{\ast}\Unit_{X} \in \QCoh(S)$$
                   is descendable.
        \item[(c)] $f$ is a proper descendable cover, if it is a descendable cover and weakly cohomologically proper.
      \end{itemize} 
\end{defn}

\begin{rem}
    For all the above covers, one gets that $\QCoh^{\ast}$ as well as 
    $\QCoh^{!}$ descend along them \cite{HeyerMann6FF}[4.7.1, 4.7.4].  
\end{rem}

To end the section, we record the following stability properties.

\begin{prop}\label{stability properties of covers in SIXF}
    We have the following.
       \begin{itemize}
        \item[(a)] Being cohomologically smooth is $\QCoh^{\ast}$-local on the target and smooth 
                   local on the source. 
        \item[(b)] Being cohomologically étale is $\QCoh^{\ast}$-local on the target and étale
                   local on the source. 
        \item[(c)] Being weakly cohomologically co-smooth is $\QCoh^{\ast}$-local on the target and 
                   proper descendable local on the source.
        \item[(d)] Being a smooth cover is stable under composition, base change, and $\QCoh^{\ast}$-local on the target. 
        \item[(e)] Being a descendable cover is stable under composition, base change, and descendable local on the target. 
       \end{itemize}
\end{prop}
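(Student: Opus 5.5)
The plan is to derive every item from two standard tools in the kernel $2$-category $\Kernels_{S}$ of \cite{HeyerMann6FF}[4], \cite{Scholze6FF}.

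The first tool is that being $f$-suave (resp.\ $f$-prim) for a given object can be detected after pullback along a $\QCoh^{\ast}$-cover of the target. The reason is that $\Kernels_{(\_)}$ satisfies descent whenever $\QCoh^{\ast}$ does, and being a left adjoint in a limit of $2$-categories can be checked componentwise, provided the unit and counit are compatible. This compatibility is automatic because the base change functors are $2$-functors and the formation of adjoints commutes with them.

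The second tool is the "local on the source" principle. If $g\from X'\to X$ is a cover of the relevant type and $\E$ is $g$-suave with $g^{!}$ conservative, then suaveness of $\E$ over $S$ can be checked after composing with $g$, via the equivalence of $\QCoh^{!}$ descent with $\QCoh^{\ast}$ descent along $g$ (cf.\ \cite{HeyerMann6FF}[4.7]).

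With these in hand, I treat (a), (b) and (c) in parallel.

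For target locality, let $S'\to S$ be a $\QCoh^{\ast}$-cover. Suaveness or primness of $\Unit_{X}$ descends by the first tool. Invertibility of $\DualSh_{f}(\Unit_{X})$, resp.\ $\CoDualSh_{f}(\Unit_{X})$, descends too, because these dualizing objects commute with base change (\cite{HeyerMann6FF}[4.5]) and invertibility is $\QCoh^{\ast}$-local. For (b) the truncatedness condition is local in the topos $\C$, and the diagonal condition follows by induction on the truncation level, since $\Delta_{f}$ base changes to $\Delta_{f'}$.

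For source locality in (a), take a smooth cover $g\from X'\to X$ with $f\circ g$ cohomologically smooth. Then $\Unit_{X'}$ is $g$-suave with invertible dualizing object $\omega_{g}$, so $g^{!}=\omega_{g}\otimes g^{\ast}$ is conservative. The object $\omega_{f\circ g}\iso \omega_{g}\otimes g^{\ast}\omega_{f}$ is invertible, so $g^{\ast}$ of the candidate $f^{!}\Unit_{S}$ is invertible. Since $g^{\ast}$ is conservative and symmetric monoidal, this gives invertibility, because the evaluation map can be checked after $g^{\ast}$. Suaveness of $\Unit_{X}$ itself follows from \cite{HeyerMann6FF}[4.6.4], which says that suave objects descend along $!$-descent.

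Item (b) follows the same pattern, using in addition that étale covers are smooth covers with trivial dualizing sheaf. Item (c) is dual: replace suave by prim and $!$ by $\ast$, and use that a proper descendable cover $g$ has $g_{\ast}\Unit$ descendable. This descendability gives the descent needed for prim objects, by \cite{HeyerMann6FF}[4.7.4].

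For (d), composition and base change follow from the corresponding stabilities of cohomological smoothness (stated in the preceding remark) and the fact that pullback-conservativity is stable under both. Base change of conservativity holds because $\QCoh^{\ast}$ descends along smooth covers, so $f^{\ast}$ conservative is equivalent to descent, which is stable. Target locality combines (a) with the observation that conservativity of $f^{\ast}$ can be checked after a $\QCoh^{\ast}$-cover of $S$.

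For (e), composition uses that $(f\circ g)_{\ast}\Unit=f_{\ast}g_{\ast}\Unit$ and that descendability is preserved by $f_{\ast}$ of a descendable algebra along descendable maps (\cite{MathewGaloisgroupsofstablehomotopy}[3.20]). Base change uses the projection formula and base change for co-smooth maps: co-smoothness gives $f_{\ast}$ commuting with pullback, and descendability is preserved by symmetric monoidal functors. Target locality again uses \cite{MathewGaloisgroupsofstablehomotopy}.

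The main obstacle I expect is the source-locality statement in (c) for prim objects. There one has to produce the left adjoint structure on $\Unit_{X}\from[S]\to[X]$ from that on $X'$. This requires a careful descent of kernels along a $\ast$-pushforward, and it is here that I would lean most heavily on \cite{HeyerMann6FF}[4.7.4] and the weak properness hypothesis $\CoDualSh_{\Delta_{g}}\iso\Unit$.
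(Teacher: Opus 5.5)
Your proposal follows the same route as the paper, whose proof simply cites the Heyer--Mann locality results for suave/prim objects and for cohomologically smooth, \'etale and co-smooth maps for (a)--(c), and Mathew's descendability lemmas for (d)--(e); you are unpacking those citations through descent for the kernel $2$-category, which is fine. Two steps need tightening: in (d), descent along a map is not stable under base change on its own, so you must get conservativity of $(f')^{\ast}$ from universality of descent for smooth covers (e.g.\ monadicity of $f^{\ast}$ gives the bar resolution $\Unit_{S}\iso|(f_{!}\omega_{f})^{\otimes\bullet+1}|$, which any pullback preserves), and in the source-local half of (b) the truncatedness and diagonal conditions need their own short argument rather than ``the same pattern'' as in (a).
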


\begin{proof}
    Assertion $(a)$ is explained in \cite{HeyerMann6FF}[4.4.8+4.5.8(i)] (see also \cite{HeyerMann6FF}[4.5.12] and \cite{HA}[4.4.1.11]).
    Assertion $(b)$ is \cite{HeyerMann6FF}[4.6.3](ii).  
    Assertion $(c)$ is \cite{AnalyticDeRahm}[3.1.14] or \cite{HeyerMann6FF}[4.4.8] and \cite{HeyerMann6FF}[4.7.5](ii).
    The other assertions are now clear if one has \cite{MathewGaloisgroupsofstablehomotopy}[3.24] in mind.
\end{proof}

\begin{rem}\label{weakly proper from diagonal and cosmooth}
    Note that, if we, for a co-smooth map $f\from X\to S$, know that the diagonal is weakly cohomologically
    proper. Then also $f$ is weakly cohomologically proper.
\end{rem}

\begin{rem}\label{weakly proper from decomposition}
    If the six-functor formalism is constructed from a suitable decomposition of $E$ \cite{HeyerMann6FF}[3.3.2]. 
    That is, the construction comes with two classes $(I,P)$ of étale and proper maps, which are cohomologically étale 
    resp. cohomologically proper by construction. Then if a map $f\from X\to S$ locally on $S$
    for a $\QCoh^{\ast}$-cover lies in $P$, it is weakly cohomologically proper.
    This is explained in \cite{AnalyticDeRahm}[3.1.21].
\end{rem}

\subsection{Additively oriented six-functor formalisms}
In what follows, we will incorporate some geometry into our six-functor formalism. 
Most importantly, we implement the so-called Projective bundle formula.
For this, we fix a six-functor formalism $\QCoh$.

\begin{defn}
    We will say a geometric setup $(\C,\C_{E})$ is \emph{geometrized}, if it comes with 
    functor 
       $$\Sm^{\text{sep}}_{B} \to \C_{E}$$
    from the category of separated smooth schemes over some (possibly derived) scheme $B$, which 
    preserves those finite limits, which are also finite limits in the category of all (derived) schemes.

    Furthermore, a six-functor formalism $\QCoh$ on a geometrized geometric setup 
    is called \emph{geometric}, if:
       \begin{itemize}
        \item For any object $S \in \C$ the category $\QCoh(S)$ is stable.
        \item Any \'etale morphism in $\Sm_{B}$ is cohomologically \'etale with respect 
              to $\QCoh$.
        \item Any proper morphism in $\Sm_{B}$ is weakly cohomologically proper with respect to 
              $\QCoh$.
        \item  Any Nisnevich covering gets sent to a $\QCoh^{\ast}$-cover. 
       \end{itemize}
\end{defn}

\begin{rem}
    We will normally not write the functor $\Sm_{B} \to \C$. That is, for example, we 
    will write $\A^{n}$ for the $n$-dimensional affine space seen as an object 
    in $\C$ via this functor. We can also define the affine space over an 
    arbitrary object $X\in \C$ via base change.
\end{rem}

\begin{rem}
    The Nisnevich topology is famously used in motivic homotopy theory and sits 
    in between the Zariski and the \'etale topology. The \'etale topology would be enough for all applications in this text. We just chose this definition for the 
    sake of generality. 
\end{rem}

For the rest of the section, we assume that the fixed six-functor formalism 
is geometric\footnote{This is of course structure and not a property.}.

\begin{defn}
    Let us write $f\from \ProS^{1}\to B$ for the projection. 
    We say $\QCoh$ admits a \emph{Tate twists}, if the 
    object 
       $$\Unit_{B}(-1) := cof(\Unit_{B} \to f_{\ast}\Unit_{\ProS^{1}_{B}})$$
    is $\otimes$-invertible. In that case, we will write 
       $$\Unit_{B}(1)$$
    for its $\otimes$-inverse and call it the \emph{Tate twist}.
\end{defn}

\begin{rem}
    Using proper base change, the Tate twist defines a 
    cartesian section for $\QCoh^{\ast}$. That is, we can define 
       $$\Unit_{S}(1)$$
    for an arbitrary $S\in \C$, either by the above construction over $S$ or via base change. 
\end{rem}

In the following, we will use the topology on $\C$ generated by families of cohomologically étale monomorphisms 
which jointly form a smooth cover (with respect to $\QCoh$) and refer to it as the \emph{open topology}.
The following definition is taken from \cite{ZavyalovPoincareDI}[5.2.4]. 

\begin{defn}
    We will say that $\QCoh$ admits a \emph{theory of first chern classes}, if
    it admits Tate twists, and it comes with a natural transformation 
       $$c_{1}\from R\Gamma_{\text{open}}(\_,\Gm)[-1] \to \Hom(\Unit_{(\_)},\Unit_{(\_)}(1))$$
    of sheaves of spectra on $\C$.
\end{defn}

\begin{rem}
    Note that there is a natural transformation 
       $$R\Gamma_{\text{Zar}}(\_,\Gm)[-1] \to R\Gamma_{\text{open}}(\_,\Gm)[1]$$
    of sheaves of spectra on $\Sm^{\text{sep}}_{B}$. Furthermore, $\pi_{0}$ of 
    the left-hand side computes the Picard group. In particular, any line 
    bundle $\Line$ on a smooth separated scheme $S$ over $B$ gives rise to a map 
       $$c_{1}(\Line)\from \Unit_{S} \to \Unit_{S}(1).$$
    In practice, $\pi_{0}$ of the right-hand side will essentially compute 
    the group of line bundles on an object $S\in \C$. 
\end{rem}

\begin{rem}
    The assignment of first Chern classes is compatible with base change 
    and we have the formula 
       $$c_{1}(\Line_{1}\otimes \Line_{2})\iso c_{1}(\Line_{1})+c_{1}(\Line_{2}).$$
\end{rem}

\begin{constr}[\textbf{The Projective bundle formula}]\label{Projective bundle formula morphism Construction}
    Consider a morphism $f\from X\to S$ of separated smooth schemes over $B$ 
    and a line bundle $\Line$ on $X$. Then, by adjunction, the morphism 
    $c_{1}(\Line)$ induces a morphism 
       $$c_{1}(\Line) \from \Unit_{S} \to f_{\ast}\Unit_{X}(1)$$
    which we will denote the same way. Furthermore, taking $\otimes$-powers iteratively 
    of this morphism with itself, we obtain a map 
       $$c_{1}(\Line)^{d} \from \Unit_{S} \to f_{\ast}\Unit_{X}(d).$$
    
    Now, if we specialize to $f\from \ProS_{S}(\F) \to S$ being the projection 
    from the projective space associated to a vector bundle $\F$ of rank $d+1$ on $S$, we can use 
    these constructions to obtain a morphism 
        $$\sum_{i=0}^{d} c_{1}(\Oh(1))^{i}(d-i)\from \bigoplus_{i=0}^{d}\Unit_{S}(d-i) \to f_{\ast}\Unit_{\ProS_{S}(\F)}(d)$$
    where $\Oh(1)$ denotes the universal line bundle and $c_{1}(\Oh(1))^{0}$ by 
    adjunction corresponds to the identity. 
\end{constr}

The following definition is taken from \cite{ZavyalovPoincareDI}[5.2.8]. 

\begin{defn}
    We will say that a theory of first Chern classes for $\QCoh$ is 
    an \emph{additive orientation}, if for each $d\ge 1$ the map 
       $$\sum_{i=0}^{d} c_{1}(\Oh(1))^{i}(d-i)\from \bigoplus_{i=0}^{d}\Unit_{B}(d-i) \to f_{\ast}\Unit_{\ProS^{d}_{B}}(d)$$
    is an isomorphism. 
\end{defn}

\begin{rem}
    Note that the Projective bundle formula isomorphism base changes to its 
    counterpart over an arbitrary object $S\in \C$. In particular, the Projective Bundle formula holds over any object in $\C$ if we have an additive orientation. 
    As we can check isomorphisms locally, we also obtain a Projective Bundle 
    formula for an arbitrary projective bundle over a smooth separated scheme 
    over $S$. 
\end{rem}

\begin{rem}
    For any object $S\in \C$ and any sheaf $\E\in \QCoh(S)$ we obtain an 
    isomorphism
       $$\sum_{i=0}^{d} c_{1}(\Oh(1))^{i}(d-i)\from \bigoplus_{i=0}^{d}\E(d-i) \to f_{\ast}f^{\ast}\E(d)$$
    by tensoring the sheaf on the Projective Bundle formula isomorphism.
\end{rem}

\subsection{Smooth morphisms}

In this subsection, we will axiomatize what we want a class of smooth morphisms 
to satisfy in order to compute their dualizing sheaf. For this, we fix a geometric 
six-functor formalism $\QCoh$. 

Recall that we referred to the topology generated by families of cohomologically 
étale monomorphisms, which generated an étale covering, the \emph{open topology}. 

\begin{defn}
    We will call a morphism $Z\to X$ in $\C$ a \emph{regular immersion}, if it locally 
    on the target in the open topology sits in a Cartesian square 
\[\begin{tikzcd}
	Z & X \\
	{\{0\}} & {\A^{1}_{B}}
	\arrow[from=1-1, to=1-2]
	\arrow[from=1-1, to=2-1]
	\arrow[from=1-2, to=2-2]
	\arrow[from=2-1, to=2-2].
\end{tikzcd}\]
\end{defn}

\begin{rem}
    We chose the wording regular immersion, as the above Cartesian square should 
    be thought of as a derived Cartesian square. 
\end{rem}

\begin{defn}\label{definition of geometrically smooth morphisms}
    Given two subcategories $(\Et,\Smoo)$ of $\C$, we will refer 
    to the morphisms in $\Et$ as geometrically étale and to morphisms in $\Smoo$ as geometrically smooth, if they satisfy 
    the following: 
       \begin{itemize}
        \item[(a)] Both classes are stable under base change along arbitrary morphisms between objects in $\Smoo$ and $\Et \subset \Smoo$.
        \item[(b)] Any morphism in $\Et$ is cohomologically étale.
        \item[(c)] $\Sm_{B}^{\text{sep}} \subset \Smoo$.
        \item[(d)] Any morphism in $\Smoo$, locally on source and target in the open topology, factors as 
\[\begin{tikzcd}
	X & {\A^{n}_{S}} & S
	\arrow["e", from=1-1, to=1-2]
	\arrow["pr", from=1-2, to=1-3]
\end{tikzcd}\]
                   where $e \in \Et$ and $pr$ denotes the canonical projection.
        \item[(e)] Given a morphism $f\from X\to S$ in $\Smoo$,
                    then any section $s\from S\to X$ factors as 
\[\begin{tikzcd}
	S & U & X
	\arrow["i", from=1-1, to=1-2]
	\arrow["j", from=1-2, to=1-3].
\end{tikzcd}\]
                   where $j$ is an \'etale monomorphism and $i$ a regular immersion.  
        \item[(f)] For any commutative triangle 
\[\begin{tikzcd}
	Z & X \\
	& S
	\arrow["i", from=1-1, to=1-2]
	\arrow["g"', from=1-1, to=2-2]
	\arrow["f", from=1-2, to=2-2]
\end{tikzcd}\]
                    where $g$ and $f$ are geometrically smooth and $i$ a regular immersion. Locally 
                    on $X$ in the open topology, we can find cartesian squares 
\[\begin{tikzcd}
	Z & Z & Z \\
	X & U & {\A^{n}_{Z}}
	\arrow["i"', from=1-1, to=2-1]
	\arrow[equal, from=1-2, to=1-1]
	\arrow[equal, from=1-2, to=1-3]
	\arrow["t"', from=1-2, to=2-2]
	\arrow["0", from=1-3, to=2-3]
	\arrow[from=2-2, to=2-1]
	\arrow[from=2-2, to=2-3]
\end{tikzcd}\]
                   with the horizontal morphisms \'etale. 
        \item[(g)] For any commutative triangle 
\[\begin{tikzcd}
	Z & X \\
	& S
	\arrow["i", from=1-1, to=1-2]
	\arrow["g"', from=1-1, to=2-2]
	\arrow["f", from=1-2, to=2-2]
\end{tikzcd}\]
                 where $g$ and $f$ are geometrically smooth and $i$ is a regular immersion. There exists a geometrically smooth
                 morphism $\Bl_{Z}(X) \to X$, which restricted to the open locus on $X$, where we have a cartesian square 
\[\begin{tikzcd}
	Z & X \\
	{\{0\}} & {\A^{n}_{B}}
	\arrow[from=1-1, to=1-2]
	\arrow[from=1-1, to=2-1]
	\arrow[from=1-2, to=2-2]
	\arrow[from=2-1, to=2-2]
\end{tikzcd}\]
                 is given by $X\times_{\A^{n}_{B}}\Bl_{\{0\}}(\A^{n})$. Here we write $\Bl_{\{0\}}(\A^{n}_{B})$
                 for the Blow-up from derived algebraic geometry \cite{KhanVirtualCD} \cite{Tang2026TheG}[A].
            \end{itemize}
\end{defn}

Let us make some remarks on this definition. 

\begin{rem}
   Assertion $(f)$ in the definition of geometrically smooth morphisms, in practice, 
   follows from the Jacobian criterion together with the fact that the diagonal of an \'etale 
   morphism is an open embedding and that the image of the complement of an 
   open along a Zariski closed immersion has an open complement \cite{MorelA!homotopy}[Section 3. Lemma 2.28].
\end{rem}

\begin{rem}\label{Nisnevich descent remark}
   Given a Zariski closed immersion $Z\to X$ then locally on $X$, $Z$ admits a 
   complement by pulling back the complement of the zero section. As these are monomorphisms 
   we can glue those local complements to a global complement $U \subset X$. Furthermore for any 
   etale morphism $V\to X$, such that the square 
\[\begin{tikzcd}
	Z & V \\
	Z & X
	\arrow[from=1-1, to=1-2]
	\arrow[equal, from=1-1, to=2-1]
	\arrow[from=1-2, to=2-2]
	\arrow[from=2-1, to=2-2]
\end{tikzcd}\]
   is Cartesian, we see that the pair 
      $$V\amalg U \to X$$
   is a cohomologically étale covering, since this holds locally. Note also that as $U\subset X$ is a 
   monomorphism, satisfying descent for this d\'et cover, is equivalent to sending the 
   square 
\[\begin{tikzcd}
	{U\times_{X}V} & V \\
	U & X
	\arrow[from=1-1, to=1-2]
	\arrow[from=1-1, to=2-1]
	\arrow[from=1-2, to=2-2]
	\arrow[from=2-1, to=2-2]
\end{tikzcd}\]
   to a pullback. 
\end{rem}

\begin{rem}
    Condition $(h)$ is a property.
\end{rem}

In order to prove the cohomologically smoothness of a geometrically smooth morphism
we will use the following criterion from \cite{ZavyalovPoincareDI}[3.2.4.]. 

Consider a morphism $f\from X\to S$ in $\C_{E}$. Then we consider the commutative
diagram 
\[\begin{tikzcd}
	X \\
	& {X\times_{S}X} & X \\
	& X & S
	\arrow["\Delta", from=1-1, to=2-2]
	\arrow["id", bend left, from=1-1, to=2-3]
	\arrow["id"', bend right, from=1-1, to=3-2]
	\arrow["{p_{2}}", from=2-2, to=2-3]
	\arrow["{p_{1}}"', from=2-2, to=3-2]
	\arrow["f", from=2-3, to=3-3]
	\arrow["f"', from=3-2, to=3-3].
\end{tikzcd}\]

\begin{defn}
    A \emph{trace-cycle theory} on a morphism $f\from X\to S$ in $\C_{E}$ consists 
    of a triple $(\omega_{f},\text{tr}_{f},\text{cl}_{\Delta})$ of:
      \begin{itemize}
        \item[(a)] A $\otimes$-invertible object $\omega_{f}$ in $\QCoh(X)$.
        \item[(b)] A \emph{trace morphism} $\text{tr}_{f} \from f_{!}\omega_{f} \to \Unit_{S}$
                   in $\QCoh(S)$.
        \item[(c)] A \emph{cycle} morphism $\text{cl}_{\Delta}\from \Delta_{!}\Unit_{X}\to (p_{2})^{\ast}\omega_{f}$
                   in $\QCoh(X\times_{S}X)$.
      \end{itemize}
    Such that the following hold:
       \begin{itemize}
        \item[(1)] The composition 
\[\begin{tikzcd}
	{\Unit_{X}} & {(p_{1})_{!}\Delta_{!}\Unit_{X}} & {(p_{1})_{!}(p_{2})^{\ast}\omega_{f}} & {\Unit_{X}}
	\arrow["\iso", from=1-1, to=1-2]
	\arrow["{(p_{1})_{!}(\text{cl}_{\Delta})}", from=1-2, to=1-3]
	\arrow["{\text{tr}_{p_{1}}}", from=1-3, to=1-4]
\end{tikzcd}\]
                  is the identity. Where we write $\text{tr}_{p_{1}}\iso f^{\ast}(\text{tr}_{f})$.
        \item[(2)] The composition 
\[\begin{tikzcd}
	{\omega_{f}} & {(p_{2})_{!}(p_{1}^{\ast}\omega_{f}\otimes\Delta_{!}\Unit_{X})} && {(p_{2})_{!}(p_{1}^{\ast}\omega_{f}\otimes p_{2}^{\ast}\omega_{f})\iso(p_{2})_{!}p_{2}^{\ast}\omega_{f}\otimes\omega_{f}} & {id\otimes\omega_{f}\iso\omega_{f}}
	\arrow["\iso", from=1-1, to=1-2]
	\arrow["{(p_{2})_{!}(id\otimes\text{cl}_{\Delta})}", from=1-2, to=1-4]
	\arrow["{\text{tr}_{f}\otimes id}", from=1-4, to=1-5]
\end{tikzcd}\]
                  is the identity. 
       \end{itemize}
\end{defn}

\begin{rem}
    The definition unwinds what it means to have a unit and counit of an adjunction 
       $$\Unit_{X}\dashv \omega_{f}$$
    in $\Kernels_{S}$.
\end{rem}

Then section $3$ in \cite{ZavyalovPoincareDI} implies the following. 

\begin{thm}[\textbf{Zavyalov}]\label{Bogdans criterium}
    Consider a geometrized geometric setup $(\C,\C_{E})$ and a geometric 
    six-functor formalism $\QCoh$ on it. Then a morphism $f\from X\to S$ in $\C_{E}$ is cohomologically 
    smooth, if and only if it admits a trace-cycle theory, and in that case, we can 
    compute the dualizing sheaf as 
       $$f^{!}\Unit_{S}\iso \omega_{f}.$$
    Furthermore, for any class of geometrically smooth morphisms, the following 
    are equivalent: 
       \begin{itemize}
        \item[(a)] Any geometrically smooth morphism is cohomologically smooth.
        \item[(b)] The morphism $\A^{1} \to B$ is cohomologically smooth.
        \item[(c)] The morphism $\ProS^{1}\to B$ is cohomologically smooth. 
       \end{itemize}
\end{thm}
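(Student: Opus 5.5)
The plan has two parts: the trace-cycle characterization, and the equivalence of (a), (b), (c).

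\textbf{Trace-cycle characterization.} By the remark after the definition of trace-cycle theories, such a theory is exactly the unit $\text{cl}_{\Delta}$ and the counit $\text{tr}_{f}$ of a candidate adjunction $\Unit_{X}\dashv\omega_{f}$ in $\Kernels_{S}$. Conditions (1) and (2) are the two triangle identities, written out using the composition of kernels by pull-push along the projections of $X\times_{S}X$.

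Conversely, if $f$ is cohomologically smooth, then $\Unit_{X}$ is $f$-suave. Its right adjoint kernel is $\DualSh_{f}(\Unit_{X})=f^{!}\Unit_{S}$, which is $\otimes$-invertible by definition, and the unit and counit of that adjunction give the triple. Since right adjoints are unique, in that case $f^{!}\Unit_{S}\iso\omega_{f}$.

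For the forward direction we need that the right adjoint of $\Unit_{X}$ in $\Kernels_{S}$ is computed by $f^{!}\Unit_{S}$. This is the standard identification of suave duals from \cite{HeyerMann6FF}[4.4]. It shows that $f$ is cohomologically smooth exactly when such a triple exists with $\omega_{f}$ invertible.

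\textbf{(a) implies (b) implies (c).} The first step is immediate from axiom (c) of the definition of geometrically smooth morphisms. For the second, cover $\ProS^{1}$ by two copies of $\A^{1}$. This is a Zariski cover, so it is a $\QCoh^{\ast}$-cover of cohomologically étale monomorphisms, and the open immersions are cohomologically étale. Cohomological smoothness is smooth-local on the source by \Cref{stability properties of covers in SIXF}(a), and each restriction is cohomologically smooth, so $\ProS^{1}\to B$ is cohomologically smooth.

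\textbf{(c) implies (b).} Restrict $\ProS^{1}\to B$ along the open immersion $\A^{1}\subset\ProS^{1}$. Open immersions are cohomologically étale, hence cohomologically smooth with trivial dualizing sheaf, and cohomological smoothness is stable under composition, so $\A^{1}\to B$ is cohomologically smooth.

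\textbf{(b) implies (a).} Base change gives that $\A^{n}_{S}\to S$ is cohomologically smooth for every $S$, since it is a composition of base changes of $\A^{1}\to B$. By axiom (d), any geometrically smooth $f$ factors, locally on source and target in the open topology, as an étale map followed by such a projection. Étale maps are cohomologically étale, hence cohomologically smooth. Smoothness is local on the target for $\QCoh^{\ast}$-covers and smooth-local on the source by \Cref{stability properties of covers in SIXF}(a), so it globalizes.

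\textbf{Main obstacle.} The delicate step is the locality argument in (b) implies (a). The open-topology covers used in axiom (d) must actually be $\QCoh^{\ast}$-covers and smooth covers. This holds by definition of the open topology: families of cohomologically étale monomorphisms forming a smooth cover. One also has to check that the local dualizing sheaves glue to an invertible object. Invertibility is local, so this reduces to \Cref{stability properties of covers in SIXF}(a) as cited from \cite{HeyerMann6FF}[4.5.8].
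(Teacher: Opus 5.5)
Your proposal is correct and follows essentially the same route as the paper, which defers to Zavyalov's Section~3 for the trace--cycle characterization (a trace--cycle theory being the unit and counit of $\Unit_X \dashv \omega_f$ in $\Kernels_S$). For the equivalence of (a)--(c) the paper cites his 3.3.3 together with axioms (b) and (d), which is exactly your locality argument combined with the open immersion $\A^{1}\subset\ProS^{1}$; you have simply unpacked those citations.
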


\begin{proof}
    Using conditions $(b)$ and $(d)$ this follows as in \cite{ZavyalovPoincareDI}[3.3.3.]. 
\end{proof}

\subsection{Vector bundles and Thom spaces}

In this section, we will define the Thom space associated with a vector bundle and 
compute its cohomology. For this, we fix an additively oriented six-functor formalism $\QCoh$
on $\C$ and recall that we have a functor 
   $$\Sm^{\text{sep}}_{B} \to \C$$
which preserves those finite limits, which are also finite limits in the category of all schemes. 

\begin{non}
    For a morphism $f\from X\to S$ in $\C$ and an object $\E_{S}\in \QCoh(S)$, we will write 
       $$\Coh^{\ast}(X,\E_{S}) := f_{\ast}f^{\ast}\E_{S} \in \QCoh(S)$$
    for its cohomology. Furthermore if $f$ lives in $\C_{E}$, we will write 
       $$\Coh_{\ast}(X,\E_{S}):= f_{!}f^{!}\E_{S} \in \QCoh(S)$$
    for its homology. We will omit the coefficients in case $\E_{S}\iso \Unit_{S}$.
\end{non}

We now also fix a class of geometrically smooth and geometrically \'etale morphisms.

\begin{non}
    Given an object $S\in \C$, we will write 
       $$(\Sm_{S}^{\text{geom}})_{\text{open}}$$
    for the topos generated by geometrically smooth objects over $S$ equipped 
    with the topology generated by families of cohomologically \'etale monomorphisms, which 
    jointly induce a $\QCoh^{\ast}$-covering. Clearly, cohomology and homology induce functors on this 
    topos.  
\end{non}

We can now consider the following constructions.

\begin{non}[\textbf{Vector bundles}]
    Given an object $S\in \C$, we will write  
       $$\BGln_{S} \in (\Sm_{S}^{\text{geom}})_{\text{open}}$$
    for the stack of rank $n$ vector bundles on $S$ and 
      $$\Vect_{S} := \coprod_{n}\BGln_{S} \in (\Sm_{S}^{\text{geom}})_{\text{open}}$$
    for the stack of all vector bundles on $S$. 

    That is a map $\langle \F \rangle \from X\to \BGln_{S}$ corresponds to a $\Gln$-torsor 
       $$\V_{X}(\F) \to X$$
    which trivializes locally in the open topology. 
\end{non}

\begin{non}[\textbf{Projective bundles}]
   Given a rank $\ge 1$ vector bundle $\V_{X}(\F)\to X$, we have a zero section $0\from X\to \V_{X}(\F)$. 
   Furthermore using the diagonal map $\Gm \to \Gln$, we obtain an $\Gm_{X}$-action on 
   $\V_{X}(\F)$, which restricts to $\V_{X}(\F)-\{0\}$. 

   From this we define the \emph{projective bundle} associated to $\V_{X}(\F)$ to be 
      $$\ProS_{X}(\F) := (\V_{X}(\F)-\{0\})/\Gm \in (\Sm_{S}^{\text{geom}})_{\text{open}}.$$
   
   Note that there is a canonical map $\ProS_{X}(\F) \to \BGm$, which defines 
   a line bundle $\Oh(1)$ on the projective bundle. This line bundle has a first Chern class. 
\end{non}

\begin{rem}
   Any projective bundle is weakly cohomologically proper, as locally on the base, this is true 
   by assumption. 
\end{rem}

\begin{non}[\textbf{Thom spaces}]
   Given a rank $\ge 1$ vector bundle $\V_{X}(\F)$, we define the its \emph{Thom space} to be 
   the cofibre 
      $$\Th_{X}(\F):= cof(\ProS_{X}(\F) \to \ProS_{X}(\F\oplus \Oh_{X})) \in (\Sm_{S}^{\text{geom}})_{\text{open}}.$$
\end{non}

We now have the following classical observation. 

\begin{prop}\label{thom spaces are trivialized}
   Given an object $S\in \C$ together with a vector bundle $\V_{S}(\F)\to S$ of rank $n\ge 1$.
   Then we have a canonical isomorphism 
      $$t(\F)\from \Unit_{S}(-n) \isoto \Coh^{\ast}(\Th_{S}(\F))$$ 
   in $\QCoh(S)$. In particular, the cohomology of those Thom spaces is $\otimes$-invertible.
\end{prop}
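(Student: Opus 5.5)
The plan is to compute $\Coh^{\ast}(\Th_{S}(\F))$ as the fibre of the restriction map
$$\Coh^{\ast}(\ProS_{S}(\F\oplus\Oh_{S})) \to \Coh^{\ast}(\ProS_{S}(\F))$$
and to analyse this map using the projective bundle formula on both sides. Since $\Coh^{\ast}$ turns colimits in the topos $(\Sm_{S}^{\text{geom}})_{\text{open}}$ into limits in $\QCoh(S)$, the cofibre defining the Thom space becomes a fibre. By the remarks following the definition of an additive orientation, the projective bundle formula holds for arbitrary projective bundles over $S$, tensored with any $\E$. It gives isomorphisms
$$\bigoplus_{i=0}^{n}\Unit_{S}(-i)\isoto \Coh^{\ast}(\ProS_{S}(\F\oplus\Oh_{S})) \quad\text{and}\quad \bigoplus_{i=0}^{n-1}\Unit_{S}(-i)\isoto \Coh^{\ast}(\ProS_{S}(\F)).$$
These come from twisting the displayed formula by $(-d)$, so that the $i$-th summand is $c_{1}(\Oh(1))^{i}$ applied to $\Unit_{S}(-i)$.

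The key compatibility is that, under the closed embedding $\ProS_{S}(\F)\to\ProS_{S}(\F\oplus\Oh_{S})$, the universal line bundle $\Oh(1)$ pulls back to $\Oh(1)$. Hence, by naturality of $c_{1}$ and its compatibility with base change and pullback, together with multiplicativity of the iterated classes, the restriction map sends the summand indexed by $i$ identically to the summand indexed by $i$ for $i\le n-1$. In other words, the restriction map is identified with the projection
$$\bigoplus_{i=0}^{n}\Unit_{S}(-i)\to\bigoplus_{i=0}^{n-1}\Unit_{S}(-i).$$
Its fibre is canonically the top summand $\Unit_{S}(-n)$, embedded via $c_{1}(\Oh(1))^{n}$. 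This defines $t(\F)$, and $\otimes$-invertibility follows because the Tate twist is invertible.

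The main obstacle is making the identification of the restriction map with the projection honest at the level of $\QCoh(S)$, rather than just on homotopy. One must check that the square formed by the two projective bundle formula maps and the restriction map commutes canonically. I would construct it directly: the maps $c_{1}(\Oh(1))^{i}\from\Unit_{S}(-i)\to \Coh^{\ast}(\ProS(\F\oplus\Oh))$ composed with restriction are literally $c_{1}(\Oh(1)|)^{i}$ by naturality of the transformation $c_{1}$ of sheaves of spectra. This yields a commutative square whose induced map on fibres is $\Unit_{S}(-n)\to\Coh^{\ast}(\Th_{S}(\F))$, and the five-lemma (stable fibre sequences) shows it is an isomorphism.

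A secondary point is canonicity independent of local trivializations. The projective bundle formula for nontrivial $\F$ is only checked to be an isomorphism locally, but the maps themselves are globally defined through $c_{1}(\Oh(1))$. So the construction is global, and the isomorphism can be checked locally in the open topology.
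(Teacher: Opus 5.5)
There is a genuine gap at the step ``In other words, the restriction map is identified with the projection.'' Naturality of $c_{1}$, together with $\Oh(1)$ pulling back to $\Oh(1)$, only determines the restriction map $r\from \Coh^{\ast}(\ProS_{S}(\F\oplus\Oh_{S}))\to\Coh^{\ast}(\ProS_{S}(\F))$ on the summands $i\le n-1$. On the top summand it gives $r\circ c_{1}(\Oh(1))^{n}=c_{1}(\Oh(1)|_{\ProS_{S}(\F)})^{n}$, and this need not vanish.

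Already for $n=1$ one has $\ProS_{S}(\F)\iso S$, with $\Oh(1)$ restricting to $\F$ or its dual. The top summand therefore maps by $\pm c_{1}(\F)\from\Unit_{S}(-1)\to\Unit_{S}$. This is nonzero, e.g., for $S=\ProS^{1}_{B}$ and $\F=\Oh(1)$, by the projective bundle formula for $S\to B$. In general this component is governed by the Chern classes of $\F$.

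Consequently, the square you want with the projection does not commute, and $c_{1}(\Oh(1))^{n}$ does not factor through $\mathrm{fib}(r)$. Your $t(\F)$, ``embedded via $c_{1}(\Oh(1))^{n}$'', is therefore not defined. Even for trivial $\F$, the vanishing of $c_{1}(\Oh(1))^{n}$ on $\ProS^{n-1}_{S}$ is not available from the axioms. Using it would also require choosing a null-homotopy, which spoils canonicity.

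The repair is to use the inclusion rather than the projection, which is exactly what the paper does. Naturality of $c_{1}$ says precisely that the composite
$\bigoplus_{i=0}^{n-1}\Unit_{S}(-i)\hookrightarrow\bigoplus_{i=0}^{n}\Unit_{S}(-i)\isoto\Coh^{\ast}(\ProS_{S}(\F\oplus\Oh_{S}))\xrightarrow{r}\Coh^{\ast}(\ProS_{S}(\F))$
is the projective bundle isomorphism for $\ProS_{S}(\F)$. Hence the summand inclusion, transported along the two projective bundle isomorphisms, is a canonical section $s$ of $r$. In a stable category a split map satisfies $\mathrm{fib}(r)\iso\mathrm{cofib}(s)$, and $\mathrm{cofib}(s)$ is the cofibre of the summand inclusion, i.e.\ $\Unit_{S}(-n)$.

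Concretely, the Thom class is $c_{1}(\Oh(1))^{n}-s\,r\,c_{1}(\Oh(1))^{n}$, the projection of $c_{1}(\Oh(1))^{n}$ onto $\mathrm{fib}(r)$ along the splitting, rather than $c_{1}(\Oh(1))^{n}$ itself. This is why the paper's diagram has its lower horizontal arrow going from $\bigoplus_{i=0}^{n-1}$ to $\bigoplus_{i=0}^{n}$, and identifies the cofibre of that arrow with the fibre of the restriction. The rest of your outline is fine: computing the Thom space cohomology as a fibre, defining the map globally and checking it is an isomorphism locally, and deducing invertibility from the Tate twist.
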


\begin{proof}
   See for example \cite{AnnalaAlgebraicCob}[6]. We consider the following diagram, which commutes by naturality 
   of first Chern classes.
\[\begin{tikzcd}
	{\Coh^{\ast}(\ProS_{S}(\F\oplus \Oh_{S}))} & {\Coh^{\ast}(\ProS_{S}(\F))} \\
	{\bigoplus_{i=0}^{n}\Unit_{S}(-i)} & {\bigoplus_{i=0}^{n-1}\Unit_{S}(-i)}
	\arrow[from=1-1, to=1-2]
	\arrow["\iso", from=2-1, to=1-1]
	\arrow["\iso"', from=2-2, to=1-2]
	\arrow[from=2-2, to=2-1].
\end{tikzcd}\]
   Here, the vertical isomorphisms come from the Projective bundle formula.
   Thus, we obtain an isomorphism between the cofibre of the lower horizontal and the fiber of the 
   upper horizontal map. 
\end{proof}

\subsection{Tang's construction of cycle classes}

To construct a cycle class theory for a geometrically smooth morphism, 
we will make use of a construction of Longke Tang \cite{Tang2026TheG}. 

Let us now fix a geometric six-functor formalism $\QCoh$, which comes equipped with an 
additive orientation. Furthermore, we fix a class of geometrically smooth and geometrically 
étale morphisms.

\begin{non}\label{blowup squares}
   For $S\in \Smoo$ we write $\Sm^{\text{geom}}_{S}$ for the category of geometrically
   smooth morphisms over $S$. Furthermore, we will call a blow-up square 
\[\begin{tikzcd}
	E & {\Bl_{Z}(X)} \\
	Z & X
	\arrow[from=1-1, to=1-2]
	\arrow[from=1-1, to=2-1]
	\arrow["p", from=1-2, to=2-2]
	\arrow["i"', from=2-1, to=2-2]
	\arrow["f", from=1-1, to=2-2]
\end{tikzcd}\]
   associated to a regular closed immersion $i\from Z\to X$ between objects in $\Sm^{\text{geom}}_{S}$ a \emph{smooth 
   blowup square}. 
\end{non}

We will also use the following minor assumption on our six-functor formalism. 

\begin{defn}\label{blowup excision definition}
	We will say that $\QCoh$-cohomology satisfies \emph{blow-up excision}
	if for every $n\ge 1$ the blow-up square 
\[\begin{tikzcd}
	\ProS_{B}^{n-1} & \V_{\ProS^{n-1}_{B}}(\Oh(1)) \\
	B & \A^{n}_{B}
	\arrow[from=1-1, to=1-2]
	\arrow[from=1-1, to=2-1]
	\arrow["p", from=1-2, to=2-2]
	\arrow["0"', from=2-1, to=2-2]
	\arrow["f", from=1-1, to=2-2]
\end{tikzcd}\]
    induces a (co)cartesian square
\[\begin{tikzcd}
	{\Unit_{\A^{n}_{B}}} & {0_{\ast}0^{\ast}\Unit_{\A^{n}_{B}}} \\
	{p_{\ast}p^{\ast}\Unit_{\A^{n}_{B}}} & {f_{\ast}f^{\ast}\Unit_{\A^{n}_{B}}}
	\arrow[from=1-1, to=1-2]
	\arrow[from=1-1, to=2-1]
	\arrow[from=1-2, to=2-2]
	\arrow[from=2-1, to=2-2]
\end{tikzcd}\]
   in $\QCoh(\A^{n}_{B})$. 
\end{defn}

From this definition, we can obtain the following general form of blow-up excision.

\begin{prop}\label{blowup excision prop}
	Consider an object $S\in \Sm$. Then any blow-up square in $\Sm_{S}^{\text{geom}}$
	as in \ref{blowup squares} induces a (co)cartesian square 
\[\begin{tikzcd}
	{\Unit_{X}} & {i_{\ast}i^{\ast}\Unit_{X}} \\
	{p_{\ast}p^{\ast}\Unit_{X}} & {f_{\ast}f^{\ast}\Unit_{X}}
	\arrow[from=1-1, to=1-2]
	\arrow[from=1-1, to=2-1]
	\arrow[from=1-2, to=2-2]
	\arrow[from=2-1, to=2-2]
\end{tikzcd}\]
   in $\QCoh(X)$.
\end{prop}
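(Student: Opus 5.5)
The plan is to reduce the general blow-up square to the model square of \ref{blowup excision definition} by working locally on $X$ and using base change. First, the question is local on $X$ in the open topology. The square of the statement consists of objects $\Unit_X$, $i_\ast i^\ast\Unit_X$, $p_\ast p^\ast\Unit_X$, $f_\ast f^\ast\Unit_X$. Since $i$ and $p$ (hence $f$) are weakly cohomologically proper, their $\ast$-pushforwards agree with $!$-pushforwards and satisfy base change along arbitrary maps. So the formation of the square commutes with $\ast$-pullback along any morphism $U\to X$. Because the open topology consists of $\QCoh^\ast$-covers and $\QCoh^\ast$ is a sheaf, being cocartesian (equivalently cartesian, since everything is stable) can be checked after pulling back to an open cover. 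Here $p$ is proper because, over the relevant opens, it is a base change of $\Bl_{\{0\}}(\A^n_B)\to\A^n_B$, which is projective. Likewise $i$ is locally a base change of the zero section, and properness then follows by the local criterion \ref{weakly proper from decomposition} / descent.

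Second, by (f) of \ref{definition of geometrically smooth morphisms}, locally on $X$ there is an intermediate $U$ with étale maps $U\to X$ and $U\to\A^n_Z$, both restricting to the identity on $Z$, and by (g) the blow-up is compatible with these étale maps. I then want to compare, on $X$ versus $U$ versus $\A^n_Z$, the fiber of $\Unit\to i_\ast i^\ast\Unit$ with the fiber of $p_\ast p^\ast\Unit\to f_\ast f^\ast\Unit$; equivalently, show that the total fiber vanishes. Both fibers are supported on $Z$: away from $Z$ the map $p$ is an isomorphism (it is pulled back from $\Bl_{\{0\}}\A^n\to\A^n$, which is an isomorphism off $0$) and $E$ is empty. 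So by \ref{Nisnevich descent remark}, applied to the étale neighbourhoods $U\to X$ and $U\to \A^n_Z$, which are isomorphisms over $Z$, it suffices to check the statement on $U$, and then on $\A^n_Z$ with $Z\to\A^n_Z$ the zero section. Concretely: cover $X$ by $U\amalg (X-Z)$, where the statement is trivial on $X-Z$ and on $U\times_X(X-Z)$, and the descent square reduces it to $U$. The same argument then transfers it from $\A^n_Z$ to $U$.

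Third, for the zero section $Z\to \A^n_Z$ the blow-up square is the base change along $Z\to B$ of the model square of \ref{blowup excision definition} by (g). Using base change for the weakly proper maps $0$, $p$, $f$ along $\A^n_Z\to\A^n_B$, the square in $\QCoh(\A^n_Z)$ is the $\ast$-pullback of the cocartesian model square, hence cocartesian.

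The main obstacle I expect is the second step. I need to make precise that the total fiber of the square is "supported on $Z$", and that an étale map which is an isomorphism over $Z$ induces an equivalence on such supported objects. The cleanest route is to phrase it as a statement about the pullback squares in the Nisnevich-type descent of \ref{Nisnevich descent remark}, together with the triviality of the square over the complement $X-Z$. I also have to check that the blow-ups and exceptional divisors are compatible with the étale base changes, which I would take as part of (g).
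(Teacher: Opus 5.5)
Your proposal is correct and follows the paper's proof. Both first obtain the zero-section case $Z\to\A^{n}_{Z}$ by base change of the model square along $Z\to B$, then use (f) to get $j^{\ast}Q_X\simeq Q_U\simeq e^{\ast}Q_{\A^{n}_{Z}}$ and descend. Your use of the cover $U\amalg(X-Z)$ from \ref{Nisnevich descent remark}, with the square trivial off $Z$, spells out what the paper calls ``open descent''. One small correction: passing from $\A^{n}_{Z}$ to $U$ needs no descent argument, only pullback along $e$.
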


\begin{proof}
	By pulling back along the structure map (using proper base change), we can deduce the 
	assertion for blow-up squares of the form 
\[\begin{tikzcd}
	\ProS_{Z}^{n-1} & \V_{\ProS^{n-1}_{Z}}(\Oh(1)) \\
	Z & \A^{n}_{Z}
	\arrow[from=1-1, to=1-2]
	\arrow[from=1-1, to=2-1]
	\arrow["p", from=1-2, to=2-2]
	\arrow["0"', from=2-1, to=2-2]
	\arrow["f", from=1-1, to=2-2]
\end{tikzcd}\]
   for general $Z\in \Sm$. For a general smooth blow-up square, we can use assertion $(f)$ in the 
   definition of geometrically smooth morphisms to Zariski locally on $X$ find cartesian squares 
\[\begin{tikzcd}
	Z & Z & Z \\
	X & U & {\A^{n}_{Z}}
	\arrow["i"', from=1-1, to=2-1]
	\arrow[from=1-2, to=1-1]
	\arrow[from=1-2, to=1-3]
	\arrow[from=1-2, to=2-2]
	\arrow["0", from=1-3, to=2-3]
	\arrow["j", from=2-2, to=2-1]
	\arrow["e"', from=2-2, to=2-3].
\end{tikzcd}\]
   Let us write $Q_{X}, Q_{U}$ and $Q_{\A^{n}_{Z}}$ for the cohomology squares induced by the corresponding blow-up squares. 
   Then by open descent, to check that $Q_{X}$ is cartesian, it suffices to see that 
      $$j^{\ast}(Q_{X})\iso Q_{U} \iso e^{\ast}(Q_{\A^{n}_{Z}})$$
	is cartesian. But this we did see in the first half of the argument. 
\end{proof}

\begin{rem}
	If $\QCoh$-cohomology over $S$ satisfies blow-up excision, by taking duals, one sees that 
	also the square 
\[\begin{tikzcd}
	{f_{!}f^{!}\Unit_{X}} & {p_{!}p^{!}\Unit_{X}} \\
	{i_{!}i^{!}\Unit_{X}} & {\Unit_{X}}
	\arrow[from=1-1, to=1-2]
	\arrow[from=1-1, to=2-1]
	\arrow[from=1-2, to=2-2]
	\arrow[from=2-1, to=2-2]
\end{tikzcd}\]
   associated to a blow-up square is (co)cartesian in $\QCoh(X)$ (note that all maps appearing are weakly 
   cohomologically proper).  
\end{rem}

\begin{rem}
	The author does not know how to formally deduce blowup-excision from the projective bundle formula. 
	In practice, though, it will normally be possible to make a reduction of this type. Essentially, because the following 
	is true. 
\end{rem}

\begin{prop}\label{Elementary blowup corollary}
   For any $S \in \Smoo$ and any object $\E_{S} \in \QCoh(S)$, the functor 
       $$\Coh_{\ast}(\_,\E_{S})\from\Sm^{\text{geom}}_{S} \to \QCoh(S)$$
    as well as the functor 
       $$\Coh^{\ast}(\_,\E_{S})\from (\Sm^{\text{geom}}_{S})^{\text{op}} \to \QCoh(S)$$
    sent smooth blow-up squares to pushout and pullback squares.
\end{prop}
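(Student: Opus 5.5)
The plan is to derive both statements from \ref{blowup excision prop} by pushing forward along the structure map, and then to pass to homology by a duality argument.

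For cohomology, let $g\from X\to S$ be the structure map of a smooth blow-up square in $\Sm^{\text{geom}}_{S}$ and let $\E_{X}:=g^{\ast}\E_{S}$. By definition $\Coh^{\ast}(Y,\E_{S})\iso g_{\ast}h_{\ast}h^{\ast}\E_{X}$ for any $h\from Y\to X$ in the square. It therefore suffices to show that
\[\begin{tikzcd}
	{\E_{X}} & {i_{\ast}i^{\ast}\E_{X}} \\
	{p_{\ast}p^{\ast}\E_{X}} & {f_{\ast}f^{\ast}\E_{X}}
	\arrow[from=1-1, to=1-2]
	\arrow[from=1-1, to=2-1]
	\arrow[from=1-2, to=2-2]
	\arrow[from=2-1, to=2-2]
\end{tikzcd}\]
is cartesian in $\QCoh(X)$ and then apply the exact functor $g_{\ast}$. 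The maps $i$, $p$ and $f$ are weakly cohomologically proper, since $p$ is locally a base change of a projective bundle situation and $i$ is a regular immersion. Hence $h_{\ast}$ satisfies the projection formula $h_{\ast}h^{\ast}\E_{X}\iso h_{\ast}\Unit\otimes\E_{X}$, and the square above is the square of \ref{blowup excision prop} tensored with $\E_{X}$. Tensoring is exact in the stable category $\QCoh(X)$, so the square stays cartesian, and the cohomology assertion follows. The projection formula for $h_{\ast}$ relies on $h_{\ast}\iso h_{!}$ up to the invertible twist $\CoDualSh$. For a regular immersion this needs a bit of care, so I would argue Zariski locally using condition $(f)$, exactly as in the proof of \ref{blowup excision prop}, to reduce to the model square over $\A^{n}_{Z}$.

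For homology, $\Coh_{\ast}(Y,\E_{S})\iso g_{!}h_{!}h^{!}g^{!}\E_{S}$. It suffices to show that the square formed by $h_{!}h^{!}\F$ for $\F=g^{!}\E_{S}$ is cocartesian in $\QCoh(X)$, and then apply $g_{!}$, which preserves colimits. For weakly cohomologically proper $h$, the functor $h_{!}h^{!}$ agrees with $\IntHom(h_{\ast}h^{\ast}\Unit,\_)$ up to the twist, via the adjunctions $h_{!}\dashv h^{!}$ and $h^{\ast}\dashv h_{\ast}$. Concretely, $\IntHom(h_{!}\Unit,\F)\iso h_{\ast}h^{!}\F$ gives the dual statement. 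More directly, the preceding remark already records that
\[f_{!}f^{!}\Unit_{X}\to p_{!}p^{!}\Unit_{X},\quad i_{!}i^{!}\Unit_{X}\to \Unit_{X}\]
form a cocartesian square. I would then use the projection formula $h_{!}h^{!}\F\iso h_{!}h^{!}\Unit\otimes\F$, which holds because the relevant objects are dualizable: they are proper pushforwards of twists of $\Unit$. Tensoring with $\F$ preserves cocartesian squares, and applying $g_{!}$ concludes.

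I expect the main obstacle to be justifying these projection formulas, namely $h_{!}h^{!}\F\iso h_{!}h^{!}\Unit\otimes\F$, for the non-smooth map $i$. My plan is to check this open-locally on $X$ via condition $(f)$. Over $U\to\A^{n}_{Z}$ the immersion becomes the zero section of a trivial bundle. There $0^{!}$ is computed from the Thom space, which by \ref{thom spaces are trivialized} is invertible, so $0^{!}\iso 0^{\ast}\otimes(\text{invertible})$ and the formula holds. Descent along the open cover then finishes the argument.
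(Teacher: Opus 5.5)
The cohomology half of your argument is correct under this subsection's standing blow-up excision hypothesis. The homology half, carried out as you plan, has a genuine gap, though it is easy to fix.

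\textbf{The gap in the homology half.} The formula $h_{!}h^{!}\F\iso h_{!}h^{!}\Unit\otimes\F$ does not follow from dualizability, because proper pushforwards of $\Unit$ are not dualizable in general. For étale sheaves and a closed immersion $i$, the object $i_{\ast}\Unit_{Z}$ is not locally constant, so it is not dualizable.

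Your local fallback also fails. \ref{thom spaces are trivialized} computes the cohomology of a Thom space \emph{in $\QCoh(S)$}, that is, after pushing forward to the base and with unit coefficients. It says nothing about $0^{!}$ as a functor on $\QCoh(\A^{n}_{Z})$. An isomorphism $0^{!}\iso 0^{\ast}\otimes L$ on all objects is false in general: for étale sheaves, take $j_{!}\Unit$ with $j$ the complement of the zero section. Then $0^{\ast}j_{!}\Unit=0$, but $0^{!}j_{!}\Unit\neq 0$.

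For the particular object $g^{!}\E_{S}$, the formula would follow from cohomological smoothness of $g$ and $g\circ h$. In the paper, however, that is \ref{smoothnes without the dualizing sheaf}, which depends on the present proposition through \ref{cohomology of BGm}, \ref{normalisation of cycle class} and \ref{chern classes underly cycle classes}. Using it here would be circular.

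\textbf{The repair.} Use the duality you mention and then set aside. For every $\F\in\QCoh(X)$, adjunction gives $h_{\ast}h^{!}\F\iso\IntHom(h_{!}\Unit,\F)$. Identify $h_{!}$ with $h_{\ast}$ for these weakly cohomologically proper maps. Then the exact functor $\IntHom(\_,\F)$ turns the cartesian square of \ref{blowup excision prop} directly into the cocartesian square of the $h_{!}h^{!}\F$. This is the remark following \ref{blowup excision prop}, with $\F$ in place of $\Unit_{X}$. Finally apply $g_{!}$. No projection formula for $h^{!}$ is needed.

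\textbf{The cohomology half takes a different route from the paper.} You deduce it from \ref{blowup excision prop}, that is, from the hypothesis \ref{blowup excision definition}, by tensoring with $g^{\ast}\E_{S}$ and pushing forward along $g$. The projection formula for $h_{\ast}$ is fine here, since $\Unit$ being $h$-prim gives $h_{\ast}\iso h_{!}(\CoDualSh_{h}(\Unit)\otimes\_)$.

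The paper never uses that hypothesis in this proof. Its steps are:
\begin{enumerate}
\item It reduces, by Nisnevich descent and \cite[2.2]{AnnalaAlgebraicCob}, to the blow-up of the zero section of $\A^{d}_{X}$.
\item It uses the cube formed with the complements of the zero sections, together with Zariski descent, to pass to the square $\ProS^{d-1}_{X}\to\ProS_{\ProS^{d-1}_{X}}(\Oh(1)\oplus\Oh)$ over $X\to\ProS^{d}_{X}$.
\item It settles that square over $S$ by the projective bundle formula.
\item It obtains homology over $S$ via $k_{!}k^{!}\E_{S}\iso\IntHom_{S}(k_{\ast}\Unit,\E_{S})$ for the proper structure maps $k$.
\end{enumerate}

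Your argument is shorter. The paper's argument shows something more: this statement, after pushing forward to $S$, follows from the additive orientation and Nisnevich descent alone. That is the point of the remark preceding the proposition, where the author says he cannot deduce the stronger excision statement in $\QCoh(X)$ from the projective bundle formula.
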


\begin{proof}
   We first prove the claim for cohomology. As we assume that cohomology is a 
   Nisnevich sheaf, by \cite{AnnalaAlgebraicCob}[2.2] it suffices to check that for any geometrically smooth $X$ 
   a Blow-up squares of the form 
\[\begin{tikzcd}
	{\ProS_{X}^{d-1}} & {\V_{\ProS^{d-1}_{X}}(\Oh(1))} \\
	X & {\A^{d}_{X}}
	\arrow[from=1-1, to=1-2]
	\arrow[from=1-1, to=2-1]
	\arrow[from=1-2, to=2-2]
	\arrow["0"', from=2-1, to=2-2]
\end{tikzcd}\]
   get send to (co)cartesian squares by $\E_{S}$-cohomology (here we are using assertion $(f)$ in the definition 
   of geometrically smooth morphisms). 

   Now consider the commutative diagram 
\[\begin{tikzcd}[column sep=small, row sep=small]
	{\ProS^{d-1}_{X}} && {\V_{\ProS^{d-1}_{X}}(\Oh(1))} && W \\
	& {\ProS^{d-1}_{X}} && {\ProS_{\ProS^{d-1}_{X}}(\Oh(1)\oplus\Oh)} && U \\
	{X} && {\A^{d}_{X}} && W \\
	& {X} && {\ProS^{d}_{X}} && U
	\arrow[from=1-1, to=1-3]
	\arrow[from=1-1, to=2-2, equal]
	\arrow[from=1-1, to=3-1]
	\arrow[from=1-3, to=2-4]
	\arrow[from=1-3, to=3-3]
	\arrow[from=1-5, to=1-3]
	\arrow[from=1-5, to=2-6]
	\arrow[from=1-5, to=3-5, equal]
    \arrow[from=3-1, to=3-3]
	\arrow[from=2-2, to=2-4, crossing over]
	\arrow[from=2-2, to=4-2, crossing over]
	\arrow[from=2-6, to=2-4, crossing over]
	\arrow[from=2-6, to=4-6, equal]
	\arrow[from=3-1, to=4-2, equal]
	\arrow[from=3-5, to=3-3]
	\arrow[from=3-5, to=4-6]
    \arrow[from=2-4, to=4-4, crossing over]
    \arrow[from=4-2, to=4-4]
	\arrow[from=4-6, to=4-4]
    \arrow[from=3-3, to=4-4]
\end{tikzcd}\]
   where we write $U$ and $W$ for the respective complements of the zero 
   sections. Now, by Zariski descent, the upper and lower squares in the right cube get 
   send to (co)cartesian squares by $\E_{S}$-cohomology. From this one sees that on $\E_{S}$-cohomology 
   the square in the back of the left cube becomes (co)cartesian if and only if the 
   square in the front of the left cube becomes (co)cartesian.

   Thus, it suffices to check the 
   claim for squares of the form 
\[\begin{tikzcd}
	{\ProS^{d-1}_{X}} & {\ProS_{\ProS^{d-1}_{X}}(\Oh(1)\oplus\Oh)} \\
	{X} & {\ProS^{d}_{X}} 
	\arrow[from=1-1, to=1-2]
	\arrow[from=1-1, to=2-1]
   \arrow["h", from=1-1, to=2-2]
	\arrow["f", from=1-2, to=2-2]
	\arrow["0"', from=2-1, to=2-2].
\end{tikzcd}\]
   For the proof, let us refer to such squares as a projective Blow-up 
   squares. Then we first claim the following: 
     \begin{itemize}
      \item[($\ast$)] The proposition holds for projective Blow-up squares with $X=S$. 
     \end{itemize}
   Let us write $p\from \ProS^{d}_{S} \to S$ for the projection. Then we have to check that the 
   square 
\[\begin{tikzcd}
	{p_{\ast}p^{\ast}\E_{S}} & {p_{\ast}0_{\ast}0^{\ast}p^{\ast}\E_{S}} \\
	{p_{\ast}f_{\ast}f^{\ast}p^{\ast}\E_{S}} & {p_{\ast}h_{\ast}h^{\ast}p^{\ast}\E_{S}}
	\arrow[from=1-1, to=1-2]
	\arrow[from=1-1, to=2-1]
	\arrow[from=1-2, to=2-2]
	\arrow[from=2-1, to=2-2]
\end{tikzcd}\]
    is (co)cartesian. Using the Projective Bundle formula, we see that this square 
    identifies with the square 
\[\begin{tikzcd}
	{\oplus_{i=0}^{d}\E_{S}(-i)} & \E_{S} \\
	{\oplus_{i=0}^{d-1}\E_{S}(-i)\oplus \oplus^{d}_{i=1}\E_{S}(-i)} & {\oplus_{i=0}^{d-1}\E_{S}(-i)}
	\arrow[from=1-1, to=1-2]
	\arrow[from=1-1, to=2-1]
	\arrow[from=1-2, to=2-2]
	\arrow[from=2-1, to=2-2]
\end{tikzcd}\]
   which is easily seen to be (co)cartesian. To see that the map 
   induced on the fibers is the identity one uses that 
      $$f^{\ast}\Oh(1)\iso \Oh(1).$$
   
   Now, for a general projective Blow-up square, let us write $g\from X\to S$ 
   for the structure map. Then applying $\ast$ for $X=S$ with coefficients 
   in $g^{\ast}\E_{S}$ and using that $g_{\ast}$ preserves (co)cartesian squares, we win. 
   This finishes the argument for cohomology. To see the claim for homology, we reduce the 
   claim to projective Blow-up squares over the base in the same way, and for those, one uses that for 
   a proper map $f\from X\to S$, we have an identification 
      $$f_{!}f^{!}\E_{S}\iso \IntHom_{S}(f_{\ast}\Unit_{X},\E_{S})$$ 
   such that the claim follows from the case of cohomology applied to the unit.
\end{proof}

In the following, we will use the notion of deflatability, which first appeared in \cite{ColliotThlne1996TheBT} (see also \cite{Bouis2025connectivityOM}).
For this, we consider the following commutative diagram, which we can associate with an object $S \in \C$. 
\[\begin{tikzcd}
	{\A^{1}_{S}} & {\ProS^{1}_{S}} & S \\
	& S
	\arrow["j", from=1-1, to=1-2]
	\arrow["\pi"', from=1-1, to=2-2]
	\arrow["{\overline{\pi}}", from=1-2, to=2-2]
	\arrow["0"', from=1-3, to=1-2]
	\arrow[equal, from=1-3, to=2-2].
\end{tikzcd}\]

\begin{prop}\label{deflatability proposition}
   Given an object $S\in \C$, we have an identification of the morphisms 
\[\begin{tikzcd}
	{\Coh^{\ast}(\ProS^{1}_{S})} && {\Coh^{\ast}(\A^{1}_{S})}
	\arrow[bend right=10, "{(\pi\circ 0)^{\ast}}"', ""{name=U, below}, from=1-1, to=1-3]
	\arrow[bend left=10, "{j^{\ast}}", ""{name=B, above}, shift left=3, from=1-1, to=1-3]
	\arrow[equal, from=B, to=U]
\end{tikzcd}\]
   in $\QCoh(S)$. This construction is functorial in $S$.
\end{prop}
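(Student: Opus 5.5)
The plan is to reduce the statement to a comparison of two line bundles on $\A^{1}_{S}$, using the Projective bundle formula. Homotopy invariance is not available here, since $\QCoh$-cohomology need not be $\A^{1}$-invariant. Everything is base changed from $B$, so I will carry out the construction over $B$ and obtain functoriality in $S$ by base change, which is compatible with first Chern classes and with the Projective bundle formula (see the remarks after the definition of an additive orientation).

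\textbf{Step 1.} By the additive orientation with $d=1$, the map
$$\Unit_{S}\oplus \Unit_{S}(-1) \xrightarrow{(1,\;c_{1}(\Oh(1)))} \Coh^{\ast}(\ProS^{1}_{S})$$
is an isomorphism. It therefore suffices to identify the two precomposites $j^{\ast}\circ(1,c_{1}(\Oh(1)))$ and $(0\circ\pi)^{\ast}\circ(1,c_{1}(\Oh(1)))$ as maps $\Unit_{S}\oplus\Unit_{S}(-1)\to \Coh^{\ast}(\A^{1}_{S})$. On the first summand both composites are canonically the unit map $\Unit_{S}\to \pi_{\ast}\pi^{\ast}\Unit_{S}$, because pullback is compatible with units of the adjunctions. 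On the second summand, naturality of $c_{1}$ (it is a natural transformation of sheaves of spectra on $\C$) identifies the composites with the adjoints of $c_{1}(j^{\ast}\Oh(1))$ and $c_{1}(\pi^{\ast}0^{\ast}\Oh(1))$ respectively, as maps $\Unit_{\A^{1}_{S}}\to\Unit_{\A^{1}_{S}}(1)$.

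\textbf{Step 2.} Write $\A^{1}=\{x_{0}\neq 0\}\subset \ProS^{1}$, so that $0=[1:0]$ lies in $\A^{1}$. The section $x_{0}$ of $\Oh(1)$ is nowhere vanishing on $\A^{1}$, hence trivializes $j^{\ast}\Oh(1)$. It also trivializes $0^{\ast}\Oh(1)$, and hence $\pi^{\ast}0^{\ast}\Oh(1)$. This produces a specific isomorphism of line bundles $\alpha\colon j^{\ast}\Oh(1)\iso \pi^{\ast}0^{\ast}\Oh(1)$ on $\A^{1}_{B}$, namely a path in $R\Gamma_{\text{open}}(\A^{1}_{B},\Gm)[-1]$ between the two points. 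Applying the natural transformation $c_{1}$ turns $\alpha$ into a homotopy $c_{1}(j^{\ast}\Oh(1))\simeq c_{1}(\pi^{\ast}0^{\ast}\Oh(1))$. Equivalently, both Chern classes are identified with $c_{1}(\Oh)=0$, compatibly along $\alpha$.

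\textbf{Step 3.} Combining the identification of the two summands gives an identification of $j^{\ast}$ with $(0\circ\pi)^{\ast}$ after precomposition with the Projective bundle isomorphism, and hence of the maps themselves. Since $\alpha$ is defined over $B$ and everything else is natural in base change, the identification is functorial in $S$.

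The main obstacle is Step 1 done coherently: one has to check that pulling back the Projective bundle map along $j$, respectively $0\circ\pi$, really is the map assembled from the pulled back Chern classes, as a homotopy and not only on homotopy groups. For this I would use that $c_{1}$ is a map of sheaves, so that $g^{\ast}c_{1}(\Line)\simeq c_{1}(g^{\ast}\Line)$ coherently, together with the naturality of the adjunction units in Construction \ref{Projective bundle formula morphism Construction}.
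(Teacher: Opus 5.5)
Your proposal is correct and follows essentially the same argument as the paper, which adapts Colliot-Th\'el\`ene--Hoobler--Kahn: precompose with the projective bundle isomorphism $\Unit_{S}\oplus\Unit_{S}(-1)\iso \Coh^{\ast}(\ProS^{1}_{S})$, treat the $\Unit_{S}$ summand by functoriality of pullback, and on the $\Unit_{S}(-1)$ summand observe that both composites are the first Chern class of a trivial line bundle on $\A^{1}_{S}$. Your Step 2, which trivializes both $j^{\ast}\Oh(1)$ and $\pi^{\ast}0^{\ast}\Oh(1)$ by the section $x_{0}$, spells out what the paper asserts without proof when it says that both composites are $c_{1}(\Oh_{\A^{1}_{S}})$.
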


\begin{proof}
   We follow the proof given in \cite{ColliotThlne1996TheBT}[5.4.3]. That is, we consider the 
   following diagram 
\[\begin{tikzcd}
	{\Coh^{\ast}(\ProS^{1}_{S})} & {\Coh^{\ast}(\A^{1}_{S})} \\
	{\Coh^{\ast}(S)\oplus\Coh^{\ast}(S)(-1)} & {\Coh^{\ast}(S)}
	\arrow["{j^{\ast}}", from=1-1, to=1-2]
	\arrow["{0^{\ast}}", from=1-1, to=2-2]
	\arrow["\iso", from=2-1, to=1-1]
	\arrow["{\pi^{\ast}}"', from=2-2, to=1-2]
\end{tikzcd}\]
   where the left vertical isomorphism comes from the projective bundle formula. 
   We want to see that the triangle on the right commutes. To check this, we can precompose with the 
   just explained isomorphism. On the factor to the left, this follows from the functoriality 
   of cohomology, and on the factor on the right, both compositions are given by the first Chern class 
   of $\Oh_{\A^{1}_{S}}$.
\end{proof}

We are now ready to recall the construction of cycle class maps from \cite{Tang2026TheG}.

\begin{non}[\textbf{Cycle class map}]\label{cycle class map constr}
   We follow the construction given in \cite{Tang2026TheG}[3.1]. Consider a regular closed immersion $i\from Z\to X$ between two geometrically 
   smooth objects over $S$. 
   Then we can produce the following blowup squares 
\[\begin{tikzcd}
	{\ProS_{Z}(\Normal_{i})} & {\Bl_{Z}(X)} && {\ProS_{Z}(\Normal\oplus \Oh)} & {\Bl_{Z}(X\times\ProS^{1})} \\
	Z & X && {Z\times\{0\}} & {X\times\ProS^{1}}
	\arrow[from=1-1, to=1-2]
	\arrow[from=1-1, to=2-1]
	\arrow[from=1-2, to=2-2]
	\arrow[from=1-4, to=1-5]
	\arrow[from=1-4, to=2-4]
	\arrow[from=1-5, to=2-5]
	\arrow[from=2-1, to=2-2]
	\arrow[from=2-4, to=2-5].
\end{tikzcd}\]
   Note that there is a canonical map of squares from the left to the right. As we assume blow-up 
   excision \ref{blowup excision prop}, this map induces a map between (co)cartesian squares 
\[\begin{tikzcd}
	{\Coh^{\ast}(\ProS^{1}_{X})} & {\Coh^{\ast}(Z)} & {\Unit_{X}} & {\Coh^{\ast}(Z)} \\
	{\Coh^{\ast}(\Bl_{Z}(\ProS^{1}_{X}))} & {\Coh^{\ast}(\ProS^{1}_{Z}(\Normal_{i}\oplus \Oh))} & {\Coh^{\ast}(\Bl_{Z}(X))} & {\Coh^{\ast}(\ProS_{Z}(\Normal_{i}))}
	\arrow[from=1-1, to=1-2]
	\arrow[from=1-1, to=2-1]
	\arrow[from=1-2, to=2-2]
	\arrow[from=1-3, to=1-4]
	\arrow[from=1-3, to=2-3]
	\arrow[from=1-4, to=2-4]
	\arrow[from=2-1, to=2-2]
	\arrow[from=2-3, to=2-4]
\end{tikzcd}\]
   in $\QCoh(X)$. Thus, taking fibers at each term of this map, we obtain a (co)fiber sequence 
      $$\Coh^{\ast}(\ProS^{1}_{X}/X\times \{0\}) \to Q \to \Coh^{\ast}(\Th_{Z}(\Normal_{1}))$$.
	Furthermore the one section $X\times\{1\} \to \Bl_{Z}(\ProS^{1}_{X})$ induces a map 
	   $$Q \to \Unit_{X}$$
	who, using \ref{deflatability proposition}, identifies with the zero map after precomposing to 
	$\Coh^{\ast}(\ProS^{1}_{X}/X\times \{0\})$. This induces a map 
	   $$\Coh^{\ast}(\Th_{Z}(\Normal_{i})) \to \Unit_{X}$$
	and using \ref{thom spaces are trivialized}, we obtain a map 
	  $$\text{cl}_{i} \from i_{\ast}\Unit_{Z} \to \Unit_{X}(n)$$
	in $\QCoh(X)$, where $n$ is the codimension of $Z$ in $X$.
\end{non}

\begin{defn}
	For a regular closed immersion $i\from Z\to X$ between two objects in $\Smoo^{\text{geom}}_{S}$, 
	we will refer to the map 
	   $$\text{cl}_{i}\from i_{\ast}\Unit_{Z} \to \Unit_{X}(n)$$
	constructed in \ref{cycle class map constr} as the \emph{cycle class map} associated to 
	$i$. 
\end{defn}

The rest of this section is devoted to proving the following. The statement asserts 
that on $\Sm^{\text{sep}}_{B}$ our theory of first Chern classes underlies a theory of cycle classes in 
the sense of \cite{ZavyalovPoincareDI}[5.3.3.]. The cycle classes we use are the ones constructed above in the 
case that the regular immersion is given by an effective Cartier divisor.

\begin{thm}\label{chern classes underly cycle classes}
	Consider an effective cartier divisor $i\from D\to S$ in $\Sm^{\text{sep}}_{B}$. 
	Then the composition 
	   $$\text{cl}_{i}\circ \text{ad}_{\ast}^{\ast}\from \Unit_{S} \to i_{\ast}\Unit_{Z} \to \Unit_{S}(1)$$
	canonically identifies with the first Chern class associated to the line bundle $\Oh(D)$.
\end{thm}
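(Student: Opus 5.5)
We aim to reduce to a universal case and then compute there using the projective bundle formula. Both sides of the claimed identity are natural in the pair $(S,D)$ under pullback along morphisms $g\from S'\to S$ in $\Sm^{\text{sep}}_{B}$ for which $D\times_{S}S'$ is again an effective Cartier divisor. For $c_{1}(\Oh(D))$ this is the base change compatibility of first Chern classes. For $\text{cl}_{i}$ it holds because Tang's construction \ref{cycle class map constr} only uses blow-up squares, the deflation of \ref{deflatability proposition} and the Thom isomorphism \ref{thom spaces are trivialized}, all of which are compatible with such transversal base change. The universal effective Cartier divisor is the zero section $0\from B\to \A^{1}_{B}$, or after twisting, the zero section of a line bundle. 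Locally in the open topology (Zariski) the pair $(S,D)$ is pulled back from $(\A^{1}_{B},\{0\})$ via a local equation $s$ of $D$. Since both sides are maps $\Unit_{S}\to \Unit_{S}(1)$, i.e.\ points of the sheaf $\Hom(\Unit_{(\_)},\Unit_{(\_)}(1))$, a canonical identification can be glued from local ones provided the local identifications are themselves natural in the local trivialisation. To arrange this, I will work with the universal pair $(\V(\Line),0)$ over $\BGm$, or concretely over the total space of $\Oh(1)$ on $\ProS^{n}_{B}$ for all $n$.

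\textbf{Step 1.} I reduce to the case $S=\V_{X}(\Line)$ the total space of a line bundle $\Line$ on some geometrically smooth $X$, with $D=X$ the zero section, so that $\Oh(D)\iso \pi^{\ast}\Line$. This works because $(S,D)$ is the pullback of this pair along the section $s$ of $\Oh(D)$.

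\textbf{Step 2.} By homotopy invariance, which follows from $\A^{1}$ being cohomologically smooth together with the projective bundle formula (or from deflatability \ref{deflatability proposition}), we have $\Coh^{\ast}(\V_{X}(\Line))\iso \Coh^{\ast}(X)$. So it suffices to compare the two maps after restriction along the zero section. There $0^{\ast}\text{cl}_{0}\circ \text{ad}$ is the Euler class of the normal bundle $\Line$, by the self-intersection property of Tang's class. This self-intersection property is itself visible from the construction: restricting the deformation to the normal cone to $Z$ yields the Thom class of $\ProS(\Normal\oplus\Oh)$ restricted along the zero section.

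\textbf{Step 3.} I identify the Thom class $t(\Line)$ of \ref{thom spaces are trivialized}, pulled back along $X\to \ProS_{X}(\Line\oplus\Oh)$, with $c_{1}(\Line)$. This is a rank one computation in the projective bundle formula for $\ProS_{X}(\Line\oplus\Oh)$. The cofibre of $\Coh^{\ast}(\ProS_{X}(\Line\oplus \Oh))\to\Coh^{\ast}(\ProS_{X}(\Line))=\Coh^{\ast}(X)$ is generated by the class $c_{1}(\Oh(1))-\pi^{\ast}c_{1}(\Line)$, or the analogous expression depending on the normalisation, whose restriction to the zero section is $c_{1}(\Line)$. This uses additivity $c_{1}(\Line_{1}\otimes\Line_{2})=c_{1}(\Line_{1})+c_{1}(\Line_{2})$ and the identification of $\Oh(1)$ on the zero section.

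The main obstacle is Step 2 together with the sign and normalisation bookkeeping in Step 3. Specifically, one must unwind Tang's construction through the deformation $\Bl_{Z}(X\times\ProS^{1})$ and check that the map $Q\to\Unit_{X}$ from the $1$-section, after the splitting given by deflatability, really restricts to the Thom class on the special fibre. To handle this, I would first do the computation for $X=B$, $S=\A^{1}_{B}$, where every blow-up is explicit: $\Bl_{0}(\A^{1}\times\ProS^{1})$ is a Hirzebruch-type surface, and everything reduces to the projective bundle formula on $\ProS^{1}$. I would then transport the result by functoriality along Step 1.
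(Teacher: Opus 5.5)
\textbf{Gap: Step 2 assumes $\A^{1}$-homotopy invariance, which does not hold here.}

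You claim $\Coh^{\ast}(\V_{X}(\Line))\iso\Coh^{\ast}(X)$ and derive it from cohomological smoothness of $\A^{1}$ plus the projective bundle formula, or from deflatability. Neither gives this. It is also false in the setting this section is built for. The axioms here (deflatability, elementary blow-up excision, following Annala--Hoyois--Iwasa) are designed for theories that are \emph{not} $\A^{1}$-invariant, and the intended example $\FGauge_{\Prism}^{\Solid}$ is not. For instance, by \ref{additive orientation for syn} the weight-one theory is, up to shift, the $p$-completion of $R\Gamma_{\text{\'et}}(\_,\Gm)$, and $\Znumb_{p}\langle T\rangle^{\times}$ is much larger than $\Znumb_{p}^{\times}$.

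Deflatability \ref{deflatability proposition} only identifies $\ProS^{1}$-homotopic maps. What it yields, non-trivially, is \emph{weighted} homotopy invariance \ref{weighted homotopy invariace computation}: $\Coh^{\ast}(V/\Gm)\iso\Coh^{\ast}(\BGm)$ for a $\Gm$-action of non-zero weight. Consequently, for a general $X$, restriction along the zero section is not injective on $\Hom(\Unit_{\V_{X}(\Line)},\Unit_{\V_{X}(\Line)}(1))$. Units like $1+pT$ give classes that die at $T=0$. So comparing the two classes after restriction proves nothing.

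Your fallbacks have the same defect. Over each finite $\ProS^{n}$, plain homotopy invariance is again unavailable. For $(\A^{1}_{B},\{0\})$, both sides are nullhomotopic because $\Oh(D)$ is trivialised by the coordinate. The content of the statement then lies in how these nullhomotopies behave when the trivialisation is rescaled, which is the $\Gm$-equivariance that a computation at $(\A^{1}_{B},0)$, restricted to the origin, cannot see.

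\textbf{How the paper avoids this.} It works only on the universal divisor $0\from G=\BGm\to V=\A^{1}/\Gm$. There $0^{\ast}$ \emph{is} an isomorphism on cohomology, by \ref{weighted homotopy invariace computation}.

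Even there, your self-intersection step is not formal. Relating $0^{\ast}\text{cl}_{0}$ to the Thom class needs Tang's normalisation \ref{normalisation of cycle class}. That result refines $\text{cl}_{0}$ through $\Coh^{\ast}(V/V-\{0\})$ via a section of a comparison map with $\Coh^{\ast}(\Th_{G}(\Line))$. The two are not identified, again for lack of homotopy invariance, so ``restricting the deformation to the normal cone'' does not by itself produce the Thom class.

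The paper then never restricts to the zero section. It identifies $\Coh^{\ast}(V/V-\{0\})$ with the reduced cohomology $\prod_{i\ge 1}\Unit_{S}(-i)$ of $\BGm$, using \ref{weighted homotopy invariace computation} and \ref{cohomology of BGm}. It then reads off $c_{1}(\Line^{\text{univ}})$ from \ref{description of first chern classes}.

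Once you work over $\BGm$ with these two inputs, your Step 3 (Thom class restricted to the zero section equals $c_{1}$, by the projective bundle formula on $\ProS(\Line\oplus\Oh)$) would be a valid alternative to that last identification. As written, though, the proposal rests on a homotopy invariance that does not hold.
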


We will start by recalling some computations (mainly taken from \cite{Tang2026TheG} and \cite{TangGysin}). 

\begin{non}[\textbf{Cohomology of $\ProS^{\infty}$}]\label{cohomology of Pinfty}
   Consider the canonical diagram 
\[\begin{tikzcd}
	S & {\ProS_{S}^{1}} & {\ProS_{S}^{2}} & {\ProS_{S}^{3}} & \dots \\
	&& S
	\arrow[from=1-1, to=1-2]
	\arrow[equal, from=1-1, to=2-3]
	\arrow[from=1-2, to=1-3]
	\arrow["{p_{1}}", from=1-2, to=2-3]
	\arrow[from=1-3, to=1-4]
	\arrow["{p_{2}}"{description}, from=1-3, to=2-3]
	\arrow[from=1-4, to=1-5]
	\arrow["{p_{3}}", from=1-4, to=2-3]
\end{tikzcd}\]
   over some separated smooth scheme $S$ over $B$ and let us write $\ProS_{S}^{\infty}$ for the colimit. We can compute the 
   cohomology of $\ProS_{S}^{\infty}$ by the formula 
      $$\limit_{n}(p_{n})_{\ast}(p_{n})^{\ast}\Unit_{S}\iso\prod^{\infty}_{i=0}\Unit_{S}(-i)\in \QCoh(S).$$
\end{non}

By \ref{deflatability proposition}, we see that cohomology identifies 
$\ProS^{1}$-homotopic maps and thus also (non-trivially) weighted $\A^{1}$-homotopic maps 
(see \cite{AnnalaAlgebraicCob}[4.6.]). We will use this in the following computations.

\begin{non}\label{weighted homotopy invariace computation}
   An example of a weighted homotopy equivalence is the map 
      $$V/\BGm \to \BGm$$
	where $V$ denotes a vector bundle over a smooth and separated $B$-scheme and 
	$\Gm$ acts on $V$ with non-zero weight.  
   From this, one also sees that the map 
      $$cof((V/\BGm)-(\BGm) \to V/\BGm) \to cof(\Oh(1)\from \ProS(V) \to \BGm)$$
   is a weighted homotopy equivalence.
   In particular, we see that we obtain an isomorphism
      $$\Coh^{\ast}(\BGm/S) \isoto \Coh^{\ast}((V/\Gm)/S)$$
   (and similar for the second map) in $\QCoh(S)$.
\end{non}

\begin{non}[\textbf{Cohomology of $\BGm$}]\label{cohomology of BGm}
   Consider a vector bundle $\F$ on $S$ for which we can find a surjective map 
   $\F \to \Oh$. Then using $\ProS^{1}$-homotopies \ref{deflatability proposition} and elementary blow up excision 
   \ref{Elementary blowup corollary}
   one proves the same way as in \cite{AnnalaAlgebraicCob}[5.3] that the projection 
      $$\Coh^{\ast}(\BGln/S) \isoto \Coh^{\ast}(\Grn(\F^{\infty})/S)$$
   from the infinite Grassmannian is an isomorphism in $\QCoh(S)$. 

   A special case of this computation is that the map 
      $$\ProS^{\infty} \to \BGm$$
   induces an isomorphism on cohomology. In particular, using \ref{cohomology of Pinfty}
   we see that we have a canonical identification 
      $$\Coh^{\ast}(\BGm/S)\iso \prod_{i=0}^{\infty}\Unit_{S}(-i)$$
    in $\QCoh(S)$. 
\end{non}

\begin{non}[\textbf{Description of first chern classes}]\label{description of first chern classes}
   Using \ref{cohomology of BGm} one sees that given a line bundle $\Line$ over $S$ corresponding 
   to a map 
      $$i_{\Line} \from S\to \BGm.$$
   The first Chern class of $\Line$ is given by the composition
      $$\Unit_{S} \to \prod_{i=0}^{\infty}\Unit_{S}(1-i)\iso \Coh^{\ast}(\BGm/S)(1) \to \Unit_{S}(1)$$
	where the first map is the inclusion into the first factor, and the second map is 
   induced by $i_{\Line}$ on cohomology.
\end{non}

\begin{non}[\textbf{Normalisation of cycle class maps}]\label{normalisation of cycle class}
   Using $\ProS^{1}$-homotopies \ref{deflatability proposition} and elementary blow up excision
   \ref{Elementary blowup corollary}, as in the proof of \cite{Tang2026TheG}[3.10], on sees the following. 
   Given a vector bundle $V$ over a smooth separated $B$-scheme, the cycle class map 
   of the zero section $S\to V$ is refined by a map 
      $$\Coh^{\ast}(\Th_{S}(V)/V) \to \Coh^{\ast}(V/V-\{0\}/V)$$ 
    which admits a section. This section is functorial in $S$ and $V$ (see \cite{Tang2026TheG}[2.17]).
\end{non}

\begin{proof}[Proof of \ref{chern classes underly cycle classes}]
	Note first that the assertion is stable under base change, so it suffices to check the claim 
	for the universal effective Cartier divisor 
\[\begin{tikzcd}
	{G=\BGm} & {V=\A^{1}/\Gm} \\
	& {G=\BGm}
	\arrow["0", from=1-1, to=1-2]
	\arrow["id"', from=1-1, to=2-2]
	\arrow["p", from=1-2, to=2-2].
\end{tikzcd}\]
   That is, we have to check that the composition 
      $$\text{cl}_{0}\circ\text{ad}_{\ast}^{\ast}\from \Unit_{V} \to 0_{\ast}\Unit_{G} \to \Unit_{V}(1)$$
	identifies with the first Chern class associated to the line bundle $\Line^{\text{univ}}$ corresponding to the map 
	   $$p\from V\to G.$$
	By adjunction, this map corresponds to the map 
	  $$\Unit_{S}(-1) \to \Coh^{\ast}(\Th_{G}(\Line^{\text{univ}}_{|G})/S) \to \Coh^{\ast}(V/S)$$
	in $\QCoh(S)$, where the first map comes from the structure morphism and the second 
	from \ref{cycle class map constr}. As we are in the situation of \ref{normalisation of cycle class}, we see that the second map 
	of this composition identifies with the map 
	  $$\Coh^{\ast}(\Th_{G}(\Line_{|G}^{\text{univ}})/S)\to \Coh^{\ast}(V/V-\{0\}/S)\to \Coh^{\ast}(V/S)$$
	where the first map is the section from \ref{normalisation of cycle class} and the second map 
	is induced by the canonical projection. 
	Finally, using the identification from \ref{weighted homotopy invariace computation} and \ref{cohomology of BGm}
	we see that the map in question can be described as the composition 
	   $$\Unit_{S}(-1) \to \prod_{i=1}^{\infty} \iso \Coh^{\ast}(G/\ProS^{1}/S) \to \Coh^{\ast}(V/S)$$
	where the first map is the inclusion into the first factor and the second map is induced by $p$. 
	This identifies with the first Chern class of $\Line^{\text{univ}}$ by \ref{description of first chern classes}. 
\end{proof}

\subsection{Cohomological smoothness}

In this section, we will argue that any geometrically smooth morphism will be cohomologically 
smooth through the eyes of our six-functor formalism. We will also compute the dualizing sheaf 
of such a morphism. 

For this, we fix an additively oriented six-functor formalism $\QCoh$ on $\C$, for which cohomology 
satisfies elementary blow-up excision. Furthermore, we fix a class of geometrically smooth and geometrically étale morphisms. 

We start with the construction of a trace morphism for the morphism 
$f\from \ProS^{1}_{B}\to B$. For this, we follow \cite{ZavyalovPoincareDI}[5.6.1]. 

\begin{constr}\label{construction of the trace morphism}
    As a dualizing sheaf, we want to consider the Tate twist. That is, we have to construct 
    a morphism 
       $$\text{tr}_{f} \from f_{\ast}\Unit_{\ProS^{1}_{B}}(1) \to \Unit_{B}.$$
    Such a morphism is given by the composition 
       $$f_{\ast}\Unit_{\ProS_{B}^{1}}(1)  \to \Unit_{B}\oplus \Unit_{B}(1) \to \Unit_{B}.$$
    The first map is the inverse of the Projective Bundle formula isomorphism, 
    and the second is the projection to the first factor. 
\end{constr}

Using the work \cite{ZavyalovPoincareDI}, we obtain the following. 

\begin{cor}\label{smoothnes without the dualizing sheaf}
    Any geometrically smooth morphism $f\from X\to S$ is cohomologically smooth for $\QCoh$. 
    Furthermore, if the morphism factors as 
       $$f=p\circ e \from X\to \A^{n}_{S} \to S$$
    where $e$ is cohomologically étale and $p$ the projection, we have an identification 
       $$f^{!}(\Unit_{S}) \iso \Unit_{X}(n)$$
    in $\QCoh(X)$. 
\end{cor}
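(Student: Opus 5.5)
By Theorem \ref{Bogdans criterium}, the first assertion reduces to showing that $f\from \ProS^{1}_{B}\to B$ is cohomologically smooth, and for that it suffices to exhibit a trace-cycle theory on $f$. I would take
\[
\omega_{f}=\Unit_{\ProS^{1}_{B}}(1),
\]
which is $\otimes$-invertible because we assume Tate twists. The trace is the map $\text{tr}_{f}$ of Construction \ref{construction of the trace morphism}; since $f$ is weakly cohomologically proper we have $f_{!}\iso f_{\ast}$. For the cycle class I would use the diagonal $\Delta\from \ProS^{1}_{B}\to \ProS^{1}_{B}\times_{B}\ProS^{1}_{B}$. It is a regular closed immersion of codimension $1$ between objects of $\Sm^{\text{sep}}_{B}$, so Construction \ref{cycle class map constr} gives
\[
\text{cl}_{\Delta}\from \Delta_{\ast}\Unit\to \Unit(1)\iso p_{2}^{\ast}\omega_{f}.
\]
Here $\Delta_{!}\iso\Delta_{\ast}$ by weak properness.

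Next I would check the two identities (1) and (2).

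For (1), the composite $\Unit_{X}\to (p_{1})_{\ast}\Unit(1)\to\Unit_{X}$ has the following description. Pushing $\text{cl}_{\Delta}$ forward along $p_{1}$ lands in $(p_{1})_{\ast}p_{1}^{\ast}\ldots$, and by the projective bundle formula over $X$ this splits as $\Unit_{X}(1)\oplus\Unit_{X}$ (after twisting). Applying $\text{tr}$ then projects to the coefficient of the $c_{1}(\Oh(1))$-component.

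By Theorem \ref{chern classes underly cycle classes}, $\text{cl}_{\Delta}$ composed with the adjunction unit is $c_{1}(\Oh(\Delta))$. On $\ProS^{1}\times\ProS^{1}$ we have $\Oh(\Delta)\iso p_{1}^{\ast}\Oh(1)\otimes p_{2}^{\ast}\Oh(1)$, and additivity gives
\[
c_{1}(\Oh(\Delta))=c_{1}(p_{1}^{\ast}\Oh(1))+c_{1}(p_{2}^{\ast}\Oh(1)).
\]
Under the projective bundle decomposition relative to $p_{1}$, the term $c_{1}(p_{2}^{\ast}\Oh(1))$ is the generator, whose trace is $1$. The term $c_{1}(p_{1}^{\ast}\Oh(1))$ is pulled back from the base, so it lies in the other summand and is killed by the trace. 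Condition (2) follows by the symmetric computation, with the roles of $p_{1}$ and $p_{2}$ swapped.

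The main obstacle is a subtlety in (1): the cycle class is a map out of $\Delta_{!}\Unit$, not out of $\Unit$. Theorem \ref{chern classes underly cycle classes} only determines its precomposition with $\Unit\to\Delta_{\ast}\Unit$. However, the required composite $\Unit_{X}\iso(p_{1})_{!}\Delta_{!}\Unit\to\cdots$ factors exactly through this restriction after applying $(p_{1})_{\ast}$, so only $c_{1}$ enters. I would verify this factorization carefully, and I would also check that the sign conventions in the projective bundle formula make the trace equal to $+1$ rather than $-1$; if it came out as $-1$, I would rescale $\text{tr}_{f}$.

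For the second assertion, with $f=p\circ e$ and $e$ cohomologically étale, we have $e^{!}\iso e^{\ast}$. This gives
\[
f^{!}\Unit_{S}\iso e^{\ast}p^{!}\Unit_{S},
\]
so it suffices to compute $p^{!}$ for $\A^{n}_{S}\to S$. Since $p$ is a composite of $n$ base changes of $\A^{1}_{B}\to B$, this reduces to showing $g^{!}\Unit\iso\Unit(1)$ for $g\from\A^{1}_{B}\to B$. The open immersion $j\from\A^{1}\to\ProS^{1}$ is cohomologically étale, so $j^{!}=j^{\ast}$, and therefore
\[
g^{!}\Unit\iso j^{\ast}\omega_{f}\iso\Unit(1).
\]
Tate twists are stable under base change and compatible with tensor products, so the twists add up to $\Unit_{X}(n)$.
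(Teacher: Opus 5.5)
Your proposal follows the paper's proof. You reduce via Theorem~\ref{Bogdans criterium} to $\ProS^{1}_{B}\to B$, use the triple $(\Unit(1),\text{tr}_{f},\text{cl}_{\Delta})$ together with Theorem~\ref{chern classes underly cycle classes}, and get the dualizing sheaf from base change and $e^{!}\iso e^{\ast}$. The only difference is that the paper cites \cite{ZavyalovPoincareDI}[5.6.6] for the two trace-cycle identities, whereas you verify them by hand from $\Oh(\Delta)\iso p_{1}^{\ast}\Oh(1)\otimes p_{2}^{\ast}\Oh(1)$.

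One point in that hand verification needs care. Reducing (2) to the mirror image of (1) passes through the symmetry of $\Unit(1)\otimes\Unit(1)$, which is multiplication by an involutive unit $\epsilon$; for $\Oh\{1\}[-2]$ this unit is $1$. A relative sign between (1) and (2) therefore cannot be removed by rescaling $\text{tr}_{f}$, but it is harmless: once both zig-zag composites are invertible, the adjunction $\Unit_{X}\dashv\Unit_{X}(1)$ already follows.
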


\begin{proof}
   By \ref{Bogdans criterium}, to deduce the first claim, it suffices to check 
   the following:
      \begin{itemize}
         \item[($\ast$)] The triple $(\Unit_{\ProS_{B}^{1}}(1),\text{tr}_{f},\text{cl}_{\Delta})$
                         constructed in \ref{construction of the trace morphism} and \ref{cycle class map constr}
                         gives a trace-cycle theory for the projection $f\from \ProS_{B}^{1}\to B$.
      \end{itemize}
   But by \ref{chern classes underly cycle classes} we see that our theory of 
   Chern classes underlie our theory of cycle classes, allowing us to apply \cite{ZavyalovPoincareDI}[5.6.6]. 
   The computation of the dualizing sheaf now easily follows by base change and the 
   fact that for a cohomologically \'etale morphism $f\from U\to S$, we have 
       $$f^{\ast}\iso f^{!}.$$
\end{proof}

Recall the category of motivic spectra $\MS(B)$ from \cite{AnnalaAlgebraicCob}. Then we also obtain the following 
corollary, which allows us to use the assertions from \cite{Tang2026TheG}. 

\begin{cor}\label{functor from MS}
    Associating to a smooth separated $B$-scheme $f\from X\to B$ its homology 
       $$\Coh_{\ast}(X/B)\in \QCoh(B)$$
    induces a symmetric monoidal functor 
       $$\MS(B) \to \QCoh(B).$$
\end{cor}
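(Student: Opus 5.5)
The plan is to use the universal property of $\MS(B)$, the stabilization of $\A^{1}$-invariant Nisnevich sheaves on $\Sm_{B}$ (separated smooth schemes suffice, by Zariski descent). A symmetric monoidal colimit-preserving functor $\MS(B)\to \QCoh(B)$ amounts to a symmetric monoidal functor $\Sm^{\text{sep}}_{B}\to \QCoh(B)$ with three properties: it satisfies Nisnevich descent (as a cosheaf), it is $\A^{1}$-invariant, and it sends $\ProS^{1}$, pointed at $\infty$, to an $\otimes$-invertible object. I will verify these three properties for $X\mapsto \Coh_{\ast}(X/B)=f_{!}f^{!}\Unit_{B}$ and then invoke the universal property.

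First, functoriality and symmetric monoidality. Homology is covariantly functorial in $f\in\C_{E}$ through the counits $g_{!}g^{!}\to \mathrm{id}$. For the monoidal structure, the six-functor formalism comes from a lax symmetric monoidal functor on spans, and \ref{smoothnes without the dualizing sheaf} tells us that $f^{!}\Unit_{B}\iso \Unit_{X}(d)$ is invertible and compatible with products. Künneth for $!$-pushforward then gives
$$\Coh_{\ast}(X\times_{B}Y)\iso\Coh_{\ast}(X)\otimes\Coh_{\ast}(Y),$$
so the functor $\Sm^{\text{sep}}_{B}\to\QCoh(B)$ is symmetric monoidal. A convenient way to package all of this coherently is the category of kernels $\Kernels_{B}$: each smooth $X$ is a dualizable object there because $\Unit_{X}$ is $f$-suave, and homology is the composite with the realization functor.

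Second, Nisnevich descent. By \ref{Nisnevich descent remark} it is enough to treat elementary distinguished squares. The sheaf property of $\QCoh^{\ast}$ along the open/étale cover gives a pullback square on cohomology with arbitrary coefficients. Since $f^{!}=f^{\ast}(d)$ for smooth $f$ and $j^{!}=j^{\ast}$ for étale $j$, dualizing converts this into a pushout square of homologies. Alternatively, one can write $\Coh_{\ast}(X)\iso \IntHom(\text{compactly supported cohomology},\Unit)$ and argue that way.

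Third, $\A^{1}$-invariance. I will use \ref{deflatability proposition}, which says $j^{\ast}$ and $(\pi\circ 0)^{\ast}$ agree. The projection $\A^{1}_{X}\to X$ has the zero section as a section. To show that it induces an isomorphism on homology I want $0\circ\pi$ to act as the identity on $\Coh_{\ast}(\A^{1}_{X})$. I would get this by dualizing the deflatability statement, or more directly by computing $\Coh_{\ast}(\A^{1}_{X})\iso \Coh_{\ast}(X)(1)$-twisted-dual to $\pi_{!}\Unit(1)$. For the latter I would use the localization sequence for $\{\infty\}\subset\ProS^{1}$ together with the projective bundle formula, which yields $\pi_{!}\pi^{!}\Unit\iso\Unit$.

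Fourth, $\otimes$-invertibility of the Tate object: $\Coh_{\ast}(\ProS^{1},\infty)$ is the dual of $\Unit_{B}(-1)$, which is invertible by the Tate twist assumption. Invoking the universal property of $\MS(B)$ then yields the symmetric monoidal functor. I expect the main obstacle to be the coherence of the symmetric monoidal structure, namely promoting the objectwise Künneth isomorphisms to an $\infty$-categorical functor. I would address this through the functor $\Span\to\PrL$ restricted to smooth objects and by passing to dualizable objects in $\Kernels_{B}$. $\A^{1}$-invariance is the step most in need of care, because deflatability only directly gives a statement about $\ProS^{1}$.
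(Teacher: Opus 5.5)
There is a genuine gap: you are using the wrong universal property. The category $\MS(B)$ of \cite{AnnalaAlgebraicCob} is the Annala--Hoyois--Iwasa category of \emph{non}-$\A^{1}$-invariant motivic spectra, i.e.\ $\ProS^{1}$-spectra in Nisnevich sheaves on $\Sm_{B}$ satisfying \emph{elementary blow-up excision}. To get a symmetric monoidal functor out of it, the functor on smooth schemes must satisfy Nisnevich descent and elementary blow-up excision, and it must make $(\ProS^{1},\infty)$, equivalently the Thom spaces, $\otimes$-invertible. $\A^{1}$-invariance plays no role.

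Your third step cannot be carried out. $\A^{1}$-invariance does not follow from the hypotheses of this section, and it is false in the main application. For $\FGauge_{\Prism}^{\Solid}$, an isomorphism $\Coh_{\ast}(\A^{1}_{B})\iso \Unit_{B}$ would dualize, via $\IntHom(\pi_{!}\pi^{!}\Unit,\Unit)\iso \pi_{\ast}\Unit$, to $\Coh^{\ast}(\A^{1}_{B})\iso \Unit_{B}$. That is $\A^{1}$-invariance of syntomic cohomology, which fails: by \ref{additive orientation for syn}, weight-one syntomic cohomology is the $p$-completion of $R\Gamma(-,\Gm)[-1]$, and $\Znumb_{p}\langle x\rangle^{\times}$ is much larger than $\Znumb_{p}^{\times}$.

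Neither of your two routes to $\A^{1}$-invariance works:
\begin{itemize}
\item \ref{deflatability proposition} only identifies two maps \emph{out of} $\Coh^{\ast}(\ProS^{1}_{S})$. It says nothing about the identity of $\Coh^{\ast}(\A^{1}_{S})$, so dualizing it cannot give the statement you want.
\item A localization sequence for $\{\infty\}\subset \ProS^{1}$ is not among the axioms. Combined with the projective bundle formula it would give $\pi_{!}\Unit_{\A^{1}}\iso \Unit(-1)$, hence exactly the $\A^{1}$-invariance that fails. So no such sequence can exist in this formalism.
\end{itemize}
The same objection applies to going through the $\A^{1}$-local category $\mathrm{SH}(B)$. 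A functor out of it would induce one out of $\MS(B)$ by precomposing with the localization $\MS(B)\to \mathrm{SH}(B)$, but that functor does not exist here.

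What replaces your third step is elementary blow-up excision. This is why the whole section works with $\ProS^{1}$-homotopies and blow-up squares rather than $\A^{1}$-homotopies. The paper's argument runs as follows:
\begin{itemize}
\item Cohomological smoothness (\ref{smoothnes without the dualizing sheaf}) makes homology symmetric monoidal.
\item \ref{Elementary blowup corollary} shows that $\Coh_{\ast}(-,\E_{S})$ sends smooth blow-up squares to pushouts.
\item \ref{thom spaces are trivialized} gives $\otimes$-invertibility of Thom spaces, in particular of $\ProS^{1}/\ProS^{0}$.
\item Nisnevich descent is built into the definition of a geometric six-functor formalism.
\end{itemize}
Your first, second and fourth steps essentially match this. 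To repair the proof, drop the identification of $\MS(B)$ with the $\A^{1}$-local stable category and the $\A^{1}$-invariance step, and verify blow-up excision for homology instead; that is exactly what \ref{Elementary blowup corollary} provides.
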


\begin{proof}
    As any such morphism $f\from X\to B$ is cohomologically smooth for $\QCoh$ by \ref{smoothnes without the dualizing sheaf},
    taking homology is symmetric monoidal. So the assertion follows from the universal property 
    of $\MS(B)$ using \ref{Elementary blowup corollary} and \ref{thom spaces are trivialized}. 
\end{proof}

\begin{constr}\label{construction cycle class map general}
   Let us consider a geometrically smooth morphism $f\from X\to S$, then we can produce the 
   following diagram:
\[\begin{tikzcd}
	X \\
	& {X\times_{S}X} & X \\
	& X & S
	\arrow["\Delta", from=1-1, to=2-2]
	\arrow[bend left=20, equal, from=1-1, to=2-3]
	\arrow[bend right=20, equal, from=1-1, to=3-2]
	\arrow["p", from=2-2, to=2-3]
	\arrow["q"', from=2-2, to=3-2]
	\arrow["f", from=2-3, to=3-3]
	\arrow["f"', from=3-2, to=3-3].
\end{tikzcd}\]
   Furthermore by assertion $(e)$ in \ref{definition of geometrically smooth morphisms}, there is a factorization 
      $$\Delta \iso j \circ \tilde{\Delta} \from X\to U \to X\times_{S}X$$
   into a Zariski closed immersion followed by an open immersion. 
   Thus, using \ref{cycle class map constr}, we obtain a cycle class map 
      $$\text{cl}_{\tilde{\Delta}} \from \tilde{\Delta}_{!}\Unit_{X} \to \Unit_{U}(d).$$
   Furthermore, applying $j_{!}$ and using the counit, we obtain a map 
      $$\text{cl}_{\Delta} \from \Delta_{!}\Unit_{X} \to \Unit_{X\times_{S}X}(d)$$
   which we will also refer to as \emph{cycle class map}. 
\end{constr}

We now obtain the following theorem. 

\begin{thm}\label{dualizing complex theorem}
   Given a geometrically smooth morphism $f\from X\to S$, then the
   data 
     \begin{itemize}
        \item[(a)] $\Unit_{X}(d)$
        \item[(b)] $\text{cl}_{\Delta} \from \Delta_{!}\Unit_{X}\to \Unit_{X\times_{S}X}(d)$ 
     \end{itemize} 
    is part of a trace-cycle theory. Where $d$ denotes the relative dimension of $f$.  
\end{thm}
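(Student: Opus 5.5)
We already know from \ref{smoothnes without the dualizing sheaf} that $f$ is cohomologically smooth. Hence $\Unit_{X}\dashv f^{!}\Unit_{S}$ is an adjunction in $\Kernels_{S}$, and some trace-cycle theory exists by \ref{Bogdans criterium}. The task is therefore not to produce an adjunction from scratch. It is to show that $\Unit_{X}(d)$, together with the specific $\text{cl}_{\Delta}$ of \ref{construction cycle class map general}, extends to one, i.e. that there is a trace $\text{tr}_{f}\from f_{!}\Unit_{X}(d)\to \Unit_{S}$ making the two triangle identities hold.

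\textbf{Step 1: reduce to a statement about the cycle class.} A trace-cycle theory is exactly a unit/counit pair. So it suffices to fix an identification $f^{!}\Unit_{S}\iso \Unit_{X}(d)$ and check that $\text{cl}_{\Delta}$ corresponds, under this identification, to the canonical cycle map $\Delta_{!}\Unit_{X}\to p^{\ast}f^{!}\Unit_{S}\iso p^{!}\Unit_{X}$. That canonical map is the counit-adjoint of $\Delta^{!}p^{!}\iso \mathrm{id}$. The trace is then the counit of $f_{!}\dashv f^{!}$, transported along the same isomorphism.

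Both the existence of the isomorphism $f^{!}\Unit_{S}\iso\Unit_{X}(d)$ and the comparison of cycle maps can be checked locally on $X$ and $S$ in the open topology. Local checking is legitimate because $\QCoh^{\ast}$ descends along open covers (\ref{stability properties of covers in SIXF}), $j^{\ast}=j^{!}$ for cohomologically étale monomorphisms $j$, and $\text{cl}_{\Delta}$ is compatible with restriction along étale maps by functoriality of Tang's construction \ref{cycle class map constr}.

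The catch is that a comparison of maps is not a property: we need the local identifications to glue. To handle this, compare $\text{cl}_{\Delta}$ with the canonical cycle map of the adjunction, and show that the resulting automorphism of $\Unit_{X}(d)$, an element of $\pi_{0}\Hom(\Unit_{X},\Unit_{X})^{\times}$, is locally the identity. That forces it to be globally the identity.

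\textbf{Step 2: local case.} By (d) of \ref{definition of geometrically smooth morphisms}, locally $f=pr\circ e$ with $e\from X\to \A^{d}_{S}$ étale. Then $f^{!}\Unit_{S}\iso e^{\ast}\Unit_{\A^{d}_{S}}(d)=\Unit_{X}(d)$, as in \ref{smoothnes without the dualizing sheaf}.

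For $\A^{d}_{S}=\A^{1}_{S}\times_{S}\cdots\times_{S}\A^{1}_{S}$, the diagonal of the product is the product of the diagonals. Both cycle maps are multiplicative under external products. For the canonical one this is formal from the Künneth property of kernels. For Tang's cycle class it follows from compatibility of blow-up squares with products together with multiplicativity of Thom classes, via \ref{thom spaces are trivialized} and the projective bundle formula. This reduces us to $\A^{1}_{S}\to S$, and then, by restricting along $\A^{1}\subset\ProS^{1}$, to $\ProS^{1}_{B}\to B$.

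For $\ProS^{1}_{B}\to B$, the triple $(\Unit(1),\text{tr}_{f},\text{cl}_{\Delta})$ is a trace-cycle theory: this is exactly ($\ast$) in the proof of \ref{smoothnes without the dualizing sheaf}, which uses \ref{chern classes underly cycle classes} and \cite{ZavyalovPoincareDI}[5.6.6]. Since units of adjunctions are unique up to the unique compatible isomorphism, $\text{cl}_{\Delta}$ agrees with the canonical cycle map there.

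To pass from $\A^{d}_{S}$ to $X$ along the étale map $e$: the diagonal of $X$ factors as the open-closed piece $X\to X\times_{\A^{d}_{S}}X$ followed by the base change of $\Delta_{\A^{d}_{S}}$. By (f) and the Nisnevich-local form of (e), locally near the diagonal $\tilde{\Delta}$ is a base change of the zero section $0\from\A^{d}_{S}\to\A^{d}\times\A^{d}_{S}$ along an étale map. Tang's cycle class is stable under such étale base change, and $\text{cl}_{\Delta}$ is defined via $j_{!}$ of it, so the comparison transfers.

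\textbf{Main obstacle.} The hardest part is the multiplicativity of Tang's cycle class under external products of regular immersions, together with the independence of $\text{cl}_{\Delta}$ from the choice of factorization $\Delta=j\circ\tilde{\Delta}$ and of the local charts. I would attempt both by the normalisation \ref{normalisation of cycle class}. That result refines the cycle class to a class on $\Coh^{\ast}(V/V-\{0\})$ functorially in $(S,V)$. Via \ref{functor from MS}, it reduces the question to the corresponding statement for Thom classes in $\MS(B)$, where multiplicativity is known from \cite{Tang2026TheG}.
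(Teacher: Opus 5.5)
Your chain of reductions is the same as the paper's: open-locally to $\A^{d}_{S}$, then to $\A^{1}$ using compatibility of Tang's cycle classes with products or composition (accessed via \ref{functor from MS}), then to $\ProS^{1}_{B}$ and ($\ast$) in the proof of \ref{smoothnes without the dualizing sheaf}. The gap is in the globalization step of Step~1, where two claims fail.

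\textbf{No global identification to fix.} There is no global identification $f^{!}\Unit_{S}\iso\Unit_{X}(d)$ available. \ref{smoothnes without the dualizing sheaf} provides one only when $f$ factors globally as $pr\circ e$. The global statement is \ref{dualizing complex corollary}, which is deduced from the theorem you are proving. Your assertion that the \emph{existence} of such an isomorphism can be checked locally is false in general. Two $\otimes$-invertible objects that are isomorphic on an open cover need not be globally isomorphic, since the isomorphisms form a torsor, exactly as for line bundles.

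\textbf{Locally the identity does not give globally the identity.} This fails for $\pi_{0}$ of a sheaf of mapping spectra. An element of $\pi_{0}\Hom(\Unit_{X},\Unit_{X})$ that becomes $1$ on every member of a cover can differ from $1$ by a class of positive descent filtration, for instance one coming from $H^{1}$ of the sheaf $\pi_{1}$. Nothing in the axioms of Section~1 makes $\Hom(\Unit_{X},\Unit_{X})$ coconnective. Also, calling the comparison map an automorphism already presupposes that it is invertible, which is the actual content.

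\textbf{The fix.} Replace the comparison by a property. Since $f$ is cohomologically smooth, the universal property of the canonical unit $\eta$ of $\Unit_{X}\dashv f^{!}\Unit_{S}$ in $\Kernels_{S}$ gives a unique, choice-free map $\phi\colon f^{!}\Unit_{S}\to\Unit_{X}(d)$ in $\QCoh(X)$. With it, $\text{cl}_{\Delta}$ equals $\eta$ followed by whiskering with $\phi$. Moreover, $\text{cl}_{\Delta}$ is the unit of an adjunction $\Unit_{X}\dashv\Unit_{X}(d)$ if and only if $\phi$ is an equivalence.

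Being an equivalence is detected on an open cover, because pullbacks along it are jointly conservative. The map $\phi$ is compatible with étale restriction and base change. So your local computations show that $\phi$ is locally, hence globally, invertible; you never need it to be the identity.

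This is in substance what the paper does. It observes that the existence of a counit for the given $\text{cl}_{\Delta}$ is a property: the maps $\Hom(p_{!}\E,\F)\to\Hom(\E,p^{\ast}\F(d))$ induced by $\text{cl}_{\Delta}$ are isomorphisms (Kerodon Tag 02CU). It checks this cohomologically étale locally on $X$ and $S$, and then runs the same reduction to $\A^{1}_{B}$ and $\ProS^{1}_{B}$ that you propose.
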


\begin{proof}
   We claim that the cycle class map is the unit of an adjunction $\Unit_{X} \dashv \Unit_{X}(d)$.
   Note that the existence of a counit for this adjunction is a property. Namely the property, that 
   for any $T\to S \in (\C_{E})_{/S}$ and any two objects $\E \in \QCoh(T\times_{S}X)$ and 
   $\F \in \QCoh(T)$ the canonical map 
      $$\Hom_{\QCoh(T)}(p_{!}\E,\F) \to \Hom_{\QCoh(T\times_{S}X)}(\E,p^{\ast}\F(d))$$
    induced by the cycle class map is an isomorphism (see \cite[\href{https://kerodon.net/tag/02CU}{Tag 02CU}]{Kerodon}). Here we write $p\from T\times_{S}X \to T$
    for the projection. This can be checked cohomologically étale locally on $X$ and $S$. 

    So by \ref{definition of geometrically smooth morphisms}$(d)$ we can assume that $f$ factors 
    as an étale map followed by a projection from an affine space. As a cycle class theory can be pulled back 
    along a cohomologically étale morphism on the domain, we can further assume that $f$ is given by 
    a projection from an affine space and as trace-cycle theories are stable under base change, we can even assume 
    that $f$ is given by the map $\A^{n}_{B} \to B$.

    Now note that a regular closed immersion locally factors as a composition of effective Cartier divisors. So using the compatibility 
    of cycle class maps with composition \cite{Tang2026TheG}[3.9, 3.17] (here we make use of \ref{functor from MS}) and again stability of base change of trace-cycle theories, 
    we can assume $f$ is given by the projection $\A^{1}_{B} \to B$. Now it suffices to construct a 
    trace-cycle theory for the map $\ProS^{1}_{B} \to B$, which we did in \ref{smoothnes without the dualizing sheaf}. 
\end{proof}

\begin{cor}\label{dualizing complex corollary}
    For any geometrically smooth morphism $f\from X \to S$, we obtain a canonical identification 
        $$f^{!}(\Unit_{S}) \iso \Unit_{X}(d)$$
    in $\QCoh(X)$, where $d$ denotes the relative dimension of $f$.
\end{cor}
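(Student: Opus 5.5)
The plan is to deduce \ref{dualizing complex corollary} from \ref{dualizing complex theorem} combined with \ref{Bogdans criterium}.

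First, by \ref{dualizing complex theorem}, for a geometrically smooth $f\from X\to S$ of relative dimension $d$, the pair $(\Unit_{X}(d),\text{cl}_{\Delta})$ extends to a trace-cycle theory $(\Unit_{X}(d),\text{tr}_{f},\text{cl}_{\Delta})$. The extension comes from the counit of the adjunction $\Unit_{X}\dashv\Unit_{X}(d)$ in $\Kernels_{S}$, whose unit is $\text{cl}_{\Delta}$.

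Second, \ref{Bogdans criterium} then says that $f$ is cohomologically smooth and that $f^{!}\Unit_{S}\iso\omega_{f}=\Unit_{X}(d)$. Concretely, right adjoints in the $2$-category $\Kernels_{S}$ are unique up to unique isomorphism. By definition, $\DualSh_{f}(\Unit_{X})=f^{!}\Unit_{S}$ is the kernel underlying the right adjoint of $\Unit_{X}\from[X]\to[S]$. This yields a canonical identification with $\Unit_{X}(d)$.

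Third, I would make precise in what sense the identification is canonical. It is the unique isomorphism of right adjoints compatible with the units, namely $\text{cl}_{\Delta}$ on one side and the canonical unit $\Delta_{!}\Unit_{X}\to p_{2}^{\ast}f^{!}\Unit_{S}$ on the other. Since $\text{cl}_{\Delta}$ was constructed in \ref{construction cycle class map general} without any choice of local coordinates, the isomorphism is independent of the local factorizations $X\to\A^{n}_{S}\to S$ used in the proof of \ref{dualizing complex theorem}.

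The main point to check is consistency with \ref{smoothnes without the dualizing sheaf}. Where $f$ factors as $p\circ e$ with $e$ étale and $p\from\A^{n}_{S}\to S$ the projection, there is an identification $f^{!}\Unit_{S}\iso e^{\ast}p^{!}\Unit_{S}\iso\Unit_{X}(n)$. I would verify that this agrees with the global one by checking that the two units agree. The key inputs are that cycle classes are compatible with étale pullback and with products, as in \cite{Tang2026TheG}. This is not strictly needed for existence, since canonicity already follows from the uniqueness of adjoints.
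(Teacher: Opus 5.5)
Your proposal is correct and matches how the paper obtains this corollary: the paper gives no separate proof, treating it as an immediate consequence of Theorem~\ref{dualizing complex theorem} (the trace-cycle theory with $\omega_f=\Unit_X(d)$) combined with Zavyalov's criterion~\ref{Bogdans criterium}, which yields $f^{!}\Unit_S\iso\omega_f$ via uniqueness of right adjoints in $\Kernels_S$. Your extra discussion of canonicity and of consistency with Corollary~\ref{smoothnes without the dualizing sheaf} is not needed; one caveat is that $\text{cl}_{\Delta}$ does involve choosing the neighbourhood $U$ from (e), though this is harmless because cycle classes are compatible with restriction along open immersions.
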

\section{Formal schemes as analytic stacks}\label{formal}

\subsection{Recollections on analytic stacks}

In this text, we will make use of the theory of analytic stacks introduced by 
Clausen and Scholze \cite{AStacksLecture}. We will now recall the examples of those
we will use. 

\begin{non}
  Recall that an affine analytic stack is represented by a pair 
     $$(A,\QCoh(A))$$
  where $A$ is a (light) condensed ring and $\QCoh(A)$ a full subcategory, stable under limits and colimits, 
  of the category of modules over $A$ in (derived) condensed abelian groups.
  This subcategory is such that it inherits a closed symmetric monoidal structure and 
  a $t$-structure from those $A$-modules. 
\end{non}

\begin{ex}
  We will write $\Spa(\Znumb)$ for the analytic stack represented by the 
  pair $(\Znumb,\DSolid(\Znumb))$ where the subcategory is given by solid abelian groups 
  \cite{AStacksLecture}[Lecture 5] \cite{CondensedMath}. All our analytic stacks will live
  over $\Spa(\Znumb)$. 
\end{ex}

\begin{ex}
   An important object is given by the following construction.
   Consider the profinite set 
       $$\Nnumb\cup \{\infty\}:= \limit_{n} \{0,\dots,n,\infty\}$$
   where the transition maps send the highest number to $\infty$. To this we can associate its free (light) 
   condensed derived abelian group $\Znumb[\Nnumb\cup \{\infty\}]$ and we will write $\IntProjP$
   for the cofibre 
      $$\Znumb[\{\infty\}] \to \Znumb[\Nnumb\cup \{\infty\}] \to \IntProjP$$
   of the canonical inclusion. Note that this is a split exact sequence. An important fact 
   of this object is that it is internally compact in derived condensed abelian groups \cite{AStacksLecture}[Lecture 3].
\end{ex}

\begin{non}\label{P produced analytic ring structures}
   One can use $\IntProjP$ to construct new affine analytic stacks out of old ones. Namely, if we consider an analytic ring $A$
   internally compactness of $\IntProjP$ formally implies that 
       $$\IntProjP_{A}:= \IntProjP\otimes_{\Znumb}A \in \QCoh(A)$$
   is internally compact as well. Furthermore if we have given any collection $W$ of endomorphisms of $\IntProjP_{A}$ 
   and write 
     $$\widehat{(\_)} \from \QCoh(A) \to \QCoh(A)[W^{-1}]$$
   for the Bousfield localization obtained by inverting maps in $W$. Then it is easy to check that 
     $$(A,\QCoh(A)[W^{-1}])$$
   represents a new affine analytic stack. All affine analytic stacks in this text will arise in this way. 
\end{non}

\begin{non}\label{Huber pairs as analytic stacks}
   The main example we will consider is the following. Consider an animated ring $A$
   together with some finitely generated ideal $I$ in $A$ and a subring $A^{+}$ of $A$. 
   Then we will write 
     $$\Spa(A_{\widehat{I}},A^{+})$$
    for the following construction. The (derived) $I$-adic completion of $A$ as a condensed ring 
    defines a solid ring, and we can invert the maps 
      $$f_{\Solid}:= \text{id}- f\cdot \text{shift} \from \IntProjP_{A_{\widehat{I}}} \to \IntProjP_{A_{\widehat{I}}}$$
    for all $f\in A^{+}$ in $\Mod_{A_{\widehat{I}}}(\DSolid(\Znumb))$ to obtain $\QCoh(\Spa(A_{\widehat{I}},A^{+}))$.
\end{non}

\begin{rem}
   The analytic stack $\Spa(A,A^{+})$ does not change if one adjoints, in $A$, topological nilpotent 
   or elements to $A^{+}$. It also does not change after adjointing elements which are integral over $A^{+}$. 
\end{rem}

\begin{rem}
    In the very end of this text, we will also consider a version of \ref{Huber pairs as analytic stacks}, but inverting 
    topological nilpotent elements in $A$ (i.e. consider derived Tate-Huber pairs). 
\end{rem}

\begin{rem}
  In the case $A=A^{+}$, we will just write $\Spa(A)$.
\end{rem}

\begin{non}\label{topological spaces of a Huber pair}
    Given a triple as in \ref{Huber pairs as analytic stacks}, we will write 
       $$|\Spa(A,A^{+})|$$
    for the space of continuous valuations on $\pi_{0}A$, which value all functions in $A^{+}$
    as $\le 1$. This can be understood as a quasi-compact topological space \cite{WedhornAdicSpacesLect}[4.7]. 
    And by \cite{AStacksLecture}[Lecture 9] there is a map of locales 
       $$\Sm(\QCoh(\Spa(A,A^{+}))) \to |\Spa(A,A^{+})|$$
      where on the left we consider the smashing spectrum of a stable symmetric monoidal category \cite{Complexgeometry}[5.3].
\end{non}

\begin{non}\label{sixfunctors on analytic stacks}
  By \cite{AStacksLecture}[Lecture 16], there is a six-functor formalism on the category 
  of affine analytic stacks. One can then use this six-functor formalism to implement some aspects of 
  geometry into affine analytic rings. 
     \begin{itemize}
      \item A map of $j\from U\to X$ of affine analytic stacks is called an open immersion if 
                $$j^{\ast}\from \QCoh(X)\to \QCoh(U)$$
            admits a left adjoint $j_{!}$ and $j_{!}\Oh_{U}$ defines an idempotent coalgebra.
      \item A map $f\from X\to S$ of affine analytic stacks is called proper if 
               $$\QCoh(X) \iso \Mod_{f_{\ast}\Oh_{X}}(\QCoh(S)).$$
      \item A map of affine analytic stacks is called $!$-able, if it factors as an open immersion 
            followed by a proper map.  
     \end{itemize}
  Using this six-functor formalism, one can define a (Grothendieck) topology on affine analytic stacks. This topology is 
  such that for a $!$-able covering $U\to X$ the functor $\QCoh(\_)^{!}$ (which implies the analogous statement for $\QCoh(\_)^{\ast}$) 
  descends along this covering. Using the techniques of \cite{HeyerMann6FF} one then obtains a 
  six-functor formalism on the category of analytic stacks. We will write 
     $$\AnStack$$
  for the latter. 
\end{non}

Instead of going into the details of this defining topology, let us give the main examples, which 
will appear in this text. 

\begin{ex}
   A cohomologically étale morphism of affine analytic stacks defines a covering if $\QCoh(\_)^{\ast}$
   descends along this morphism. 
\end{ex}

\begin{ex}\label{descendable cover of analytic stacks}
   A proper morphism $f\from X\to S$ of affine analytic stacks gives a covering if 
   the algebra 
      $$f_{\ast}\Oh_{X} \in \QCoh(S)$$
    is descendable in the sense of \cite{MathewGaloisgroupsofstablehomotopy}[3.18].
\end{ex}

\begin{ex}\label{integral affine maps are proper}
   Consider a map of affine analytic stacks $f\from \Spa(B,B^{+}) \to \Spa(A,A^{+})$. 
   Then $f$ is proper if and only if the map $A^{+} \to B^{+}$ is integral up to topological nilpotent elements.
\end{ex}

\subsection{Formal stacks and the éid-topology}

In this section, we will clarify what we mean by a formal stack and introduce the 
main topology used in this text. 

\begin{constr}
   Consider an animated ring $A$ and a finitely generated ideal $I\subset \pi_{0}A$, such that 
   $A$ is derived $I$-adically complete. We 
   define a presheaf on the category of affine derived schemes $\Spf(A)$ by the 
   cartesian square
\[\begin{tikzcd}
	{\Spf(A)} & {\Spec(A)} \\
	{\Hom_{\text{cont}}(\pi_{0}A,\_)} & {\Spec(\pi_{0}A)}
	\arrow[from=1-1, to=1-2]
	\arrow[from=1-1, to=2-1]
	\arrow[from=1-2, to=2-2]
	\arrow[from=2-1, to=2-2]
\end{tikzcd}\]
   where we equip $\pi_{0}A$ with the $I$-adic topology. 
\end{constr}

\begin{defn}
    We will call a presheaf of the form $\Spf(A)$ an (derived)
    affine formal scheme. 
\end{defn}

\begin{rem}
   Note that $\Spf(A)$ just depends on the $I$-adic topology on $\pi_{0}A$
   not on the ideal itself. Any ideal defining the same topology will be 
   called an \emph{ideal of definition}. 
\end{rem}

\begin{rem}
    The category $\QCoh(\Spf(A))$ of quasi-coherent sheaves on $\Spf(A)$ is computed by the full subcategory 
       $$\QCoh_{I\text{-comp}}(A)\subset \QCoh(A)$$
    of those $A$-modules, which are derived $I$-adically complete. 
\end{rem}

\begin{non}
    Many types of maps between formal stacks can be defined by declaring 
    them to be representable by schemes and then referring to the corresponding 
    notion for schemes. We will use the following notions in this sense:
       \begin{itemize}
        \item[(a)] \emph{Affine open immersions}.
        \item[(b)] \emph{Affine étale morphisms}. 
        \item[(c)] \emph{Integral morphisms}.  
       \end{itemize}
\end{non}

To define the topology we want to work with, we will make use 
of the notion of a descendable algebra from \cite{MathewGaloisgroupsofstablehomotopy}. 

\begin{defn}
    A map of affine formal schemes $\Spf(B) \to \Spf(A)$, which is 
    representable by affine schemes, is called \emph{descendable},
    if 
       $$B \in \QCoh(\Spf(A))$$
    is a descendable algebra. 
\end{defn}

\begin{constr}\label{EIDtopos}
    Consider a presheaf $S$ on affine derived schemes. Then let us write 
       $$\fSch^{\text{aff}}_{/S}$$
    for the full subcategory of those presheaves over $S$, which are given 
    by an affine formal scheme 
      $$\Spf(A)\to S,$$
    such that the structure map as well as the diagonal $\Spf(A)\to \Spf(A)\times_{S}\Spf(A)$
    are representable by affine schemes. 

    We will equip this category with a (pre)-topology called the \emph{éid}-topology. 
    This topology is the union of the étale topology with the topology 
    generated by integral descendable maps. The induced topos will be denoted by 
       $$S_{\text{éid}}.$$ 
\end{constr}

\begin{vari}\label{flat EID topos}
    For later use, let us also record the following variant of \ref{EIDtopos}. 
    Assume $S$ is a static $p$-nilpotent ring. Then we consider the category of static $S$-algebras and 
    equip this category with the intersection of the flat topology with 
    the éid-topology. We will write 
       $$S_{\text{éfid}}$$
    for the induced topos.
\end{vari}

\begin{rem}
    Let us say a map $\Spf(B)\to \Spf(A)$ is called \emph{adic}, if there exists 
    an ideal of definition $I_{A}$ for $\Spf(A)$, such that 
       $$I_{A}\pi_{0}B \subset \pi_{0}B$$
    gives an ideal of definition for $\Spf(B)$. Then a map of affine formal schemes 
    is adic if and only if it is representable by affine schemes. 
\end{rem}

\begin{rem}\label{Fibre products of adic maps Remark}
    The ultimate reason we fix a base is the following: 
    Consider a cospan of representable morphisms 
       $$\Spf(B)\to \Spf(A) \leftarrow \Spf(C).$$
    Then the solid tensor product 
       $$C_{\widehat{I}}\otimes^{\Solid}_{A_{\widehat{I}}}B_{\widehat{I}}$$
    is derived $I$-adically complete \cite{BoscoRationalH}[A.3]. This is not true if 
    we do not assume the maps to be representable. 
    
    The generality of our base is chosen, as the objects we want to consider as a base will 
    be honest stacks. 
\end{rem}

\begin{rem}
    Any (derived) formal scheme $X$ admitting a map to $S$, 
    which locally on $X$ in the Zariski topology is representable by affines, defines an 
    object in $S_{\text{éid}}$. We will say a map of formal stacks 
    over $S$ is \emph{étale} if it is representable by étale maps 
    of formal schemes.  
\end{rem}

\begin{rem}
   Note that associating to an affine formal stack $\Spf(A)$ its category of quasi-coherent 
   sheaves $\QCoh(\Spf(A))$ defines a sheaf on $S_{\text{éid}}$. In particular, we 
   obtain a category of quasi-coherent sheaves $\QCoh(X)$ for any formal stack 
   $X$ over $S$.
\end{rem}

We will also use the following notion of covering. 

\begin{defn}\label{descendable cover of formal stacks}
   We will say a map $f\from X\to S$ of formal stacks is a descendable cover if 
     $$f_{\ast}\Oh_{X} \in \QCoh(S)$$
   is descendable.   
\end{defn}

We will use the following notions of maps of formal stacks and formal schemes. 

\begin{defn}
    We will say a map of affine formal schemes $\Spf(B) \to \Spf(A)$
    is of \emph{\textsuperscript{$+$}finite type}, if it can be factored 
       $$\Spf(B) \to \A^{n}_{\Spf(A)} \to \Spf(A)$$
    as an integral map followed by a projection from an affine space.

    We will say a map $X\to S$ of $p$-adic formal schemes is:
        \begin{itemize}
            \item \emph{\textsuperscript{+}Proper}, if the following conditions hold.
                  \begin{itemize}
                      \item[(a)] Zariski locally on target and domain it is of \textsuperscript{+}finite type.
                      \item[(b)] It is represented by separated and universally closed morphisms of schemes. 
                  \end{itemize}  
            \item \emph{Proper}, if it is \textsuperscript{+}proper and locally of finite presentation.
         \end{itemize}
    
    Furthermore, we will say a map of formal stacks $X \to S$
    is \emph{locally of \textsuperscript{$+$}finite type}, if, locally in the éid-topology on source and target, 
    it is of \textsuperscript{+}finite type. 
\end{defn}

\subsection{From formal stacks to analytic stacks} 

We will construct a functor from a suitable 
category of formal stacks to analytic stacks, and explain why this 
functor behaves well with respect to the geometries on both sides. 

\begin{rem}
   In the following, we will give criteria for proving that maps of formal stacks 
   induce $!$-able or proper maps on the side of analytic stacks. For this, it will 
   be convenient to work over an affine base. Note that there is no harm in doing so, 
   as we can check $!$-ability as well as properness locally on the target. 
\end{rem}

\begin{non}\label{compactification map on affines}
    To an affine formal scheme $\Spf(A)$, we associate 
    the analytic stacks 
       $$\Spa(A) \to \Spa(A,\widetilde{\Znumb})$$
    together with the canonical map coming from solidifying the 
    points in $A$. This construction is natural in $\Spf(A)$.   
\end{non}

\begin{thm}\label{From formal to analytic prop}
   The following hold:
      \begin{itemize}
         \item[\textbf{A}] There exist left exact and colimit preserving functors 
                  $$(\_)^{\Solid}\to (\_)^{\Solid\text{lp}}\from S_{\text{éid}} \to \AnStack_{/S^{\Solid}}$$
               together with a natural transformation. This datum is uniquely 
               determined by the assertion that on affine formal schemes it is 
               given by \ref{compactification map on affines} and then one pulls back the 
               target along the map $S^{\Solid} \to S^{\Solid\text{lp}}$. 
         
         \item[\textbf{B}]  The functor $(\_)^{\Solid}$ preserves étale morphisms and 
               sends maps represented by integral maps of affine formal schemes 
               to weakly proper morphisms. 

         \item[\textbf{C}] Given a map $X\to S=\Spf(A)$ of formal stacks for which we can find a 
               descendable cover 
                  $$\Spf(B) \to X$$
               from an affine formal scheme and
               which is locally on the source and 
               target of \textsuperscript{$+$}finite type, the induced map 
                    $$X^{\Solid} \to S^{\Solid}$$
               is $!$-able. 
         \item[\textbf{D}]Under the assumptions of \textbf{C} then the following are equivalent:
                   \begin{itemize}
                     \item[(a)] The map $X^{\Solid} \to S^{\Solid}$ is co-smooth. 
                     \item[(b)] The map $X\times_{S}\Spec(A/I) \to \Spec(A/I)$ is co-smooth, where 
                        $A/I$ is the quotient of $A$ by some finitely 
                        generated ideal of definition\footnote{Here, this quotient depends on a chosen collection of finitely many generators 
                        of $I$. The conclusion of the statement, though, is independent of this choice.}.
                   \end{itemize}
         \item[\textbf{E}] Under the assumptions of $\textbf{C}$ the map 
                               $$X^{\Solid} \to S^{\Solid}$$
                           is co-smooth, if and only if it is co-smooth over a finite 
                           Zariski stratification of $S$. 
      \end{itemize}
\end{thm}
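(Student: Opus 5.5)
The plan is to build everything on the affine level and then extend formally, proving \textbf{A}--\textbf{E} in order. Each later statement is reduced to an affine computation of the shape $\Spa(B)\to\Spa(A)$ or $\Spa(B,B^{+})\to\Spa(A,A^{+})$.

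For \textbf{A}, I first check two things about the assignments $\Spf(A)\mapsto\Spa(A)$ and $\Spf(A)\mapsto\Spa(A,\widetilde{\Znumb})$ on $\fSch^{\text{aff}}_{/S}$. They preserve the finite limits that exist there: fibre products of representable maps go to solid tensor products, which stay $I$-complete by \ref{Fibre products of adic maps Remark}. They also send éid-covers to covers of analytic stacks. For étale maps I use that $A\to B$ étale gives a cohomologically étale map with $\ast$-descent, by passing to $A/I$ and lifting. For integral descendable maps I use \ref{integral affine maps are proper} together with \ref{descendable cover of analytic stacks}: integrality of $A\to B$ makes $\Spa(B)\to\Spa(A)$ proper, and descendability of $B$ in $I$-complete modules transfers to solid modules. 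By the universal property of the topos $S_{\text{éid}}$, left Kan extension followed by sheafification then gives colimit-preserving, left exact functors. The natural transformation comes from \ref{compactification map on affines}, and pulling back along $S^{\Solid}\to S^{\Solid\text{lp}}$ pins down the relative version.

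For \textbf{B}, I reduce locally to the affine case. Integral maps give proper maps by \ref{integral affine maps are proper}, and weak properness then follows from \ref{weakly proper from decomposition}, applied to the diagonal, which is again integral.

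For \textbf{C}, I use the descendable cover $\Spf(B)\to X$ and $!$-descent along it to reduce to $X=\Spf(B)$ of $^{+}$finite type. So the map factors as $\Spf(B)\to\A^{n}_{\Spf(A)}\to\Spf(A)$ with the first map integral, and the first map is proper by \textbf{B}. For the projection I take the factorization
$$\Spa(A\langle T\rangle)\to\Spa(A\langle T\rangle,A)\to\Spa(A).$$
The second map is proper because $A\to A$ is integral. The first should be an open immersion, the locus $|T|\le 1$ cut out by inverting $T_{\Solid}$, and it is at least $!$-able by the standard Clausen–Scholze computation. The \emph{main obstacle} is making this compatible with the non-affine $X$: I need that $!$-ability descends along descendable covers on the source. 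I would try to get this from the formalism of \cite{HeyerMann6FF} together with \ref{stability properties of covers in SIXF}.

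For \textbf{D}, I show that co-smoothness can be checked after base change to $\Spa(A/I)$. The key input is that $\Spa(A/I)\to\Spa(A)$ is a nil-thickening-type cover. Since $A$ is $I$-complete, $A$ lies in the thick subcategory generated by $A/I$-modules up to completion, and the Koszul complex on the generators is descendable in $\QCoh(\Spa(A))$. That makes it a proper descendable cover, and co-smoothness is local for those by \ref{stability properties of covers in SIXF}(c). The other direction is base change.

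For \textbf{E}, I apply the same principle to the closed pieces of a Zariski stratification. Write $S$ as a finite union of a closed $Z=\Spf(A/J)$ and its complement $U$, and form $\Spa(A/J)\sqcup U^{\Solid}\to\Spa(A)$. I would show this is a cover by combining an excision sequence with descendability of $A/J\times A[1/J]^{\wedge}$ in the appropriate sense, then induct on the number of strata.
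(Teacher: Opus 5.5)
The genuine gap is in \textbf{D}, and \textbf{E} inherits it. You claim that $\Spa(A/I)\to\Spa(A)$ is a proper descendable cover because the Koszul complex is descendable ``up to completion''. This is false unless $I$ is nilpotent, since descendability is a finite-stage condition. For $f\in I$, the fibre of $A\to A/\!/f$ is $A$ mapping to $A$ by $f$, and its $n$-th tensor power maps by $f^{n}\neq 0$. Base change to $A/\!/I$ is not even conservative: for $A=\Znumb_{p}$ and $I=(p)$, the nonzero object $\Qnumb_{p}\in\QCoh(\Spa(\Znumb_{p}))$ vanishes after tensoring with $\Fp$. The Tate points of $\Spa(A)$ are invisible mod $I$, so no descent or locality-on-the-target argument can reduce (a) to (b).

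Your \textbf{E} has the same problem: a closed stratum together with its open complement is not a descendable cover. Apply the smashing functor $\Gamma_{f}=\mathrm{fib}(\mathrm{id}\to(\_)[1/f])$. Descendability would put $\Gamma_{f}A$ into the thick $\otimes$-ideal generated by $A/\!/f$, on which some power of $f$ acts by zero. This fails already for $\Znumb$ with $f=p$. A closed/open pair gives only a recollement, i.e.\ joint conservativity, not descent.

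The missing idea is the relative compactification $(X/S)^{\Solid\text{lp}}$, locally $\Spa(B,\widetilde{A})$, which your proposal never uses.

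\textbf{The factorization.} The paper factors $X^{\Solid}\to S^{\Solid}$ as an open immersion $X^{\Solid}\hookrightarrow(X/S)^{\Solid\text{lp}}$ followed by a co-smooth map. The open immersion is obtained by solidifying coordinates, as in your affine computation. The second map is co-smooth because the given descendable cover becomes a \emph{proper} descendable cover $(\Spf(B)/S)^{\Solid\text{lp}}\to(X/S)^{\Solid\text{lp}}$, since the $+$-ring is $\widetilde{A}$ on both sides. So \ref{stability properties of covers in SIXF}(c) applies.

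\textbf{The obstacle in C.} This also removes the obstacle you flag in \textbf{C}. The cover $\Spf(B)\to X$ is only assumed descendable, not \'etale or integral, so $\Spa(B)\to X^{\Solid}$ need not be proper. Descendability of the pushforward yields $!$-descent for proper maps, where $\QCoh$ of the source is modules over the pushforward, but not in general.

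\textbf{D.} With this factorization, co-smoothness amounts to the open immersion being an isomorphism. That is the vanishing of an idempotent algebra, which can be tested on points via $\SmSpec\to|\cdot|$ (\ref{From smashing spectrum to topological space}). Surjectivity on points comes from specialization, not descent. Every point of $\Spa(B,\widetilde{A})$ specializes to a non-Tate point: restrict the valuation to the convex hull of $\mathrm{Im}(x)\cap\Gamma_{x,\ge 1}$. Opens are stable under generalization, and (b) is what guarantees that the open contains the non-Tate locus.

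\textbf{E.} Once \textbf{D} has reduced to discrete adic spaces, \textbf{E} is the same argument. It uses that $|\Spa(A,A^{+})|\to|\Spec(A)|$ respects Zariski opens and closeds.
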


Before proving the theorem let us record the following corollary.

\begin{cor}\label{Proper maps of formal schemes are proper}
   Consider a \textsuperscript{+}proper map $X\to S$ of $p$-adic formal schemes. 
   Then the induced map 
      $$X^{\Solid} \to S^{\Solid}$$
   is weakly proper.
\end{cor}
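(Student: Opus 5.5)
We want to show that for a \textsuperscript{+}proper map $f\from X\to S$ of $p$-adic formal schemes, $f^{\Solid}$ is weakly cohomologically proper. Recall that this means $\Unit$ is $f$-prim, $\Unit$ is $\Delta_f$-prim, and $\CoDualSh_{\Delta_f}(\Unit)\iso \Unit$.

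The plan is to deduce this from Theorem \ref{From formal to analytic prop} together with Remark \ref{weakly proper from diagonal and cosmooth}. That remark says a co-smooth map whose diagonal is weakly cohomologically proper is itself weakly cohomologically proper. Since all of these notions are local on the target (Proposition \ref{stability properties of covers in SIXF} and \ref{sixfunctors on analytic stacks}), we first reduce to $S=\Spf(A)$ affine. Then $X$ is a quasi-compact $p$-adic formal scheme, because universal closedness together with \textsuperscript{+}finite type gives quasi-compactness over an affine.

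\textbf{Step 1: $!$-ability.} Choose a finite Zariski cover of $X$ by affines $\Spf(B_i)$, each of \textsuperscript{+}finite type over $\Spf(A)$. The disjoint union $\coprod_i \Spf(B_i)\to X$ is a descendable cover: it is a finite Zariski cover, and the Čech nerve gives descendability with finite index. Hence hypotheses \textbf{C} of Theorem \ref{From formal to analytic prop} hold, and $X^{\Solid}\to S^{\Solid}$ is $!$-able.

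\textbf{Step 2: the diagonal.} The diagonal $X\to X\times_S X$ is represented by closed immersions, since $f$ is separated. Closed immersions are integral maps. By part \textbf{B}, $(\_)^{\Solid}$ sends maps represented by integral maps of affine formal schemes to weakly proper maps. The diagonal is such a map Zariski locally on the target $X\times_S X$, and weak properness is local on the target. Since $(\_)^{\Solid}$ is left exact, $\Delta_{f}^{\Solid}$ is the diagonal of $f^{\Solid}$. It follows that $\Delta_{f^{\Solid}}$ is weakly cohomologically proper.

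\textbf{Step 3: co-smoothness.} This step is the main obstacle. By parts \textbf{D} and \textbf{E}, it suffices to show that $X\times_S\Spec(A/I)\to\Spec(A/I)$ is co-smooth in the solid (discrete) setting, and we may further pass to a finite Zariski stratification of $\Spec(A/I)$. Over a discrete base, $X_0=X\times_S \Spec(A/I)$ is a proper morphism of schemes that is only locally of \textsuperscript{+}finite type. I would handle it as follows.

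First reduce to the noetherian or finite-type situation, using that $\Spa(A,\widetilde{\Znumb})$-type analytic stacks are insensitive to integral extensions (the earlier Remark on integral elements). A proper map of schemes is covered, via Chow's lemma, by a projective map from a blowup. Properness then reduces, using descendable proper covers and the fact that co-smoothness is proper-descendable local on the source (Proposition \ref{stability properties of covers in SIXF}(c)), to two cases. The first is closed immersions and integral maps, which are handled by Example \ref{integral affine maps are proper}. The second is projective space $\ProS^n$ over the base. For $\ProS^n$ one checks co-smoothness directly: the standard affine cover glues, and the relevant idempotent algebra computation over $\Spa(\Znumb[T],\Znumb)$ versus $\Spa(\Znumb[T])$ shows that $\ProS^{1,\Solid}$ is proper, being the gluing of two copies of the solid affine line along $\Gm$.

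Chow's lemma and noetherian approximation over a non-noetherian derived base are where I expect the difficulty. If they fail, the fallback is to use the valuative criterion: properness is detected on the locale $|\Spa|$ from \ref{topological spaces of a Huber pair}, and universal closedness matches the condition that $\Spa(B,B)\to \Spa(B,\widetilde{A})$ be an isomorphism on the relevant locus.

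Combining Steps 2 and 3 through Remark \ref{weakly proper from diagonal and cosmooth} gives the corollary.
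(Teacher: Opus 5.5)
\textbf{There is a genuine gap in Step 3.} Your Steps 1 and 2 and your use of part \textbf{D} of Theorem~\ref{From formal to analytic prop} match the paper's argument. The diagonal is a closed immersion and hence weakly proper, so by Remark~\ref{weakly proper from diagonal and cosmooth} only co-smoothness remains. Quasi-compactness and a finite Zariski cover supply the descendable cover that \textbf{C} and \textbf{D} require. Part \textbf{D} then reduces everything to the base change $X_0\to S_0=\Spec(A/p)$, which is a map of discrete adic spaces.

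You never prove this discrete statement, and it is the only non-formal input. The paper closes this step by citing \cite{LucasMannPhD}[2.9.29]. Your Chow-lemma route does not close it as written, for two reasons.
\begin{itemize}
\item A \textsuperscript{+}proper map is only locally an integral map followed by a projection from affine space, so it need not be of finite type. Neither Chow's lemma nor noetherian approximation applies. The remark you invoke is about enlarging the ring of integral elements $A^{+}$; it does not let you replace $B$ by a finite type subring.
\item Even for a finite type map over a noetherian base, Proposition~\ref{stability properties of covers in SIXF}(c) needs the Chow modification $\pi\from X'\to X_0$ to induce a proper \emph{descendable} cover. That means $\pi^{\Solid}_{\ast}\Unit$ must be descendable in $\QCoh(X_0^{\Solid})$. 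Proper surjectivity alone does not give this; you would need a separate descendability theorem for proper surjections, plus a transfer to the solid setting.
\end{itemize}

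\textbf{Your fallback is the right idea and is short once made precise.}
\begin{itemize}
\item As in the proof of \textbf{D}, it suffices that the open immersion $X_0^{\Solid}\to (X_0/S_0)^{\Solid\text{lp}}$ is an isomorphism, since the target is co-smooth over $S_0^{\Solid}$ by the proof of \textbf{C}.
\item By the idempotent-algebra and smashing-spectrum argument there, this reduces to surjectivity on underlying spaces.
\item A point of the target comes from some $|\Spa(B_i,\widetilde{A_0})|$. Concretely, it is a point $x$ of an affine open $\Spec(B_i)\subset X_0$ together with a valuation ring $V\subset\kappa(x)$ containing the image of $A_0$. The topology is discrete, so every such valuation is continuous.
\item Since $X_0\to S_0$ is quasi-compact and universally closed, the existence part of the valuative criterion gives a center $c\in X_0$ of $V$. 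That is, $c$ specializes $x$ and $\mathcal{O}_{X_0,c}$ maps into $V$.
\item If $c\in\Spec(B_j)$, then $x\in\Spec(B_i)\cap\Spec(B_j)$, which is affine because $X_0$ is separated, and $V$ contains the image of $B_j$.
\item Hence the point comes from $|\Spa(B_j,B_j)|\subset|X_0^{\Solid}|$, which proves surjectivity.
\end{itemize}
With this argument, or with the citation, Step 3 is complete; as written it is only a plan.
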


\begin{proof}
   Note that the diagonal of a \textsuperscript{+}proper morphism is a closed immersion and thus weakly proper by 
   \ref{weakly proper from decomposition} and \ref{integral affine maps are proper}.
   Thus by \ref{weakly proper from diagonal and cosmooth} we just have to see that the map 
   is co-smooth.
   To see this, we note that such a map is quasi-compact, and we can find a descendable cover using a Zariski cover. 
   In particular using \ref{From formal to analytic prop}[\textbf{D}]
   it suffices to check the claim after base change to $\Spec(\Fp)$. This base change
   now is a map of discrete adic spaces, so the claim follows from \cite{LucasMannPhD}[2.9.29].
\end{proof}

The proof of this Theorem will occupy the rest of the section. Let us 
start with the first paragraph. 

The left exactness claims follow from \ref{Fibre products of adic maps Remark}
together with the observation that a map between affine formal schemes 
in $S_{\text{éid}}$ is representable by affine schemes (i.e. adic).  
Thus, to prove the first claim, we have to argue that the functors $(\_)^{\Solid}$
and $(\_)^{\Solid\text{lp}}$ send $\text{éid}$-coverings to effective 
epimorphisms in analytic stacks.

\begin{non}\label{etale maps as pullbacks of etale maps}
    We will use the following observation in two places below. 
    Any étale morphism $\Spf(B) \to \Spf(A)$ of affine formal stacks sits in 
    a cartesian square 
\[\begin{tikzcd}
	{\Spf(B)} & {\Spf(A)} \\
	{\Spec(B')} & {\Spec(A)}
	\arrow[from=1-1, to=1-2]
	\arrow[from=1-1, to=2-1]
	\arrow[from=1-2, to=2-2]
	\arrow[from=2-1, to=2-2]
\end{tikzcd}\]
   where the lower horizontal map is an étale morphism of schemes. Furthermore if the 
   map of formal schemes was surjective, the map of schemes can be chosen to be surjective as 
   well. 

   This follows from \cite[ \href{https://stacks.math.columbia.edu/tag/0AN8}{Tag 0AN8}]{StacksProject} combined with the fact that, for an animated ring $A$, base change 
   along the map $A\to \pi_{0}A$ induces an equivalence on étale algebras.
\end{non}

\begin{proof}[Proof of \ref{From formal to analytic prop} \textbf{A} $(\_)^{\Solid\text{lp}}$]
    Consider an étale morphism $\Spf(B)\to \Spf(A)$ of formal schemes. Using \ref{etale maps as pullbacks of etale maps} together with the fact that faithfully flat étale maps of animated rings are 
    descendable \cite{MathewGaloisgroupsofstablehomotopy}[3.33], we see that 
       $$B\in \QCoh(\Spf(A))$$
    is a descendable algebra. Now, as $(\_)^{\Solid,\text{lp}}$ sends maps of affine 
    formal schemes to proper analytic ring maps \ref{integral affine maps are proper}, we have to see that an $\text{éid}$-covering 
    gets sent to a descendable map of algebras \ref{descendable cover of analytic stacks}. But there is 
    a lax monoidal functor 
       $$\QCoh(\Spf(A)) \to \QCoh_{I\text{-comp}}(\QCoh(\Spa(A,\widetilde{\Znumb}))) \subset \QCoh(\Spa(A,\widetilde{\Znumb}))$$
    and such functors preserve descendable algebras \cite{BhattScholzeWittvectorGrassmannian}[11.20].
\end{proof}

The following will also prove the second paragraph of the proposition. 

\begin{proof}[Proof of \ref{From formal to analytic prop} \textbf{A} \text{and} \textbf{B} $(\_)^{\Solid}$]
    We first do the case of an integral descendable cover. Using the same argument as 
    in the proof for $(\_)^{\Solid\text{lp}}$, it is enough to check that $(\_)^{\Solid}$
    sends integral maps to proper maps of analytic rings. As for any solid affinoid 
    $A$, the topological nilpotent elements are contained in $A^{+}$ \cite{AStacksLecture}[Lecture 8], 
    it suffices to check that 
       $$\widetilde{A+ IB}=B$$
    where $I$ is some ideal of definition for $A$. But given a point $b\in B$, 
    by assumption, we can find a monic polynomial $f$ over $A$ such that 
       $$f(b) = i \in IB.$$
    This means that 
      $$b \in \widetilde{A[i]+IB}=\widetilde{A+IB}$$
    and we win. 

    For the case of an étale morphism, consider a cartesian square as in \ref{etale maps as pullbacks of etale maps} and 
    write $\Spa(A)^{\text{disc}}$ for the discrete adic space associated to the underlying discrete ring 
    of $A$ (and similar for $B$). Then we have a commutative square
\[\begin{tikzcd}
	{\Spa(B)} & {\Spa(A)} \\
	{\Spa(B')^{\text{disc}}} & {\Spa(A)^{\text{disc}}}
	\arrow[from=1-1, to=1-2]
	\arrow[from=1-1, to=2-1]
	\arrow[from=1-2, to=2-2]
	\arrow["j"', from=2-1, to=2-2].
\end{tikzcd}\]
   Now it is proven in \cite{CondensedMath}[11.4, 11.6] that the lower horizontal map 
   is étale. In particular, the functor $j^{\ast}$ commutes with limits, which implies that 
   the square pictured above is cartesian. This implies that the upper horizontal map 
   is étale as well, and to see that it is an epimorphism in analytic stacks (in the case the map 
   of formal schemes was surjective), we have to check that the functor $\QCoh^{\ast}$ descends 
   along the map $j$. This is shown in \cite{LucasMannPhD}[2.10.6]. 
\end{proof}

\begin{proof}[Proof of \ref{From formal to analytic prop} \textbf{C}]
   We first claim that the map 
      $$(X/S)^{\Solid\text{lp}} \to S^{\Solid}$$
   is co-smooth. 
   
   Using the descendable cover, we obtain a proper descendable cover 
      $$(\Spf(B)/S)^{\Solid\text{lp}} \to (X/S)^{\Solid\text{lp}}$$
   of analytic stacks. By \ref{stability properties of covers in SIXF}[(c)] it now suffices 
   to see that the composition 
      $$(\Spf(B)/S)^{\Solid\text{lp}} \to S^{\Solid}$$
   is co-smooth. This is even weakly cohomologically proper \ref{integral affine maps are proper}.

    Under the \textsuperscript{$+$}finite type assumption, we claim that the map 
       $$X^{\Solid}\to (X/S)^{\Solid\text{lp}}$$
    is an open immersion and thus $!$-able. This implies the $!$-ability claim. 
    To see this, we can work locally on the target and thus assume 
       $$X\iso \Spf(B) \to \A^{n}_{S} \to S$$
    is affine and factors as an integral map followed by the projection from some affine space. 
    In this case, the map in question is obtained by solidifying the coordinates and thus an open 
    immersion. 
\end{proof}

To prove assertion \textbf{D}, we will need to associate a topological space 
to a formal stack. For this recall from \ref{topological spaces of a Huber pair} that, to an affine formal 
scheme $\Spf(B)$ over $\Spf(A)$, we can associate two topological spaces 
   $$|\Spa(B)| \text{ and } |\Spa(B,\widetilde{A})|.$$

We will need the following lemma. 

\begin{lem}\label{submersive lemma}
   For any \text{éid}-cover $\Spf(C)\to \Spf(B)$ of affine formal schemes over an affine formal scheme
   $\Spf(A)$, the induced 
   maps 
     $$|\Spa(C)| \to |\Spa(B)|  \text{ and } |\Spa(C,\widetilde{A})| \to |\Spa(B,\widetilde{A})|$$
   are submersive. 
\end{lem}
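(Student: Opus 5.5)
The éid-topology is generated by two kinds of covers, and being submersive is stable under composition and base change along the covers we need. My plan is therefore to treat separately the two generating classes: surjective étale maps and integral descendable maps. Since both topological spaces are quasi-compact spectral spaces and all maps in question are spectral, I will reduce to a single generating cover (a finite composite of generators, after refining). The key tool is the classical fact that a surjective spectral map of spectral spaces which is \emph{closed}, or which is \emph{open}, is submersive (a quotient map). So for each generator it suffices to show that the induced map on valuation spaces is surjective and either open or closed.

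\emph{Étale case.} Using \ref{etale maps as pullbacks of etale maps}, write $\Spf(C)\to\Spf(B)$ as the base change of a surjective étale map $\Spec(C')\to\Spec(B)$. Continuous valuations on $C$ are then exactly continuous valuations on $B$ together with a lift through $B\to C'$; this uses the fact that étale maps of Huber pairs induce open maps on adic spectra, as in \cite{WedhornAdicSpacesLect}. Surjectivity follows because every valuation on $B$ extends along a surjective étale, hence faithfully flat, map: choose a prime of $C'$ over the support of the valuation and extend the valuation ring along the residue field extension. The same argument works with the ring of integral elements $\widetilde{A}$ in the second space, since $\widetilde{A}$ maps into both $B$ and $C$ compatibly. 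The resulting map is open and surjective, hence submersive.

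\emph{Integral descendable case.} Here I will show the map is surjective and closed. Closedness comes from integrality: for integral maps of Huber pairs, with the plus-rings equal to integral closures (which is harmless by the earlier remark on adjoining integral elements), the map of adic spectra satisfies the going-up/valuative property, hence is specializing. A quasi-compact spectral map which is specializing is closed. Surjectivity is the step I expect to be the main obstacle. Descendability gives faithfulness: $B\to C$ is universally injective up to descent, so $\Spec(C)\to\Spec(B)$ is surjective. Indeed, if the fibre over a point $x$ were empty, then $C\otimes_B k(x)=0$, and descendability forces $k(x)=0$. It remains to extend a valuation $v$ on $B$ with support $x$ to $C$. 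Choose a prime $y$ of $C$ over $x$. Then $k(x)\to k(y)$ is algebraic (integral), so the valuation ring of $v$ extends to $k(y)$, and continuity is preserved because $I C$ is an ideal of definition for $C$ (the map is adic). The valuation remains $\le 1$ on $\widetilde{A}$ because $\widetilde{A}$ is the same subring on both sides.

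Having established that each generating cover is surjective and either open or closed, hence a quotient map, I then pass to a general éid-cover. Such a cover is refined by a composite of generators, and a map through which a quotient map factors is itself a quotient map. This concludes the proof.
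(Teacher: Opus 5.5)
Your skeleton matches the paper's: reduce to the two generating classes, show each induced map is surjective and open (resp.\ surjective and closed), then conclude via quotient maps. The gap is that your surjectivity arguments do not work as written. The paper never does these by hand; it deduces surjectivity from the conservativity of the pullback functors proved in parts \textbf{A} and \textbf{B} of Theorem~\ref{From formal to analytic prop}.

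In the integral descendable case, descendability holds in $\QCoh(\Spf(B))$, i.e.\ for derived $I$-complete modules with the completed tensor product. For a prime $x$ with $I\not\subseteq x$, the residue field $k(x)$ is not in this category (its derived $I$-completion is $0$). So ``$C\otimes_B k(x)=0$ forces $k(x)=0$'' only gives surjectivity of $\Spec(C)\to\Spec(B)$ over $V(I)$. But the supports of the analytic points of $\Spa(B)$ lie outside $V(I)$; for instance, the $p$-adic valuation in $\Spa(\Znumb_p)$ has support $(0)$. These are exactly the points where you need a prime of $C$.

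Moreover, ``$k(x)\to k(y)$ is algebraic'' presupposes that $B\to C$ is honestly integral. An integral map of affine formal schemes is only integral modulo each $I^n$, so you need the paper's reduction: up to completion, $C$ is a filtered colimit of finite $B$-algebras, and $\Spa$ only sees the completion.

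Both points can be repaired. An element $b\in\ker(\pi_0B\to\pi_0C)$ acts by zero on every $C\hat\otimes_B M$, hence nilpotently on the whole thick $\otimes$-ideal generated by $C$, in particular on $B$. So the kernel is nil, and lying over gives surjectivity on spectra. Integrality then gives $w\le 1$ on $C$ and continuity of the extended valuation; adicness alone, which is what you invoke, does not.

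In the étale case, ``choose a prime of $C'$ over the support and extend the valuation ring'' is wrong as stated. Take $B=\Znumb_p$ and $C'=\Znumb_p\times\Qnumb_p$, which is étale and surjective with $p$-completion $C=\Znumb_p$, and let $v$ be the $p$-adic valuation. Choosing the prime in the $\Qnumb_p$-factor gives the valuation $C'\to\Qnumb_p\to\Gamma\cup\{0\}$. It is $>1$ on $(0,1/p)$, it is not continuous (every $p^nC'$ contains $(0,1)$), and it does not come from $C$.

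What you need is a lift of $\Spec(V_v)\to\Spec(B)$ to $\Spec(V')\to\Spec(C')$ for some extension of valuation rings $V_v\subset V'$. Such a lift exists because $V_v\to V_v\otimes_B C'$ is étale and surjective: take a point over the closed point, generize by going down, and dominate by a valuation ring. Only a choice of this kind guarantees $w\le1$ on $C'$ and continuity.

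Finally, closedness in the integral case is only asserted; that integral maps lift specializations on $\Spa$ is the real content. The paper gets it from three ingredients:
\begin{itemize}
\item closedness of $\Spv(C)\to\Spv(B)$ for integral maps (Huber--Knebusch);
\item the cartesian square cutting out the loci where functions are $\le 1$;
\item Huber's retraction onto $\Spa$.
\end{itemize}
If you keep your route, you must lift vertical and horizontal specializations separately and check that the lifts are continuous.
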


\begin{proof}
   We have already seen in \textbf{A} and \textbf{B} that the upper-$\ast$-functors 
   on these analytic stacks are conservative. This implies that the maps in question are surjective. 
   Now such an éid-cover is given by a finite composition of either étale covers or 
   integral descendable covers. Thus by \cite[\href{https://stacks.math.columbia.edu/tag/02YB}{Tag 02YB}]{StacksProject} 
   and \cite[ \href{https://stacks.math.columbia.edu/tag/0AAU}{Tag 0AAU}]{StacksProject} it suffices to show that each of those 
   is either open (in the case of an étale cover) or closed (in the case of an integral map). 
   The case of an étale map is \cite{HuberAdicSpaces}[1.7.9], but we will give a similar argument in the 
   case of an integral map. For this case, we will explain the map 
      $$|\Spa(C)| \to |\Spa(B)|$$
   as the argument for the other map is the same. 

   Note first that up to completion, we can write $C$ as a filtered colimit of 
   finite $B$-algebras. Thus, as the adic spectrum just depends on the completion, we can assume 
   that $B\to C$ is integral. Now as in \cite{OnValuationHub}, we write $\Spv(B)$
   for the spectrum of all valuations on $B$ and similar for $C$. Then by \cite{OnValuationHub}[2.1.7.(ii)]
   the map $\Spv(C) \to \Spv(B)$ is closed. Furthermore, let us write 
      $$\Spv(B)^{+} \text{ and } \Spv(B,B^{\circ\circ})^{+}$$
   for the subspace of those valuations which value any function of $B$ as $\le 1$ and the 
   subspace of those valuations, which, in addition, value any topological nilpotent 
   element as $< 1$. Then we have a Cartesian square 
\[\begin{tikzcd}
	{\Spv(C)^{+}} & {\Spv(C)} \\
	{\Spv(B)^{+}} & {\Spv(B)}
	\arrow[from=1-1, to=1-2]
	\arrow[from=1-1, to=2-1]
	\arrow[from=1-2, to=2-2]
	\arrow[from=2-1, to=2-2]
\end{tikzcd}\]
   which shows that the left vertical map is also closed. Furthermore the subspace 
      $$\Spv(B,B^{\circ\circ})^{+}\subset \Spv(B)^{+}$$
   is closed (and similar for $C$), which shows that also the map 
   $\Spv(C,C^{\circ\circ})^{+} \to \Spv(B,B^{\circ\circ})^{+}$ is closed. By \cite{HuberContVal}[2.6]
   there exist retractions making the following diagram commute 
\[\begin{tikzcd}
	{\Spa(C)} & {\Spv(C,C^{\circ\circ})^{+}} & {\Spa(C)} \\
	{\Spa(B)} & {\Spv(B,B^{\circ\circ})^{+}} & {\Spa(B)}
	\arrow["{i_{C}}", from=1-1, to=1-2]
	\arrow["f"', from=1-1, to=2-1]
	\arrow["{r_{C}}", from=1-2, to=1-3]
	\arrow["g", from=1-2, to=2-2]
	\arrow["f", from=1-3, to=2-3]
	\arrow["{i_{B}}"', from=2-1, to=2-2]
	\arrow["{r_{B}}"', from=2-2, to=2-3].
\end{tikzcd}\]
   From this we see that for any closed $Z\subset \Spa(C)$, we can write 
      $$f(Z)=g(r_{C}^{-1}(Z))\cap\Spa(B)$$
   which is closed. 
\end{proof}

\begin{constr}
   Using \ref{submersive lemma}, we see that associating to an affine formal scheme $\Spf(B)$
   over $\Spf(A)$ the topological spaces $|\Spa(B)|$ and $|\Spa(B,\widetilde{A})|$ induces colimit 
   preserving functors 
      $$|(\_)^{\Solid}| \text{ and } |(\_)^{\Solid\text{lp}}| \from A_{\text{éid}} \to \Top$$.
\end{constr}

\begin{non}\label{From smashing spectrum to topological space}
   Recall from \ref{topological spaces of a Huber pair} that for any affine formal scheme $\Spf(B) \to \Spf(A)$, we had maps 
   of locales 
     $$\SmSpec(\QCoh(\Spa(B))) \to |\Spa(B)| \text{ and } \SmSpec(\QCoh(\Spa(B,\widetilde{A}))) \to |\Spa(B,\widetilde{A})|.$$
   Using those maps, we obtain maps of locales 
      $$\SmSpec(\QCoh(X^{\Solid})) \to |X^{\Solid}| \text{ and } \SmSpec(\QCoh(X^{\Solid,\text{lp}}))\to |X^{\Solid\text{lp}}|$$
   for any formal stack $X$ over $\Spf(A)$. 
\end{non}

\begin{proof}[Proof of \ref{From formal to analytic prop} \textbf{D}]
   Arguing as in the proof of \textbf{C}, we can factor the map as 
      $$X^{\Solid} \to (X/S)^{\Solid} \to S^{\Solid}$$
   an open immersion followed by a proper map. So it suffices to see that the 
   open immersion is an isomorphism. This open immersion comes from an open immersion 
      $$|X^{\Solid}| \subset |(X/S)^{\Solid\text{lp}}|$$
   and we first claim that it suffices to show that the latter map is surjective. 
   Indeed, the open immersion $X^{\Solid}\to (X/S)^{\Solid\text{lp}}$ corresponds to 
   an idempotent algebra in $\QCoh((X/S)^{\Solid\text{lp}})$ and is an isomorphism if and 
   only if this idempotent algebra is $0$. But by the surjectivity assumption and 
   \ref{From smashing spectrum to topological space}, we can check this after pulling back 
   along the map itself, where the idempotent algebra vanishes by construction. 
   To see that this inclusion is surjective, we choose a descendable cover
      $$\Spf(B)\to X$$
   and pull the open subset back along the map 
     $$\Spa(B,\widetilde{A}) \to (X/S)^{\Solid\text{lp}}.$$
   This way we obtain an open subset $U\subset\Spa(B,\widetilde{A})$ which induces an 
   isomorphism on non-Tate points. The next lemma shows that any point in the target 
   specializes to such a point, which implies that it is an isomorphism itself. 
\end{proof}

We used the following lemma.

\begin{lem}
   Consider a ring $A$ which is $I$-adically complete for some finitely 
   generated ideal $I\subset A$. Furthermore, let $A^{+}\subset A$ be an 
   integrally closed subring that contains the generators of $I$. Then any point in 
   $\Spa(A,A^{+})$ specializes to a non-Tate point.
\end{lem}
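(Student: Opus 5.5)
Here a point of $\Spa(A,A^{+})$ means a continuous valuation $v$ on $A$ with $v(A^{+})\le 1$, and a non-Tate point is one whose support is open. Since the generators $f_1,\dots,f_r$ of $I$ lie in $A^{+}$ and are topologically nilpotent, every point satisfies $v(f_j)<1$ by continuity. The support $\mathfrak{p}=\mathrm{supp}(v)$ is then open if and only if $I\subset \mathfrak{p}$, i.e. $v(f_j)=0$ for all $j$. The plan is to start from an arbitrary point $v$ and produce a specialization $w$ with $w(I)=0$, in two steps: a secondary specialization that kills $I$, followed by a vertical adjustment.

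\emph{Step 1 (reduce to the residue field).} Let $K=\kappa(\mathfrak{p})$, with valuation ring $R_v\subset K$ of $v$. The image of $A^{+}$ lies in $R_v$. If $v(I)=0$ we are done. Otherwise pass to the convex subgroup generated by the values $v(f_j)$: consider the coarsening (primary generization) of $v$ and the relevant horizontal/secondary specialization. The standard fact I would use is Huber's: for a continuous valuation $v$ on $A$ with $c\Gamma_v$ the characteristic subgroup, the specializations of $v$ are described by convex subgroups and by valuation rings of the residue field of $R_v$ modulo the height-one prime. Concretely, let $\mathfrak{q}\subset R_v$ be the prime ideal $\{x: v(x)<v(f_j)^n \ \forall n, \text{ some } j\}$, the radical of the ideal generated by the $f_j$ in $R_v$. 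Then $R_v/\mathfrak{q}$ is a domain on which $I$ vanishes. The continuity of $v$ gives that $A\to R_v$ actually lands in $R_v$ up to elements dominated by powers of $I$ (using $I$-adic completeness). Thus we get a ring map $A\to R_v/\mathfrak{q}$, or more precisely $A\to \mathrm{Frac}(R_v/\mathfrak{q})$ via the support.

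\emph{Step 2 (choose the new valuation).} On $L=\mathrm{Frac}(R_v/\mathfrak{q})$, choose a valuation ring $R_w\subset L$ containing the image of $A^{+}$ and dominating the local ring $(R_v/\mathfrak{q})$. This exists by Chevalley's extension theorem, applied to the integral closure of the image of $A^{+}$ in $R_v/\mathfrak{q}$ localized at the maximal ideal. Composing $A\to L$ with $w$ gives a valuation $w$ with $w(A^{+})\le1$ and $w(I)=0$. Then $\mathrm{supp}(w)\supset I$, so $w$ is automatically continuous, since $I$ is open in the $I$-adic topology. The composite of $w$ with the quotient valuation is a valuation of $K$ whose ring is contained in $R_v$, so $w$ is a specialization of $v$. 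Integral closedness of $A^{+}$ is used only to ensure that $\Spa(A,A^{+})$ is the honest adic spectrum; otherwise we could replace $A^{+}$ by its integral closure anyway.

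\emph{Main obstacle.} The subtle step is Step 1: showing that $A$ maps into $R_v$ modulo $\mathfrak{q}$, i.e. that $v(a)\le 1$ modulo $\mathfrak{q}$ for all $a\in A$. This need not hold for $a\notin A^{+}$, and in that case one cannot specialize naively. I would handle this by first performing the standard retraction $r$ from \cite{HuberContVal}[2.6], as in the proof of \ref{submersive lemma}, which moves to the valuation obtained by coarsening to the convex hull $c\Gamma_v$. Only after that would I apply Chevalley extension to the residue ring at the height given by $I$, checking that specialization relations are preserved by $r$.
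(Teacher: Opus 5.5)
There is a genuine gap, and it sits where you placed the ``main obstacle''. Step 1 never produces a valuation on all of $A$ that kills $I$. Elements $a\in A\setminus A^{+}$ may have $v(a)>1$, so $A$ does not map to $R_v$. To get $A\to\kappa(\mathfrak{q})$ you would need $v(A)$ to be bounded by elements of the convex subgroup attached to $\mathfrak{q}$, and you never prove this.

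Your two descriptions of $\mathfrak{q}$ also disagree. The set $\{x : v(x)<v(f_j)^n\ \forall n\}$ does not contain $f_j$, so $I$ does not vanish modulo it; only $\sqrt{IR_v}$ has that property.

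The proposed repair via Huber's retraction $r$ does not help. $r$ retracts onto a subspace containing all continuous valuations, so $r(v)=v$ for every point of $\Spa(A,A^{+})$ and nothing moves. Step 2 is vacuous: $R_v/\mathfrak{q}$ is already a valuation ring of its fraction field, so the only valuation ring dominating it is itself. Moreover, the composite-valuation criterion you invoke describes vertical specializations with the same support, whereas your $w$ has strictly larger support.

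The missing idea is the horizontal specialization to the characteristic subgroup, and this is the paper's entire proof. Let $c\Gamma_v$ be the convex subgroup of $\Gamma_v$ generated by $\{v(a) : v(a)\ge 1\}$. Put $v|_{c\Gamma_v}(a)=v(a)$ if $v(a)\in c\Gamma_v$, and $0$ otherwise. By Huber this is a valuation specializing $v$, and it is clearly $\le 1$ on $A^{+}$.

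What remains is to show it kills $I$, and this is where continuity enters. Choose $N$ with $v(I^N)<1$; then $v(f_j^N b)<1$ for all $b\in A$, because $I^N$ is an ideal. On the other hand, every element of $c\Gamma_v$ is $\le v(b)$ for some $b\in A$, namely a product of values $\ge 1$. So if $v(f_j)\in c\Gamma_v$, you could pick $b$ with $v(b)\ge v(f_j)^{-N}$, giving $v(f_j^Nb)\ge 1$, a contradiction.

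Hence $v(f_j)\notin c\Gamma_v$, so $v|_{c\Gamma_v}(f_j)=0$. Its support therefore contains $I$ and is open, and $v|_{c\Gamma_v}$ is a continuous non-Tate specialization of $v$.

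Once you know $v(I)<c\Gamma_v$, your Step 1 with $\mathfrak{q}=\sqrt{IR_v}$ also goes through. The convex subgroup $H$ of $\mathfrak{q}$ then contains $c\Gamma_v$, so $A\to (R_v)_{\mathfrak{q}}\to\kappa(\mathfrak{q})$ exists and yields $v|_H$. The Chevalley step adds nothing.
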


\begin{proof}
   Let us be given a point $x\in \Spa(A,A^{+})$ corresponding to a valuation 
      $$x\from A \to \Gamma_{x}\cup\{0\}.$$
   Then we write $c\Gamma_{x} \subset \Gamma_{x}$ for the convex closure 
   of $\text{Im}(x)\cap \Gamma_{x,\ge 1}$ and consider the map 
      \begin{equation}
           x_{|c\Gamma_{x}} \from A \to c\Gamma_{x}\cup\{0\}, |f(x)| = 
           \begin{cases}
               |f(x)| & \text{ if } |f(x)| \in c\Gamma_{x} \\
               0 & \text{ if } |f(x)| \notin c\Gamma_{x}.
           \end{cases}
      \end{equation}
   As the subgroup is convex, this defines a valuation \cite{WedhornAdicSpacesLect}[4.15], and it clearly 
   values any function $f\in A^{+}$ with $\le 1$. As it sends any topological nilpotent 
   element to $0$, it is continuous and defines a non-Tate point. It is a 
   specialization of $x$ by \cite{WedhornAdicSpacesLect}[4.16]. 
\end{proof}

\begin{proof}[Proof of \ref{From formal to analytic prop} \textbf{E}]
   Using \textbf{D}, we can assume that the map 
      $$X^{\Solid} \to S^{\Solid}$$
   is a map of discrete adic spaces. For an affine discrete adic space $\Spa(A,A^{+})$, taking the 
   support 
      $$|\Spa(A,A^{+})| \to |\Spec(A)|$$
   is compatible with Zariski-closed and Zariski-open sets. So we can argue similarly to the proof of \textbf{D}. 
\end{proof}
\section{The solid syntomification}\label{solid Syntomic}


\subsection{The infinite roots topology}

We will introduce a topology on the category of $p$-adic formal schemes called the \emph{infinite $p$-root 
topology}. This topology will be weaker than the quasi-syntomic topology originally used in the theory of prismatic and 
syntomic cohomology \cite{Bhatt2018TopologicalHH} \cite{BhattLurieAPC}, but still strong enough to make most of the local to global arguments work. 
On the other hand, it will induce surjections of prismatic stacks on the level of analytic stacks.

By an affine $p$-adic formal scheme, we mean an affine (derived) formal scheme, which is 
adic over $\Spf(\Znumb_{p})$. The main example to have in mind is the following.

\begin{ex}\label{fundamental example of root cover}
  For any set $I$, we will call a map of the form 
     $$\Spf(\Znumb_{p}[x_{i}^{\frac{1}{p^{\infty}}}| i\in I] \to \Spf(\Znumb_{p}[ x_{i}| i\in I])$$
   a generic infinite $p$-root cover. 
\end{ex}

\begin{defn}\label{Definition of infinite root cover}
   A family of maps $\{\Spf(B_{i})\to \Spf(A)\}$ of $p$-adic formal schemes is called an \emph{infinite $p$-root cover}, if 
   it can be written as a finite composition of one of the following:
      \begin{itemize}
         \item[(1)] $\Spf(A)$ is the finite disjoint union of the $\Spf(B_{i})$.
         \item[(2)] The family is given by one map $\Spf(B) \to \Spf(A)$, which is pulled back from 
                    a generic infinite $p$-root cover \ref{fundamental example of root cover}.
      \end{itemize}
\end{defn}

\begin{lem}
   The collection of infinite $p$-root covers forms a Grothendieck pretopology on the 
   category of affine $p$-adic formal schemes. 
\end{lem}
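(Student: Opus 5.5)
To show that infinite $p$-root covers form a Grothendieck pretopology on affine $p$-adic formal schemes, I will check the three axioms directly from \ref{Definition of infinite root cover}: isomorphisms are covers, covers are stable under composition, and covers are stable under base change along arbitrary maps $\Spf(A')\to\Spf(A)$ of affine $p$-adic formal schemes, with the pullbacks existing in this category.

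Isomorphisms and composition are immediate. An isomorphism $\Spf(B)\to\Spf(A)$ is a one-element disjoint union decomposition, so it is of type (1). Stability under composition holds by construction: the class is defined as finite composites of families of types (1) and (2). Here a composite of families $\{U_i\to X\}$ and $\{V_{ij}\to U_i\}$ means the family $\{V_{ij}\to X\}$.

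Base change carries the content, and I will reduce it to the two elementary types. A pullback of a composite is the composite of the pullbacks, so it suffices to treat types (1) and (2) separately.

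For type (1), a decomposition $\Spf(A)=\coprod_i\Spf(B_i)$ corresponds to a complete system of orthogonal idempotents $e_i\in\pi_0A$. These pull back to idempotents in $\pi_0A'$, which give a finite disjoint union decomposition $\Spf(A')=\coprod_i \Spf(A'\otimes_A B_i)$ with $A'\otimes_A B_i\iso A'[e_i^{-1}]$. I will drop the empty summands or allow them, since the definition permits finite disjoint unions.

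For type (2), suppose $\Spf(B)\to\Spf(A)$ is the pullback of the generic cover along a map $\Spf(A)\to\Spf(\Znumb_p[x_i\mid i\in I])$. Composing this with $\Spf(A')\to\Spf(A)$ exhibits the base change as a pullback of the same generic cover, by pasting of cartesian squares. This makes it again of type (2).

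The step I expect to be the main obstacle is showing that these fibre products exist in affine $p$-adic formal schemes and are computed by the $p$-completed derived tensor product $(A'\otimes^{L}_{A}B)^{\wedge}_p$. I plan to handle it as follows. All maps between affine $p$-adic formal schemes are adic over $\Spf(\Znumb_p)$, hence representable by affine schemes. By \ref{Fibre products of adic maps Remark} the (solid) completed tensor product is then derived $p$-complete, and it is again adic over $\Spf(\Znumb_p)$ with $p$ as ideal of definition. So the fibre product in presheaves is represented by $\Spf\big((A'\otimes^{L}_{A}B)^{\wedge}_p\big)$, and the pasting argument above is valid inside the category. I must also check that the pullback of the generic cover is formed in this category, which means completing $A\otimes_{\Znumb_p[x_i]}\Znumb_p[x_i^{1/p^\infty}]$. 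The same remark handles this, since the generic map is adic.
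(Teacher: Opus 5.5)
Your proposal is correct and is essentially the paper's argument: identities are trivially covers, and stability under composition and base change hold by the way the class is defined. You unwind this by reducing base change to the two elementary types and using pasting of cartesian squares. Your extra check that fibre products exist is also fine, though it is more elementary than your appeal to the solid remark suggests: maps from $A'\otimes^{L}_{A}B$ to $p$-nilpotent rings factor uniquely through its derived $p$-completion, so $\Spf\big((A'\otimes^{L}_{A}B)^{\wedge}_{p}\big)$ represents the fibre product directly.
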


\begin{proof}
   We have to see that the collection of those covers contains all identities and is stable under composition and 
   pullback. The first assertion is clear, and the others hold by definition.
\end{proof}

We will write $\Sheaves_{\sqrt{\infty}}(\fSch^{\text{aff}}_{\Znumb_{p}})$ for the induced topos. 

\begin{rem}\label{Remark on producing IFR sheaves}
   A presheaf on $p$-adic formal schemes is a sheaf for the infinite $p$-root topology, 
   if and only if it sends finite disjoint unions to products and satisfies the \v{C}ech condition 
   for maps pulled back from generic infinite $p$-root covers. 

   In the case we want to construct a map of topoi 
      $$\Sheaves_{\sqrt{\infty}}(\fSch^{\text{aff}}_{\Znumb_{p}})\to \mathcal{X}$$
   starting from a finite limit-preserving functor from (affine) $p$-adic formal schemes 
   to $\mathcal{X}$. The latter condition will simplify. Namely, we only have to check the 
   \v{C}ech condition on generic infinite $p$-root covers. 
\end{rem}

The following easy observation will be crucial for us.

\begin{lem}\label{fundamental properties of infinite root cover}
   Consider a map $\Spf(B) \to \Spf(A)$, which is a finite composition of maps 
   pulled back from generic infinite $p$-root covers. Then we have the following:
      \begin{itemize}
         \item[(1)] The map $A/p \to B/p$ is integral.
         \item[(2)] The cofibre of the map $A/p \to B/p$ is a projective $A/p$-module.
         \item[(3)] The shifted cotangent complex $\CoTan_{B/A}/p[-1]$ is projective.  
      \end{itemize}  
\end{lem}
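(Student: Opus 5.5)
We reduce to a single map pulled back from a generic infinite $p$-root cover and then close up under composition. Suppose first that $B = A\widehat{\otimes}_{\Znumb_{p}[x_{i}]}\Znumb_{p}[x_{i}^{1/p^{\infty}}]$ (derived $p$-completed tensor product) along some map $\Znumb_{p}[x_{i}\,|\,i\in I]\to A$. Reducing mod $p$ commutes with completed base change, so
   $$B/p \iso A/p\otimes_{\Fp[x_{i}]}\Fp[x_{i}^{1/p^{\infty}}].$$
All three assertions are stable under base change: integrality, projectivity of the cofibre after $\otimes_{A/p}$, and, by base change for the cotangent complex, $\CoTan_{B/A}/p\iso \CoTan_{B/p\,/\,A/p}\iso (A/p\otimes_{\Fp[x_i]}-)$ applied to $\CoTan_{\Fp[x_{i}^{1/p^\infty}]/\Fp[x_{i}]}$. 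It therefore suffices to treat the generic case $R=\Fp[x_{i}]\to R'=\Fp[x_{i}^{1/p^{\infty}}]$.

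In the generic case, (1) holds because each $x_{i}^{1/p^{n}}$ is a root of the monic polynomial $T^{p^{n}}-x_{i}$. For (2), $R'$ is free over $R$ on the monomials $\prod x_{i}^{a_{i}}$ with exponents $a_{i}\in \Znumb[1/p]\cap[0,1)$, almost all zero, and $1$ is one of these basis elements. The cofibre of $R\to R'$ is therefore free on the remaining monomials, hence projective. For (3), write $R'=\colimit_{n}\Fp[x_{i}^{1/p^{n}}]$. At stage $n$ the relative cotangent complex is $\bigoplus_{i} R_{n}\,d(x_{i}^{1/p^{n}})$ in degree $0$ modulo the relations $d(x_{i})=p^{n}x_{i}^{(p^{n}-1)/p^{n}}d(x_i^{1/p^n})=0$. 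Concretely, $R\to R_{n}$ is a complete intersection given by $T_{i}^{p^{n}}-x_{i}$, so
   $$\CoTan_{R_{n}/R}\iso [\textstyle\bigoplus_{i}R_{n}\xrightarrow{0}\bigoplus_{i}R_{n}]$$
with the zero differential coming from $p^{n}=0$. Thus $\CoTan_{R_{n}/R}\iso R_{n}^{\oplus I}[1]\oplus R_{n}^{\oplus I}$. The transition maps $n\to n+1$ send $d(x_{i}^{1/p^{n}})=d\big((x_{i}^{1/p^{n+1}})^{p}\big)$ to $0$, so the $H_{0}$-part dies in the colimit. On the $H_{1}$-part, the generator $[T_{i}^{p^{n}}-x_{i}]$ maps to $[T_{i}'^{p^{n+1}}-x_{i}]$, which is a basis element again (a unit multiple under the identification), so this part is compatible with base change. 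Hence
   $$\CoTan_{R'/R}\iso \bigoplus_{i\in I}R'[1],$$
and $\CoTan_{R'/R}[-1]$ is free. Equivalently, one can use the transitivity triangle for $\Fp\to R\to R'$ together with $\CoTan_{R'/\Fp}=0$ ($R'$ is relatively perfect), which gives $\CoTan_{R'/R}\iso \Omega_{R/\Fp}\otimes_R R'[1]$, free of rank $|I|$. This second route is cleaner and is the one I would use.

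Next we close up under finite composition. For $A\to B\to C$ both of the required form, integrality composes. For (2), $\mathrm{cof}(A/p\to C/p)$ sits in a cofibre sequence between $\mathrm{cof}(A/p\to B/p)$ and $\mathrm{cof}(B/p\to C/p)$; the latter is projective over $B/p$, and $B/p$ is projective over $A/p$ (it is an extension of a projective by $A/p$, so it splits). The sequence is then made of projectives and splits, since these are discrete and the quotient is projective, which makes the middle term projective. For (3), the transitivity triangle $\CoTan_{B/A}\otimes_{B}C\to\CoTan_{C/A}\to\CoTan_{C/B}$ mod $p$ and shifted by $[-1]$ likewise has projective outer terms over $C/p$, so it splits.

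The only real subtlety I expect is bookkeeping with derived mod-$p$ reductions. If $A$ is animated and not $p$-torsionfree, $A/p$ is the derived quotient and "projective" must be read as projective over the animated ring $A/p$. Base change handles this uniformly, because all generic computations happen over $\Fp$ with flat, indeed free, modules.
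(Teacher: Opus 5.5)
Your proposal is correct and is essentially the paper's proof: reduce via stability under base change and finite composition to the generic map $\Fp[x_i]\to\Fp[x_i^{1/p^{\infty}}]$, where (1) and (2) are immediate, and (3) follows from the transitivity triangle for $\Fp\to\Fp[x_i]\to\Fp[x_i^{1/p^{\infty}}]$ together with the vanishing of the cotangent complex of a perfect $\Fp$-algebra. One harmless slip: projective modules over the animated ring $A/p$ need not be discrete, but your splitting arguments only use that the third term of each cofibre sequence is projective, so nothing is affected.
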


\begin{proof}
   All assertions are stable under finite compositions and base change. So we can check them for 
   a generic infinite $p$-root cover. Then $(1)$ and $(2)$ are clear and $(3)$ follows, using the 
   fundamental sequence of the cotangent complex, from the observations that 
   $A/p \to B/p$ is the colimit perfection of the domain and that the cotangent complex 
   of perfect $\Fp$-algebras vanishes.
\end{proof}

We can also describe a basis for this topology. 

\begin{defn}
   A $p$-complete animated ring $S$ is called \emph{semiperfectoid}, if there exists a 
   $\pi_{0}$-surjection $R \to S$ from a perfectoid ring $R$. We will write 
      $$\SemPerfd_{\Znumb_{p}}^{\text{aff}}\subset \fSch_{\Znumb_{p}}^{\text{aff}}$$
   for the full subcategory of those affine $p$-adic formal schemes, whose functions form a 
   semiperfectoid ring. 
\end{defn}

\begin{rem}
   One natural way to define animated perfectoid rings is as quotients $A/I$ where 
   $I\to A$ is a perfect animated prism. Note that all such are static \cite{BhattLuriePrismatisationofpadicformalschemes}[2.16]. 
\end{rem}

\begin{lem}\label{Basis by semiperfectoids lemma}
   The category of affine semiperfectoid $p$-adic formal schemes forms a basis 
   of the infinite $p$-root topology on affine $p$-adic formal schemes. 
   In particular, restricting along the inclusion induces an equivalence 
   of topoi 
     $$\Sheaves_{\sqrt{\infty}}(\fSch^{\text{aff}}_{\Znumb_{p}})\iso \Sheaves_{\sqrt{\infty}}(\SemPerfd^{\text{aff}}_{\Znumb_{p}}).$$
\end{lem}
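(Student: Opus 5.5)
To prove \ref{Basis by semiperfectoids lemma}, I will show that every affine $p$-adic formal scheme $\Spf(A)$ admits an infinite $p$-root cover $\Spf(B)\to\Spf(A)$ with $B$ semiperfectoid. I will also show that the semiperfectoid objects are stable enough under the relevant fibre products. Given both facts, the comparison lemma for sites (a full subcategory such that every object is covered by objects of it) yields the stated equivalence of topoi. Since the whole argument happens in a $\kappa$-small version of the site, size issues are handled as in the conventions.

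\textbf{Step 1: covering by semiperfectoids.} Choose a set $I$ and a surjection $\Znumb_{p}[x_{i}\mid i\in I]\to\pi_{0}A$; for instance, take $I$ to be the underlying set of $\pi_{0}A$. This lifts to a map $\Znumb_{p}[x_{i}]\to A$. Base change the generic infinite $p$-root cover of \ref{fundamental example of root cover} along this map and $p$-complete to obtain
$$B:=\big(A\otimes_{\Znumb_{p}[x_{i}]}\Znumb_{p}[x_{i}^{1/p^{\infty}}]\big)^{\wedge}_{p}.$$
By definition $\Spf(B)\to\Spf(A)$ is an infinite $p$-root cover. Adjoining all $p$-power roots of $p$ if needed, by a further pullback of a generic cover in one extra variable sent to $p$, the ring $R:=\Znumb_{p}[p^{1/p^{\infty}}][x_{i}^{1/p^{\infty}}]^{\wedge}_{p}$ is perfectoid. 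The map $R\to B$ is surjective on $\pi_{0}$, because $\pi_{0}B$ is generated over the image of $\pi_{0}A$ by the $x_{i}^{1/p^{n}}$, and every element of $\pi_{0}A$ is already the image of some $x_{i}$. Right exactness of $\pi_{0}$ of the tensor product and of $p$-completion gives the surjectivity. Hence $B$ is semiperfectoid.

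\textbf{Step 2: basis property.} The comparison lemma in the form I will use requires that each covering sieve of an object of $\SemPerfd^{\text{aff}}_{\Znumb_{p}}$ be generated by semiperfectoid objects. Equivalently, it requires that an infinite $p$-root cover of a semiperfectoid ring can be refined by semiperfectoids. By Step 1, every object, including each member of a cover, is covered by a semiperfectoid, so any covering sieve can be refined by one consisting of semiperfectoids. Coproduct covers of type (1) are harmless, since a finite product of semiperfectoids is semiperfectoid. The topology induced on $\SemPerfd^{\text{aff}}_{\Znumb_{p}}$ is then the one whose covers are those of the ambient site, and the equivalence of sheaf topoi follows from the standard statement (SGA4 III.4.1, or its $\infty$-version for hypercomplete-free sheaves via Lurie's comparison for bases).

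\textbf{Main obstacle.} The main obstacle is the animated setting. I need $\pi_{0}$-surjectivity of $R\to B$ when $B$ is derived, and I need to check that the extra root-of-$p$ step stays within covers of type (2). The first point follows because $\pi_{0}$ of a derived $p$-completed tensor product surjects from the classical completed tensor product. For the second, the map $\Znumb_{p}[y]\to A$ with $y\mapsto p$ makes the root-of-$p$ step a pullback of a generic cover.
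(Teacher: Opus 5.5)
Your proposal is correct and follows essentially the paper's route. The paper's proof just refers to the argument of \cite{Bhatt2018TopologicalHH}[4.28]: pull back the generic infinite $p$-root cover along a $\pi_{0}$-surjection from a polynomial ring to cover any affine by a semiperfectoid, then apply the comparison lemma. Two small sharpenings. First, $\pi_{0}$-surjectivity survives derived $p$-completion because completion preserves $1$-connectivity of the cofibre; this is cleaner than your comparison with the classical completed tensor product. Second, your unproved claim that the induced topology on $\SemPerfd^{\text{aff}}_{\Znumb_{p}}$ is the infinite $p$-root topology holds because an infinite $p$-root cover $S\to S'$ of a semiperfectoid is again semiperfectoid: the map $R_{S}\langle x_{i}^{1/p^{\infty}}\rangle \to S'$ is a $\pi_{0}$-surjection from a perfectoid.
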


\begin{proof}
   This follows the same way as in \cite{Bhatt2018TopologicalHH}[4.28] using \ref{fundamental example of root cover}.
\end{proof}

\begin{non}
   We will write $\sqrt{\infty}^{\text{ét}}$-topology for the union of the infinite $p$-root topology 
   with the étale topology. We will consider this topology on the category of qcqs (derived) $p$-adic formal schemes. 
   That is, those formal schemes that are adic over $\Spf(\Znumb_{p})$. The corresponding topos will be 
   denoted by  
     $$\Spf(\Znumb_{p})_{\sqrt{\infty}^{\text{ét}}}.$$
   Clearly, this topos is also generated by semiperfectoid $p$-adic formal schemes. 
\end{non}


\subsection{$\Witt$-modules} 

We will collect some examples of $\Witt$-modules and recall some facts 
about them. 

\begin{non}
    Note that associating to an affine $p$-adic formal scheme its Witt-vectors 
    defines a sheaf of ($p$-complete) animated rings on the éid-topology. 
    To see this, we can write 
       $$\Witt \iso \limit_{n}\Witt_{n}$$
    as the limit of the truncated Witt-vectors and then recall that we have an isomorphism 
      $$\Witt_{n} \iso \A^{n}$$
    as presheaves of anima. So the claim follows as the éid-topology is subcanonical. 
\end{non} 

\begin{non}[\textbf{$\Gasharp$-bundles}]\label{Gasharp bundles}
   For an animated ring $R$ and a connective $R$-module $\E$, we will write 
      $$\Gamma_{R}(\E)$$
    for the \emph{animated divided power algebra} over $R$ associated to $\E$\footnote{This can be obtained 
    by animating the classical construction from pairs $(R,\E)$ where $R$ is a polynomial algebra in finitely many variables and 
    $\E$ a finite free $R$-module. The animation of the category of such pairs gives the total category of the fibration corresponding 
    to the functor which assigns to an animated ring its category of connective modules.}.
    We then will write 
       $$\V(\E)_{R}^{\#}:= \Spec(\Gamma_{R}(\E)).$$ 
\end{non}

\begin{ex}
   In the case the $\Gasharp$-bundle is associated to the trivial vector bundle of rank one, we obtain 
   the following.
   Let $\Gasharp$ be the PD-hull of the origin in $\Ga$ over $\Znumb$. Concretely we have 
      $$\Gasharp \iso \Spec(\Znumb[x,\frac{x^{2}}{2!},\frac{x^{3}}{3!},\dots]).$$
   Note that there is a map $\Gasharp \to \Ga$ and that the equalities 
     \begin{itemize}
      \item $\frac{(x+y)^{n}}{n!} = \sum_{i+j=n}\frac{x^{i}}{i!}\frac{y^{j}}{j!}$
      \item $\frac{(xy)^{n}}{n!}=\frac{x^{n}}{n!}\cdot y^{n}$
     \end{itemize}
   show that the additive group action on $\Ga$ induces an action on $\Gasharp$ and the the multiplicative 
   group action on $\Ga$ induces via the above map an action on $\Gasharp$ which makes it into a $\Ga$-module scheme. 
   In particular we can understand $\Gasharp$ as a $\Witt$-module scheme via the projection $\Witt \to \Ga$.
\end{ex}

The following in particular shows that $\Witt$ stays a sheaf if we let it 
take values in possibly non-connective spectra. 

\begin{lem}\label{Witt vecotors is an derived sheaf}
   Given an affine $p$-adic formal scheme $\Spf(S)$ and a finite projective $\Witt(S)$-module 
   $P$, we have 
      $$R\Gamma_{\text{éid}}(S,P\otimes_{\Witt(S)}\Witt)\in \QCoh(\Witt(S))_{\ge 0}$$
   and the same holds for the truncated Witt vectors. 
\end{lem}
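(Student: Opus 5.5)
The plan is to reduce everything to the truncated Witt vectors, then to the graded pieces $\Ga$, and finally to a vanishing statement for descent along éid-covers.

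\emph{Reduction to the truncated Witt vectors.} Since $P$ is finite projective over $\Witt(S)$, it is a retract of $\Witt(S)^{\oplus m}$. Both $R\Gamma_{\text{éid}}(S,\_)$ and connectivity are compatible with retracts and finite sums, so we may take $P=\Witt(S)$. Now $\Witt\iso \limit_{n}\Witt_{n}$ with surjective transition maps, and $R\Gamma$ commutes with limits. The Milnor sequence then shows that coconnectivity survives, up to a $\limit^{1}$ term which lands in degree $-1$. We therefore need a slightly sharper input for the $\Witt_{n}$: the cohomology is static in degree $0$, and the transition maps on $\pi_{0}$ are surjective. With that, $\limit^{1}$ vanishes and the statement for $\Witt$ follows from the one for the $\Witt_{n}$.

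\emph{Reduction to $\Ga$.} The kernel of $\Witt_{n}\to\Witt_{n-1}$ is $V^{n-1}\Witt_{1}$, which is $\Ga$ twisted by Frobenius as a module. As a sheaf of abelian groups, however, it is just $\Oh$. By dévissage along these fiber sequences, it suffices to show
\[R\Gamma_{\text{éid}}(S,\Oh)\iso S,\]
that is, that the structure sheaf has no higher éid-cohomology. Since this is then concentrated in degree $0$, the long exact sequences give that each $R\Gamma(S,\Witt_{n})$ is static and equal to $\Witt_{n}(S)$. Surjectivity of $\Witt_{n}(S)\to \Witt_{n-1}(S)$ then gives the $\limit^{1}$ vanishing needed above.

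\emph{Descent for $\Oh$.} This is the key step. By definition, éid-covers are generated by étale covers and integral descendable maps. For any map $\Spf(B)\to\Spf(A)$ with $B$ descendable in $\QCoh(\Spf(A))$, Mathew's result \cite{MathewGaloisgroupsofstablehomotopy}[3.20] shows that the Čech conerve
\[A\to B\rightrightarrows B\widehat{\otimes}_{A}B\cdots\]
is a limit diagram. Étale surjections are descendable by \cite{MathewGaloisgroupsofstablehomotopy}[3.33], as used in the proof of \ref{From formal to analytic prop}\textbf{A}. Hence the presheaf $\Spf(A)\mapsto A$, valued in $p$-complete spectra, is already a sheaf for every covering in the pretopology. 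It remains to pass from Čech descent to hypercover descent. Here I would argue that the pretopology is finitary, since all coverings are finite families, and that all objects are qcqs. This identifies sheaf cohomology with the value of the Čech-descending presheaf. Equivalently, one argues that the presheaf is already the sheafification, because it satisfies descent for every cover in a generating pretopology.

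I expect this passage from Čech descent to hypercompleteness to be the main obstacle, since one needs enough care with the non-connective values. Note that at no point is completeness of the tensor products needed beyond \ref{Fibre products of adic maps Remark}.
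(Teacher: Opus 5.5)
Your proposal is correct and follows essentially the paper's route: reduce to $P=\Witt(S)$, use the Milnor sequence with surjective transition maps, and run a d\'evissage along Witt vector fiber sequences down to $\Ga$. You use $V^{n-1}\Witt_{1}\to\Witt_{n}\to\Witt_{n-1}$ where the paper uses $\Witt_{n-1}\xrightarrow{V}\Witt_{n}\to\Ga$, and you supply the $\Ga$ case, which the paper leaves implicit, via descendability of \'etale and integral descendable covers (Mathew's 3.20).

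Three small corrections. It is connectivity, not coconnectivity, that the $\limit^{1}$ term threatens. For animated $S$ the terms $R\Gamma(S,\Witt_{n})\iso\Witt_{n}(S)$ are connective rather than static, which is all the Milnor argument needs. Finally, the paper works with sheaves of spectra on the \'eid-topos, with no hypercompletion, so \v{C}ech descent for the generating covers already identifies $R\Gamma$ with the presheaf value; your hypercompleteness worry does not arise, and the ``finitary and qcqs'' remark is unnecessary.
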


\begin{proof}
   Writing $P$ as a summand of a finite free $\Witt(S)$-module this reduces to 
   showing $R\Gamma(S,\Witt)\in \QCoh(\Witt(S))_{\ge 0}$. Furthermore writing $\Witt$
   as a limit of the truncated Witt Vectors and observing that the transition maps 
   are surjective, the Milnor sequence tells us that this reduces to the claim 
       $$R\Gamma(S,\Witt_{n})\in \QCoh(\Witt(S))_{\ge 0}.$$
   This follows by induction on $n$ using the fiber sequences 
   $\Witt_{n-1}\to \Witt_{n} \to \Ga$ coming from the Verschiebung and the 
   corresponding claim for $\Ga$.  
\end{proof}

For an affine $p$-adic formal scheme $\Spf(S)$, we will write $$\QCoh_{\text{éid}}(\Spf(S),\Witt)$$ 
for the category of $\Witt$-modules in sheaves of spectra on the éid-topos of $\Spf(S)$. 

\begin{lem}\label{PD-hull via witt vectors}\cite{Drinfeld2020Prismatization}[3.4]\cite{FGauges}[2.6.1]\cite{BhattLurieAPC}[3.4.11]
   The Frobenius $F\from \Witt \to F^{\ast}\Witt$ is a $\pi_{0}$ surjection in the éid-topology. 
   Furthermore, if we write $\Witt[F]$ for the fibre of the Frobenius, the composition $\Witt[F] \subset \Witt \to \Ga$ 
   of the inclusion with the projection lifts uniquely to an isomorphism 
      $$\Witt[F]\iso \Gasharp.$$
   In particular, there is a fiber sequence
      $$\Gasharp \to \Witt \to F_{\ast}\Witt$$
   in $\QCoh_{\text{éid}}(\Spf(S),\Witt)$ where the second map is given by the Frobenius. 
\end{lem}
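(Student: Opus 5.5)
The statement has three parts: the Frobenius is an éid-local $\pi_{0}$-surjection, $\Witt[F]\iso\Gasharp$, and the resulting fiber sequence. I will reduce all three to classical facts about Witt vectors of $p$-nilpotent rings (Drinfeld, Bhatt--Lurie), using only the éid-topology.

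\textbf{Surjectivity.} Fix an affine $p$-adic formal scheme $\Spf(S)$ and an element $w\in\Witt(S)$; I must show it is éid-locally in the image of $F$. The plan is to adjoin formal Frobenius preimages of the Witt components. The generic case is the map
$$\Spf(\Znumb_{p}[x_{i}])\to\Spf(\Znumb_{p}[x_{i}^{1/p}]),$$
pulled back along the classifying map of $w$. The resulting map $S\to S'$ is finite free, so it is integral. Its pushforward is a finite free algebra containing $S$ as a unit summand, hence descendable (Mathew). So $S\to S'$ is an integral descendable éid-cover.

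Over $S'$ one solves $F(w')=w$ modulo $p$ componentwise. This works because on $\Witt(R)$ for $R$ of characteristic $p$, the map $F$ is induced by the Frobenius of $R$. To pass from $p$-torsion to a general $p$-complete $S$, I use that $\ker(\Witt(S)\to\Witt(S/p))$ is handled by $V$ and $p$-completeness: $\Witt$ is $p$-adically derived complete, and $F$ is surjective on the associated graded up to the same root extraction. Alternatively, one can compare directly with the Bhatt--Lurie argument [3.4.11], which uses the same cover.

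\textbf{Identifying $\Witt[F]$.} I will follow Drinfeld [3.4]. First, the composite $\Witt[F]\to\Witt\to\Ga$ should factor through $\Gasharp$. For $w=(w_{0},w_{1},\dots)$ with $F(w)=0$, the ghost-component relations give $w_{0}^{p}=-p\,w_{1}$, and iterating gives $w_{0}^{p^{n}}$ divisible by $p^{n}$-factorials appropriately. This supplies the divided powers $\gamma_{n}(w_{0})$ as universal polynomials in the $w_{i}$, which defines the lift $\Witt[F]\to\Gasharp$.

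Next, I construct the inverse $\Gasharp\to\Witt$ as $x\mapsto$ the Witt vector with ghost coordinates $(x,0,0,\dots)$, using the Artin--Hasse/$p$-typical expression. This is integral precisely on divided power algebras. Checking that the two maps are mutually inverse can be done on the flat, $p$-torsionfree universal case $\Znumb_{p}\langle x\rangle$, where ghost coordinates detect everything. Uniqueness of the lift holds because $\Gasharp\to\Ga$ is a monomorphism on $p$-torsionfree test objects, and both sides are flat sheaves generated by such objects.

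\textbf{Fiber sequence and main obstacle.} By the first two steps, $\Gasharp\to\Witt\xrightarrow{F}F_{\ast}\Witt$ is an exact sequence of sheaves of $\pi_{0}$-modules, and the $\Witt$-linearity holds because $F$ is a ring map. The step I expect to be the main obstacle is promoting this to a fiber sequence in $\QCoh_{\text{éid}}(\Spf(S),\Witt)$ of sheaves of spectra. For that I need the terms to have no higher sheaf cohomology interfering, i.e.\ that $\Witt$ has no negative homotopy as an éid-sheaf. This is \ref{Witt vecotors is an derived sheaf}, together with the analogous vanishing for $\Gasharp$. I expect the latter to follow from the finite filtration of $\Gamma(x)$ by $\Ga$-type pieces modulo $p^{n}$ and a Milnor sequence, exactly as in the proof of \ref{Witt vecotors is an derived sheaf}.
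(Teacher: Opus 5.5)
\paragraph{The surjectivity step has a genuine gap.} Adjoining $p$-th roots of the Witt components $w_i$ only controls $F$ modulo $p$, because $F(w')_n\equiv (w'_n)^p$. The promised $p$-adic correction (``$F$ is surjective on the associated graded up to the same root extraction'') is never made into an argument, and on your cover no correction exists. Take $S=\Znumb_{p}$ and $w=V(1)=(0,1,0,0,\dots)$.
\begin{itemize}
\item The components of $w$ already have $p$-th roots in $S$, so $S\to S'=S[y_i]/(y_i^{p}-w_i)$ has a ring retraction $S'\to S$ sending each $y_i$ to a chosen root. Hence $w\in F(\Witt(S'))$ would imply $w\in F(\Witt(\Znumb_{p}))$.
\item $F$ shifts ghost components, and $V(1)$ has ghost components $(0,p,p,\dots)$. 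So $F(a)=V(1)$ requires $a_0^{p}+pa_1=0$ and $a_0^{p^{2}}+pa_1^{p}+p^{2}a_2=p$.
\item The first equation forces $p\mid a_0$ and then $p\mid a_1$. The left side of the second equation then lies in $p^{2}\Znumb_{p}$, a contradiction.
\end{itemize}
Any repair must therefore adjoin roots of elements other than the components of $w$, such as $p$-multiples. That leads to an unbounded iteration, and you would then have to show its colimit is still an éid cover. Also, with countably many components $S\to S'$ is not finite free. It is still split, hence descendable, so that slip is harmless.

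\paragraph{The missing idea.} The paper takes as cover the pullback of the Frobenius $F\from\Witt\to\Witt$ itself along $w\from\Spf(S)\to\Witt$; over this cover $w$ lifts tautologically.
\begin{itemize}
\item The map is integral. On $p$-nilpotent rings integrality can be checked modulo the nilpotent ideal $(p)$, where $F$ is the componentwise $p$-th power map.
\item It is faithfully flat by the proof of \cite{FGauges}[2.6.1]. Being countably presented, it is then descendable by \cite{MathewGaloisgroupsofstablehomotopy}[3.31].
\item Modulo $p$ it agrees with your root adjunction. Integrally it also adjoins elements such as $a_0$ with $a_0^{p}=-pa_1$, which is exactly what $V(1)$ needs.
\end{itemize}
Alternatively, you could iterate root extraction countably often and invoke Davis--Kedlaya, as in \ref{Basis of witt vector surjectives}.

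\paragraph{Secondary gap in the identification $\Witt[F]\iso\Gasharp$.} Your second step silently uses the same flatness. Flatness of $F$ shows that the fibre of $F$, even on derived test rings, is represented by the classical, $p$-torsion-free ring $\Znumb_{(p)}[x_0,x_1,\dots]/(x_0^{p}+px_1,x_1^{p}+px_2,\dots)$. The paper then reads off $\Witt[F]\iso\Gasharp$ as an isomorphism of representing rings. Your Drinfeld-style ghost-coordinate maps are a reasonable alternative, but two of your checks depend on torsion-freeness of this representing ring, not of $\Znumb_{p}\langle x\rangle$:
\begin{itemize}
\item uniqueness of the lift $\Witt[F]\to\Gasharp$;
\item the identity $\Witt[F]\to\Gasharp\to\Witt[F]$.
\end{itemize}
You only assert this torsion-freeness.

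\paragraph{Misplaced obstacle.} Once $\Witt[F]$ is defined as the fibre and $F$ is an epimorphism of sheaves, the fibre sequence of sheaves is tautological. \ref{Witt vecotors is an derived sheaf} is needed only for global sections in the subsequent corollary, so your ``main obstacle'' is not where the difficulty lies.
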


\begin{proof}
   The proof of \cite{FGauges}[2.6.1] shows that the Frobenius is faithfully flat; thus it is descendable by \cite{MathewGaloisgroupsofstablehomotopy}[3.31]
   as the domain is countable. It is also clearly integral. This shows $\pi_{0}$-surjectivity. 

   For the rest, note that the ordinary scheme represents $\Witt[F]$ also on derived schemes 
      $$\Spec(\Znumb_{(p)}[x_{0},x_{1},x_{2},\dots]/(x_{0}^{p}+px_{1},x_{1}^{p}+px_{2},\dots))$$
   and the proof of \cite{FGauges}[2.6.1] gives an isomorphism $\Witt[F]\iso \Gasharp$ of representing objects. 
\end{proof}

\begin{cor}\label{twistet witt vector sequences}\cite{BhattLuriePrismatisationofpadicformalschemes}[5.7]
   For any affine $p$-adic formal scheme $\Spf(S)$ and any finite projective $\Witt(S)$-module $P$ there are fibre sequences
      \begin{itemize}
         \item $R\Gamma_{\text{éid}}(\Spf(S),P\otimes_{\Witt}\Witt_{n}[F]) \to P\otimes_{\Witt}\Witt_{n}(S) \to P\otimes_{\Witt}F_{\ast}\Witt_{n-1}(S)$ 
         $(n\ge 2).$
         \item $R\Gamma_{\text{éid}}(\Spf(S),P\otimes_{\Witt}\Witt[F]) \to P\otimes_{\Witt}\Witt(S) \to P\otimes_{\Witt}F_{\ast}\Witt(S)$
      \end{itemize}
   in $\QCoh(\Witt(S))$. 
\end{cor}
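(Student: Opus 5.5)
The plan is to derive both fibre sequences from the fibre sequence $\Gasharp \iso \Witt[F] \to \Witt \to F_{\ast}\Witt$ of \ref{PD-hull via witt vectors}, and its truncated analogue, by tensoring with $P$ and then taking éid-cohomology on $\Spf(S)$. Tensoring with $P$ (over $\Witt(S)$, viewed as a constant sheaf of modules mapping to the sheaf $\Witt$) is exact, so we obtain fibre sequences of sheaves
$$P\otimes_{\Witt}\Witt[F] \to P\otimes_{\Witt}\Witt \to P\otimes_{\Witt}F_{\ast}\Witt$$
in $\QCoh_{\text{éid}}(\Spf(S),\Witt)$. Since $R\Gamma_{\text{éid}}(\Spf(S),\_)$ is exact, it suffices to identify the global sections of the middle and right terms with $P\otimes_{\Witt}\Witt(S)$ and $P\otimes_{\Witt}F_{\ast}\Witt(S)$ respectively.

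For this step we use \ref{Witt vecotors is an derived sheaf}. Writing $P$ as a summand of a finite free module reduces everything to $P=\Witt(S)$. Then $R\Gamma_{\text{éid}}(\Spf(S),\Witt)$ is connective by that lemma, and its $\pi_{0}$ equals $\Witt(S)$ because $\Witt$ is a sheaf of animated rings on the éid-site, so a representable-type object has global sections equal to its value. The lemma actually shows more: the sheaf condition holds at the level of animated rings, while the lemma rules out negative homotopy groups. Together these give $R\Gamma(\Spf(S),\Witt)\iso \Witt(S)$. The term $F_{\ast}\Witt$ has the same underlying sheaf of spectra as $\Witt$, with its module structure restricted along $F$, so the same identification applies, giving $F_{\ast}\Witt(S)$. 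The map between these global sections is the Frobenius, by naturality.

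For the truncated statement, the plan is to first establish the analogue of \ref{PD-hull via witt vectors}: the Frobenius $F\from \Witt_{n}\to F_{\ast}\Witt_{n-1}$ is a $\pi_{0}$-surjection in the éid-topology, and we write $\Witt_{n}[F]$ for its fibre. Surjectivity follows by comparing with the untruncated case: $\Witt\to\Witt_{n-1}$ is surjective, and $F\from\Witt\to F_{\ast}\Witt$ is éid-locally surjective. With that in hand, the same exactness argument applies, using the truncated version of \ref{Witt vecotors is an derived sheaf} for $\Witt_{n}$ and $\Witt_{n-1}$.

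The main obstacle is making the global-sections identification honest. One has to check that the sheafification implicit in "fibre of $F$ in sheaves of spectra" agrees with the fibre taken in presheaves. This holds because $\pi_{0}$-surjectivity is only needed locally, while fibres commute with $R\Gamma$, so no cokernel term appears. One also has to check that the connectivity statement of \ref{Witt vecotors is an derived sheaf} suffices to prevent higher cohomology from contributing.
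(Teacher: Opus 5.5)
Your proposal is correct and follows the paper's proof. Both take éid-global sections of the fibre sequence $\Witt[F]\to\Witt\to F_{\ast}\Witt$ (and its truncated analogue) tensored with $P$, and in both the only real input is the connectivity of $R\Gamma_{\text{éid}}(\Spf(S),P\otimes_{\Witt(S)}\Witt)$ from Lemma~\ref{Witt vecotors is an derived sheaf}.

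Your extra checks, namely the local surjectivity of the truncated Frobenius and the reduction to free $P$, make explicit what the paper leaves implicit. The worry in your last paragraph about sheafification is unnecessary, since fibres in sheaves of spectra are computed objectwise.
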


\begin{proof}
   We claim that these fiber sequences arise as the global sections of the fiber sequence 
      $$\Witt[F] \to \Witt \to F_{\ast}\Witt$$
   tensored with $P$ (and the analogous fiber sequence with the truncated Witt Vectors). 
   That is the essential claim is that $R\Gamma(\Spf(S),P\otimes_{\Witt(S)}\Witt) \in \QCoh(\Witt(S))_{\ge 0}$.
   This was explained in \ref{Witt vecotors is an derived sheaf}. 
\end{proof}

\begin{cor}
    For any affine $p$-adic formal scheme, there is a commutative diagram 
\[\begin{tikzcd}
	& {F_{\ast}\Witt} & {F_{\ast}\Witt} \\
	{\Witt[F]} & \Witt & {F_{\ast}\Witt} \\
	\Gasharp & \Ga & {\GadR\iso F_{\ast}\Witt/p}
	\arrow[from=1-2, to=1-3, equal]
	\arrow["V"', from=1-2, to=2-2]
	\arrow["p", from=1-3, to=2-3]
	\arrow[from=2-1, to=2-2]
	\arrow[from=2-1, to=3-1, equal]
	\arrow["F", from=2-2, to=2-3]
	\arrow[from=2-2, to=3-2]
	\arrow[from=2-3, to=3-3]
	\arrow[from=3-1, to=3-2]
	\arrow[from=3-2, to=3-3]
\end{tikzcd}\]
   in $\QCoh_{\text{éid}}(\Spf(S),\Witt)$. Such that the square in the middle 
   is cartesian and the horizontal and vertical sequences are (co)fiber sequences. 
\end{cor}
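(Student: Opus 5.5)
The plan is to build the whole diagram from the fiber sequence $\Gasharp\iso\Witt[F]\to\Witt\xrightarrow{F}F_{\ast}\Witt$ of \ref{PD-hull via witt vectors}, together with the two classical identities $F\circ V=p$ and $\Witt/V\Witt\iso\Ga$. The middle row is exactly that fiber sequence, and the left column is an identity, so the only work is to build the right column and the bottom row and to show that the middle square is cartesian.

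First I would check that $V\from F_{\ast}\Witt\to\Witt$ is a map of $\Witt$-modules. This is the projection formula $V(F(a)x)=aV(x)$. The identity $F\circ V=p$ then makes the top square commute, with the top row the identity of $F_{\ast}\Witt$ and the right vertical map multiplication by $p$. Next I would note that the middle column $F_{\ast}\Witt\xrightarrow{V}\Witt\to\Ga$ is a fiber sequence of sheaves. $V$ is injective on Witt vectors of any ring with image the kernel of the projection to the zeroth ghost component, and this holds sectionwise, hence also for the éid-sheafified version. Here I use that $\Witt$ is a derived sheaf by \ref{Witt vecotors is an derived sheaf}, so the sectionwise short exact sequence is a fiber sequence of sheaves of spectra.

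I would then \emph{define} $\GadR:=\mathrm{cof}(p\from F_{\ast}\Witt\to F_{\ast}\Witt)=F_{\ast}\Witt/p$, so the right column is a cofiber sequence by construction. The map $\Ga\to\GadR$ is the map induced on cofibers of the columns by the commuting top square, and the bottom-left map $\Gasharp\to\Ga$ is the composite $\Witt[F]\to\Witt\to\Ga$, as in \ref{PD-hull via witt vectors}. Once the columns are fiber sequences and the top row is an equivalence, the middle square is cartesian: its horizontal fibers are the fiber of the top row, namely $0$, mapped to the fiber of the middle and bottom rows. Equivalently, the fibers of the two vertical maps $\Witt\to\Ga$ and $F_{\ast}\Witt\to F_{\ast}\Witt/p$ agree via $\mathrm{id}$. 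Cartesianness in stable categories then gives that the horizontal fibers agree. So $\mathrm{fib}(\Ga\to\GadR)\iso\mathrm{fib}(F)=\Witt[F]\iso\Gasharp$, and the bottom row is a fiber sequence, compatibly with the identity on the left.

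The step I expect to be the main obstacle is the identification $\GadR\iso F_{\ast}\Witt/p$, if $\GadR$ is intended as an independently defined object (for instance the de Rham stack quotient $\Ga/\Gasharp$) rather than simply named by this cofiber. To handle it I would first try to derive it from the cartesian square: $\Ga/\Gasharp\iso\mathrm{cof}(\Gasharp\to\Ga)$, and the square identifies this with $\mathrm{cof}(\Witt[F]\to\Witt)\,/\,V$, i.e.\ with $F_{\ast}\Witt/p$. This uses the $\pi_0$-surjectivity of $F$ from \ref{PD-hull via witt vectors}, so that $\mathrm{cof}(\Witt[F]\to\Witt)\iso F_{\ast}\Witt$ in the éid-topology. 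A secondary point is that all maps are $\Witt$-linear when $F_{\ast}$ is used to twist the module structure. I would verify this directly from the formulas $F(ax)=F(a)F(x)$ and the projection formula for $V$.
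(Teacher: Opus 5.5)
Your proposal is correct and follows the paper's route. The paper takes the middle row from Lemma \ref{PD-hull via witt vectors} and cites \cite{FGauges}[2.6.8] for the identification $\GadR\iso F_{\ast}\Witt/p$, and that citation is precisely the argument you spell out: $F\circ V=p$, the fiber sequence $F_{\ast}\Witt\xrightarrow{V}\Witt\to\Ga$, and cartesianness of the lower-right square, giving $\Ga/\Gasharp\iso F_{\ast}\Witt/p$. One phrase is garbled: in ``its horizontal fibers are the fiber of the top row, namely $0$'' you mean that the total fiber of the square is the fiber of the top row, but your ``Equivalently'' reformulation states it correctly.
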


\begin{proof}
    By \ref{PD-hull via witt vectors}, we only have to give the identification in the lower right 
    corner. This is explained in \cite{FGauges}[2.6.8]. 
\end{proof}

For the following we consider a static $p$-nilpotent ring $S$ and recall the topos $S_{\text{éfid}}$
from \ref{flat EID topos}.

\begin{lem}\label{Basis of witt vector surjectives}
   Let $S$ be a static $p$-nilpotent ring. Then the flat éid-topology on $S$ admits a basis of rings $T$, 
   such that the maps  
      $$F \from \Witt_{n}(T) \to F_{\ast}\Witt_{n-1}(T)$$
   are surjective for all $n\ge 2$.
\end{lem}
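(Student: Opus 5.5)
The basis will consist of rings $T$ with the property that every element of $T$ admits a $p$-th root, e.g.\ $T$ semiperfect (Frobenius $x\mapsto x^{p}$ surjective on $T$). The plan is to prove two things: for such $T$ the maps $F\from \Witt_{n}(T)\to F_{\ast}\Witt_{n-1}(T)$ are surjective for all $n\ge 2$, and every static $p$-nilpotent $S$-algebra admits a flat éid-cover by such rings.

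For the surjectivity I use the formula $F(V^{i}[a])=pV^{i-1}[a]$ for $i\ge 1$, together with $F([a])=[a^{p}]$. Every element of $\Witt_{n-1}(T)$ is a finite sum $\sum_{i=0}^{n-2}V^{i}[b_{i}]$. Choose $a_{i}$ with $a_{i}^{p}=b_{i}$; then $F(\sum_{i=0}^{n-2}V^{i}[a_{i}])$ agrees with the target modulo $V(\Witt_{n-2})$. Because $T$ is $p$-nilpotent, $p$ is nilpotent on $\Witt_{n-1}(T)$, and the $V$-filtration is finite, so an induction on $i$ (i.e.\ on $V$-adic order) finishes the argument. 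Concretely, the image of $F$ is a subgroup containing all $[b]$, hence it surjects onto the associated graded pieces $V^{i}\Witt/V^{i+1}\Witt\iso T$, since $F$ of $V^{i}[a]$ hits $V^{i}[a^{p}]$ up to higher terms. An equivalent route is to compare via \ref{PD-hull via witt vectors}: the cokernel of $F$ is governed by $\Witt[F]\iso\Gasharp$ and by the surjectivity of Frobenius on $T$.

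For the covering I take, for any $S$-algebra $A$, the map $A\to A_{\infty}:=A[x_{a}^{1/p^{\infty}}\mid a\in A]/(x_{a}-a)$. This is the base change of a generic infinite $p$-root cover, so it is faithfully flat: it is a free module, which is clear on the polynomial level. It is also integral, and by \ref{fundamental properties of infinite root cover} its cofibre is projective, so it is descendable. Countability is not needed here because this is a base change along a projective, hence split, extension, and descendability then follows as in \ref{PD-hull via witt vectors}. Hence it is a cover in the flat éid topology. The ring $A_{\infty}$ has $p$-th roots of all elements of $A$ but not yet of all its own elements, so I iterate countably many times and take $A^{(\infty)}=\colim_{k}A^{(k)}$. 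In $A^{(\infty)}$ every element has a $p$-th root, and the class is stable under base change/flat éid-localisation enough to form a basis.

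The main obstacle is showing that the countable colimit is still a descendable cover, since descendability is not preserved by arbitrary filtered colimits. My fix is to adjoin roots cleverly so that one step suffices: adjoin all $p$-power roots $x_{f}^{1/p^{\infty}}$ of all elements simultaneously, universally, i.e.\ use the free semiperfect-ization generated by a perfect polynomial ring surjecting onto $A$. Then $\Fp[x_{a}^{1/p^{\infty}}]\otimes_{\Fp[x_{a}]}A$ is semiperfect in one step, because every element is a polynomial in the $x_{a}$, and a polynomial in elements with $p$-th roots has a $p$-th root modulo $p$, with $p$-nilpotence handled by working over $A/p$ and lifting. This keeps the cover a single base change of a generic infinite $p$-root cover, whose properties are recorded in \ref{fundamental properties of infinite root cover}.
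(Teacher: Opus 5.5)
Your plan matches the paper's: the basis consists of rings $T$ in which every element is a $p$-th power in $T$, and such rings are reached by freely adjoining $p$-th roots. The gap is the one-step fix you end up relying on. It works when $A$ is an $\Fp$-algebra, but the lemma is about $p$-nilpotent $S$.

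\textbf{Why the one-step ring is not in your class.} Your argument that $A_{\infty}=A\otimes_{\Znumb[x_{a}]}\Znumb[x_{a}^{1/p^{\infty}}]$ is semiperfect uses that a sum of $p$-th powers is a $p$-th power. That only shows $A_{\infty}/p$ is semiperfect, and the promised lifting is not available in mixed characteristic. In any $\Znumb/p^{2}$-algebra one has $(z+pt)^{p}=z^{p}$. So if $y=z^{p}+pe$, you cannot remove $pe$ by changing $z$ by multiples of $p$. You would have to perturb $z$ by nilpotents of $A_{\infty}/p$ such as $x_{p}^{1/p^{k}}$, and nothing in your argument makes these corrections terminate. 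Your earlier remark, that $A_{\infty}$ does not yet contain $p$-th roots of its own elements, is the accurate one when $p\neq 0$ in $A$.

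Semiperfectness modulo $p$ is also not what the Witt vector statement needs. The ring $\Znumb/p^{2}$ has perfect reduction, yet $V(1)=(0,1)$ is not in the image of $F\from W_{3}(\Znumb/p^{2})\to W_{2}(\Znumb/p^{2})$. Indeed, $x_{0}^{p}+px_{1}=0$ forces $p\mid x_{0}$ and then $p\mid x_{1}$, while the first component of $F(x)$ is congruent to $x_{1}^{p}$ modulo $p$. So the one-step ring is not shown to lie in your class, and your surjectivity argument does not apply to it.

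\textbf{The missing observation.} The obstacle you tried to avoid is not one. Keep the countable tower $S_{0}\to S_{1}\to\cdots$ with $T=\colimit_{n}S_{n}$, exactly as the paper does. Each $S_{n}\to S_{n+1}$ is free with $1$ part of a basis. Hence its cofibre is $\bigoplus_{I}S_{n}$ and it is descendable of bounded index. The paper passes to the countable colimit using \cite[11.18, 11.22]{BhattScholzeWittvectorGrassmannian}.

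You can also see this by hand. Choosing complements stage by stage gives $T\iso S_{0}\oplus\bigoplus_{n}S_{n+1}/S_{n}$ as $S_{0}$-modules. So $S_{0}$ is a module retract of $T$, and $T$ is descendable; flatness and integrality are clear.

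\textbf{A smaller point in your surjectivity step.} The claim that $F(V^{i}[a])$ equals $V^{i}[a^{p}]$ up to higher terms holds only in characteristic $p$, where $VF=p$. In general $F(V^{i}[a])=V^{i-1}(p[a])$, and $p[a]$ has zeroth component $pa$. So the induction on $V$-adic order does not run as written. The paper avoids this by citing \cite[3.2]{DavisKedlayaWittvectorFrobenious}: surjectivity for all $n\ge 2$ follows once $F$ on $W(T)$ hits every Teichm\"uller lift. The identity $F([t])=[t^{p}]$ gives this when every element of $T$ is a $p$-th power.
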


\begin{proof}
   In \cite{DavisKedlayaWittvectorFrobenious}[3.2] the authors explain that for this 
   surjectivity to hold it is enough that the Frobenius $F\from \Witt(T)\to F_{\ast}\Witt(T)$
   on the whole Witt Vectors hits all multiplicative lifts $[t]$ of elements 
   $t\in T$. As we have the formula $F([t])=[t^{p}]$ this is true for 
   any ring admitting all $p$-th roots of its elements. 

   Now consider a static $p$-nilpotent $S$-algebra $S_{0}$. Then we consider the 
   countable chain 
      $$S_{0} \to S_{1} \to S_{2} \to \cdots$$
    where the maps $S_{n} \to S_{n+1}$ are obtained by freely adjoining $p$-th roots of 
    all elements. Write $T=\colimit_{n}S_{n}$; then the map $S_{0} \to T$ is clearly 
    flat and integral, and any element in $T$ admits a $p$-th root. To see that it is 
    descendable, note that for each $n$ the fiber of the map $S_{n} \to S_{n+1}$ takes the 
    form $\oplus_{I}S_{n}[-1]$ for some set $I$. From this one easily sees that the 
    map $S_{n} \to S_{n+1}$ is descendable of index $\le 0$ \cite{BhattScholzeWittvectorGrassmannian}[11.18], such that 
    the map $S_{0} \to T$ becomes descendable by \cite{BhattScholzeWittvectorGrassmannian}[11.22].  
\end{proof}


\subsection{Recollections on prismatic stacks}

We will make use of the stacky approach to prismatic and syntomic 
cohomology invented by Bhatt-Lurie \cite{BhattLurieAPC}\cite{BhattLuriePrismatisationofpadicformalschemes} and Drinfeld \cite{Drinfeld2020Prismatization}. 
More references and details on this theory can be found in \cite{FGauges} \cite{Gardner2024AnAC} \cite{Hauck2025AnA}. 

Let us recall some definitions. We will interpret the following objects as éid-sheaves (with values in anima)
on affine $p$-adic formal schemes. Having said this, we will describe the functor of points on $p$-nilpotent rings 
and then extend the presheaves by left Kan-extension. As an example, we obtain the formula 
   $$\Ga(\Spf(A))=A_{\widehat{p}}.$$

\begin{non}[\textbf{The de Rham stack}]
    Recall the ring stack, which assigns to a $p$-nilpotent ring $R$ the animated ring 
       $$\GadR(R):=\Ga(R)/\Gasharp(R).$$
    We then define the \emph{de Rham stack} of a $p$-adic formal scheme $X$ via transmutation 
    along this ring stack. That is, the anima of $R$-valued points is given by 
       $$X^{dR}(R):= X(\Spec(\GadR(R))).$$
\end{non}

\begin{non}[\textbf{The Hodge stack}]
    Recall the ring stack over $\BGm$, which assigns to a $p$-nilpotent animated ring $R$ with a  
    line bundle $\Line$ on $\Spec(R)$
    the square zero extension 
       $$\GaHodge(\Spec(R)\to \BGm):=R\oplus \BV_{R}(\Line^{-1})^{\sharp}(R).$$
    For a $p$-adic formal scheme $X$, we then define the \emph{Hodge stack} via transmutation. That is 
    the anima of $R$-valued points for a $p$-nilpotent animated ring $R$ is given by 
       $$X^{Hodge}(\Spec(R)\to \BGm):= X(\Spec(\GaHodge(R))).$$
\end{non}

\begin{non}[\textbf{The Prismatisation}]
    For a $p$-nilpotent animated ring $R$, we will write 
       $$\Znumb_{p}^{\Prism}(R)$$
    for the anima of \emph{Cartier-Witt divisors} on $R$ as defined in \cite{BhattLuriePrismatisationofpadicformalschemes}[8.2].
    Then recall the ring stack over $\Znumb_{p}^{\Prism}$, which assigns
    to a Cartier-Witt divisor $\alpha\from I \to \Witt(R)$ on $R$, the animated 
    ring 
       $$\GaPrism(\Spec(R) \to \Znumb_{p}^{\Prism}):=\Witt(R)/I.$$ 
    To define the \emph{Prismatisation} of a general $p$-adic formal scheme $X$, we again 
    use transmutation. It assigns to a Cartier divisor on $R$ the anima 
       $$X^{\Prism}(\Spec(R) \to \Znumb_{p}^{\Prism}):=X(\Spec(\GaPrism(R))).$$
\end{non}

We already obtained the following.

\begin{cor}\label{Prismatiastion of the affine line is a derived sheaf}
   For a Cartier-Witt divisor $I \to \Witt(S)$, we have 
      $$R\Gamma_{\text{éid}}(S,\GaPrism) \in \QCoh(\Witt(S))_{\ge 0}$$
   and the latter admits the structure of an animated ring. 
\end{cor}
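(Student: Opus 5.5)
The plan is to realize $\GaPrism = \Witt/I$ as a cofiber of $\Witt$-modules on the éid-site of $\Spf(S)$ and then read off connectivity from \ref{Witt vecotors is an derived sheaf}. Concretely, a Cartier-Witt divisor $\alpha\from I\to \Witt(S)$ has $I$ an invertible, in particular finite projective, $\Witt(S)$-module. For any $S$-algebra $R$ in the site, $\GaPrism(R)$ is the animated ring $\Witt(R)/I_R$ with $I_R = I\otimes_{\Witt(S)}\Witt(R)$. We therefore get a cofiber sequence of sheaves of $\Witt$-modules
   $$I\otimes_{\Witt(S)}\Witt \to \Witt \to \GaPrism$$
on the éid-topos. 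One point needs checking: the presheaf cofiber, sheafified in spectra, agrees with the sheaf of animated rings $\GaPrism$. We will argue this from the left Kan extension convention and the fact that the cofiber of connective objects is connective.

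Next we apply $R\Gamma_{\text{éid}}(S,\_)$, which is exact and so preserves this cofiber sequence. This yields a cofiber sequence
   $$R\Gamma(S,I\otimes_{\Witt(S)}\Witt)\to R\Gamma(S,\Witt)\to R\Gamma(S,\GaPrism)$$
in $\QCoh(\Witt(S))$. By \ref{Witt vecotors is an derived sheaf}, applied with $P=I$ and with $P=\Witt(S)$, the first two terms are connective, and they equal $I$ and $\Witt(S)$ respectively, since $\Witt$ is an éid-sheaf with vanishing higher cohomology. The cofiber of a map between connective objects is connective. Indeed, it equals $\Witt(S)/I$, which is even connective as a module. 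This gives $R\Gamma_{\text{éid}}(S,\GaPrism)\in \QCoh(\Witt(S))_{\ge 0}$.

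For the animated ring structure, the identification $R\Gamma(S,\GaPrism)\iso \Witt(S)/I = \GaPrism(S)$ already provides one, since the quotient of an animated ring by a Cartier-Witt divisor is an animated ring. Alternatively, connectivity means $R\Gamma$ is computed by the limit of the sheaf of connective animated rings, and limits of animated rings are computed underlying. The main obstacle is the first step: making sure the cofiber in $\Witt$-modules of sheaves of spectra really computes the ring stack $\GaPrism$, rather than something that differs by sheafification in the non-connective world. The connectivity established above is exactly what resolves this.
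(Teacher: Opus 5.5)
Your proposal is correct and follows the paper's proof: connectivity comes from applying $R\Gamma$ to the cofiber sequence $I\otimes_{\Witt(S)}\Witt \to \Witt \to \GaPrism$ of \'eid-sheaves of $\Witt(S)$-modules, together with \ref{Witt vecotors is an derived sheaf}. For the ring structure, the paper justifies your ``quotient by a Cartier-Witt divisor'' step by taking functions on the derived pullback of $\Spec(\Witt(S)) \to \A^{1}/\Gm$ along $\BGm \to \A^{1}/\Gm$; the obstacle you flag is harmless, since a cofiber of two sheaves of spectra is already a sheaf.
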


\begin{proof}
   The first claim follows from \ref{Witt vecotors is an derived sheaf} and the cofiber sequence
      $$I\otimes_{\Witt(S)}\Witt \to \Witt \to \overline{\Witt}$$
   of sheaves on $(S)_{\text{éid}}$ with values in $\QCoh(\Witt(S))$. For the second claim, note that 
   the Cartier-Witt divisor can be seen as a map $\Spec(\Witt(S)) \to \A^{1}/\Gm$
   and we can pull back this map to $\BGm$. 
\end{proof}

\begin{ex}\label{Prismatisation of a perfectoid}
   Given an animated prism $I\to A$, there is a map 
      $$\Spf(A) \to \overline{A}^{\Prism}$$
   which is constructed as follows. A map $A \to S$, by adjunction, uniquely refines to a map 
   $A\to \Witt(S)$ of $\delta$-rings. Then $I\otimes_{A}\Witt(S) \to \Witt(S)$ defines a Cartier-Witt divisor and we obtain a map $\overline{A} \to \Witt(S)/I$ by pushing out 
   along $A\to \overline{A}$.
   As explained in \cite{BhattLuriePrismatisationofpadicformalschemes}[3.12], in the case the prism is perfect with 
   corresponding perfectoid $\overline{A}\iso R$ this construction gives an isomorphism of functors
     $$\Spf(A) \iso R^{\Prism}.$$ 
\end{ex}

\begin{ex}\label{presentation of prismatisation}
   The moduli of Cartier-Witt divisors is essentially the moduli of animated prisms \cite{BhattLuriePrismatisationofpadicformalschemes}[8.3]. 
   An important example of a prism is the universal oriented prism. For an animated $p$-nilpotent 
   ring $R$ we write $\Witt_{0}(R) \subset \Witt(R)$ for the subspace of those Verschiebung 
   expansions $\sum_{n\ge 0}V^{n}[a_{n}]$ for which $a_{0}$ is nilpotent and $a_{1}$ is a unit. 
   Note that, as a functor, this assignment is represented by an affine formal scheme 
      $$\Spf(\Znumb_{p}\langle a_{0},a_{1}^{\pm 1},a_{2},\dots\rangle)$$
   where we also complete to $(a_{0})$, and that the Frobenius on the Witt Vectors restricts to this subspace, such that we obtain a 
   $\delta$-structure on the representing ring. Thus choosing $(a_{0})$ as an ideal we have produced a 
   prism and as in \cite{BhattLurieAPC}[3.2.4] we get a map 
      $$\Witt_{0} \to \Znumb_{p}^{\Prism}.$$
   By (the proof of) \cite{BhattLurieAPC}[3.2.3] (see also \cite{BhattLuriePrismatisationofpadicformalschemes}[8.5]), this map identifies the target as the quotient, 
   in the Zariski topology, of the domain by the canonical 
   action of the affine group scheme representing the functor $R\mapsto \Witt(R)^{\times}$. Where we write $\Witt(R)^{\times}$
   for the units in the Witt Vectors. This group scheme is represented 
   by the free delta ring on a unit $\Znumb_{p}\{u^{\pm}\}$.
   
   In particular, we see that we have 
   found a presentation of 
      $$\Znumb_{p}^{\Prism}\in (\Znumb_{p}^{\Prism})_{\text{éid}}.$$
\end{ex}

\begin{vari}\label{Relative prismatisation definition}
   As pointed out in \ref{presentation of prismatisation}, for any animated prism $I\to A$
   there is a map 
      $$\Spf(A) \to \Znumb_{p}^{\Prism}.$$
   Pulling back the ring stack $\GaPrism$ along this map gives a ring stack 
      $$\GaPrismA \to \Spf(A)$$
   which via transmutation defines the relative Prismatisation over $A$ for 
   $p$-adic formal schemes over $\Spf(A/I)$.
\end{vari}

\begin{non}[\textbf{The Hodge-Tate stack}]\label{HodgeTate stack}
    Base changing a Cartier-Witt divisor on a $p$-nilpotent ring $R$ along the map $\Witt(R) \to R$ produces a map 
       $$\Znumb_{p}^{\Prism} \to \AHadmodGm.$$
    The locus over $\BGm$ is called the \emph{Hodge-Tate} locus and will be denoted by 
        $$X^{HT}.$$
    One can also define this stack via transmutation using the ring stack 
       $$\GaHT := \GaPrism_{|\Znumb_{p}^{HT}}$$
    over $\Znumb_{p}^{HT}$ which we will investigate in more detail later. 
\end{non}

\begin{non}[\textbf{The Breuil-Kisin twist on tha Prismatisation}]\label{BK twist non analytic}
   Let us write $\Oh(-1) \to \Oh$ for the universal generalized Cartier divisor on $\A^{1}/\Gm$. Then pulling 
   back the line bundle $\Oh(-1)$ via the map 
      $$\Znumb_{p}^{\Prism} \to \widehat{\A^{1}}/\Gm$$
   defines a line bundle on $\Znumb_{p}^{\Prism}$ which we will denote by $\Oh\{1\}$ an call the 
   \emph{Breuil-Kisin twist}. We will denote several pullbacks of this line bundle in the same way. 
\end{non}

\begin{non}[\textbf{Filtered Cartier-Witt divisors}]
    The moduli of Nygaard filtered Cartier-Witt divisors can be described as 
    the following pullback (see \cite{Hauck2025AnA}[2.3.9])  
\[\begin{tikzcd}
	{\Znumb_{p}^{\Nyg}} & {\Znumb_{p}^{\Prism}} \\
	{\AmodGm\times(\AmodGm)^{dR}} & {(\AmodGm)^{dR}}
	\arrow["\pi", from=1-1, to=1-2]
	\arrow["{(t,u)}"', from=1-1, to=2-1]
	\arrow["{\tilde{\mu}}", from=1-2, to=2-2]
	\arrow["m"', from=2-1, to=2-2]
\end{tikzcd}\] 
    where the maps can be described as follows.
       \begin{itemize}
        \item The map $m$ takes a pair of generalized Cartier divisors $(t\from\Line_{1}\to R, u\from\GadR(R)\to \Line_{2})$ 
              to the generalized Cartier divisor 
                 $$\Line_{1}\otimes_{R}\Line_{2} \to \GadR(R)$$
            obtained by base changing $t$ along the map $R\to \GadR(R)$ and then precomposing 
            with $u^{-1}$. 
        \item To describe the map $\tilde{\mu}$, one starts with a Cartier-Witt divisor $I \to \Witt(R)$ and considers 
              its frobenius pullback $F_{\ast}I \to F_{\ast}\Witt(R)$. Base changing the latter 
              along the map 
                 $$F_{\ast}\Witt(R) \to F_{\ast}\Witt(R)/p\iso \GadR(R)$$
            (see \cite{FGauges}[2.6.8] for the used isomorphism) gives a generalized Cartier divisor 
                $$F_{\ast}I\otimes_{F_{\ast}\Witt(R)}\GadR(R) \to \GadR(R).$$
       \end{itemize}
    We will refer to $\pi$ as the \emph{structure map} and to $t$ as the \emph{Rees map}.    

    Thus for a $p$-nilpotent animated ring $R$, a Nygaard filtered Cartier-Witt divisor 
    $\Spec(R)\to \Znumb_{p}^{\Nyg}$ is given by a triple 
       $$(\Line_{1}\to R,\GadR(R)\to \Line_{2},I\to \Witt(R))$$
    consisting of two generalized Cartier divisors and a Cartier-Witt divisor, together with an isomorphism 
       $$(F_{\ast}I\otimes_{F_{\ast}\Witt(R)}\GadR(R)\to \GadR(R))\iso(\Line_{1}\otimes_{R}\Line_{2}\to \GadR(R))$$
    of generalized Cartier divisors. 
\end{non}

\begin{non}[\textbf{The Nygaard filtered Prismatisation}] 
    Out of the data of a filtered Cartier-Witt divisor, we can construct a Cartesian square of filtered animated 
    rings
\[\begin{tikzcd}
	{Fil^{\bullet}_{M}\Witt(R)} & {Fil^{\bullet}_{F_{\ast}I}F_{\ast}\Witt(R)} \\
	{Fil^{\bullet}_{\Line_{1}}R} & {Fil^{\bullet}_{\Line_{1}}\GadR(R)}
	\arrow[from=1-1, to=1-2]
	\arrow[from=1-1, to=2-1]
	\arrow[from=1-2, to=2-2]
	\arrow[from=2-1, to=2-2]
\end{tikzcd}\]
    where the lower horizontal map is the canonical one and the right vertical map uses 
    the identification in the data of a filtered Cartier-Witt divisor \cite{Hauck2025AnA}[2.3.12].
    Thus we obtain a (co)fiber sequence 
       $$M(R) \to \Witt(R) \to \GaNyg(R)$$
     with the cofibre carrying the structure of an animated ring. This defines a ring 
     stack 
         $$\GaNyg \to \Znumb_{p}^{\Nyg}$$ 
     over the moduli of filtered 
     Cartier-Witt divisors. 
     Finally for a $p$-adic formal scheme $X$, we define the \emph{Nygaard filtered Prismatisation} via 
     transmutation, by assigning a Nygaard filtered Cartier-Witt divisor to the anima 
       $$X^{\Nyg}(\Spec(R)\to \Znumb_{p}^{\Nyg}):= X(\Spec(\GaNyg(\Spec(R)\to \Znumb_{p}^{\Nyg}))).$$
\end{non}

\begin{non}\label{Prismatisation in Nyg filtered Prismatiastion}
     The Prismatisation sits openly inside the Nygaard filtered prismatisation in two disjoint ways 
\[\begin{tikzcd}
	{X^{\Prism}} & {X^{\Nyg}} & {X^{\Prism}}
	\arrow["{j_{dR}}", from=1-1, to=1-2]
	\arrow["{j_{HT}}"', from=1-3, to=1-2].
\end{tikzcd}\]
     Here the open immersion on the left comes from pulling back along $\Gm/\Gm \to \A^{1}/\Gm$ and the open 
     immersion on the right comes from pulling back along $(\Gm/\Gm)^{\text{dR}}\to (\A^{1}/\Gm)^{\text{dR}}$.
     Here composing $j_{\text{dR}}$ with the structure map recovers the identity, while composing 
     $j_{\text{HT}}$ with the structure map recovers the Frobenius.
\end{non}

\begin{ex}\label{Nygaard filtered prismatisation of a semiperfectoid example}
   Given a filtered animated ring $Fil^{\bullet}A$, we can associate to it the so called 
   \emph{Rees stack}
      $$\Rees(Fil^{\bullet}A):= \Spec(\oplus_{i\in \Znumb}Fil^{i}A \cdot t^{-1})/\Gm \to \A^{1}/\Gm$$
   where the $\Gm$-action comes from the grading. Then one obtains an equivalence 
      $$\QCoh(\Rees(Fil^{\bullet}A))\iso \Mod_{Fil^{\bullet}A}(\QCohF(A))$$
   where on the right we denote by $\QCohF(A)$ the category of filtered objects in $\QCoh(A)$ \cite{MoulinosGeomofFiltr}.
   
   Now given a semiperfectoid $S$, by \cite{FGauges}[5.5.10] there is a canonical identification 
      $$\Rees(Fil^{\bullet}_{\Nyg}\Prism_{S}) \iso R^{\Nyg}$$
   where we write $Fil^{\bullet}_{\Nyg}\Prism_{S}$ for the absolute Nygaard filtration \cite{BhattLurieAPC}[5.5]. 
\end{ex}

\begin{ex}\label{Nygaard stratification on perferctoids example}
   Continuing \ref{Nygaard filtered prismatisation of a semiperfectoid example}, let us assume 
   $R$ is integral perfectoid and $A\iso \Ainf(R)$. Then we have an isomorphism of graded rings 
      $$\oplus_{i \in \Znumb}Fil^{i}_{\Nyg}A\cdot t^{-1}\iso A[t,u]/(tu-\varphi^{-1}(d))$$
   where $d$ is a generator of $I$ and $u=\varphi^{-1}(d)t^{-1}$ sits in degree $-1$.
   
   From this description, we obtain the following Zariski stratification:
      \begin{itemize}
         \item[(a)] The locus $(R^{\Nyg})_{t\neq 0}$ corresponds to taking the colimit of the 
                    filtration and thus recovers $\Spf(A)\iso R^{\Prism}$.
         \item[(b)] The locus $(R^{\Nyg})_{t=0,u\neq 0}$ identifies with 
                    $\Spf(A/\varphi^{-1}(d))\iso \Spf(\varphi_{\ast}R)$ via the filtered Frobenius \cite{FGauges}[5.5.6].
         \item[(c)] The locus $(R^{\Nyg})_{u=t=0}$ identifies with $\BGm$ over $\Spf(\varphi_{\ast}R)$.
      \end{itemize}
\end{ex}

\begin{ex}\label{stratification of Nygaard filtered Prismatisation}
   We further continue on \ref{Nygaard stratification on perferctoids example} and describe the ring stack 
   $\GaNyg$ over a perfectoid $R$. That is, in order to describe the pullback of the latter ring stack 
   to $R^{\Nyg}$, we can consider the following diagram of $A$-module schemes 
\[\begin{tikzcd}
	{\varphi^{-1}(I)\otimes_{A}\Gasharp} & {\varphi^{-1}(I)\otimes_{A}\Witt} & {F_{\ast}(I\otimes_{A}\Witt)} \\
	{\V(\Line_{2})^{\sharp}} & M & {F_{\ast}(I\otimes_{A}\Witt)} \\
	\Gasharp & \Witt & {F_{\ast}\Witt}
	\arrow[from=1-1, to=1-2]
	\arrow[from=1-1, to=2-1, "u^{\sharp}"']
	\arrow[from=1-2, to=1-3]
	\arrow[from=1-2, to=2-2]
	\arrow[from=1-3, to=2-3, equal]
	\arrow[from=2-1, to=2-2]
	\arrow[from=2-1, to=3-1, "t^{\sharp}"']
	\arrow[from=2-2, to=2-3]
	\arrow[from=2-2, to=3-2]
	\arrow[from=3-1, to=3-2]
	\arrow[from=3-2, to=3-3]
   \arrow[from=2-3, to=3-3]
\end{tikzcd}\] 
   where the horizontal compositions are fiber sequences and the vertical compositions are the 
   canonical maps. Then we obtain 
       $$\Witt/M \iso \GaNyg.$$
   
   Thus, further restricting to the stratification described in \ref{Nygaard stratification on perferctoids example}
   we obtain: 
      \begin{itemize}
         \item[(a)] Over $(R^{\Nyg})_{t\neq 0}$ we obtain 
                         $$\GaNyg \iso F_{\ast}(\Witt/I)\iso \varphi^{\ast}\GaPrism$$
                     as one sees by taking cofibres of the lower vertical maps. 
         \item[(b)] Over $(R^{\Nyg})_{t=0,u\neq 0}$, we obtain 
                         $$\GaNyg \iso \Witt/\varphi^{-1}(I)\iso \varphi^{\ast}\GaHT.$$
         \item[(c)] Over $(R^{\Nyg})_{u=t=0}$ we first observe that 
                         $$M\iso \V(\Line_{2})^{\sharp}\oplus F_{\ast}(I\otimes_{A}\Witt).$$
                    Furthermore, as we live over the Hodge-Tate locus, the Cartier-Witt divisor is 
                    locally given by $V(1)\from \Witt \to \Witt$. Such that via this 
                    identification the map $M\to \Witt$ sits in a pushout square 
\[\begin{tikzcd}
	{\V(\Line_{2})^{\sharp}\oplus F_{\ast}\Witt} & \Witt \\
	{\V(\Line_{2})^{\sharp}} & \Ga
	\arrow["{(0,V)}", from=1-1, to=1-2]
	\arrow["pr"', from=1-1, to=2-1]
	\arrow["pr", from=1-2, to=2-2]
	\arrow["0"', from=2-1, to=2-2].
\end{tikzcd}\]
                    From this we learn that over this locus we have an identification 
                       $$\GaNyg\iso \Ga\oplus\BV(\Line_{2})^{\sharp}\iso \GaHodge.$$
                      
      \end{itemize}

      Finally, unwinding what we did for a map $X\to \Spf(R)$ of $p$-adic formal schemes, we learn 
      that we obtain a stratification:
         \begin{itemize}
            \item[(a)] $(X^{\Nyg})_{t\neq0} \iso X^{\Prism}$ over $\Spf(A)$.
            \item[(b)] $(X^{\Nyg})_{t=0,u\neq 0}\iso \varphi_{\ast}X^{\text{HT}}$ over $\Spf(\varphi_{\ast}R)$.
            \item[(c)] $(X^{\Nyg})_{u=t=0}\iso \varphi_{\ast}X^{\text{Hodge}}$ over $\BGm_{\Spf(\varphi_{\ast}R)}$.
         \end{itemize}
\end{ex}


\subsection{Recollections on the Hodge-Tate locus}

One can understand the geometry of the Hodge-Tate stack \ref{HodgeTate stack} quite well. In order to 
make use of this later, we need to recall some aspects of these stacks. For this, we will essentially copy 
\cite{BhattLuriePrismatisationofpadicformalschemes}[5], but taking care that we work in a slightly different topology. 

\begin{non}\label{Classifying spcae via witt wectors}
   Let us write 
      $$\Gasharp\{1\} \in (\Znumb_{p}^{HT})_{\text{éid}}$$
   for the sheaf which takes a Hodge-Tate divisor $I \to \Witt(S)$ on 
   $\Spf(S)$ to $\Gasharp(S)\{1\}$ where the Breuil-Kisin twist was defined in \ref{BK twist non analytic}. 
   This defines a group object 
   and we will write $\BGasharp\{1\}$ for the sheaf of torsors on it. Not that using 
   \ref{twistet witt vector sequences} and \ref{PD-hull via witt vectors} we obtain an identification 
      $$\BGasharp\{1\} \iso R\Gamma_{\text{éid}}(\_,\Witt[F]\{1\}[1])$$
   where the twist $\{1\}$ on the right is defined analogously. 
\end{non}

Let us write $\GaHT$ for the ring stack over $\Znumb_{p}^{HT}$ obtained by 
pulling back $\GaPrism$. The following, then, is an absolute version of \cite{BhattLuriePrismatisationofpadicformalschemes}[5.10]
with the same proof. 

\begin{prop}\label{HodgeTate as square zero extension}
   The ring stacks $\GaHT$ is a square-zero extension of $\Ga$ by $\BGasharp\{1\}[-1]$. That is 
   there exists a natural $\Witt$-linear derivation $\partial\from  \Ga \to\Ga \oplus \BGasharp\{1\}$ fitting into a 
   cartesian square 
\[\begin{tikzcd}
	{\GaHT} & \Ga \\
	\Ga & {\Ga\oplus\BGasharp\{1\}}
	\arrow[from=1-1, to=1-2]
	\arrow["\pi^{HT}"', from=1-1, to=2-1]
	\arrow["\partial", from=1-2, to=2-2]
	\arrow["{\partial_{triv}}"', from=2-1, to=2-2]
\end{tikzcd}\]
   of ring stacks over $\Znumb_{p}^{HT}$.  
\end{prop}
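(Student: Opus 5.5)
The plan is to follow Bhatt--Lurie's proof of the relative statement \cite{BhattLuriePrismatisationofpadicformalschemes}[5.10], carried out with sheaves on the éid-topology of the Hodge--Tate locus. Throughout we work with a Hodge--Tate divisor $I\to \Witt(S)$ on an affine $p$-adic formal scheme $\Spf(S)$. By definition $\GaHT = \Witt/I$. Locally (for the éid-topology, or Zariski locally on $\Znumb_{p}^{HT}$) the divisor is given by $V(1)\cdot u$ for a unit $u$, i.e. it is isomorphic to $V\from F_{\ast}\Witt\{1\}\to \Witt$. Hence
$$\GaHT\iso \mathrm{cof}(V\from F_{\ast}\Witt\{1\}\to \Witt)$$
as $\Witt$-module sheaves.

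\textbf{Step 1: the underlying module statement.} Composing with the projection $\Witt\to \Ga$, whose fiber is $V(F_{\ast}\Witt)$, gives a map $\pi^{HT}\from \GaHT\to \Ga$. I will compute its fiber via the octahedral axiom for $F_{\ast}\Witt\{1\}\xrightarrow{V}\Witt\to\Ga$. The fiber of $\pi^{HT}$ is the cofiber of the map $F_{\ast}\Witt\{1\}\to V F_{\ast}\Witt\{1\}$. This map is the identity on the image but twisted by the unit $u$, or more precisely by $V(1)\cdot$. Using $V(x)\cdot y=V(x F(y))$ I will rewrite it as a Frobenius-type map. Then \ref{PD-hull via witt vectors} together with \ref{twistet witt vector sequences} identifies the cofiber with $\Witt[F]\{1\}[1]\iso \Gasharp\{1\}[1]$. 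By \ref{Classifying spcae via witt wectors} this is $\BGasharp\{1\}$. So we get a fiber sequence
$$\BGasharp\{1\}[-1]\to \GaHT\to \Ga$$
of $\Witt$-modules over $\Znumb_{p}^{HT}$. The twist is canonical because $I\iso \Oh\{1\}$ by \ref{BK twist non analytic}.

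\textbf{Step 2: square-zero structure.} It remains to show this is a square-zero extension of animated rings, i.e. it is classified by a derivation $\partial\from \Ga\to \Ga\oplus\BGasharp\{1\}$ making the stated square cartesian. Following Bhatt--Lurie, I will first treat the universal case over the orientation cover $\Witt_{0}\to\Znumb_{p}^{\Prism}$ of \ref{presentation of prismatisation}, restricted to the Hodge--Tate locus. There the ideal $I\cdot \GaPrism$ squares to zero: $I^{2}\subset I\cdot V(\Witt)$ and $V(x)V(y)=pV(xy)$, while $p$ lies in $I$ modulo $V$-terms. Hence $\GaHT$ is the quotient of $\Witt/I^{2}$-type data. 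The derivation will be constructed as the map classifying the extension $\Witt/I^{2}\to \Witt/I$ after reduction along $\Witt\to\Ga$; equivalently, via the canonical $\delta$-structure, $\partial(x)=(x, \delta$-correction$)$. Naturality lets me descend it along the $\Witt^{\times}$-action to all of $\Znumb_{p}^{HT}$, since the éid-topology sees this quotient presentation.

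\textbf{Main obstacle.} The hard part will be Step 2 in the animated setting. We need to produce the derivation as a map of ring stacks, not just of modules, and check that the pullback square is one of animated rings. For this I would use that both sides are determined on polynomial (or semiperfectoid, by \ref{Basis by semiperfectoids lemma}-type arguments) test objects, where everything is static. On static rings the square-zero claim is a concrete computation with Witt vector identities. I then left Kan extend, using \ref{Witt vecotors is an derived sheaf} and \ref{Prismatiastion of the affine line is a derived sheaf} to ensure that the éid-sheafified global sections stay connective, so that the cartesian square survives.
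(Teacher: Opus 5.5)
The genuine gap is Step 2, which carries the whole content of the proposition. As a sheaf, the fiber of $\GaHT\to\Ga$ is $\Gasharp\{1\}[1]=\BGasharp\{1\}$, concentrated in homological degree $1$. This is what your Step 1 computation actually gives, so the $[-1]$ in the sequence you display is a slip. Hence the square-zero structure is homotopy-coherent data (a $k$-invariant), not a statement that some ideal squares to zero.

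Nothing you propose constructs this data:
\begin{itemize}
\item $\Witt/I^{2}\to\Witt/I$ is an extension of $\GaPrism$ by $I/I^{2}$, not an extension of $\Ga$ by $\BGasharp\{1\}$.
\item ``Reduction along $\Witt\to\Ga$'' and the ``$\delta$-correction'' are never turned into an actual map $\partial$.
\item Identities like $V(x)V(y)=pV(xy)$ only control products of elements, i.e.\ $\pi_{0}$.
\end{itemize}
The fallback to static polynomial test rings does not repair this. For such $S$ and $d=V(u)$ one has $\pi_{0}(\Witt(S)/I)\iso\Witt(S)/V(uF\Witt(S))$ and $\pi_{1}(\Witt(S)/I)\iso I\otimes\Witt[F](S)$. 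So the values are in general neither static nor equal to $S$ on $\pi_{0}$, and the $\Witt[F]$-part of the kernel is invisible to elementwise computations.

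There is also a smaller error in Step 1: its first display is wrong as written, since $\mathrm{cof}(V)$ is just $\Ga$. The divisor is $x\mapsto V(u)x=V(uF(x))$, and the extra term is the cofiber of $uF$, which is $\Gasharp\{1\}[1]$ by \ref{PD-hull via witt vectors}.

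The missing idea is the one the paper uses, following Bhatt--Lurie, after the same reduction to prisms via \ref{presentation of prismatisation} that you propose. Evaluate on static $p$-nilpotent rings $S$ for which $F\from\Witt_{n}(S)\to F_{\ast}\Witt_{n-1}(S)$ is surjective for all $n\ge 2$; these form a basis of the (flat) éid-topology by \ref{Basis of witt vector surjectives}. For such $S$, the identity $V(u)\cdot x=V(uF(x))$ shows that $I\otimes\Witt_{n}(S)\to\Witt_{n}(S)$ maps onto $V\Witt_{n}(S)$. So $\Witt_{n}(S)/I$ is $1$-truncated with $\pi_{0}\iso S$ and $\pi_{1}\iso I\otimes\Witt_{n}[F](S)$. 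Any $1$-truncated animated ring is canonically a square-zero extension of its $\pi_{0}$ by $\pi_{1}[1]$, via its Postnikov truncation. This produces $\partial$ functorially in $S$ and $n$.

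One then passes to the limit in $n$ and descends to all static $p$-nilpotent algebras, using that every term is an éid-sheaf. Finally one left Kan extends from polynomial algebras over $\overline{A}/p^{n}$ to animated ones. Without this truncation argument on a well-chosen basis, your Step 2 contains no construction of the derivation.
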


\begin{proof}
   First, all ring stacks in the square define sheaves for the 
   éid-topology. For the lower right corner, this follows from the 
   identification \ref{Classifying spcae via witt wectors}, for the left upper corner 
   from \ref{Prismatiastion of the affine line is a derived sheaf}, and 
   for $\Ga$ as the topology is subcanonical. That means to show the claim 
   for the values on an animated Hodge-Tate divisor $\alpha \from I \to \Witt(S)$, we can 
   resolve $\Znumb_{p}^{HT}$ by (ordinary) prisms and show the claim on the induced 
   cover of $\Spf(S)$. Concretely, we can, for example, take the covering from \ref{presentation of prismatisation}
   then all objects appearing in the \v{C}ech nerve are represented by (ordinary) prisms 
   \cite{BhattLurieAPC}[3.2.8+3.2.10]. This reduces the claim to the situation relative to a prism, 
   in which case the argument is given in \cite{BhattLuriePrismatisationofpadicformalschemes}[5.10]. For the convenience of the reader, 
   we recall this argument now. 

   We fix a prism $(A, I)$ and let us assume that $p$ is a non-zero divisor in $\overline{A}$. 
   Using \ref{Classifying spcae via witt wectors} we can replace $\BGasharp\{1\}$
   by $R\Gamma(\_,\Witt[F]\{1\}[1])$ and by taking limits it suffices to prove the 
   corresponding claim for $R\Gamma(\_,\Witt_{n}[F]\{1\}[1])$ functorial in $n$. 
   
   We first prove this claim evaluated at a static $p$-nilpotent $\overline{A}$-algebra $S$ for which the Frobenius maps 
      $$F \from \Witt_{n}(S) \to F_{\ast}\Witt_{n-1}(S)$$
   are surjective for all $n\ge 2$. To see this, we first claim:
      \begin{itemize}
         \item[($\ast$)] The map $\alpha \from I\otimes_{\Witt(S)}\Witt_{n}(S) \to \Witt_{n}(S)$ maps surjectivity onto 
               $V\Witt_{n}(S)$. 
      \end{itemize}
   Using this claim, we can compute 
       $$\pi_{0}\overline{\Witt_{n}}(S)\iso \Witt_{n}(S)/V\Witt_{n}(S) \iso S$$
   and 
     $$\pi_{1}\overline{\Witt_{n}}(S)\iso ker(I\otimes_{\Witt(S)}\Witt_{n}(S)\to \Witt_{n}(S))\iso 
     I\otimes_{\Witt(S)}\Witt_{n}[F](S).$$
   So the claim of the proposition, in this case, follows as any $1$-truncated animated ring 
   is naturally a square zero extension of its $\pi_{0}$ by its $\pi_{1}[1]$. To see $(\ast)$
   we can work zariski locally on $\Witt(S)$ and thus assume $\alpha$ corresponds to a distinguished 
   element $d= (x_{0},x_{1},\dots) \in \Witt(S)$. The assumption that the Cartier-Witt divisor lives 
   in the Hodge-Tate stack tells us that $x_{0}=0$, such that $d=V(u)$ where $u$ is a unit as 
   $d$ is distinguished. Thus the formula $V(u)\cdot \underline{x}=V(u\cdot F(\underline{x}))$
   shows that $\alpha$ maps into the image of the Verschiebung map. On the other hand, as the Frobenius
   is surjective, we can write any $\underline{y}\in \Witt_{n+1}(S)$ as 
   $\underline{y}=u\cdot F(\underline{x})$ for some $\underline{x}$ and the surjectivity follows from 
   the equality
      $$V(\underline{y})= V(u\cdot F(\underline{x})) = V(u)\cdot \underline{x}.$$
    
    To deduce the claim of the proposition, we proceed as follows. As all functors are sheaves for the 
    éid-topology, by \ref{Basis of witt vector surjectives}, we can descend the statement to all static 
    $p$-nilpotent $\overline{A}$ algebras. This in particular implies the statement for all polynomial 
    algebras over $\overline{A}/p^{n}$. Now all functors are left Kan extended from 
    polynomial algebras, such that we obtain the claim for all $p$-nilpotent animated $\overline{A}$-algebras. 
    The general claim now follows by left Kan extending from affine $p$-nilpotent schemes. 
\end{proof}

\begin{non}
   For any morphism $f\from X\to S$ of $p$-adic formal schemes precomposing with the map $\pi_{HT} \from \GaHT \to \Ga$
   induces a map 
      $$\pi^{HT}_{f} \from X^{HT} \to X\times_{S}S^{HT}$$
   which we will call the \emph{Hodge-Tate structure map} of $f$. Using \ref{HodgeTate as square zero extension}, we can 
   understand the geometry of this map in terms of the cotangent complex of $f$. 
\end{non}

\begin{non}\label{torsors along square zero extentions}
   Given a square zero extension $\tilde{B} \to B$ along a connective $B$-module $N$. Then, for any morphism $X\to S$ of (derived formal) schemes and any point 
   $\eta \in X(B)\times_{S(B)}S(\tilde{B})$ the fibre at $\eta$ of the map 
       $$X(\tilde{B})\to X(B)\times_{S(B)}S(\tilde{B})$$
   gives a torsor over $\Hom_{X}(\CoTan_{f},N)$. Let us informally\footnote{To make this coherent, we have to specify
   a point in $\BHom_{X}(\CoTan_{f},N)\iso \Hom_{X}(\CoTan_{f},N[-1])$. But the trivial map does the job.} describe the 
   action. The point $\eta$ corresponds to a commutative square like the outer square in the following diagram 
\[\begin{tikzcd}
	{\Oh_{S}} & {\Oh_{X}\times_{B}\tilde{B}} & {\tilde{B}} \\
	{\Oh_{X}} & {\Oh_{X}} & B
	\arrow[from=1-1, to=1-2]
	\arrow[from=1-1, to=2-1]
	\arrow[from=1-2, to=1-3]
	\arrow[from=1-2, to=2-2]
	\arrow[from=1-3, to=2-3]
	\arrow[dashed, from=2-1, to=1-2]
	\arrow[dashed, from=2-1, to=1-3]
	\arrow[from=2-1, to=2-2]
	\arrow[from=2-2, to=2-3].
\end{tikzcd}\]
   Thus, the fiber is given by the anima of dashed lifts in the square. This anima is equivalent to the dashed lifts in the 
   left square, but now choosing such a lift gives an identification of $\Oh_{X}\times_{B}\tilde{B}$ with the trivial 
   square zero extension. Via this identification, the anima of lifts, by definition, becomes the anima of $\Oh_{S}$-linear 
   derivations in $N$, which is isomorphic to $\Hom_{X}(\CoTan_{f},N)$. 
\end{non}

\begin{nota}
   Consider a morphism $f\from X\to S$ of $p$-adic formal schemes. Then, we will write 
      $$\V(\CoTan_{f}\{1\})^{\sharp} \to X\times_{S}S^{HT}$$
   for the bundle which takes a point $\Spec(R) \to X\times_{S}S^{HT}$ to 
      $$\Hom_{R}(\eta_{X}^{\ast}\CoTan_{f},\Gasharp\{1\}(R))$$
   where $\eta$ corresponds to the point in $X$ and the Breuil-Kisin twist is defined via the point in $S^{HT}$. Note that this 
   defines a group object. The following proposition is just a reformulation of \cite{BhattLuriePrismatisationofpadicformalschemes}[5.12]
   in its natural generality. 
\end{nota}

\begin{prop}\label{Hodge Tate map is a gerbe}
   Given a map $f\from X\to S$ of derived $p$-adic formal schemes, the associated Hodge-Tate structure map 
      $$\pi^{HT}_{f} \from X^{HT} \to X\times_{S}S^{HT}$$
   defined a gerbe baned by $\V(\CoTan_{f}\{1\})^{\sharp}$.  
\end{prop}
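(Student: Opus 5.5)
The plan is to describe the fibres of $\pi^{HT}_{f}$ pointwise using the square-zero description of $\GaHT$ in \ref{HodgeTate as square zero extension}, and then to apply the deformation theory recalled in \ref{torsors along square zero extentions}.

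Fix a test point $\Spec(R)\to \Znumb_{p}^{HT}$ with $R$ a $p$-nilpotent animated ring. By \ref{HodgeTate as square zero extension}, the ring $\tilde{B}:=\GaHT(R)$ (more precisely its global sections, which form an animated ring by \ref{Prismatiastion of the affine line is a derived sheaf}) is a square-zero extension of $B:=R$ by $N:=\Gasharp\{1\}(R)$, read as a sheaf on $R_{\text{éid}}$. The extension is classified by the derivation $\partial$.

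An $R$-point of $X^{HT}$ is a point of $X(\tilde{B})$. Its image under $\pi^{HT}_{f}$ is the pair consisting of the induced point in $X(B)$ and the point in $S(\tilde{B})$, and these are compatible in $S(B)$. So the map $\pi^{HT}_{f}$ evaluated at $R$ is exactly
$$X(\tilde{B})\to X(B)\times_{S(B)}S(\tilde{B}).$$
By \ref{torsors along square zero extentions}, each nonempty fibre of this map is a torsor under $\Hom_{X}(\CoTan_{f},N)=\Hom_{R}(\eta^{\ast}\CoTan_{f},\Gasharp\{1\}(R))$. This is precisely the value of $\V(\CoTan_{f}\{1\})^{\sharp}$ at $\eta$.

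To upgrade this to a gerbe statement, I will check three things.
\begin{itemize}
\item[(i)] The torsor structure, and hence the banding, is functorial in $R$ and compatible with base change. This follows from naturality of the derivation $\partial$ and of the construction in \ref{torsors along square zero extentions}.
\item[(ii)] The relative automorphism groups are abelian and identified with $\V(\CoTan_{f}\{1\})^{\sharp}$. Concretely, the loop space of a fibre is $\Hom(\CoTan_{f},N[-1])$ and its $\pi_{0}$ is the bundle. The correct formulation is that the fibre is a torsor under $\Hom(\CoTan_{f},\Gasharp\{1\}[1])$, that is, under $B\V(\CoTan_{f}\{1\})^{\sharp}$, and this is what "gerbe banded by" means.
\item[(iii)] The map is locally surjective for the éid-topology.
\end{itemize}

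Step (iii) is the main obstacle, since a nonempty fibre requires a lift through the square-zero extension. Deformation theory says the obstruction lives in $\Hom(\CoTan_{f},N[1])$, and it is the trivial class exactly when the extension is pulled back along the trivial derivation. Here $\GaHT$ is a pullback of the trivial extension along $\partial$ composed with a point of $X$, so for a given point $\eta$ the obstruction is the pullback of the class of $\partial$ along $\eta$.

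To show that this class dies locally, I will first work éid-locally on $R$, using \ref{Basis of witt vector surjectives} and the identification $\BGasharp\{1\}\iso R\Gamma_{\text{éid}}(\_,\Witt[F]\{1\}[1])$. On this basis an element of $\pi_{0}\Hom(\CoTan_{f},N[1])$ restricted to affines is locally trivial: after passing to a cover, the cohomology classes of $\Witt[F]$ vanish. Then I will reduce, as in the proof of \ref{HodgeTate as square zero extension}, to the case of a prism with $p$ a non-zero divisor. In that setting the statement is \cite{BhattLuriePrismatisationofpadicformalschemes}[5.12], and both sides are éid-sheaves, so the result descends.

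Finally, I will pass from polynomial test algebras to general ones by left Kan extension, since both the source and the target of $\pi^{HT}_{f}$ are left Kan extended.
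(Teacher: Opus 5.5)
Your route is the paper's. Its entire proof combines \ref{HodgeTate as square zero extension} with \ref{torsors along square zero extentions}, and reads off that the fibre over $\eta$ is a torsor under $\Hom_R(\eta_X^{\ast}\CoTan_f,\BGasharp\{1\}(R))\iso \BV(\CoTan_f\{1\})^{\sharp}(R)$.

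However, you plug in the wrong module. The fibre $N$ of $\GaHT(R)\to R$ is $\BGasharp\{1\}(R)$, i.e.\ $\Gasharp\{1\}[1]$, not $\Gasharp\{1\}(R)$. You can see this in the proof of \ref{HodgeTate as square zero extension}: there $\GaHT(S)$ has $\pi_0\iso S$ and $\pi_1\iso I\otimes_{\Witt(S)}\Witt[F](S)\iso \Gasharp\{1\}(S)$. It is therefore the square-zero extension of its $\pi_0$ by $\pi_1[1]$. With your $N$, \ref{torsors along square zero extentions} would make $\pi^{HT}_f$ a $\V(\CoTan_f\{1\})^{\sharp}$-\emph{torsor}, not a gerbe. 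So the banding you assert in (ii) is the right one, but it contradicts your own setup rather than following from it. Once $N=\BGasharp\{1\}$, the description as a $\BV(\CoTan_f\{1\})^{\sharp}$-torsor is immediate and (ii) needs no separate argument.

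The shift does real damage in (iii). Your vanishing mechanism kills the cohomology of $\Witt[F]$ via \ref{Basis of witt vector surjectives}. That is tailored to obstructions in $\Hom(\CoTan_f,\BGasharp\{1\})$, i.e.\ to your incorrect $N$. With the correct $N$, the obstruction lies in $\pi_0\Hom_R(\eta^{\ast}\CoTan_f,N[1])$. Even on a basis where $R\Gamma(\_,\Witt[F]\{1\})$ is discrete, this is $\pi_0\Hom_R(\eta^{\ast}\CoTan_f,\Gasharp\{1\}(R)[2])$, an $\mathrm{Ext}^2$ of $R$-modules. No refinement of $R$ kills this group in general: if $\eta^{\ast}\CoTan_f$ has a summand $P[2]$, it contains $\Hom_R(P,\Gasharp\{1\}(R))$. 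So your claim that every class becomes locally trivial is false for arbitrary derived $f$.

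The claim does hold when $\CoTan_f$ is perfect of Tor amplitude in $[0,1]$, since then this $\pi_0$ vanishes outright. So (iii) goes through for smooth or quasi-smooth $f$. For general $f$ you would have to show that the specific obstruction class attached to $\eta$ (coming from $\partial$) dies locally, not the whole group. Citing \cite[5.12]{BhattLuriePrismatisationofpadicformalschemes} cannot supply this, because the present proposition is stated as a generalization of that result.

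For comparison, the paper's proof does not isolate local surjectivity at all; it stops at the torsor description. Your final left Kan extension step is also unnecessary, since the fibre computation is pointwise in $R$.
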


\begin{proof}
   We claim that for any point $\eta\from \Spec(R) \to X\times_{S}S^{HT}$ the fibre of the map 
      $$X^{HT}(R) \to X(R)\times_{S(R)}S^{HT}(R)$$
   at $\eta$ defines a torsor over 
      $$\BV(\CoTan_{f})^{\sharp}(R) \iso \Hom_{R}(\eta_{X}^{\ast}\CoTan_{f},\BGasharp(R)).$$
   This follows by combining \ref{torsors along square zero extentions} and \ref{HodgeTate as square zero extension}. 
\end{proof}

\begin{cor}\label{Hodge-Tate square for trivial cotangent complex}
   For any map $f\from X\to S$ of $p$-adic formal schemes, which has vanishing cotangent
   complex the square 
\[\begin{tikzcd}
	{X^{HT}} & {S^{HT}} \\
	X & S
	\arrow[from=1-1, to=1-2]
	\arrow["{\pi^{HT}}"', from=1-1, to=2-1]
	\arrow["{\pi^{HT}}", from=1-2, to=2-2]
	\arrow["f"', from=2-1, to=2-2]
\end{tikzcd}\]
   is Cartesian. 
\end{cor}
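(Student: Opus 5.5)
We deduce the corollary directly from \ref{Hodge Tate map is a gerbe}, combined with the fact that gerbes banded by the trivial group are isomorphisms. Since both vertical maps are the Hodge-Tate structure maps, the square factors as
   $$X^{HT} \xrightarrow{\pi^{HT}_{f}} X\times_{S}S^{HT} \to S^{HT}.$$
The second map is the base change of $f$ along $\pi^{HT}\from S^{HT}\to S$, and its composite with the first map is the top horizontal map. The square is therefore Cartesian exactly when $\pi^{HT}_{f}$ is an isomorphism. We first check that this factorization really recovers the square, i.e. that the composite $X^{HT}\to X\times_{S}S^{HT}\to X$ is $\pi^{HT}$ for $X$. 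This follows because $\pi^{HT}_{f}$ is defined by precomposing with $\pi_{HT}\from \GaHT\to\Ga$, and the same ring-stack map defines $\pi^{HT}$ for $X$ and for $S$.

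Next we compute the band. By \ref{Hodge Tate map is a gerbe}, $\pi^{HT}_{f}$ is a gerbe banded by $\V(\CoTan_{f}\{1\})^{\sharp}$. Its $R$-points at a point $\eta$ are
   $$\Hom_{R}(\eta_{X}^{\ast}\CoTan_{f},\Gasharp\{1\}(R)),$$
and the relevant fibres are torsors under
   $$\Hom_{R}(\eta^{\ast}_{X}\CoTan_{f},\BGasharp\{1\}(R)).$$
If $\CoTan_{f}\iso 0$, both of these are contractible for every test object.

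It remains to conclude. The proof of \ref{Hodge Tate map is a gerbe} shows, via \ref{torsors along square zero extentions} and \ref{HodgeTate as square zero extension}, that for each point $\eta\from\Spec(R)\to X\times_{S}S^{HT}$ the fibre of $X^{HT}(R)\to X(R)\times_{S(R)}S^{HT}(R)$ is a torsor under the contractible anima above. A torsor under a contractible group is nonempty by the proof's construction (we trivialize using the chosen trivial class) and is then itself contractible. Hence the map is an equivalence on $R$-points for every $p$-nilpotent $R$. Both sides are left Kan extended from such $R$ and are éid-sheaves, so $\pi^{HT}_{f}$ is an isomorphism.

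The one point needing care is that the fibres are genuinely nonempty, not merely pseudo-torsors. With the formulation via \ref{torsors along square zero extentions}, the obstruction to lifting lies in $\Hom(\CoTan_{f},N[1])$ for the connective $N=\BGasharp\{1\}[-1](R)$ coming from \ref{HodgeTate as square zero extension}. This group vanishes when $\CoTan_{f}=0$, so nonemptiness holds.
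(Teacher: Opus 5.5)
Your proposal is correct and follows the paper's route. The paper records this as an immediate consequence of Proposition~\ref{Hodge Tate map is a gerbe}: for $\CoTan_{f}\iso 0$ the band $\V(\CoTan_{f}\{1\})^{\sharp}$ is trivial, so the comparison map $\pi^{HT}_{f}\from X^{HT}\to X\times_{S}S^{HT}$ is an isomorphism and the square is Cartesian. Your extra check of nonemptiness, via the vanishing obstruction group $\Hom(\eta_{X}^{\ast}\CoTan_{f},N[1])$, simply spells out the standard fact that a gerbe with trivial band is an equivalence.
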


In the following statement, we will make use 
of the notion of derived algebras in the sense of \cite{DerivedAlgMathew}.

\begin{prop}\label{affines of Hodge structure map}
   Consider an affine $p$-adic formal scheme $f\from \Spf(B)\to \Spf(\Znumb_{p})$, then the map 
      $$B^{\text{Hodge}} \to \BGm$$
   is relatively affine and represented by the graded (possibly non-connective) derived algebra 
       $$\oplus_{i}\Gamma(\Spf(B),\wedge^{i}\CoTan_{f}[-i])(i)$$
   computing absolute Hodge cohomology. 
\end{prop}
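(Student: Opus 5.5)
The plan is to mimic the Hodge–Tate computation of \ref{Hodge Tate map is a gerbe}, with the square-zero extension $\GaHodge=\Ga\oplus\BV(\Line^{-1})^{\sharp}$ replacing $\GaHT$. By definition, a point $\Spec(R)\to B^{\text{Hodge}}$ over a point $(\Spec(R),\Line)$ of $\BGm$ is a point of $\Spf(B)$ with values in the trivial square-zero extension $R\oplus \BV(\Line^{-1})^{\sharp}(R)$. This extension is split by the canonical section $R\to R\oplus\BV(\Line^{-1})^{\sharp}(R)$.

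\emph{Step 1: reduce to a relative classifying stack.} Apply \ref{torsors along square zero extentions} to the split extension, with $S=\Spf(\Znumb_{p})$. The structure map $B^{\text{Hodge}}\to \Spf(B)\times\BGm$ has a canonical section given by the trivial derivation. The fibre over a point $\eta$ is then canonically $\Hom_{R}(\eta^{\ast}\CoTan_{f},\BGasharp\otimes\Line^{-1})$, because $\Hom(\CoTan_{f},N)$ is additive in $N$. Hence
\[
B^{\text{Hodge}}\iso B\V(\CoTan_{f}\otimes\Line)^{\sharp}\quad\text{over } \Spf(B)\times\BGm,
\]
with the twist bookkeeping to be fixed by the sign conventions. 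This is a relative classifying stack of the $\Gasharp$-vector group attached to the connective module $\CoTan_{f}$, twisted by the universal line bundle.

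\emph{Step 2: compute the pushforward.} This is the main obstacle. I must show that $B\V(\E)^{\sharp}\to \Spf(B)$ is relatively affine in the derived sense of \cite{DerivedAlgMathew}, with functions $\oplus_{i}\wedge^{i}\E^{\vee}[-i]$ in dual form. Equivalently, on the $\Oh$-side the pushforward of $\Oh$ is $\oplus_{i}\wedge^{i}\CoTan_{f}[-i]$, graded by the $\Gm$-weight $i$.

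First I treat $\E$ finite projective, using the Cartier duality $\V(\E)^{\sharp}\iso \widehat{\V(\E^{\vee})}$. The cohomology of $B\widehat{\Ga}$ is computed by the Koszul/Chevalley–Eilenberg complex, which gives an exterior algebra $\wedge^{\bullet}\E[-\bullet]$ concentrated in the right weights. Working over $\BGm$, the $\Gm$-weights give the grading $(i)$. The same computation shows that $\QCoh$ of the classifying stack is modules over this algebra, up to the usual completion issue. I will handle that issue by checking that the functor $\Gamma$ is conservative and commutes with colimits. This follows because $\Gasharp$ is flat affine and $B\Gasharp$ admits the fpqc cover from a point, so descendability in the sense of \ref{EIDtopos} applies.

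Next I extend to arbitrary connective $\CoTan_{f}$ by animation. Both sides, the stack $B\V(-)^{\sharp}$ and $\oplus\wedge^{i}(-)[-i]$, commute with sifted colimits of connective modules. These are exactly the derived algebras of \cite{DerivedAlgMathew}, which is why non-connective derived algebras appear.

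\emph{Step 3: globalise.} Take global sections over $\Spf(B)$ and over the base $\BGm$; these are compatible with the $\Gm$-grading. The resulting graded derived algebra is $\oplus_{i}\Gamma(\Spf(B),\wedge^{i}\CoTan_{f}[-i])(i)$, which is the Hodge cohomology. Relative affineness over $\BGm$ follows because $\Spf(B)$ is affine, the classifying stack is relatively affine over it, and affineness in this sense is stable under composition. I expect Step 2 to need the most care, in particular matching the cohomological shift and weight conventions.
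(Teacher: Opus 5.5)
Your overall architecture agrees with the paper's. The splitting of $\GaHodge=\Ga\oplus B\V(\Line^{-1})^{\sharp}$ makes $B^{\text{Hodge}}\to\Spf(B)\times\BGm$ a trivial gerbe for $\V(\CoTan_f\otimes\Line)^{\sharp}$ (up to the sign of the twist). The free case is settled by Cartier duality, and the general case by a sifted-colimit argument. You animate in the module $\E=\CoTan_f$ over a fixed $B$, whereas the paper left Kan extends in $B$ from polynomial algebras; that is a harmless variation.

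The gap is in the base case of Step~2. You say you must show that $B\V(\E)^{\sharp}\to\Spf(B)$ is relatively affine in the sense of \cite{DerivedAlgMathew}, and you call this ``equivalent'' to computing the pushforward of $\Oh$. For finite projective $\E$ you then establish only $R\Gamma(B\V(\E)^{\sharp},\Oh)\iso\bigoplus_{i}\wedge^{i}\E[-i]$ and $\QCoh(B\V(\E)^{\sharp})\iso\Mod_{\bigoplus_{i}\wedge^{i}\E[-i]}$. Neither statement implies that the canonical map $B\V(\E)^{\sharp}\to\Spec(\bigoplus_{i}\wedge^{i}\E[-i])$ is an equivalence of stacks. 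That stack-level equivalence is precisely what your animation step consumes. The functor $\E\mapsto B\V(\E)^{\sharp}$ turns a sifted colimit $\E\iso\colimit_{\alpha}\E_{\alpha}$ into the cosifted limit $\limit_{\alpha}B\V(\E_{\alpha})^{\sharp}$ of stacks. Neither $R\Gamma(-,\Oh)$ nor $\QCoh$ commutes with such limits in general, so your free-case computations do not propagate. What does propagate is a natural equivalence $B\V(\E_{\alpha})^{\sharp}\iso\Spec(\bigoplus_{i}\wedge^{i}\E_{\alpha}[-i])$, because $\Spec$ carries colimits of derived rings to limits.

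This is exactly the input the paper takes from the affineness result \cite[4.19]{DerivedAlgMathew}, applied to the gerbe in the polynomial case; Cartier duality is used there only to identify the ring of functions. You need the same ingredient. One form is that $B\Gasharp$ is the affine stack associated with $\Znumb_{p}\oplus\Znumb_{p}[-1]$, extended to finite free $\E$ by products and to the twist by $\Gm$-equivariance. With it, your Steps 2 and 3 go through.

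Two smaller corrections:
\begin{itemize}
\item Cartier duality pairs $\V(\E)^{\sharp}$ with $\widehat{\V(\E^{\vee})}$ rather than identifying them. The Koszul complex computes $\mathrm{End}(\Oh_{0})$ on that formal group, which is $R\Gamma(B\V(\E)^{\sharp},\Oh)$. The cohomology of $B\widehat{\Ga}$ itself is not an exterior algebra in characteristic $p$.
\item Conservativity of $R\Gamma$ on $\QCoh(B\Gasharp)$ does not follow from having an fpqc cover by a point; $B\Gm$ is a counterexample. It comes from conilpotence of the divided power coalgebra, equivalently from the fact that the unit corresponds to the compact generator $\Znumb_{p}[t]/t$ of $t$-torsion modules.
\end{itemize}
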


\begin{proof}
   Note that both sides are compatible with colimits in $B$. For $B^{\text{Hodge}}$, this follows as it is defined by 
   transmutation, and for the absolute Hodge cohomology this follows as it is left Kan-extended from 
   polynomial algebras. Thus it suffices to produce the identification for polynomial algebras. 
   But for polynomial 
   algebras, we can combine \ref{Hodge Tate map is a gerbe} and \cite{DerivedAlgMathew}[4.19] to obtain the affineness. 
   The computation of the global sections in this case follows from Cartier duality (see \cite{FGauges}[2.5.6]). 
\end{proof}

\subsection{Solid Hodge(-Tate) stacks}

In the following section we will work with the topos $\Spf(\Znumb_{p})_{\text{éid}}$ 
from \ref{EIDtopos}. Recall from \ref{From formal to analytic prop} that there is a 
functor 
   $$(\_)^{\Solid} \from \Spf(\Znumb_{p})_{\text{éid}} \to \AnStack_{/\Spa(\Znumb_{p})}$$
which preserves finite limits and colimits. 

\begin{non}
    Given a $p$-adic formal stack $X\in \Spf(\Znumb_{p})_{\text{éid}}$ and a locally connective quasi coherent sheaf 
    $\E \in \QCoh(X)$, we consider the bundle 
       $$\V(\E)^{\sharp} \to X.$$
    Concretely this bundle sends a point $\eta_{A} \from \Spf(A) \to X$ to the anima 
       $$\Hom_{\QCoh(\Spf(A))}(\eta_{A}^{\ast}\E,\Gasharp(A))= \Hom_{\QCoh(\Spf(A))}(\Gamma(\eta_{A}^{\ast}\E),A)$$
    and thus defines a group object in $X_{\text{éid}}$.
\end{non}

The goal of this section is now to prove the following. 

\begin{thm}\label{Properties of Sharpgerbes}
    Given a $p$-adic formal stack $X\in \Spf(\Znumb_{p})_{\text{éid}}$ together with a 
    locally connective perfect complex $\E \in \Perf(X)$, we have the following. 
      \begin{itemize}
        \item[(1)] For any $\V(\E)^{\sharp}$-gerbe $G\to X$, the induced map 
                       $$G^{\Solid} \to X^{\Solid}$$
                   is weakly proper. 
      \end{itemize}
\end{thm}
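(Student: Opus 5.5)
We reduce to the split case and then to a single elementary computation. Weak properness is stable under base change and composition, and it is $\QCoh^{\ast}$-local on the target: being weakly cohomologically proper is checked after a $\QCoh^{\ast}$-cover, cf.\ \ref{stability properties of covers in SIXF} and \ref{weakly proper from decomposition}. Moreover $(\_)^{\Solid}$ preserves finite limits and sends éid-covers to covers of analytic stacks (\ref{From formal to analytic prop} \textbf{A}). So we may pass to an éid-cover $\Spf(A)\to X$ over which $G$ is trivialised, i.e.\ $G\iso B\V(\E)^{\sharp}$ over $\Spf(A)$. Locally $\E$ is a connective perfect complex, so after a further Zariski localisation it has a finite presentation by finite free modules in nonnegative degrees.

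Next we dévissage in $\E$. A cofiber sequence $\E'\to\E\to\E''$ of locally connective perfect complexes gives a fibre sequence of group objects
\[ \V(\E'')^{\sharp}\to\V(\E)^{\sharp}\to\V(\E')^{\sharp}, \]
since $\Gamma$ turns cofiber sequences into tensor products. This in turn gives a factorisation $B\V(\E)^{\sharp}\to B\V(\E')^{\sharp}$ whose fibres are $B\V(\E'')^{\sharp}$-gerbes. By composition and base change, the claim therefore reduces to $\E=\Oh[n]$ with $n\ge 0$. Write $B^{k}\Gasharp$ for the $k$-fold classifying stack, so that $B\V(\Oh[n])^{\sharp}\iso B^{n+1}\Gasharp$. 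Since $B^{k+1}\Gasharp$ is the quotient of a point by $B^{k}\Gasharp$, weak properness of $B^{k+1}\Gasharp\to\ast$ follows by induction from weak properness of $B^{k}\Gasharp\to\ast$ together with that of the diagonal. The diagonal of $B^{k+1}\Gasharp$ is a base change of $\ast\to B^{k+1}\Gasharp$, whose fibre is $B^{k}\Gasharp$. The remaining input is therefore the base cases: the group $\Gasharp\to\Spf(A)$ itself and $B\Gasharp\to\Spf(A)$.

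For $\Gasharp=\Spf(A\langle x\rangle^{PD})$, the map $A\to\Gamma_{A}(A)^{\wedge}$ is integral modulo $p$. Indeed $(x^{n}/n!)^{p}$ is a multiple of $p$ times lower divided powers, and topologically nilpotent elements lie in $A^{+}$. Hence \ref{From formal to analytic prop} \textbf{B} and \ref{integral affine maps are proper} make $(\Gasharp)^{\Solid}\to\Spf(A)^{\Solid}$ proper, in particular weakly proper.

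The main obstacle is $B\Gasharp$. The map $\ast\to B\Gasharp$ is a $\Gasharp$-torsor, so by the previous paragraph it is proper. Its diagonal is a base change of $\Gasharp\to\ast$ twice, hence proper. The real content is showing that $\Unit$ is prim for $B\Gasharp\to\ast$. Here we plan to use the Čech nerve of $\ast\to B\Gasharp$ and show it is a proper descendable cover in the sense of \ref{descendable cover of analytic stacks}, i.e.\ that $\QCoh^{\ast}$ of the target is recovered and the relevant algebra is descendable. One route is to use Cartier duality (as in \ref{affines of Hodge structure map} and \cite{FGauges}[2.5.6]): $\QCoh(B\Gasharp)$ identifies with modules over the ring of functions on the dual formal group $\widehat{\Ga}$ (the $p$-adic formal completion at $0$), and $\ast\to B\Gasharp$ corresponds to the augmentation. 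With this, cohomology and co-smoothness become explicit. An alternative is to invoke \ref{stability properties of covers in SIXF}(c) directly: co-smoothness is local on the source for proper descendable covers, and the source $\ast$ is trivially co-smooth, so we only need that $\ast\to B\Gasharp$ is a proper descendable cover, i.e.\ that $\Gamma_{A}(A)^{\wedge}$ is descendable over $A$. We expect this to hold because $A\to\Gamma_{A}(A)^{\wedge}$ has a section (evaluation at $0$), which makes it descendable. Once these base cases are established, weak properness follows by \ref{weakly proper from diagonal and cosmooth} and induction.
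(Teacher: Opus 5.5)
\emph{First gap: the d\'evissage.} In the paper's homological conventions, $\V(\E)^{\sharp}=\Spec\Gamma(\E)$ sends cofibre sequences in $\E$ to fibre sequences. Applied to $\Oh[n-1]\to 0\to\Oh[n]$ this gives $\V(\Oh[n])^{\sharp}\simeq\Omega\V(\Oh[n-1])^{\sharp}$. So $\V(\Oh[n])^{\sharp}\simeq\Omega^{n}\Gasharp=\Spec\Gamma_{A}(A[n])$ is a derived affine group scheme, not $B^{n}\Gasharp$, and $B\V(\Oh[n])^{\sharp}\not\simeq B^{n+1}\Gasharp$.

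There is a more serious problem. A fibre sequence of groups $K\to H\to H'$ makes $BH\to BH'$ a $BK$-gerbe only when $H\to H'$ is an effective epimorphism; in general the fibre is $H'/H$. Along a filtration of $\E$ by free pieces, $H\to H'$ is a base change of a zero section and is not surjective. For example, take $\E=\mathrm{cofib}(a\colon\Oh\to\Oh)$ with $\Oh\to\E\to\Oh[1]$. Then $\V(\E)^{\sharp}=\mathrm{fib}(a\colon\Gasharp\to\Gasharp)$. The fibre of $B\V(\E)^{\sharp}\to B\Gasharp$ is $\Gasharp/\V(\E)^{\sharp}$, which maps monomorphically onto the image of multiplication by $a$ (all of $\Gasharp$ when $a=1$). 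That is not a $B\V(\Oh[1])^{\sharp}$-gerbe.

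So the reduction to $\E=\Oh[n]$ fails, and the induction over $B^{k}\Gasharp$ treats the wrong stacks. The paper never d\'evisses gerbes. It only uses that $\E\mapsto\Gamma(\E)$ and the divided power Hodge filtration preserve colimits in $\E$, which is where d\'evissage is harmless.

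\emph{Second gap, and the missing idea: descendability.} You correctly reduce to $\Spf(A)\to B\V(\E)^{\sharp}$ being a proper descendable cover. You then identify this with $\Gamma_{A}(A)^{\wedge}$ being descendable over $A$, using the counit section. That is descendability in $\QCoh(\Spf(A))$, i.e.\ of the base change $\Gasharp\to\Spf(A)$. What is actually needed is that $\pi_{\ast}\Oh$, namely $\Gamma_{A}(\E)$ with its regular coaction, is descendable in $\QCoh(B\V(\E)^{\sharp})$, the category of comodules. The counit is not a comodule map, so it gives no retraction there.

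Your reasoning would apply verbatim to $\Ga$ in place of $\Gasharp$, yet $\ast\to B\Ga$ is not descendable over $\Fp$. Descendability would make $R\Gamma(B\Ga,\Oh)$ a retract of a finite partial totalization of the cobar complex, whose terms $R\Gamma(\Ga^{m},\Oh)$ are discrete. It would therefore be bounded, while $H^{\ast}(\Ga,\Fp)$ is nonzero in infinitely many degrees.

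What distinguishes $\Gasharp$ is the divided power Poincar\'e lemma (\ref{PD Hodge filtration}, \ref{finiteness of Hodge filtration}). The PD de Rham complex $A\to\Gamma_{A}(\E)\to\Gamma_{A}(\E)\otimes\E\to\Gamma_{A}(\E)\otimes\wedge^{2}\E\to\cdots$ is a finite, equivariant resolution of the trivial comodule by comodules of the form $\Gamma_{A}(\E)\otimes(\text{perfect})$. This places $\Oh$ in the thick ideal generated by $\pi_{\ast}\Oh$.

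Once you have this for general connective perfect $\E$, the rest of your plan is the paper's argument. Properness of $\Spf(A)\to G$ is checked locally on $G$, i.e.\ for $\V(\E)^{\sharp}\to\Spf(A)$. That is checked modulo $p$ via \ref{From formal to analytic prop}\textbf{D}, and reduced by colimits to integrality of $\Gasharp$.

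(Minor: $(x^{[n]})^{p}=\frac{(pn)!}{(n!)^{p}}x^{[pn]}$ is $p$ times a higher divided power, not a lower one. Integrality modulo $p$ is unaffected.)
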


Before proving this theorem, let us record some corollaries. 

\begin{cor}\label{Properness on HodgeTate}
    Consider a proper map of $p$-adic formal schemes $X\to S$. 
    Then the induced map 
       $$X^{\text{HT},\Solid} \to S^{\text{HT},\Solid}$$
    is weakly proper.
\end{cor}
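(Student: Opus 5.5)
The plan is to factor the map $X^{\text{HT}} \to S^{\text{HT}}$ through the Hodge--Tate structure map and treat the two factors separately. Write $f\from X\to S$ for the proper map. By \ref{Hodge Tate map is a gerbe} we have a factorization
   $$X^{\text{HT}} \xrightarrow{\pi^{HT}_{f}} X\times_{S}S^{\text{HT}} \longrightarrow S^{\text{HT}},$$
where the first map is a gerbe banded by $\V(\CoTan_{f}\{1\})^{\sharp}$ and the second is the base change of $f$ along $S^{\text{HT}}\to S$. Since $(\_)^{\Solid}$ preserves finite limits (\ref{From formal to analytic prop} \textbf{A}), applying it gives a factorization
   $$X^{\text{HT},\Solid} \to X^{\Solid}\times_{S^{\Solid}}S^{\text{HT},\Solid} \to S^{\text{HT},\Solid}.$$
Weak cohomological properness is stable under composition and base change, so it suffices to show that each factor is weakly proper.

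For the second factor, \ref{Proper maps of formal schemes are proper} shows that $X^{\Solid}\to S^{\Solid}$ is weakly proper, since proper maps are in particular \textsuperscript{+}proper. Base change along $S^{\text{HT},\Solid}\to S^{\Solid}$ then handles this factor. One point needs checking: $S^{\text{HT}}$ is only a formal stack in $\Spf(\Znumb_{p})_{\text{éid}}$, not a scheme. However, the fibre product is formed in that topos and $(\_)^{\Solid}$ commutes with it, so this is harmless.

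For the first factor, I would apply \ref{Properties of Sharpgerbes}(1) with base the formal stack $X\times_{S}S^{\text{HT}}$ and $\E=\CoTan_{f}\{1\}$ (pulled back). This requires $\E$ to be a locally connective perfect complex. The cotangent complex of a map of $p$-adic formal schemes is always connective. Perfectness holds because proper maps are locally of finite presentation, so $\CoTan_{f}$ is perfect, at least after $p$-completion and locally, and twisting by the line bundle $\Oh\{1\}$ preserves this.

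The main obstacle I expect is this perfectness step. For derived $p$-adic formal schemes, "locally of finite presentation" must be interpreted so that $\CoTan_{f}$ is perfect; otherwise one only has almost perfect (pseudo-coherent). If that is an issue, I would note that \ref{Properties of Sharpgerbes} should only need $\E$ to be Zariski-locally a finite colimit of shifted finite free modules in nonnegative degrees, and reduce to that case.

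With both factors weakly proper, the composite is weakly proper. A further check is needed: the gerbe in \ref{Hodge Tate map is a gerbe} is defined on points over $X\times_{S}S^{\text{HT}}$ in exactly the sense used in \ref{Properties of Sharpgerbes}, with the Breuil--Kisin twist absorbed into the pulled-back perfect complex.
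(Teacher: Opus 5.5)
Your proposal is correct and follows essentially the same route as the paper. The paper also factors the map through the Hodge--Tate structure map $X^{\text{HT},\Solid}\to X^{\Solid}\times_{S^{\Solid}}S^{\text{HT},\Solid}$. It obtains the second factor by base change of \ref{Proper maps of formal schemes are proper}, and the first from \ref{Hodge Tate map is a gerbe} together with \ref{Properties of Sharpgerbes}. For the perfectness of $\CoTan_{X/S}$, the point you flag, it uses the same one-line justification: proper maps are of finite presentation.
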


\begin{proof}
    Consider the commutative diagram 
\[\begin{tikzcd}
	{X^{\text{HT},\Solid}} && \\
	& {X^{\Solid}\times_{S^{\Solid}}S^{\text{HT},\Solid}} & {S^{\text{HT},\Solid}} \\
	& {X^{\Solid}} & {S^{\Solid}}
	\arrow[from=1-1, to=2-2]
	\arrow[from=1-1, to=2-3, bend left=15]
	\arrow[from=1-1, to=3-2, bend right=15]
	\arrow[from=2-2, to=2-3]
	\arrow[from=2-2, to=3-2]
	\arrow[from=2-3, to=3-3]
	\arrow[from=3-2, to=3-3].
\end{tikzcd}\]
   The lower horizontal map is weakly proper by \ref{Proper maps of formal schemes are proper}, so the upper horizontal 
   map is also weakly proper. By \ref{Hodge Tate map is a gerbe}, the map into the fiber product is 
   a $\V(\CoTan_{X/S}\{1\})^{\sharp}$-gerbe and thus weakly proper by \ref{Properties of Sharpgerbes} (note that the cotangent complex
   is a perfect complex because the map is of finite presentation).
   This shows what we want, as weakly proper maps are stable under composition.
\end{proof}

\begin{cor}\label{Properness of Hodge stacks}
    Consider a proper map $X\to S$ of $p$-adic formal schemes. Then the induced map 
       $$X^{\text{Hodge},\Solid} \to S^{\text{Hodge},\Solid}$$
    is weakly proper. 
\end{cor}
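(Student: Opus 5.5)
Mirroring the proof of \ref{Properness on HodgeTate}, I would factor the map through the Hodge stack of $S$ base changed to $X$:
\[\begin{tikzcd}
	{X^{\text{Hodge},\Solid}} & {X^{\Solid}\times_{S^{\Solid}}S^{\text{Hodge},\Solid}} & {S^{\text{Hodge},\Solid}} \\
	& {X^{\Solid}} & {S^{\Solid}}
	\arrow[from=1-1, to=1-2]
	\arrow[from=1-2, to=1-3]
	\arrow[from=1-2, to=2-2]
	\arrow[from=1-3, to=2-3]
	\arrow[from=2-2, to=2-3]
\end{tikzcd}\]
where the first map is induced by the Hodge structure map $\GaHodge\to\Ga$, i.e. projecting off the square-zero part. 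The square is cartesian because $(\_)^{\Solid}$ is left exact (\ref{From formal to analytic prop} \textbf{A}). By \ref{Proper maps of formal schemes are proper} the bottom map is weakly proper, hence so is its base change $X^{\Solid}\times_{S^{\Solid}}S^{\text{Hodge},\Solid}\to S^{\text{Hodge},\Solid}$. Since weakly proper maps are stable under composition, it remains to show that the first map is weakly proper.

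For this I would establish the Hodge analogue of \ref{Hodge Tate map is a gerbe}: the map
   $$X^{\text{Hodge}}\to X\times_{S}S^{\text{Hodge}}$$
is a gerbe banded by $\V(\CoTan_{f}\otimes\Oh(1))^{\sharp}$, with $\Oh(1)$ the tautological line bundle pulled back from $\BGm$. This is in fact simpler than the Hodge–Tate case, because $\GaHodge(R)=R\oplus\BV(\Line^{-1})^{\sharp}(R)$ is by definition the trivial square-zero extension of $R$ by $\BV(\Line^{-1})^{\sharp}(R)$. No analogue of \ref{HodgeTate as square zero extension} is needed: applying \ref{torsors along square zero extentions} pointwise shows that the fibre of $X(\GaHodge(R))\to X(R)\times_{S(R)}S(\GaHodge(R))$ is a torsor under $\Hom_{R}(\eta^{\ast}\CoTan_{f},\BV(\Line^{-1})^{\sharp}(R))$, which is the classifying stack of the claimed band. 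Because $f$ is proper, in particular locally of finite presentation, $\CoTan_{f}$ is a connective perfect complex, so $\CoTan_{f}\otimes\Oh(1)$ is a locally connective perfect complex on $X\times_{S}S^{\text{Hodge}}$. Theorem \ref{Properties of Sharpgerbes} then shows that the first map is weakly proper after solidification, which completes the argument.

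The main obstacle is making sure the gerbe statement lives in the right topos. The stacks $X^{\text{Hodge}}$ and $X\times_S S^{\text{Hodge}}$ sit over $\BGm$, so they must be treated as objects of $\Spf(\Znumb_{p})_{\text{éid}}$ to which \ref{Properties of Sharpgerbes} applies. One also needs the sheaf condition for $\BV(\Line^{-1})^{\sharp}$ in the éid-topology; here I would argue as in \ref{Classifying spcae via witt wectors}, using \ref{PD-hull via witt vectors} and \ref{twistet witt vector sequences}, after trivializing $\Line$ Zariski-locally. As an alternative check, the relative affineness description in \ref{affines of Hodge structure map} shows directly that $B^{\text{Hodge}}\to\BGm$ is nice for affine pieces, and Zariski descent would then glue.
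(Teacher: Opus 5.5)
Your proposal is correct and is the paper's argument: the paper also notes that $\GaHodge\to\Ga$ is a square-zero extension by $\BV(\Oh(-1))^{\sharp}$ over $\BGm$, deduces as in \ref{Hodge Tate map is a gerbe} that $X^{\text{Hodge}}\to X\times_{S}S^{\text{Hodge}}$ is a $\V(\CoTan_{X/S}(-1))^{\sharp}$-gerbe, and then concludes exactly as in \ref{Properness on HodgeTate}, via the same factorization, \ref{Proper maps of formal schemes are proper} and \ref{Properties of Sharpgerbes}. Your twist $\Oh(1)$ versus the paper's $(-1)$ is only a convention for $\V(\_)^{\sharp}$ and does not affect the argument, since \ref{Properties of Sharpgerbes} applies to any locally connective perfect complex.
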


\begin{proof}
    Note that $\GaHodge \to \Ga$ is a square zero extension by $\BV(\Oh(-1))^{\sharp}$
    of ring stacks over $\BGm$. Using the same argument as in \ref{Hodge Tate map is a gerbe}
    we see that the map 
       $$X^{\text{Hodge}} \to X\times_{S}S^{\text{Hodge}}$$
    is a $\V(\CoTan_{X/S}(-1))^{\sharp}$-gerbe. So the claim follows the same way as in \ref{Properness on HodgeTate}.
\end{proof}

Let us now come to the proof of \ref{Properties of Sharpgerbes}.

\begin{constr}\label{PD Hodge filtration}
   Let (for simplicity) $R$ be a static ring. Then for a free $R$-module $\E$, we 
   consider the divided power deRham complex 
\[\begin{tikzcd}
	R & {\Gamma_{R}(\E)} & {\Gamma_{R}(\E)\otimes M} & {\Gamma_{R}(\E)\otimes \wedge^{2}M} & \cdots
	\arrow[from=1-1, to=1-2]
	\arrow[from=1-2, to=1-3]
	\arrow["d", from=1-3, to=1-4]
	\arrow["d", from=1-4, to=1-5]. 
\end{tikzcd}\]
   By the divided power Poincaré lemma \cite[\href{https://stacks.math.columbia.edu/tag/07LC}{Tag 07LC}]{StacksProject} this chain 
   complex is acyclic and thus gives an isomorphism 
      $$R\iso \DR^{\text{dp}}_{B/R}$$
   where the right-hand side denotes the complex starting at the first stage and $B$ 
   the free algebra on $\E$. As a chain complex the object $\DR^{\text{pd}}_{B/R}$ corresponds to a 
   filtration 
      $$Fil^{\bullet}_{\text{Hodge}}\DR^{\text{dp}}_{B/R} \to R$$
   which we will refer to as the \emph{Hodge filtration}. Animating this construction, we thus 
   obtain a Hodge filtration resolving $R$ for any connective object $\E\in \QCoh(R)$. The graded pieces 
   of this filtration are given by 
     $$gr^{i}_{\text{Hodge}}\DR^{\text{dp}}_{B/R}\iso \wedge^{i}\Gamma_{R}(\E)[-i].$$
   Note also that this filtration lives in $\Gamma_{R}(\E)$-comodules. 
\end{constr}

We need the following lemma. 

\begin{lem}\label{finiteness of Hodge filtration}
   Consider an animated ring $R$ and a connective perfect complex $\E\in \QCoh(R)$. 
   Furthermore, let $B$ denote the free $R$-algebra on $\E$. Then the Hodge filtration 
      $$Fil^{\bullet}_{\text{Hodge}}\DR^{\text{dp}}_{B/R}$$
   is finite. 
\end{lem}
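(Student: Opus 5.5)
The plan is to reduce to the case where $\E$ is free of finite rank and then check finiteness directly on the graded pieces. First, the meaning of \emph{finite}. The filtration $Fil^{\bullet}_{\text{Hodge}}\DR^{\text{dp}}_{B/R}$ is constructed in \ref{PD Hodge filtration} by animation from the pairs (polynomial ring, finite free module). So finiteness should mean that there is an $N$, depending only on $\E$, with $gr^{i}_{\text{Hodge}}=0$ for $i>N$. Equivalently, $Fil^{i}_{\text{Hodge}}\iso 0$ for $i>N$, since the filtration is exhaustive from below by the Poincaré lemma resolution of $R$ (it has finitely many nonzero steps starting at $Fil^{0}$).

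The graded pieces are identified in \ref{PD Hodge filtration} as $gr^{i}\iso \wedge^{i}\E\otimes_{R}\Gamma_{R}(\E)[-i]$. Here I read the displayed formula as the $i$-th term $\Gamma_{R}(\E)\otimes\wedge^{i}\E$ of the divided power de Rham complex, with $M=\E$. The key point is therefore that the derived exterior powers $\wedge^{i}\E$ vanish for $i$ large.

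\textbf{Step 1.} Since $\E$ is a connective perfect complex, it has Tor-amplitude in some range $[0,n]$. I will bound it cellularly: $\E$ admits a finite cell structure built from copies of $R^{a_{k}}[k]$ for $0\le k\le n$. Over $\pi_{0}R$ one sees this by taking the minimal number of generators at each homotopical step, and the bound persists along base change.

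\textbf{Step 2.} I will show that $\wedge^{i}\E=0$ for $i>\operatorname{rk}$ in the appropriate sense. Here I have to be careful: $\wedge^{i}(R[1])\iso \Gamma^{i}(R)[i]$ does \emph{not} vanish. So for $\E$ with higher cells the exterior powers need not vanish, and the naive bound fails. This is the step I expect to be the main obstacle.

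My first attempt to get around it would be to track the Tor-amplitude of the graded piece itself instead of its vanishing. Even if $\wedge^{i}\E$ is nonzero, it sits in homological degrees $\ge i$ (using $\wedge^{i}(N[1])\iso\Gamma^{i}(N)[i]$ and connectivity), so $gr^{i}=\wedge^{i}\E\otimes\Gamma(\E)[-i]$ is connective. That alone does not give finiteness, so the honest route is the following. If $\E$ is locally free (Tor-amplitude $[0,0]$), then $\wedge^{i}\E=0$ for $i>\operatorname{rk}\E$, and we are done. For general connective perfect $\E$ I would exploit that the divided power de Rham complex only involves $\E$ through the pair $(B,\E)$ as the module of Kähler differentials of $\Gamma_R(\E)$. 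I would then reduce via a cofiber sequence $\E'\to\E\to\E''$ with $\E''$ free, using that the de Rham filtration is multiplicative: $Fil_{\text{Hodge}}$ for $\E$ is the tensor product of those for the pieces. Multiplicativity together with finiteness for each piece gives finiteness of the Day-convolution tensor product.

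\textbf{Step 3.} To apply the reduction I use that finiteness is local, Zariski on $\Spec(\pi_0R)$. Locally every connective perfect complex is built in finitely many extensions from shifts of free modules, so by induction on cells it suffices to treat $\E=R$ and $\E=R[k]$ with $k\ge 1$. For $\E=R$ the filtration has length one. For the shifted case I would compute directly that $\Gamma_R(R[k])$ and its de Rham complex collapse: the Poincaré lemma resolution reduces to $R$ in two steps, given by the suspension/looping relation between $\Gamma(R[1])$ and the exterior algebra. I expect this shifted-cell computation to be where one needs real care.
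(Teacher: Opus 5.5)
The gap is your Step~3, and it cannot be closed: your correct observation in Step~2 already shows the statement fails for shifted cells. Take $\E = R[1]$. Using your identification of the graded pieces together with d\'ecalage,
\[
gr^{i}_{\text{Hodge}} \iso \Gamma_{R}(R[1])\otimes_{R}\wedge^{i}(R[1])[-i] \iso \Gamma_{R}(R[1])\otimes_{R}\Gamma^{i}(R) \iso \Gamma_{R}(R[1]).
\]
Moreover $\pi_{0}\Gamma_{R}(R[1])\iso \pi_{0}R\neq 0$. So every graded piece $gr^{i}$, for every $i\geq 0$, is non-zero.

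Hence for $\E=R[1]$ the Poincar\'e-lemma resolution of $R$ is an infinite filtration. There is no ``collapse in two steps'', and no cell induction can produce a uniform bound $N$.

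Your multiplicativity step also only handles direct sums. Higher cells are attached along non-split maps, i.e.\ by pushouts $\E_{k-1}\sqcup_{R^{a}[k-1]}0$ in $\QCoh(R)_{\geq 0}$. The functor $\E\mapsto Fil^{\bullet}_{\text{Hodge}}\DR^{\text{dp}}_{B/R}$ sends these to relative tensor products in $\CAlg(\QCohF(R))$. Those are geometric realizations, not finite colimits of filtered objects. The pushout $R[1]\iso 0\sqcup_{R}0$ is exactly the example above.

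For comparison, the paper's proof is one line: the functor preserves colimits, finite filtrations are stable under finite colimits and retracts in $\CAlg(\QCohF(R))$, so one reduces to $\E=R$. That reduction is valid for finite coproducts and retracts, since under Day convolution lengths simply add. It fails at the pushout step for the reason just given, so the paper's argument does not supply the missing idea either.

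What is actually true is the statement for $\E$ of Tor amplitude $[0,0]$, i.e.\ finite locally free. Then $\E$ is a retract of some $R^{n}$. The filtration for $R^{n}$ is the $n$-fold tensor power of the length-one filtration for $R$, so $gr^{i}=0$ for $i>n$. This is precisely the locally free case you handled correctly.

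So your argument should stop there, with the lemma restricted to that hypothesis. The descendability argument in the proof of \ref{Properties of Sharpgerbes} can then only use it under that hypothesis.
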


\begin{proof}
   By construction, the functor 
      $$Fil^{\bullet}_{\text{Hodge}}\DR^{\text{dp}}_{B(\_)/R}\from \QCoh(R)_{\ge 0} \to \CAlg(\QCohF(R))$$
   preserves colimits. As on the right, finite filtrations are stable under finite colimits and retractions, it is enough 
   to observe the claim for $\E\iso R$. This is clear. 
\end{proof}

\begin{proof}[Proof of \ref{Properties of Sharpgerbes}$(1)$]
   By \ref{stability properties of covers in SIXF}[c] we can assume that $X=\Spf(A)$ is affine and $G\iso \BV(\E)^{\sharp}_{X}$ for some connective perfect complex 
   $\E$ on $X$. 
   We now claim that the map $X\to G$ is a descendable cover. For this first recall that 
   we can identify $\QCoh(G)$ with $\Gamma_{R}(\E)$-comodules in $\QCoh(X)$. So to see the descendability, 
   we can use the filtration from \ref{PD Hodge filtration}, which is finite by \ref{finiteness of Hodge filtration}
   and has graded pieces given by perfect complexes. 
   
   In the following, we will in particular deduce tha the diagonal is weakly cohomologically proper. So 
   by \ref{weakly proper from diagonal and cosmooth} and
   \ref{stability properties of covers in SIXF}[c], it suffices to see that the 
   map $X^{\Solid}\to G^{\Solid}$ is co-smooth. This we can see locally on $G$, so it reduces to 
   checking that the map 
      $$(\V(\E)^{\sharp}_{X})^{\Solid} \to X^{\Solid}$$
   is co-smooth. By \ref{From formal to analytic prop}[D], we can check this after 
   base changing to $\Spec(\Fp)$. Then the map in question becomes 
      $$\Spa(\Gamma_{A/p}(\E/p))\to \Spa(A/p)$$
   which is a map of affine discrete adic spaces. As $\E/p$ is built out of colimits 
   from $A/p$, the functor $\Gamma_{A/p}(\_)$ commutes with colimits and proper maps 
   of analytic rings are stable under colimits, we see that it suffices to check that 
      $$(\Gasharp)_{A/p} \to \Spec(A/p)$$
   becomes proper. That this map is integral can either be seen by hand or follows from \cite{FGauges}[2.6.5].
\end{proof}

From the proof we also obtain the following corollary. 

\begin{prop}
   Consider a map $f\from \Spf(S) \to \Spf(R)$ and assume one of the following: 
      \begin{itemize}
         \item[(a)] $R$ is semiperfectoid and $f$ is a naive syntomic cover. 
         \item[(b)] $R$ is semiperfectoid and $f$ is a zariski closed immersion. 
      \end{itemize}
   then the induced map  
      $$S^{\text{Hodge},\Solid} \to R^{\text{Hodge},\Solid}$$
   is weakly proper. 
\end{prop}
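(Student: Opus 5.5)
We follow the template of \ref{Properness on HodgeTate} and \ref{Properness of Hodge stacks}: factor the map as
   $$S^{\text{Hodge},\Solid} \to S^{\Solid}\times_{R^{\Solid}}R^{\text{Hodge},\Solid} \to R^{\text{Hodge},\Solid}.$$
The first map is, by the argument of \ref{Properness of Hodge stacks} (the square-zero description of $\GaHodge\to\Ga$ over $\BGm$ combined with \ref{torsors along square zero extentions}), a gerbe banded by $\V(\CoTan_{f}(-1))^{\sharp}$. The second map is the base change of $f^{\Solid}\from S^{\Solid}\to R^{\Solid}$. Since weakly proper maps are stable under base change and composition, it suffices to prove two things: that $f^{\Solid}$ is weakly proper, and that the gerbe is weakly proper. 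For the gerbe we want to invoke \ref{Properties of Sharpgerbes}(1), which needs $\CoTan_{f}(-1)$, or a suitable replacement, to be a locally connective perfect complex.

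For the map $f^{\Solid}$ itself, consider first case (b). A Zariski closed immersion $R\to S$ is surjective on $\pi_{0}$, hence integral, so $f^{\Solid}$ is weakly proper by \textbf{B} of \ref{From formal to analytic prop}. Alternatively one can argue via \ref{weakly proper from decomposition} and \ref{integral affine maps are proper}, exactly as for the diagonal in \ref{Proper maps of formal schemes are proper}. In case (a), a naive syntomic cover of a semiperfectoid is in general not integral, so I would not try to prove properness of $f^{\Solid}$ directly. Instead I would work at the level of the Hodge stack. Over a semiperfectoid base one can pass, using \ref{Basis by semiperfectoids lemma}, to an infinite $p$-root cover of $S$ that is again semiperfectoid, and on such covers \ref{fundamental properties of infinite root cover} gives integrality mod $p$ and control of the cotangent complex. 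Using \ref{stability properties of covers in SIXF}(c), properness can then be checked after this cover. What remains is the composite, which is an integral map (weakly proper by \textbf{B} of \ref{From formal to analytic prop}) followed by a gerbe.

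The main point is the cotangent complex. For (b), $\CoTan_{f}$ is not connective: it is $I/I^{2}[1]$ at first order. Here I would use that the relevant band is $\V(\CoTan_{f})^{\sharp}$ with $\CoTan_{f}$ in homological degree $\ge 1$, so that the ``gerbe'' is really a relative affine $\V(\CoTan_{f}[-1])^{\sharp}$-torsor-type object of divided-power type. For semiperfectoid $R$ and regular-type ideals this is the PD-envelope picture. Its weak properness would come from integrality of divided-power algebras mod $p$, as in the last step of the proof of \ref{Properties of Sharpgerbes}, combined with \textbf{D} of \ref{From formal to analytic prop} to reduce co-smoothness to $\Spec(\Fp)$. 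For (a), $\CoTan_{f}$ has Tor-amplitude $[0,1]$ and is perfect. After the semiperfectoid cover, $\CoTan/p$ becomes a shifted projective module by \ref{fundamental properties of infinite root cover}(3), so the same two ingredients apply.

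I expect the main obstacle to be exactly these non-connective pieces: extending the descendability and finite Hodge-filtration argument of \ref{finiteness of Hodge filtration} from connective $\E$ to $\E$ concentrated in degree $1$. I would first try to handle this by reducing mod $p$ via \textbf{D} and checking integrality of the resulting divided-power algebra by hand.
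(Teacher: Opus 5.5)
There is a genuine gap. It lies both in how you read case (a) and in what you identify as the obstacle.

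\textbf{Case (a).} The paper never defines ``naive syntomic cover''. Its proof reduces to a \emph{generic} such cover, lifts to a perfectoid $R$, and then invokes \ref{fundamental properties of infinite root cover}; so in the proof the term means (composites of pullbacks of) generic infinite $p$-root covers. For such $f$ the map is already integral, and $\CoTan_{f}$ is free in homological degree $1$, so no detour through a further cover of $S$ is needed. Under your reading (a flat lci cover that need not be integral), your key step ``what remains is the composite, which is an integral map'' is false. For the smooth cover $R\to R\langle x\rangle$, the composite $R\to R\langle x^{1/p^{\infty}}\rangle$ is not integral. For such maps one should not expect the conclusion at all, since already $\Spa(R\langle x\rangle)\to\Spa(R)$ is not proper by \ref{integral affine maps are proper}. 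Your use of \ref{stability properties of covers in SIXF}(c) also has two gaps. It needs the induced map of solid Hodge stacks to be a proper descendable cover, which nothing in the paper provides. And it only yields co-smoothness, not the diagonal condition needed for weak properness.

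\textbf{The obstacle.} The difficulty you single out does not exist. Cotangent complexes are always connective, and $I/I^{2}[1]$ sits in homological degree $1$, so it is even $1$-connective. \ref{Properties of Sharpgerbes} and \ref{finiteness of Hodge filtration} apply verbatim whenever $\CoTan_{f}$ is perfect. For example, for $R\to R/(f_{1},\dots,f_{n})$ your decomposition is literally the proof of \ref{Properness of Hodge stacks}.

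The real obstruction to your gerbe decomposition is non-perfectness. An infinite $p$-root cover may adjoin roots of infinitely many elements, and a Zariski closed immersion may be cut out by an infinitely generated ideal. Then $\CoTan_{f}$ is free of infinite rank (or has higher homology), and weak properness of a non-affine gerbe is not obviously stable under the relevant colimits.

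The missing idea is \ref{affines of Hodge structure map}. Since $\CoTan_{R}$ and $\CoTan_{S}$ are $1$-connective, both Hodge stacks are relatively affine over $\BGm$ with connective algebras, and so $S^{\text{Hodge},\Solid}\to R^{\text{Hodge},\Solid}$ is an affine map. In case (a), $1$-connectivity of $\CoTan_{S}$ comes from the transitivity sequence $f^{\ast}\CoTan_{R}\to\CoTan_{S}\to\CoTan_{f}$ with $R$ perfectoid. Because proper maps of analytic rings are stable under colimits and $p$-completion, one can then reduce to finitely many generators in (a), respectively finitely presented quotients in (b). There one concludes via \textbf{D} of \ref{From formal to analytic prop} together with integrality of $\Gasharp$ mod $p$, respectively via \ref{Properness of Hodge stacks}.

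Your remark that a band with $\CoTan_{f}$ in degree $1$ gives an affine divided-power torsor under $\V(\CoTan_{f}[-1])^{\sharp}$ points in a similar direction. It might replace this reduction, but as written it is not carried out and is aimed at the wrong difficulty.
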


\begin{proof}
   We start with $(a)$. As weakly proper maps are stable under base change and composition, 
   we can assume that $f$ is a generic naive syntomic cover. Furthermore, by choosing a perfectoid surjecting 
   onto $R$ and lifting the cover, we can assume $R$ to be perfectoid. Now we consider the cofiber sequence 
      $$f^{\ast}\CoTan_{\Spf(R)}\to \CoTan_{\Spf(S)} \to \CoTan_{f}$$
   where the left-hand side is a free module sitting in homological degree $1$ as $R$ is perfectoid and 
   the right-hand side is free sitting in homological degree $1$ by \ref{fundamental properties of infinite root cover}.
   So the same also holds for the middle term. Using \ref{affines of Hodge structure map} we thus learn that 
      $$S^{\text{Hodge},\Solid} \to R^{\text{Hodge},\Solid}$$
   is an affine map of analytic stacks. As proper maps of analytic rings are stable under colimits (and $p$-adic completion), we now can 
   assume that the set of generators for the generic naive syntomic cover $\Spf(S) \to \Spf(R)$
   is finite, such that $\CoTan_{f}$ becomes finite free sitting in homological degree $1$. 
   Finally using \ref{From formal to analytic prop}[D], we can check the weakly properness modulo $p$ and the 
   last part of the proof of \ref{Properties of Sharpgerbes} shows that $S^{\text{Hodge},\Solid}\times_{\Spa(\Znumb_{p})} \Spa(\Fp)$
   lives in the subtopos of 
      $$\AnStack_{/R^{\text{Hodge},\Solid}\times_{\Spa(\Znumb_{p})}\Spa(\Fp)}$$ 
   generated by proper analytic rings over the base. Any affine map in there is weakly proper. 
   To prove $(b)$, by a similar argument as above, we first observe that the cotangent complex of a semiperfectoid 
   is $1$-connective. As both $R$ and $S$ are semiperfectoid, using \ref{affines of Hodge structure map}, we see that the 
   map 
     $$S^{\text{Hodge},\Solid} \to R^{\text{Hodge},\Solid}$$
   is an affine map of analytic stacks. Now writing $S$ as a colimit of finite presentation Zariski quotients of $R$ and 
   using that proper maps of analytic rings are stable under colimits, we deduce the claim from \ref{Properness of Hodge stacks}. 
\end{proof}


\subsection{Inducing covers on Prismatic stacks}

In this section we will explain how to induce covers on the Nygaard 
filtered prismatisation. 

\begin{non}
   In the following we will make use of the functor 
      $$(\_)^{\Solid}\from (\Znumb_{p}^{\Prism})_{\text{éid}} \to \AnStack_{/\Znumb_{p}^{\Prism,\Solid}}$$
   introduced in \ref{From formal to analytic prop}[A]. Note that using \ref{Nygaard filtered prismatisation of a semiperfectoid example}, 
   we see that for a semiperfectoid $R$, the stack $R^{\Nyg}$ lives in the domain of this functor. 
\end{non}

The main theorem of this section is the following.

\begin{thm}\label{Inducing descendable covers}
   Consider a map $g\from \Spf(S)\to \Spf(R)$ of affine $p$-adic formal 
   schemes and assume $\Spf(R)$ is semiperfectoid. Then we have the following:
      \begin{itemize}
         \item[(a)] If $f$ is a Zariski open immersion (resp. a Zariski cover), then the induced map 
                      $$S^{\Nyg,\Solid} \to R^{\Nyg,\Solid}$$
                    is an open immersion (resp. an open covering) of analytic stacks. 
         \item[(b)] If $f$ is étale (resp. an étale covering), then the induced map 
                       $$S^{\Nyg,\Solid} \to R^{\Nyg,\Solid}$$
                     is an étale map (resp. étale cover) of analytic stacks. 
         \item[(c)] If $f$ is an infinite $p$-root cover, the induced map 
                       $$S^{\Nyg,\Solid} \to R^{\Nyg,\Solid}$$
                     is a proper descendable cover of analytic stacks.
      \end{itemize}
\end{thm}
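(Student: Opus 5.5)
Throughout I write $g$ for the map in the statement; the items refer to it as $f$. The plan is to reduce all three assertions to the corresponding statements for the functor $(\_)^{\Solid}$ of \ref{From formal to analytic prop}, applied to formal stacks over $\Znumb_{p}^{\Prism}$ (or over $R^{\Nyg}$). The required input is that $g^{\Nyg}\from S^{\Nyg}\to R^{\Nyg}$ is, in each case, (after base change to an affine éid-cover of $R^{\Nyg}$) an étale map, an open immersion, or an integral descendable map of formal stacks. Since $R$ is semiperfectoid, \ref{Nygaard filtered prismatisation of a semiperfectoid example} identifies $R^{\Nyg}$ with the Rees stack $\Rees(Fil^{\bullet}_{\Nyg}\Prism_{R})$. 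This is an object of $(\Znumb_{p}^{\Prism})_{\text{éid}}$, covered by the affine formal scheme $\Spf(\bigoplus_{i}Fil^{i}_{\Nyg}\Prism_{R}t^{-i})$. All three assertions are local on the target, by \ref{stability properties of covers in SIXF} and the fact that open, étale and descendable covers descend along $\QCoh^{\ast}$-covers. So I may pull back along this chart, and further along a perfectoid $R_{0}\to R$ if needed.

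For (a) and (b), I use that $X\mapsto X^{\Nyg}$ is defined by transmutation along the ring stack $\GaNyg$. An étale map $\Spf(S)\to\Spf(R)$ is by \ref{etale maps as pullbacks of etale maps} the base change of an étale map of schemes $\Spec(S')\to\Spec(R)$. Étale maps satisfy unique lifting along the nilpotent thickening $\GaNyg(T)\to\pi_{0}\GaNyg(T)$, and more generally along animated henselian-type thickenings. So the square formed by $S^{\Nyg}\to R^{\Nyg}$ and the étale map $\Spf(S)\times\dots$ should be cartesian: $S^{\Nyg}\iso R^{\Nyg}\times_{R}S$ after pulling back along the structure map $R^{\Nyg}\to \Spf(R)$ given by $\GaNyg\to \Ga/p$-reduction. (This is the analogue of the étale base-change property of the prismatisation from \cite{BhattLuriePrismatisationofpadicformalschemes}.) With this cartesianity, $S^{\Nyg}\to R^{\Nyg}$ is representable by étale maps, respectively open immersions, of formal stacks. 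Then \ref{From formal to analytic prop}[B] gives étaleness of the solidified map, and surjectivity gives a cover. For open immersions I additionally use that the étale monomorphism $\Spa$ of a Zariski localisation is an open immersion of analytic stacks.

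For (c), I reduce to a generic infinite $p$-root cover by base change and composition (\ref{stability properties of covers in SIXF}[(e)]). I then compute $S^{\Nyg}$ directly: $S$ is again semiperfectoid, so $S^{\Nyg}=\Rees(Fil^{\bullet}_{\Nyg}\Prism_{S})$ by the same example, and the map is affine. It therefore suffices to show two things. First, that $\bigoplus_{i}Fil^{i}_{\Nyg}\Prism_{R}\to\bigoplus_{i}Fil^{i}_{\Nyg}\Prism_{S}$ is integral modulo $p$ (up to completion), which via \ref{integral affine maps are proper} and \ref{From formal to analytic prop}[B] gives properness. Second, that it is descendable. For both I use the stratification \ref{stratification of Nygaard filtered Prismatisation} and \ref{From formal to analytic prop}[E], which allows checking properness and co-smoothness stratum by stratum. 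On the $t\neq0$ stratum the map is $S^{\Prism}\to R^{\Prism}$, and on the $t=0,u\neq0$ stratum it is a Frobenius twist of the Hodge–Tate map. Combining \ref{Hodge Tate map is a gerbe}, \ref{fundamental properties of infinite root cover}(3) ($\CoTan_{S/R}/p[-1]$ projective) and the proof of \ref{Properties of Sharpgerbes}, I expect weak properness there. On the Hodge stratum I use the proposition on $S^{\text{Hodge},\Solid}\to R^{\text{Hodge},\Solid}$ for naive syntomic covers just proved. For descendability, I would use that $\Prism_{S}$ mod $p$ and the Nygaard graded pieces are obtained from $\Prism_{R}$ by a conjugate/Hodge–Tate filtration. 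By \ref{fundamental properties of infinite root cover}(2),(3) its graded pieces are projective $\overline{\Prism}_{R}$-modules containing the unit as a split summand. Descendability then follows from \cite{BhattScholzeWittvectorGrassmannian}[11.18, 11.22] as in \ref{Basis of witt vector surjectives}, first mod $p$ and then $p$-adically by completeness.

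The main obstacle is (c), specifically descendability of the Nygaard-filtered (Rees) algebra rather than just of $\Prism_{S}$ over $\Prism_{R}$. The filtrations are infinite, and one must control the index uniformly in the weight. I would first try to establish descendability of $\Prism_{R}\to\Prism_{S}$ via the Hodge–Tate comparison and projectivity of $\CoTan$. I would then pass to the Rees algebra by checking on the three strata separately, using that descendability can be detected after pullback along the finite stratification, together with excision for the closed/open decomposition.
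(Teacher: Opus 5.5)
The genuine gap is in (c), at the step you yourself flag as the main obstacle. You cannot get descendability of $\bigoplus_{i}Fil^{i}_{\Nyg}\Prism_{S}$ over $\bigoplus_{i}Fil^{i}_{\Nyg}\Prism_{R}$ by checking it on the strata of \ref{stratification of Nygaard filtered Prismatisation} and gluing by excision, because these Zariski strata do not form a recollement of the solid category. After solidification the open $\{t\neq 0\}$ becomes $\{|t|=1\}$. Its closed complement in the six-functor sense is the locus $\{|t|<1\}$ where $t$ is topologically nilpotent, not $\{t=0\}$. Concretely, $k[t]\to k[t^{\pm 1}]\times k$ is an isomorphism over both strata but is not descendable. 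Moreover, on $\Spa(k[t])$ the nonzero module $k((t))$ restricts to zero on both $\{|t|=1\}$ and $\{t=0\}$. So descendability, or even conservativity, on the strata tells you nothing about the $t$- and $u$-adic neighbourhoods that excision would actually need.

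What is missing is the paper's filtered argument (\ref{Nygaard filtered Prismatisation descendable on infinite root cover}):
\begin{itemize}
\item Reduce to $R$ perfectoid, pull back along $\A^{1}\to\A^{1}/\Gm$, and work with the complete filtered algebra map $Fil^{\bullet}_{\Nyg}\Prism_{R}\to Fil^{\bullet}_{\Nyg}\Prism_{S}$.
\item Let $F^{\bullet}$ be its fibre. It suffices to show $\pi_{0}\Hom((F^{\bullet})^{\otimes 3},Fil^{\bullet}_{\Nyg}\Prism_{R})=0$.
\item The Nygaard filtrations are complete, $\mathrm{gr}^{\bullet}$ is conservative on complete objects, and it is compatible with internal Hom. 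So this reduces to the associated graded pieces.
\item There, the conjugate filtration together with freeness of $S/R$ and of $\CoTan_{S/R}[-1]$ (\ref{fundamental properties of infinite root cover}) gives $\mathrm{gr}^{i}F^{\bullet}\iso\bigoplus_{K}R[-1]$ for every $i$.
\end{itemize}
Since every graded piece of the fibre sits in the same negative degree, the mapping spectra out of the triple tensor power have vanishing $\pi_{0}$ in all weights at once. This is exactly the uniformity in the weight that your stratification was supposed to supply.

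Your properness argument has the same defect. \ref{From formal to analytic prop}\textbf{E} only applies under the \textsuperscript{+}finite type hypothesis of \textbf{C}. For this map that hypothesis is essentially the integrality you are trying to prove, and integrality is not detectable on Zariski strata either (same example). The paper argues directly instead. Every $f\in Fil^{i}_{\Nyg}\Prism_{S}$ with $i\geq 1$ is topologically nilpotent, since $\varphi(f)\equiv f^{p}$ mod $p$ and $\varphi(f)\in I$. It therefore suffices that $\Prism_{R}\to\Prism_{S}$ is integral modulo $(p,I)$. By colimits this reduces to a one-generator cover, where it follows from the conjugate filtration on $\overline{\Prism}_{S}$.

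For (a) and (b), your deformation-theoretic input is the same as the paper's, but the cartesian square you write down does not exist. There is no structure map $R^{\Nyg}\to\Spf(R)$: for perfectoid $R$ one has $R^{\Prism}\iso\Spf(\Ainf(R))$, and a map to $\Spf(R)$ would need a ring map $R\to\Ainf(R)$, whereas $R$ is only a quotient of $\Ainf(R)$. Instead, reduce to $R$ perfectoid. Then the chart is $\Spf(\Ainf(R)[t,u]/(tu-\varphi^{-1}(d)))$, and $\Ainf(R)\to\Ainf(S)$ is the unique étale (resp.\ Zariski) lift of $R\to S$ along the henselian surjection $\Ainf(R)\to R$. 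Alternatively, phrase the claim as representability of $S^{\Nyg}\to R^{\Nyg}$ by étale maps, using invariance of the étale site under henselian thickenings.
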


Before proving the theorem let us also state the following corollary. 

\begin{cor}\label{inducing descendable covers prismatisation}
   Consider a map $g\from \Spf(S)\to \Spf(R)$ of affine $p$-adic formal 
   schemes and assume $\Spf(R)$ is semiperfectoid. Then we have the following:
      \begin{itemize}
         \item[(a)] If $f$ is a Zariski open immersion (resp. a Zariski cover), then the induced map 
                      $$S^{\Prism,\Solid} \to R^{\Prism,\Solid}$$
                    is an open immersion (resp. an open covering) of analytic stacks. 
         \item[(b)] If $f$ is étale (resp. an étale covering), then the induced map 
                       $$S^{\Prism,\Solid} \to R^{\Prism,\Solid}$$
                     is an étale map (resp. étale cover) of analytic stacks. 
         \item[(c)] If $f$ is an infinite $p$-root cover, the induced map 
                       $$S^{\Prism,\Solid} \to R^{\Prism,\Solid}$$
                     is a proper descendable cover of analytic stacks.
      \end{itemize}
\end{cor}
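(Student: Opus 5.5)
The plan is to deduce the corollary from \ref{Inducing descendable covers} by exhibiting $X^{\Prism,\Solid}$ as an open substack of $X^{\Nyg,\Solid}$, compatibly in $X$. By \ref{Prismatisation in Nyg filtered Prismatiastion}, the de Rham embedding $j_{dR}\from X^{\Prism}\to X^{\Nyg}$ is the pullback of $X^{\Nyg}\to \AmodGm$ (via the Rees map $t$) along $\Gm/\Gm\to\AmodGm$. This description is natural in $X$. So for $g\from\Spf(S)\to\Spf(R)$ we obtain a cartesian square of formal stacks
\[\begin{tikzcd}
	{S^{\Prism}} & {S^{\Nyg}} \\
	{R^{\Prism}} & {R^{\Nyg}}
	\arrow[from=1-1, to=1-2]
	\arrow[from=1-1, to=2-1]
	\arrow[from=1-2, to=2-2]
	\arrow[from=2-1, to=2-2]
\end{tikzcd}\]
in which both horizontal maps are pulled back from $\{t\neq 0\}$. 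It is cartesian because both $S^{\Prism}$ and $R^{\Prism}$ are the $t\neq0$ loci, and the Rees map of $S^{\Nyg}$ factors through that of $R^{\Nyg}$.

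Next I would apply $(\_)^{\Solid}$. It is left exact by \ref{From formal to analytic prop}[A], so the square stays cartesian in analytic stacks. Hence $S^{\Prism,\Solid}\to R^{\Prism,\Solid}$ is the base change of $S^{\Nyg,\Solid}\to R^{\Nyg,\Solid}$ along $R^{\Prism,\Solid}\to R^{\Nyg,\Solid}$. For $R$ semiperfectoid, \ref{Nygaard filtered prismatisation of a semiperfectoid example} and \ref{Nygaard stratification on perferctoids example} identify $R^{\Prism}\iso\Spf(\Prism_R)$ inside the Rees stack. I also need $j_{dR}$ to remain an open immersion after solidification. The reason is that on affines it is a localization inverting $t$, and $(\_)^{\Solid}$ preserves étale, hence Zariski-open, maps by \ref{From formal to analytic prop}[B]. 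Even without openness, base change suffices for the argument.

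Finally I would transport each of (a)–(c). Open immersions, étale maps, proper maps and weak properness are all stable under base change in a six-functor formalism. The covering conditions are also base-change stable: open/étale covers are $\QCoh^{\ast}$-covers, and proper descendable covers are stable by \ref{stability properties of covers in SIXF}(e), since descendability is preserved by the symmetric monoidal pullback functor. Applying this to the outputs of \ref{Inducing descendable covers}(a)–(c) gives (a)–(c) of the corollary.

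The main obstacle is the first step: checking that the square is genuinely cartesian at the level of formal stacks in $(\Znumb_{p}^{\Prism})_{\text{éid}}$. This means confirming that the open locus $t\neq0$ in $S^{\Nyg}$ is exactly the pullback of the corresponding locus in $R^{\Nyg}$. I would check this on points using the triple description of Nygaard filtered Cartier–Witt divisors, where $t$ is part of the data over $\Znumb_p^{\Nyg}$ and so is independent of $X$.
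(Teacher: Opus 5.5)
Your proposal is correct and follows the paper's own argument. The paper likewise deduces the corollary from \ref{Inducing descendable covers} via the cartesian square relating $S^{\Prism,\Solid}\to R^{\Prism,\Solid}$ to $S^{\Nyg,\Solid}\to R^{\Nyg,\Solid}$ along $j_{dR}$; your pasting argument with $\Gm/\Gm\to\AmodGm$, the left exactness of $(\_)^{\Solid}$, and your observation that only base change (not openness of $j_{dR}$) is needed just make explicit what the paper leaves implicit.
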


\begin{proof}
   This follows from \ref{Inducing descendable covers} and the cartesian square 
\[\begin{tikzcd}
	{S^{\Prism,\Solid}} & {S^{\Nyg,\Solid}} \\
	{R^{\Prism,\Solid}} & {R^{\Nyg,\Solid}}
	\arrow[from=1-1, to=1-2]
	\arrow[from=1-1, to=2-1]
	\arrow[from=1-2, to=2-2]
	\arrow[from=2-1, to=2-2]
\end{tikzcd}\]
   using the open immersion $j_{\text{dR}}$ from \ref{Prismatisation in Nyg filtered Prismatiastion}.
\end{proof}

\begin{proof}[Proof of \ref{Inducing descendable covers}$(a)$ and $(b)$]
   We start with the case of an elementary Zariski open immersion (resp. a Zariski cover consisting of elementary open immersions). 
   Choosing a perfectoid mapping surjective to $R$, we can lift the open immersion (resp. the cover) to this perfectoid, and as the conclusion 
   is stable under base change, we can assume $R$ is perfectoid. In this case $S$ is perfectoid as well, and we have to check that 
   the map 
      $$\Spa(\Ainf(S)[t,u]/(tu-\varphi^{-1}(d))) \to \Spa(\Ainf(R)[t,u]/(tu-\varphi^{-1}(d)))$$
   is an open immersion (resp. an open cover). Using \ref{From formal to analytic prop}[B], we see that it is enough to show that the map 
      $$\Spf(\Ainf(S)) \to \Spf(\Ainf(R))$$
   is an open immersion (resp. a Zariski cover). This follows from deformation theory, as the conclusion holds for the map 
   $\Spf(S) \to \Spf(R)$. 

   To see the assertion in $(b)$, we can now work Zariski locally on $R$. Locally on $R$ the map $\Spf(S)\to \Spf(R)$
   factors as an open immersion followed by a standard étale map (or a collection of such maps in the case of an étale cover). 
   As above, such a map can be lifted to a perfectoid, so we may assume $R$ is perfectoid. Again 
   $S$ is perfectoid in this case as well, and we argue as in $(a)$ to deduce the claim. 
\end{proof}

In order to prove part $(c)$, we will need some preparations. First, let us recall 
some facts on filtered complexes. 

\begin{defn}
   Given an animated ring $R$, we will write 
      $$\QCohF(R):=\Fun(\Znumb^{\text{op}},\QCoh(R))$$
   for the category of \emph{filtered objects} in $\QCoh(R)$. Here we understand 
   $\Znumb^{\text{op}}$ as a poset. 
\end{defn}

\begin{rem}
   We often work with the full subcategory of derived $I$-complete objects 
   in $\QCohF(R)$ for some ideal in $R$. But it will be obvious how to adopt everything 
   we say about this category. 
\end{rem}

\begin{non}\label{colimit and associated graded of a filtration}
   Taking the colimit and taking the cofiber of each map in the filtration induces 
   functors 
\[\begin{tikzcd}
	{\QCoh(R)} & {\QCohF(R)} & {\Fun(\Znumb,\QCoh(R))}
	\arrow["{\text{colim}}"', from=1-2, to=1-1]
	\arrow["{\text{gr}^{\bullet}}", from=1-2, to=1-3]
\end{tikzcd}\]
   where we understand $\Znumb$ as a discrete category. We will refer 
   to the category $\Fun(\Znumb,\QCoh(R))$ as \emph{graded objects} in $\QCoh(R)$.
\end{non}

\begin{non}\label{computation of mapping spectra in filtered objects}
   Given two filtered objects $\F^{\bullet},\E^{\bullet} \in \QCohF(R)$, the mapping spectrum 
      $$\Hom_{\QCohF(R)}(\F^{\bullet},\E^{\bullet})$$
   can be computed as the equalizer of the two maps 
\[\begin{tikzcd}
	{\prod_{i\in \Znumb^{\text{op}}}\Hom_{\QCoh(R)}(\F^{i},\E^{i})} & {\prod_{j\in\Znumb^{\text{op}}}\Hom_{\QCoh(R)}(\F^{j},\E^{j-1})}
	\arrow[shift left, from=1-1, to=1-2]
	\arrow[shift right, from=1-1, to=1-2]
\end{tikzcd}\]
   one coming from postcomposing along the target filtration and one from precomposing 
   along the source filtration. This follows as one can write the category $\Znumb^{\text{op}}$
   as the Segal completion of the simplicial anima
      $$\dots \Delta^{1}\coprod_{\Delta^{0}}\Delta^{1}\coprod_{\Delta^{0}}\Delta^{1}\dots .$$  
\end{non}

\begin{non}\label{monoidal structure on filtered objects}
   Day-convolution equips the category $\QCohF(R)$ with a symmetric monoidal structure. That is, we obtain the 
   formula 
      $$(\F^{\bullet}\otimes \E^{\bullet})^{n} \iso \colimit_{i+j\ge n}\F^{i}\otimes\E^{j}.$$
   Furthermore, this monoidal structure is closed, so we have an internal hom. Note also that the 
   colimit functor from \ref{colimit and associated graded of a filtration} is symmetric monoidal. 

   Day-convolution also equips the category $\QCoh(R)_{\text{graded}}$ of graded objects with 
   a symmetric monoidal structure. For two graded objects $\F^{\bullet},\E^{\bullet}$ this produces the formula 
      $$(\F^{\bullet}\otimes\E^{\bullet})^{n}\iso \colimit_{i+j=n}\F^{i}\otimes\E^{j}\iso \bigoplus_{j+i=n}\F^{i}\otimes\E^{j}.$$
   Furthermore, the associated graded functor from \ref{colimit and associated graded of a filtration}
   is symmetric monoidal for these structures. Also, the symmetric monoidal structure 
   on graded objects is closed. Unwinding the formula for the internal hom, one gets 
      $$\IntHom_{\QCoh(R)_{\text{graded}}}(\F^{\bullet},\E^{\bullet})^{n}\iso \prod_{m\in \Znumb}\IntHom_{\QCoh(R)}(\F^{m},\E^{m-n}).$$
   The associated graded functor is also compatible with the internal homs \cite{Gwilliam2016EnhancingTF}[2.28], that is, we have the formula 
      $$\text{gr}^{\bullet}\IntHom_{\QCohF(R)}(\F^{\bullet},\E^{\bullet})\iso \IntHom_{\QCoh(R)_{\text{graded}}}(\text{gr}(\F^{\bullet}),\text{gr}(\E^{\bullet})).$$
\end{non}

\begin{non}\label{associated graded is conservative on complete objects}
   A filtered object $\F^{\bullet}$ in $\QCoh(R)$ is called complete, if 
      $$\limit_{n\in \Znumb^{\text{op}}}\F^{n} \iso 0.$$
   The full subcategory $\widehat{\QCohF}(R)\subset \QCohF(R)$ of complete filtered objects admits a symmetric monoidal 
   left adjoint \cite{Gwilliam2016EnhancingTF}[2.25]. Furthermore, the associated grade functor is conservative when restricted to complete objects.  
\end{non}

\begin{non}
   There is a $t$-structure on $\QCohF(R)$, called the \emph{standard $t$-structure}. 
   The connective objects are given by those filtered objects $\F^{\bullet}$, for which 
   each $\F^{i}$ is connective in the standard $t$-structure on $\QCoh(R)$. 
\end{non}

The crucial input will be the following. Recall that for an algebra $R\to S$
over an integral perfectoid with corresponding perfect prism $A$, we write $Fil^{\bullet}_{\Nyg}F^{\ast}\Prism_{S/A}$
for the relative Nygaard filtration \cite{BhattLurieAPC}[5.1].

\begin{prop}\label{Nygaard filtered Prismatisation descendable on infinite root cover}
   Given an infinite $p$-root cover $R\to S$ with $R$ an integral perfectoid. Then the 
   algebra 
      $$Fil^{\bullet}_{\Nyg}F^{\ast}\Prism_{S/A}\in \QCohF_{(I,p)\text{-comp}}(Fil_{\Nyg}^{\bullet}A)$$
   is descendable. 
\end{prop}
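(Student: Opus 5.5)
The plan is to show that the unit map $Fil^{\bullet}_{\Nyg}A \to Fil^{\bullet}_{\Nyg}F^{\ast}\Prism_{S/A}$ is split injective in $\QCohF_{(I,p)\text{-comp}}(Fil^{\bullet}_{\Nyg}A)$. A split algebra map is descendable of index $\le 1$, and that is what we want.

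\emph{Reduction to the generic case.} By \ref{Definition of infinite root cover}, the cover $R\to S$ is a finite composition of finite disjoint unions and of base changes of generic infinite $p$-root covers \ref{fundamental example of root cover}. Descendability is stable under composition and base change \cite{MathewGaloisgroupsofstablehomotopy}[3.24], and finite products are harmless, so it is enough to treat a single map pulled back from a generic cover. One subtlety is that intermediate steps are no longer perfectoid. I would handle this by passing to the perfectoidization of each intermediate stage, which refines the tower to a composition of infinite $p$-root covers between perfectoids followed by one cover of the required form. Alternatively, one can directly treat $S = R\widehat{\otimes}_{\Znumb_{p}[x_{i}]}\Znumb_{p}[x_{i}^{1/p^{\infty}}]$ with $x_{i}\mapsto r_{i}\in R$ for a single perfectoid $R$, as this already covers the general case.

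\emph{Explicit computation.} Write $[r_{i}]\in A=\Ainf(R)$ for Teichmüller lifts. The map $x_{i}\mapsto[r_{i}]$ is a map of $\delta$-rings $\Znumb_{p}\{x_{i}\}\to A$ with $x_{i}$ of rank one. Base change of relative prismatic cohomology then gives
$$F^{\ast}\Prism_{S/A}\iso A\widehat{\otimes}_{\Znumb_{p}[x_{i}]}F^{\ast}\Prism_{\Znumb_{p}[x_{i}^{1/p^{\infty}}]/\Znumb_{p}[x_{i}]},$$
with the Nygaard filtration compatible as well \cite{BhattLurieAPC}[5.1]. The map $\Znumb_{p}[x_{i}]\to\Znumb_{p}[x_{i}^{1/p^{\infty}}]$ is graded by the monoid $\Znumb[1/p]_{\ge 0}^{(I)}$, and everything in sight, including the Nygaard filtration, is functorial for the $\Gm^{I}$-action rescaling the $x_{i}$. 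Hence $Fil^{\bullet}_{\Nyg}F^{\ast}\Prism_{S/A}$ decomposes as a completed direct sum, indexed by the fractional parts $a\in(\Znumb[1/p]\cap[0,1))^{(I)}$, of filtered $Fil^{\bullet}_{\Nyg}A$-modules. The summand $a=0$ is the image of $Fil^{\bullet}_{\Nyg}A$, i.e. the unit map. To justify this identification, I would check it on associated gradeds, which is enough by conservativity of $\text{gr}$ on complete objects \ref{associated graded is conservative on complete objects}. There $\text{gr}^{i}$ is the conjugate-filtered Hodge–Tate piece, with graded pieces $\wedge^{j}\CoTan_{S/R}[-j]$. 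The weight-zero part of these is precisely the corresponding piece for $R$, because by \ref{fundamental properties of infinite root cover} the relative cotangent complex mod $p$ is concentrated in nonzero fractional weights.

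\emph{Main obstacle.} The difficult step is making the weight decomposition rigorous at the level of the Nygaard filtration, rather than just on underlying objects. One needs the grading to survive derived $(p,I)$-completion and the Day-convolution module structure over $Fil^{\bullet}_{\Nyg}A$ \ref{monoidal structure on filtered objects}, and the weight-zero projection to be a map of filtered $Fil^{\bullet}_{\Nyg}A$-modules. My first attempt would be to construct the $\Gm$- (or $\mu_{p^{\infty}}$-) action on the stack $S^{\Nyg}$ over $R^{\Nyg}$ via transmutation, and take invariants, which is exact on the relevant category. If that fails, the fallback is to prove descendability directly: show that the fibre $K$ of the unit map has $K^{\otimes 2}\to\Unit$ null. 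This can be checked on $\text{gr}$ and mod $p$ using \ref{computation of mapping spectra in filtered objects} and the projectivity in \ref{fundamental properties of infinite root cover}, together with completeness to lift the nullhomotopy.
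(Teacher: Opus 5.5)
\textbf{The weight-zero summand is not the unit.} Your main route breaks at the step where you identify the fractional-weight-zero summand with $Fil^{\bullet}_{\Nyg}A$. The justification you give is false. Modulo $p$ one has $\CoTan_{S/R}/p\iso (S/p)\otimes_{\Znumb_{p}[x]}\Omega^{1}_{\Znumb_{p}[x]}[1]=(S/p)\,dx[1]$, and $dx$ has \emph{integral} weight, so the weight-zero part is $(R/p)\,dx[1]\neq 0$. Lemma \ref{fundamental properties of infinite root cover} only says this module is projective after a shift; it says nothing about avoiding weight zero.

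Consequently each conjugate graded piece $\Gamma^{j}_{S}(S\,dx)$ of $\overline{\Prism}_{S/A}$ has the nonzero weight-zero part $R\cdot dx^{[j]}$. When your lift exists, the weight-zero summand of $F^{\ast}\Prism_{S/A}$ is the prismatic envelope $A\langle x\rangle\{(x-[r])/d\}^{\wedge}$, not $A$. Concretely, take $R=\Fp$, $A=\Znumb_{p}$, $r=1$. Then $\Prism_{S}$ is the $p$-completed PD envelope of $(x-1)$ in $\Znumb_{p}\langle x^{1/p^{\infty}}\rangle$, and its weight-zero part is the completed divided power algebra on $x-1$, far from $\Znumb_{p}$. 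So the weight decomposition does not exhibit the unit as a direct summand.

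There are two further problems. First, a lift $[r_{i}]$ with $\delta([r_{i}])=0$ exists only if $r_{i}$ has a compatible system of $p$-power roots in $R$, which a general element of a perfectoid ring does not. When it does, $R\to S$ has a section $x_{i}^{1/p^{n}}\mapsto r_{i}^{1/p^{n}}$, and functoriality of $Fil^{\bullet}_{\Nyg}F^{\ast}\Prism_{(\_)/A}$ already splits the unit without any weights. Second, the Nygaard filtration is defined through $\varphi$, which multiplies weights by $p$, and this is not injective on $\Znumb[1/p]/\Znumb$. So even compatibility of the filtration with your weight decomposition is not automatic.

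\textbf{The fallback is the paper's proof.} Let $F^{\bullet}$ be the fibre of the unit map. Freeness of $\mathrm{cof}(R\to S)$ and of $\CoTan_{S/R}[-1]$ makes the conjugate filtration on $\overline{\Prism}_{S}$ a chain of split inclusions of free modules starting from $R\subset S$. Hence $\mathrm{gr}^{i}F^{\bullet}\iso\bigoplus_{K}R[-1]$ for all $i$, so the graded pieces of $\IntHom((F^{\bullet})^{\otimes n},Fil^{\bullet}_{\Nyg}A)$ are products of copies of $R[n]$; projectivity rules out higher Ext terms. Now combine completeness, conservativity of $\mathrm{gr}$, its compatibility with internal Hom (\ref{associated graded is conservative on complete objects}, \ref{monoidal structure on filtered objects}), and \ref{computation of mapping spectra in filtered objects}. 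This gives $\pi_{0}\Hom((F^{\bullet})^{\otimes n},Fil^{\bullet}_{\Nyg}A)=0$ once $n$ is large enough to absorb the degree losses; the paper takes $n=3$.

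What you need is the vanishing of this whole mapping group. Checking nullity of one map on $\mathrm{gr}$ and then lifting a nullhomotopy is not a valid mechanism, since a map of complete filtered objects can be nonzero with zero associated graded. The essential input is the projectivity in Lemma \ref{fundamental properties of infinite root cover}, not any weight grading.
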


\begin{proof}
   Let us write $F^{\bullet}$ for the fibre of the map 
      $$Fil_{\Nyg}^{\bullet}A\to Fil_{\Nyg}^{\bullet}F^{\ast}\Prism_{S/A}$$
   and we claim that 
      $$\pi_{0}\Hom_{\QCohF}((F^{\bullet})^{\otimes 3},Fil_{\Nyg}^{\bullet}A)\iso 0.$$
   This will finish the proof by \cite{BhattScholzeWittvectorGrassmannian}[11.20]. 
   
   To check this, let us 
   first make some reduction steps. First, we can take the tensor product over $A$. Also as 
   the Nygaard filtration on $A$ is complete, we can take the 
   completed tensor product and thus assume that both sides are complete. 
   Now to see the above, by \ref{computation of mapping spectra in filtered objects}, 
   it suffices to check that 
      $$\IntHom_{\QCohF}((F^{\bullet})^{\otimes d+3},Fil_{\Nyg}^{\bullet}F^{\ast}A)\in \QCohF_{>1}$$
   where we use the standard $t$-structure. As both sides are (assumed to be) complete, we 
   can use \ref{associated graded is conservative on complete objects} and the compatibility of taking associated 
   graded with the internal Hom \ref{monoidal structure on filtered objects}, to reduce this to showing that 
      $$\IntHom_{\QCoh_{\text{graded}}}(gr^{\bullet}(F^{\bullet})^{\otimes d+3},gr^{\bullet}_{\Nyg}F^{\ast}A)\in (\QCoh_{\text{graded}})_{>2}.$$
   Unwinding the internal Hom \ref{monoidal structure on filtered objects} and the definition of the 
   $t$-structure, this boils down to showing that for each pair of integers $i,j$, we have 
      $$\Hom_{\QCoh(R)}(gr^{i}(F^{\bullet})^{\otimes 3},gr^{j}_{\Nyg}A)\in \QCoh_{>2}.$$
   
   For this, we recall from \ref{fundamental properties of infinite root cover} that we have the following:
     \begin{itemize}
         \item[(a)] The cofibre of the map $R\to S$ is a free $R$-module.
         \item[(b)] $\CoTan_{S/R}[-1]$ is a free $S$-module.  
      \end{itemize}
   Now, let us identify the graded pieces with the filtered pieces in the 
   conjugate filtration. Then using $(b)$, we observe that the conjugate filtration 
   on $\overline{\Prism}_{S}$ takes the form 
      $$S\to \bigoplus_{I_{1}}S \to \bigoplus_{I_{2}}S \to \bigoplus_{I_{3}}S \to \dots$$
   where each map is an inclusion of a direct summand. In particular using $(b)$, we learn that 
     $$gr^{i}(F^{\bullet})\iso \bigoplus_{K}R[-1]$$
   for some set $K$ for all $i$. From this, one can easily observe what we want. 
\end{proof}

We now come to the proof of \ref{Inducing descendable covers}$(b)$.

\begin{proof}[Proof of \ref{Inducing descendable covers}$(b)$]
   The assertion is stable under pullbacks and finite compositions. Thus, as explained in 
   \ref{Remark on producing IFR sheaves}, we can assume that there is a cocartesian square 
\[\begin{tikzcd}
	{\Znumb_{p}[ x_{i}|i\in I]} & {\Znumb_{p}[ x_{i}^{\frac{1}{p^{\infty}}}|i\in I]} \\
	R & S
	\arrow[from=1-1, to=1-2]
	\arrow[from=1-1, to=2-1]
	\arrow[from=1-2, to=2-2]
	\arrow[from=2-1, to=2-2].
\end{tikzcd}\]
   Furthermore, by lifting functions to a perfectoid mapping to $R$ and pushing the 
   upper horizontal map to $R$, we can assume $R$ is perfectoid. By possibly refining the map, we also can 
   assume $B$ is semiperfectoid \ref{Basis by semiperfectoids lemma}. 

   Using \ref{Nygaard filtered prismatisation of a semiperfectoid example}, we can identify the map in question 
   with the map 
      $$\Rees(Fil^{\bullet}_{\Nyg}\Prism_{S}) \to \Rees(Fil^{\bullet}_{\Nyg}\Prism_{R})$$
   over $\A^{1}/\Gm$. After pulling back along $\A^{1} \to \A^{1}/\Gm$, this map becomes a descendable 
   map of algebras by \ref{Nygaard filtered Prismatisation descendable on infinite root cover}.

   To finish the proof, it now suffices to see that the map 
      $$\Spa(\oplus_{i}Fil_{\Nyg}^{i}\Prism_{S}) \to \Spa(\oplus_{i}Fil_{\Nyg}^{i}\Prism_{R})$$
   is proper. For this note first that for any $f\in Fil_{\Nyg}^{i}\Prism_{S}$ 
   for $i\ge 1$, we have 
      $$\varphi(f) = f^{p} \text{ mod } p \text{ and } \varphi(f) \in I.$$
   This shows that such $f$ are topologically nilpotent, and it suffices to see that the map 
      $$\Prism_{R} \to \Prism_{S}$$
   is integral modulo $(I,p)$. As this statement is stable under colimits in $S$, we can assume $S$ arises as the 
   pullback of a generic infinite $p$-root cover in one generator. In this case, using the conjugate filtration, it is 
   easy to see that the map 
     $$S\to \bar{\Prism}_{S}$$
   is finite. 
\end{proof}

\subsection{Solid Prismatic stacks}

We are now able to define the \emph{Solid syntomification}. 

\begin{constr}
    Given a semiperfectoid $S$, as in the last section, we can associate to $S$ 
    the analytic stacks 
       $$S^{\Nyg,\Solid} \text{ and } S^{\Prism,\Solid}\iso \Spa(\Prism_{S}) \in \AnStack_{/\Znumb_{p}^{\Prism}}.$$
    Using \ref{Inducing descendable covers} and \ref{inducing descendable covers prismatisation}, we see that this 
    assignment sends infinite $p$-root covers and étale covers to effective epimorphisms 
    of analytic stacks. 
    
    Now let us consider the topos 
       $$(\Znumb_{p})_{_{\sqrt{\infty},\text{ét}}}$$
    generated by affine $p$-adic formal schemes via the union of the infinite $p$-root and the 
    étale topology. Then this topos is generated by semiperfectoids \ref{Basis by semiperfectoids lemma}. 
    Using this observation, we obtain colimit-preserving functors 
       $$(\_)^{\Nyg,\Solid} \text{ and } (\_)^{\Prism,\Solid} \from (\Znumb_{p})_{_{\sqrt{\infty},\text{ét}}} \to \AnStack_{/\Znumb_{p}^{\Prism}}.$$
    Note that $(\_)^{\Prism,\Solid}$ also preserves finite limits, and that this also 
    becomes true for $(\_)^{\Nyg,\Solid}$ if we change the base in the target to 
       $$(\Znumb_{p})^{\Nyg,\Solid}.$$ 
\end{constr}

\begin{defn}
    For a $p$-adic formal scheme $X$, we will call the analytic stack 
       $$X^{\Prism,\Solid}$$
    the \emph{solid Prismatisation} of $X$. 

    Analogously, we will refer to the analytic stack 
      $$X^{\Nyg,\Solid}$$
    as the \emph{solid Nygaard filtered Prismatisation} of $X$. 
\end{defn}

\begin{non}
   The open immersions from \ref{Prismatisation in Nyg filtered Prismatiastion} 
   induce disjoint open immersions 
      \[\begin{tikzcd}
	{X^{\Prism,\Solid}} & {X^{\Nyg,\Solid}} & {X^{\Prism,\Solid}}
	\arrow["{j_{dR}}", from=1-1, to=1-2]
	\arrow["{j_{HT}}"', from=1-3, to=1-2].
\end{tikzcd}\]
   of analytic stacks. Again composing $j_{\text{dR}}$ with the structure map recovers the identity, while composing 
     $j_{\text{HT}}$ with the structure map recovers the Frobenius.
\end{non}

\begin{defn}
   Given a derived $p$-adic formal scheme $X$, we write $X^{\text{syn},\Solid}$ 
   for the pushout 
\[\begin{tikzcd}
	{X^{\Prism,\Solid}\coprod X^{\Prism,\Solid}} & {X^{\Nyg,\Solid}} \\
	{X^{\Prism,\Solid}} & {X^{\text{syn},\Solid}}
	\arrow[from=1-1, to=1-2]
	\arrow["can"', from=1-1, to=2-1]
	\arrow[from=1-2, to=2-2]
	\arrow[from=2-1, to=2-2]
\end{tikzcd}\]
   where the upper horizontal map is induced by the two open immersions $j_{dR}$ and $j_{HT}$.
   We will refer to this analytic stack as the \emph{solid Syntomification}.
\end{defn}

\begin{non}\label{from NFP to syn remark}
   Using that the open immersions $j_{dR}$ and $j_{HT}$ are stable under 
   base change and descent for 
   $\infty$-topoi, we see that for any morphism $f\from X\to S$ of derived $p$-adic 
   formal schemes, the induced square 
\[\begin{tikzcd}
	{X^{\Nyg,\Solid}} & {S^{\Nyg,\Solid}} \\
	{X^{\text{syn},\Solid}} & {S^{\text{syn},\Solid}}
	\arrow[from=1-1, to=1-2]
	\arrow[from=1-1, to=2-1]
	\arrow[from=1-2, to=2-2]
	\arrow[from=2-1, to=2-2]
\end{tikzcd}\]
   is Cartesian. Furthermore for any derived $p$-adic formal scheme $X$ the 
   map 
      $$X^{\Nyg,\Solid} \to X^{\text{syn},\Solid}$$
   is an \'etale surjection. Combining these two observations, one can deduce 
   most properties for the solid syntomification from the solid Nygaard filtered 
   Prismatisation. 
\end{non}

\begin{non}\label{Qcoh of syntomic}
   For a $p$-adic formal scheme $X$, we can compute the category of 
   solid prismatic F-gauges as the equalizer 
\[\begin{tikzcd}
	{\FGauge_{\Prism}^{\Solid}(X)} & {\QCoh(X^{\Nyg,\Solid})} & {\QCoh(X^{\Prism,\Solid})}
	\arrow[from=1-1, to=1-2]
	\arrow["{j_{dR}^{\ast}}", shift left, from=1-2, to=1-3]
	\arrow["{j_{HT}^{\ast}}"', shift right, from=1-2, to=1-3].
\end{tikzcd}\]
\end{non}

Let us record the following.

\begin{prop}\label{dualizable objects in Fgauges}
   For a $p$-adic formal scheme $X$, the subcategories of dualizable objects canonically identify as 
      \begin{itemize}
         \item $\QCoh(X^{\Prism,\Solid})^{\text{dual}} \iso \Perf(X^{\Prism})$ 
         \item $\QCoh(X^{\Nyg,\Solid})^{\text{dual}} \iso \Perf(X^{\Nyg})$
         \item $\FGauge_{\Prism}^{\Solid}(X)^{\text{dual}} \iso \Perf(X^{\text{Syn}})$
      \end{itemize}
   where the categories on the right are the perfect complexes on the classical incarnations of these 
   stacks as defined in \cite{FGauges}[6.1]. 
\end{prop}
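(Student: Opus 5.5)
The argument runs through descent, in three steps: first the prismatisation, then the Nygaard filtered prismatisation, and finally F-gauges as an equalizer. Dualizable objects in a limit of symmetric monoidal categories (with symmetric monoidal transition functors) are exactly the compatible families of dualizable objects. This holds because dualizability is detected by the evaluation and coevaluation maps, and these are preserved and reflected in limits. So for each of the three categories it suffices to write both sides as the same limit and to identify the two sides on a basis of the topology.

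\textbf{Step 1 (prismatisation).} By construction, $X^{\Prism,\Solid}$ is the colimit of $S^{\Prism,\Solid}\iso\Spa(\Prism_{S})$ over semiperfectoid $S$ mapping to $X$. By \ref{inducing descendable covers prismatisation}, the infinite $p$-root covers and étale covers go to proper descendable resp.\ étale covers, along which $\QCoh^{\ast}$ descends. Hence
$$\QCoh(X^{\Prism,\Solid})\iso \limit_{S}\Mod_{\Prism_{S}}(\DSolid)_{(p,I)\text{-comp}}$$
(the solidified, derived complete modules). On the classical side, $\Perf(X^{\Prism})\iso\limit_{S}\Perf(\Prism_{S})$ by quasisyntomic descent \cite{FGauges}. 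We have to compare these two limits. The infinite $p$-root topology is coarser than the quasisyntomic one, but perfect complexes satisfy descent along the covers used here: flat descent for étale covers, and for infinite $p$-root covers the maps $\Prism_{R}\to\Prism_{S}$ are $(p,I)$-completely faithfully flat (\ref{fundamental properties of infinite root cover}, conjugate filtration). Since both sides are sheaves on the same basis, it then suffices to show that for a single semiperfectoid $S$ the dualizable objects of solid $(p,I)$-complete $\Prism_{S}$-modules are exactly $\Perf(\Prism_{S})$. For that I will use that dualizable objects in $\QCoh(\Spa(A))$ of a complete (discrete-valued) analytic ring are the perfect $A$-modules; here I expect to cite \cite{AStacksLecture} / \cite{AnalyticDeRahm}.

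\textbf{Step 2 (Nygaard).} I repeat the argument with $S^{\Nyg,\Solid}\iso\Spa(\Rees(Fil^{\bullet}_{\Nyg}\Prism_{S}))$, using \ref{Nygaard filtered prismatisation of a semiperfectoid example} and \ref{Inducing descendable covers}. Locally the question becomes: dualizable objects among solid graded modules over the Rees algebra $\oplus_{i}Fil^{i}_{\Nyg}\Prism_{S}$ equal perfect graded modules, i.e.\ $\Perf(S^{\Nyg})$ as in \cite{FGauges}. Going from $\A^{1}$ to $\A^{1}/\Gm$ is just passing to $\Gm$-equivariant (graded) objects, which preserves the identification because $\Gm$-descent is the same on both sides.

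\textbf{Step 3 (F-gauges).} $\FGauge_{\Prism}^{\Solid}(X)$ is the equalizer in \ref{Qcoh of syntomic}. The functors $j_{\text{dR}}^{\ast}$ and $j_{\text{HT}}^{\ast}$ are symmetric monoidal, so the dualizable objects form the equalizer of the dualizable subcategories. By Steps 1 and 2 this equalizer is the equalizer of $\Perf(X^{\Nyg})\rightrightarrows\Perf(X^{\Prism})$, which by \cite{FGauges}[6.1] is $\Perf(X^{\text{syn}})$. It remains to check that $j_{\text{dR}}$, $j_{\text{HT}}$ and the identifications of Steps 1 and 2 are compatible, which holds because everything is defined by the same transmutation.

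\textbf{Main obstacle.} I expect the hardest point to be the local statement in Step 1: that a dualizable solid $(p,I)$-complete $\Prism_{S}$-module is perfect, and conversely, and similarly for the graded Rees version. $\Prism_{S}$ is typically non-noetherian, and it has two topological directions (the $p$- and $I$-adic ones, with the Nygaard direction added in Step 2). My first attempt is to reduce modulo $(p,I)$ via completeness, using that dualizability is detected by the Nakayama-type statement that dualizable objects are compact and complete. I would then invoke \cite{LucasMannPhD} for discrete adic spaces, where dualizable objects are perfect modules.
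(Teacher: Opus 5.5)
Your proposal is correct and follows essentially the paper's route: reduce to semiperfectoid $R$ via \cite{HA}[4.6.1.11], then show that the dualizable objects of $\QCoh(\Spa(A,A^{+}))$ for $I$-complete $A$ are exactly $\Perf(A)$. The paper does this by observing that dualizable objects are automatically $I$-complete ($M\otimes -$ preserves limits, so $M\iso\limit_{n}M/I^{n}$) and passing to the discrete quotients $A/I^{n}$, citing \cite{Andreychev2023KTheorieAR}[3.19] (dualizable implies nuclear implies discrete) rather than Mann's thesis. The only slip is that $\QCoh(\Spa(\Prism_{S}))$ consists of all solid $\Prism_{S}$-modules, not just the $(p,I)$-complete ones; this automatic completeness of dualizable objects is precisely what makes your identification harmless.
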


\begin{proof}
   As taking the dualizable object of symmetric monoidal categories commutes with limits \cite{HA}[4.6.1.11], the claims 
   easily reduce to the case $X \iso \Spf(R)$ for $R$ a semiperfectoid. In this case the claims follow from the next lemma.
\end{proof}

\begin{lem}
   Consider an animated ring $A$ which is (derived) $I$-adically complete for some finitely generated ideal $I$ together with some subring 
   of integral elements $A^{+} \subset A$. Then we have a canonical identification 
      $$\QCoh(\Spa(A,A^{+}))^{\text{dual}}\iso \Perf(A).$$
\end{lem}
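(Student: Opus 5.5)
The plan is to prove the two inclusions separately: every perfect $A$-module gives a dualizable object of $\QCoh(\Spa(A,A^{+}))$, and every dualizable object comes from a perfect $A$-module. Recall from \ref{Huber pairs as analytic stacks} that $\QCoh(\Spa(A,A^{+}))$ is the Bousfield localization of solid $A_{\widehat{I}}$-modules obtained by inverting the maps $f_{\Solid}$ for $f\in A^{+}$. Write $L$ for this localization and $\mathcal{A}:=L(A_{\widehat{I}})$ for the unit. Since $A$ is already $I$-adically complete, $\mathcal{A}$ has underlying condensed ring $A$ with its $I$-adic topology. The localization is symmetric monoidal.

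For the first inclusion, consider the symmetric monoidal functor
$$\Mod_{A}(\QCoh(\Znumb)) \to \QCoh(\Spa(A,A^{+})),\qquad M\mapsto L(M\otimes_{A}\mathcal{A}),$$
where the source consists of discrete modules. It carries $\Perf(A)$ into dualizable objects, because symmetric monoidal functors preserve duals. It is fully faithful on perfect objects. By thick subcategory closure (\cite{HA}[4.6.1.11]-type arguments, or simply exactness in both variables) it suffices to check full faithfulness on the unit. There it amounts to $\Hom(\mathcal{A},\mathcal{A})=\Gamma(\mathcal{A})=A$, since global sections of the completed ring, taken on underlying discrete sets, recover $A$.

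For the converse, take a dualizable object $M$. In an analytic ring whose unit is compact, dualizable objects are compact: $\Hom(M,-)=\Hom(\mathcal{A},M^{\vee}\otimes -)$, and $\mathcal{A}$ is compact because $\mathcal{A}\otimes \IntProjP$-generation makes $\Hom(\mathcal{A},-)$ commute with filtered colimits in the solid setting. So $M$ is compact, and it remains to see that compact dualizable objects are perfect. First reduce modulo $I$: $M/I:=M\otimes_{A}A/I$, formed via the Koszul complex on the chosen generators, is dualizable over $\Spa(A/I,A^{+}/\ldots)$, which is a discrete Huber pair. There the standard result for discrete adic spaces applies (\cite{LucasMannPhD}, and \cite{AStacksLecture}): dualizable objects in $\QCoh$ of a discrete Huber pair are perfect over the underlying discrete ring, via the support map to $|\Spec|$ and the fact that $\Hom(\mathcal{A}/I, M/I)$ is a dualizable discrete module.

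Next lift perfectness along the completion. Set $N:=\Hom(\mathcal{A},M)$, the underlying $A$-module. It is derived $I$-complete, because every object of $\QCoh(\Spa(A,A^{+}))$ is $I$-complete once $I$ consists of topologically nilpotent elements: the maps $f_{\Solid}$ for topologically nilpotent $f$ are already inverted, which forces $(f)$-completeness. Moreover $N\otimes_{A}A/I$ is perfect by the previous step. A derived $I$-complete $A$-module whose reduction mod $I$ is perfect is perfect (Stacks, Tag 09AW-style / \cite{BhattScholzeWittvectorGrassmannian}). Finally, the natural map $L(N\otimes_{A}\mathcal{A})\to M$ is an isomorphism. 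It is a map of dualizable, hence $I$-complete, objects that becomes an isomorphism mod $I$, so it is an isomorphism by derived Nakayama.

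\textbf{Main obstacle.} The hardest step is the discrete case mod $I$: showing that dualizable objects in the solid category of a discrete Huber pair $(B,B^{+})$ are perfect $B$-modules, and not merely some nuclear or idempotent-twisted objects. My first attempt would be to use that $\Spa(B,B^{+})\to\Spa(B)$ (the map to $B$ with trivial $+$-ring) is an open immersion of the analytic stack $\Spa(B,\Znumb)$-type onto which everything pulls back, combined with \ref{From smashing spectrum to topological space}, and to reduce to $B^{+}=B$. For $B^{+}=B$ the category is just solid $B$-modules with $B$ discrete, where dualizable means perfect by the classical argument.
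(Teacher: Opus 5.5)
\textbf{Gap 1: the discrete case is not proved.} Your overall structure matches the paper's: settle the discrete case modulo $I$, then lift using $I$-completeness. But the discrete case, which you yourself flag as the main obstacle, is never established, and the route you sketch does not work.

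\begin{itemize}
\item The underlying-module functor $\Hom(\mathcal{A}/I,-)$ is only lax monoidal. So there is no reason for $\Hom(\mathcal{A}/I,M/I)$ to be dualizable in the classical derived category unless you already know that $M/I$ lies in the image of discrete modules.
\item Passing to the open substack $\Spa(B)=\Spa(B,B)\subset\Spa(B,B^{+})$ (solidifying all of $B$ is the open immersion, not the other way around) only tells you about the restriction of $M$ to that open. Perfectness there does not propagate to all of $\Spa(B,B^{+})$.
\item Even for $B^{+}=B$ there is no ``classical argument''. $\QCoh(\Spa(B))$ contains many non-discrete objects, such as $\prod_{\Nnumb}B$, and the whole content is to show that a dualizable object is discrete.
\end{itemize}

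The missing idea is nuclearity. If $M$ is dualizable, then $\IntHom(P,M)\iso\IntHom(P,\mathcal{A})\otimes M$ for every $P$, in particular for the compact generators, so $M$ is nuclear. Nuclear modules over a discrete Huber pair are discrete by \cite{Andreychev2023KTheorieAR}[3.19], independently of $B^{+}$. Once $M$ is discrete, dualizable means perfect. This is exactly how the paper does the discrete case, with no reduction to $B^{+}=B$.

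\textbf{Gap 2: the $I$-completeness claim is false as stated.} It is not true that every object of $\QCoh(\Spa(A,A^{+}))$ is $I$-complete. For example, $\Qnumb_{p}$ and $\bigoplus_{\Nnumb}\Znumb_{p}$ lie in $\QCoh(\Spa(\Znumb_{p}))$, and neither is derived $p$-complete. Having $f_{\Solid}$ inverted for topologically nilpotent $f$ does not force $(f)$-completeness.

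What is true, and all you need, is that dualizable objects are $I$-complete. Since $M\otimes-$ commutes with limits,
$$M\iso M\otimes\lim_{n}A/I^{n}\iso\lim_{n}M/I^{n}.$$
This is the paper's argument. It gives completeness of $M$, hence of $N=\Hom(\mathcal{A},M)$, and also of $L(N\otimes_{A}\mathcal{A})$, which is dualizable because $N$ is perfect.

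With both repairs, your lifting step is a valid alternative to the paper's. You use perfectness of a derived $I$-complete module whose reduction mod $I$ is perfect, then derived Nakayama for $L(N\otimes_{A}\mathcal{A})\to M$. The paper instead uses that dualizable objects commute with limits together with $\lim_{n}\Perf(A/I^{n})\iso\Perf(A)$.
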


\begin{proof}
   Let us first assume that $A$ is discrete. Then any dualizable object is in particular nuclear and thus discrete 
   by \cite{Andreychev2023KTheorieAR}[3.19]. But the dualizable objects in the classical derived category of a ring are 
   exactly the perfect complexes. 
   Now, to see the claim for general $A$, we first observe that for a dualizable object $M$ the functor $M\otimes\_$
   preserves limits. Thus we have 
      $$M\iso M\otimes A \iso M\otimes \limit_{n}A/I^{n} \iso \limit_{n}M/I^{n}$$
   which shows that $M$ is $I$-adically complete. From this we see that 
      $$\QCoh(\Spa(A,A^{+}))^{\text{dual}}\iso \QCoh(\Spa(A,A^{+}))_{\widehat{I}}^{\text{dual}}\iso
      \limit_{n}\QCoh(\Spa(A/I^{n},A^{+}))^{\text{dual}}\iso \Perf(A).$$
   Where the second equivalence uses that taking duals commutes with limits \cite{HA}[4.6.1.11] and the third 
   equivalence the case of a discrete ring. 
\end{proof}

\begin{non}
   Recall from \cite{BhattLurieAPC}[2.2.11+3.3.8] that there is a line bundle 
      $$\Oh_{\Znumb_{p}^{\Prism}}\{1\}\in \Perf(\Znumb_{p}^{\Prism})$$
   called the \emph{Breuil-Kisin twist}, which comes together with a frobenius 
   isomorphism \cite{BhattLurieAPC}[2.2.14]
     $$\varphi \from F^{\ast}\Oh_{\Znumb_{p}^{\Prism}}\{1\}\iso \mathcal{I}^{-1}\otimes \Oh_{\Znumb_{p}^{\Prism}}\{1\}$$
   where $\mathcal{I}$ denotes the Hode-Tate ideal.
   Heuristically, this line bundle should be thought of as 
      $$\otimes_{k\ge 0}(F^{k})^{\ast}\mathcal{I}$$
   where the Frobenius isomorphism is the evident one.
   Using \ref{dualizable objects in Fgauges}, we can understand this object as a line bundle on $\Znumb_{p}^{\Prism,\Solid}$.

   Recall that we had two maps 
\[\begin{tikzcd}
	{\Znumb_{p}^{\Prism,\Solid}} & {\Znumb_{p}^{\Nyg,\Solid}} & {\Disc^{1}_{\Solid}/(\Gm)_{\Solid}^{\text{an}}}
	\arrow["\pi"', from=1-2, to=1-1]
	\arrow["t", from=1-2, to=1-3]
\end{tikzcd}\]
   where $\pi$ is the structure map and $t$ is the Rees map. 
   Using these maps, the \emph{Nygaard filtered Breuil-Kisin twist} can be defined via 
      $$\Oh_{\Znumb_{p}^{\Nyg}}\{1\}:= \pi^{\ast}\Oh_{\Znumb_{p}^{\Prism}}\{1\}\otimes t^{\ast}\Oh(-1).$$
   We will understand this line bundle on $\Znumb_{p}^{\Nyg,\Solid}$. 

   Note that on the one hand we have 
      $$j_{dR}^{\ast}\Oh_{\Znumb_{p}^{\Nyg}}\{1\}\iso \Oh_{\Znumb_{p}^{\Prism}}\{1\}$$
   as composing $j_{dR}$ with the structure map gives the identity, and the composition 
   $t\circ j_{dR}$ factors over $(\Gm)_{\Solid}^{\text{an}}/(\Gm)_{\Solid}^{\text{an}}$.
   On the other hand, we have 
      $$j_{HT}^{\ast}\Oh_{\Znumb_{p}^{\Nyg}}\{1\}\iso F^{\ast}\Oh_{\Znumb_{p}^{\Prism}}\{1\}\otimes \mathcal{I}\iso \Oh_{\Znumb_{p}^{\Prism}}\{1\}.$$
   Where, for the first isomorphism, we use that composing $j_{HT}$ with the structure map 
   recovers the frobenius and that the composition $t\circ j_{HT}$ classifies the 
   Hodge-Tate locus $\Znumb_{p}^{HT}\subset \Znumb_{p}^{\Prism}$. The second 
   Isomorphism comes from the Frobenius automorphism on the Breuil-Kisin twist recalled above.
   In particular, using this identification, we obtain a line bundle 
     $$\Oh_{\Znumb_{p}^{\text{syn}}}\{1\}$$
   on $\Znumb_{p}^{\text{syn},\Solid}$ which we will refer to as the 
   \emph{syntomic Breuil-Kisin twist}. 

   By base change, we also obtain line bundles 
      $$\Oh_{X^{\Prism}}\{1\}, \Oh_{X^{\Nyg}}\{1\}, \Oh_{X^{\text{syn}}}\{1\}$$
   for an arbitrary derived $p$-adic formal scheme $X$. 
\end{non}

\begin{non}\label{Mapping spectrum computes syn cohomology remark}
   Given a $p$-adic formal scheme $X$, then the mapping spectrum 
      $$\Hom_{X^{\text{syn},\Solid}}(\Oh_{X^{\text{syn},\Solid}},\Oh_{X^{\text{syn},\Solid}}\{n\})$$
   via the presentation given in \ref{Qcoh of syntomic} become the 
   fiber of the map 
      $$\varphi\{n\}-\text{can}\from Fil_{\Nyg}^{n}\Prism_{X}\{n\} \to \Prism_{X}\{n\}$$
   where $\varphi\{n\}$ is the twisted filtered frobenius and $\text{"can"}$ comes from 
   the inclusion of the filtration. In particular, this mapping spectrum identifies with 
   the syntomic cohomology
      $$R\Gamma_{\text{syn}}(X,\Znumb_{p}(n))$$
   for $p$-adic formal schemes as defined in \cite{BhattLurieAPC}[7.4]. 
\end{non}

\section{A six-functor formalism for syntomic cohomology}\label{theSIXFF}

\subsection{Solid F-Gauges}

At this point, we have obtained a six-functor formalism for syntomic cohomology, of which we will now 
start to discuss properties. 

\begin{defn}
    Given a $p$-adic formal scheme $X$, we will write 
       $$\FGauge_{\Prism}^{\Solid}(X):= \QCoh_{\text{qc}}(X^{\text{syn},\Solid})$$
    for the category of quasi-coherent sheaves on the solid Syntomification 
    of $X$ and refer to it as \emph{solid prismatic F-gauges} on $X$. 
\end{defn}

\begin{constr}
    Recall from \ref{sixfunctors on analytic stacks}, that we have a six-functor formalism 
    on analytic stacks, which considers the category of quasi-coherent sheaves on an 
    analytic stack. Precomposing with the assignment 
       $$X \mapsto X^{\text{syn},\Solid}$$
    thus tells us that the assignment 
      $$X\mapsto \FGauge_{\Prism}^{\Solid}(X)$$
    extends to a six-functor formalism. 
\end{constr}

The goal of the next two sections is to prove the following theorem. 

\begin{thm}\label{Main theorem}
    The functor 
       $$\FGauge_{\Prism}^{\Solid}(\_)\from \fSch_{\Spf(\Znumb_{p})}^{\text{op}} \to \PrL_{\text{st}}$$
    can be extended to a six-functor formalism on the category of (derived)
    $p$-adic formal schemes satisfying the following:
       \begin{itemize}
        \item[(A)] Morphisms locally of finite type are 
                   $!$-able. 
        \item[(B)] \'Etale morphisms are cohomologically \'etale. 
        \item[(C)] Any finite composition of closed immersions and proper morphisms is weakly cohomologically proper. 
        \item[(D)] The functor $(\_)^{\text{syn},\Solid}$ preserves étale covers and sends Zariski covers to 
                   open covers of analytic stacks. In particular the functors $\FGauge_{\Prism}^{\Solid}(\_)^{\ast}$ and 
                   $\FGauge_{\Prism}^{\Solid}(\_)^{!}$ are \'etale sheafs. 
        \item[(E)] It admits Tate twists. That is for any $p$-adic formal scheme $X$ 
                   the object 
                      $$\Oh_{X^{\text{syn}}}(-1):= cof(\Oh_{X^{\text{syn}}} \to f_{\ast}\Oh_{(\ProS^{1})^{\text{syn}}})$$
                   is $\otimes$-invertible and its inverse identifies with 
                      $$\Oh_{X^{\text{syn}}}(1)\iso \Oh_{X^{\text{syn}}}\{1\}[-2]$$
                   a shift of the Breuil-Kisin twist. 
        \item[(F)] Any smooth morphism is cohomologically smooth. Furthermore, for such a smooth 
                   morphism $f\from X\to S$, we have an identification 
                      $$f^{!}\Oh_{S^{\text{syn}}}:= \omega_{f} \iso \Oh_{X^{\text{syn}}}(d)\iso \Oh_{X^{\text{syn}}}\{d\}[-2d]$$
                    of the dualizing sheaf, where $d$ is the relative dimension 
                    of $f$. 
        \item[(G)] The dualizable objects identify 
                      $$(\FGauge_{\Prism}^{\Solid}(\_))^{\text{dual}}\iso \Perf((\_)^{\text{syn}})$$
                    with perfect F-Gauges as defined in \cite{FGauges}[6.1].
                   In particular, there is a functorial identification 
                      $$R\Gamma_{\text{syn}}^{\text{BMS}}(\_,\Znumb_{p}(n))\iso \Hom_{(\_)^{\text{syn}}}(\Oh_{(\_)^{\text{syn}}},\Oh_{(\_)^{\text{syn}}}\{n\})$$
                   of the mapping spectrum with the syntomic cohomology of $p$-adic formal schemes as defined in \cite{BhattLurieAPC}.
       \end{itemize}
\end{thm}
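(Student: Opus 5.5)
The plan is to go through the assertions in an order where each one feeds the next, reducing everything to the solid Nygaard filtered prismatisation via \ref{from NFP to syn remark}. That remark says $X^{\Nyg,\Solid}\to X^{\text{syn},\Solid}$ is an étale surjection and the square with $S^{\Nyg,\Solid}\to S^{\text{syn},\Solid}$ is Cartesian. Hence $!$-ability, cohomological étaleness and weak properness of $X^{\text{syn},\Solid}\to S^{\text{syn},\Solid}$ may be checked on $X^{\Nyg,\Solid}\to S^{\Nyg,\Solid}$, using \ref{stability properties of covers in SIXF} (locality on the target).

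\textbf{(D)} comes first. It follows from \ref{Inducing descendable covers}(a),(b), extended from semiperfectoid bases to general $X$ by the basis statement \ref{Basis by semiperfectoids lemma}, since the functor is defined by descent from semiperfectoids. Sheafiness of $\ast$ and $!$ then follows from descent along étale covers of analytic stacks.

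\textbf{(B)} is proved by étale-locally reducing to a standard étale map and lifting it to a perfectoid as in the proof of \ref{Inducing descendable covers}(b). \textbf{(G)} is \ref{dualizable objects in Fgauges} together with \ref{Mapping spectrum computes syn cohomology remark}.

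\textbf{(C)} is the first substantial step. I would use the stratification of $R^{\Nyg}$ over a perfectoid from \ref{stratification of Nygaard filtered Prismatisation}, pulled back to $X$. On the three strata the map becomes $X^{\Prism}\to S^{\Prism}$, $\varphi_\ast X^{HT}\to\varphi_\ast S^{HT}$ and $\varphi_\ast X^{\text{Hodge}}\to \varphi_\ast S^{\text{Hodge}}$.
\begin{itemize}
\item On the Hodge–Tate and Hodge strata, co-smoothness is given by \ref{Properness on HodgeTate} and \ref{Properness of Hodge stacks}.
\item By \ref{From formal to analytic prop}\textbf{E}, co-smoothness can be checked over a finite Zariski stratification. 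By \textbf{D}, it can be checked mod $p$, where the prismatic locus and the Hodge–Tate locus agree up to nilpotents.
\item The diagonal of a proper map is a closed immersion, which is handled by \ref{weakly proper from decomposition} together with the closed-immersion case of the preceding Hodge proposition.
\item \ref{weakly proper from diagonal and cosmooth} then gives weak properness.
\end{itemize}

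\textbf{(A)} follows in the same way. Locally a finite type map is an open immersion into $\A^n_S$ followed by the projection, and $\A^n\subset\ProS^n$ reduces the projection to (B)/(D) plus the proper case (C). Alternatively, \ref{From formal to analytic prop}\textbf{C} can be applied stratumwise.

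\textbf{(E)} and \textbf{(F)} come last, via Theorem C, i.e. \ref{dualizing complex corollary} applied to the geometrized setup $\Sm^{\text{sep}}_{\Znumb_p}\to$ $p$-adic formal schemes, with geometrically smooth maps the smooth ones. The geometricity axioms are (B), (C) and (D). The remaining inputs are an additive orientation and blow-up excision.
\begin{itemize}
\item \textbf{Chern classes.} I would build them from the prismatic logarithm, i.e. the syntomic first Chern class $R\Gamma(\_,\Gm)[-1]\to R\Gamma_{\text{syn}}(\_,\Znumb_p(1))[1]$ of Bhatt–Lurie.
\item \textbf{Projective bundle formula.} The syntomic projective bundle formula (BMS/Bhatt–Lurie) gives $f_\ast\Oh_{\ProS^d}$ in terms of twists $\{-i\}[-2i]$. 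This must be upgraded from mapping spectra to an equivalence of objects in $\FGauge_\Prism^\Solid$; that can be checked after pullback to $\QCoh(R^{\Nyg,\Solid})$ for semiperfectoid $R$, where it is the Nygaard-filtered prismatic projective bundle formula.
\item \textbf{Tate twist.} The case $d=1$ identifies $\Oh(-1)$ with $\Oh\{-1\}[2]$, which is invertible; this gives (E).
\item \textbf{Blow-up excision} for $0\in\A^n$ similarly reduces to the known blow-up formula for Nygaard-filtered prismatic cohomology.
\end{itemize}

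The main obstacle I expect is (C), specifically co-smoothness on the prismatic (non Hodge–Tate) stratum. It has to be deduced from the mod-$p$ reduction and the Hodge–Tate case, and it is delicate that \ref{From formal to analytic prop}\textbf{D} applies to these stacks, which need not admit descendable covers of the required form. A close second is making the projective bundle isomorphism hold internally in solid F-gauges rather than only on global sections.
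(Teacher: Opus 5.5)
Your treatment of (B), (D), (G), (E), (F) and the stratum-by-stratum core of (C) matches the paper. However, there is a genuine gap exactly at the point you call delicate, and your route to (A) is circular.

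\textbf{Why the argument for (A) fails.} Weak cohomological properness is only defined for $!$-able maps. Also, \ref{From formal to analytic prop}\textbf{D},\textbf{E} may only be invoked under the hypotheses of \textbf{C}: locally of \textsuperscript{+}finite type, with a descendable cover by an affine. So deducing $!$-ability of $(\A^{n}_{S})^{\text{syn},\Solid}\to S^{\text{syn},\Solid}$ from (C) applied to $\ProS^{n}_{S}\to S$ presupposes what it is meant to prove. Proving (C) for $\ProS^{n}_{S}$ already requires knowing that $(\ProS^{n}_{S})^{\Nyg,\Solid}\to S^{\Nyg,\Solid}$ satisfies those hypotheses. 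The ``stratumwise'' alternative does not help either: \textbf{E} only lets you test co-smoothness, not $!$-ability, on a stratification, and only once the hypotheses of \textbf{C} are known.

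\textbf{Closed immersions are not covered.} A finite type map is locally a \emph{closed}, not an open, immersion into $\A^{n}_{S}$ followed by the projection. So closed immersions need an independent argument. The closed-immersion part of the semiperfectoid Hodge proposition only controls the Hodge stratum. It therefore does not make the diagonal weakly proper on the whole Nygaard stack.

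\textbf{The missing ingredient.} What you need is \ref{affine space is shreakable} together with \ref{closed immersions are proper}, which the paper proves \emph{before} (C).
\begin{itemize}
\item \emph{Affine space.} Over a perfectoid $R$, resolve $\A^{n}_{R}$ by the infinite $p$-root cover $\Spf(R[x_{i}^{1/p^{\infty}}])$. By \ref{Inducing descendable covers}(c) this induces a proper descendable cover of solid Nygaard stacks. Moreover $\Ainf(R[x_{i}^{1/p^{\infty}}])\iso\Ainf(R)[x_{i}^{1/p^{\infty}}]$ is integral over a polynomial algebra over $\Ainf(R)$. Positive Nygaard pieces are topologically nilpotent, as in the last step of the proof of \ref{Inducing descendable covers}(c). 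Hence the map is of \textsuperscript{+}finite type.
\item \emph{Closed immersions.} For $R\to R/(f_{1},\dots,f_{n})$ with $R$ perfectoid, the same integrality-modulo-$(p,I)$ argument shows the induced map of Nygaard stacks is affine and proper.
\end{itemize}
Together these give (A). They also show that for proper $X\to S$ the map $X^{\Nyg,\Solid}\to S^{\Nyg,\Solid}$ satisfies the hypotheses of \ref{From formal to analytic prop}\textbf{C}. They further make the diagonal weakly proper directly.

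Only then may you reduce via \textbf{E} to the three strata, and via \textbf{D} to the Hodge--Tate and Hodge cases. Note that \textbf{D} works modulo the full ideal of definition $(p,I)$, not just $p$, since $I$ is only topologically nilpotent mod $p$.

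\textbf{Minor point.} For the solid upgrade of the projective bundle formula, the paper uses $(p,I)$-completeness to reduce mod $(p,I)$. It then tests against free solid modules on profinite sets, whose mapping spectra are colimits of those out of the finite quotients. This reduces everything to the global-section statements of Bhatt--Lurie.
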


\begin{proof}[Proof of \ref{Main theorem}(G)]
    This was explained in \ref{dualizable objects in Fgauges} and \ref{Mapping spectrum computes syn cohomology remark}.
\end{proof}

\begin{non}
    Note that assertions $(A),(B),(C)$ and $(D)$ are étale local on the target. So using 
    \ref{from NFP to syn remark}, we see that it suffices to check the analogous claims for the solid Nygaard
    filtered prismatisation. We will make use of this without further mentioning.
\end{non}

\begin{proof}[Proof of \ref{Main theorem} $(B)$ and $(D)$]
    We claim that for an étale morphism (resp. an étale cover) $X\to S$ of $p$-adic formal schemes
    the induced morphism 
       $$X^{\Nyg,\Solid} \to S^{\Nyg,\Solid}$$
    is an étale morphism (resp. an étale cover).  
    As this assertion is local on the target, we can assume $S\iso \Spf(R)$
    for $R$ a semiperfectoid. But then the claim is \ref{Inducing descendable covers}.
\end{proof}

Let us outsource parts of the proof in the following propositions. 

\begin{prop}\label{closed immersions are proper}
    Given a Zariski closed immersion $i\from Z\to X$ of $p$-adic formal schemes. 
    The induced morphism 
       $$Z^{\text{syn},\Solid} \to X^{\text{syn},\Solid}$$
    is affine and weakly proper. 
\end{prop}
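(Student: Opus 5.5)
By \ref{from NFP to syn remark}, the square comparing $Z^{\Nyg,\Solid}\to X^{\Nyg,\Solid}$ with $Z^{\text{syn},\Solid}\to X^{\text{syn},\Solid}$ is cartesian, and $X^{\Nyg,\Solid}\to X^{\text{syn},\Solid}$ is an étale surjection. Both affineness and weak properness can be checked locally on the target for a $\QCoh^{\ast}$-cover (\ref{stability properties of covers in SIXF}(c) and \ref{weakly proper from decomposition}). So I would reduce to showing that $Z^{\Nyg,\Solid}\to X^{\Nyg,\Solid}$ is affine and weakly proper.

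The claim is also étale local on $X$, and the topos is generated by semiperfectoids (\ref{Basis by semiperfectoids lemma}). I would therefore cover $X$ by infinite $p$-root and étale covers $\Spf(R)\to X$ with $R$ semiperfectoid. These induce proper descendable covers, resp. étale covers, on the Nygaard stacks by \ref{Inducing descendable covers}, and closed immersions are stable under base change. This reduces us to $X=\Spf(R)$ with $R$ semiperfectoid and $Z=\Spf(S)$, where $S=R/J$ is a Zariski quotient. Then $S$ is again semiperfectoid, since a quotient of a quotient of a perfectoid remains such. By \ref{Nygaard filtered prismatisation of a semiperfectoid example} the map becomes
$$\Rees(Fil^{\bullet}_{\Nyg}\Prism_{S})\to \Rees(Fil^{\bullet}_{\Nyg}\Prism_{R})$$
over $\A^{1}/\Gm$, with solid structure. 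This map is visibly affine, being relative $\Spa$ of a graded algebra.

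For weak properness of an affine map of analytic rings it suffices, by \ref{integral affine maps are proper}, that $\oplus_i Fil^i_{\Nyg}\Prism_R\to \oplus_i Fil^i_{\Nyg}\Prism_S$ be integral up to topologically nilpotent elements. The diagonal of an affine map that is proper in this sense is itself of the same kind, so weak properness follows as in \ref{weakly proper from decomposition}. As in the proof of \ref{Inducing descendable covers}(c), elements in positive Nygaard degree are topologically nilpotent, since $\varphi(f)\equiv f^p$ mod $p$ and $\varphi(f)\in I$. It therefore suffices to see that $\Prism_R\to\Prism_S$ is integral modulo $(I,p)$, i.e. that $\overline{\Prism}_R/p\to \overline{\Prism}_S/p$ is integral.

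This is the main obstacle. I would first try to use compatibility with colimits in $S$ to reduce to $J$ finitely generated, and then to the case $S=R/(f)$ of one generator. For this I would invoke the conjugate filtration on $\overline{\Prism}_S$, whose graded pieces are $\wedge^i\CoTan_{S}[-i]$ twisted. Since $\CoTan_{S/R}$ is $S[1]$ (derived quotient), the Hodge–Tate cohomology of $S$ over that of $R$ is generated by the divided powers $\gamma_n(f)$ of one class. These divided powers are integral mod $p$, because each satisfies $x^p=p!\,\gamma_p(x)\equiv 0$ up to units, i.e. they are nilpotent. Hence $\overline{\Prism}_S$ is integral over the image of $\overline{\Prism}_R$ modulo $p$.

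An alternative I would keep in reserve is to use \ref{Properties of Sharpgerbes} directly. It gives weak properness on the Hodge–Tate and Hodge strata via \ref{Properness on HodgeTate}, \ref{Properness of Hodge stacks} and part (b) of the preceding proposition. One would then combine this with \ref{From formal to analytic prop}\textbf{E}, checking co-smoothness stratumwise on the Zariski stratification of \ref{stratification of Nygaard filtered Prismatisation}. The prismatic stratum there is handled by the integrality above, and the Hodge locus is handled by the preceding proposition (b).
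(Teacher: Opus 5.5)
Your proposal is essentially the paper's proof. You reduce to the solid Nygaard filtered prismatisation, then to $X=\Spf(R)$ with $R$ semiperfectoid, and via compatibility with colimits in $S$ to $S=R/(f_1,\dots,f_n)$; you then finish as in the last paragraph of the proof of \ref{Inducing descendable covers}(c), using topological nilpotence in positive Nygaard degree and integrality of $\Prism_R\to\Prism_S$ modulo $(I,p)$ via the conjugate filtration. The one step the paper makes explicit that you skip is lifting the $f_i$ to a perfectoid surjecting onto $R$, so that $R$ may be assumed perfectoid. This is what makes your conjugate-filtration step precise: then $\overline{\Prism}_S\iso\overline{\Prism}_{S/\Ainf(R)}$ has graded pieces $\wedge^i\CoTan_{S/R}[-i]$ (up to Breuil--Kisin twists), rather than the absolute $\wedge^i\CoTan_S[-i]$ you cite.
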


\begin{proof}
    It is enough to prove the analogous assertion for the solid Nygaard 
    filtered prismatisation. Furthermore, using \ref{Main theorem}[(D)] we can assume 
    that $X$ (and thus also $Z$) is affine and then semiperfectoid. 
    Let us write 
      $$\Spf(S)\iso Z\to X\iso \Spf(R).$$
    As the assertion is stable under colimits in $S$, we can assume 
      $$S \iso R/(f_{1},\dots,f_{n})$$
    and by lifting the functions to a perfectoid mapping surjective to $R$, we can 
    assume $R$ to be perfectoid. Now one argues similarly to the last paragraph 
    of the proof of \ref{Inducing descendable covers}[(c)]. 
\end{proof}

\begin{prop}\label{affine space is shreakable}
    Given a $p$-adic formal scheme $X$. The induced map 
       $$(\A^{n}_{X})^{\text{syn},\Solid} \to X^{\text{syn},\Solid}$$
    is locally of \textsuperscript{+}finite type and thus in particular $!$-able. 
\end{prop}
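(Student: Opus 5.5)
Following the convention of the proof of \ref{Main theorem}, we reduce to the solid Nygaard filtered prismatisation. By \ref{from NFP to syn remark} the square relating $(\A^{n}_{X})^{\Nyg,\Solid}\to X^{\Nyg,\Solid}$ with the syntomic map is cartesian, and $X^{\Nyg,\Solid}\to X^{\text{syn},\Solid}$ is an étale surjection. Both \textsuperscript{+}finite type and $!$-ability are local on the target. We may therefore work étale locally on $X$, and then along the infinite $p$-root covers of \ref{Inducing descendable covers}(c), which are proper descendable covers. This reduces us to $X=\Spf(R)$ with $R$ semiperfectoid, and after lifting along a perfectoid surjecting onto $R$ (the statement being stable under base change), to $R$ integral perfectoid with $A=\Ainf(R)$.

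In this case $R^{\Nyg,\Solid}$ is the solidification of the Rees stack $\Spf(A[t,u]/(tu-\varphi^{-1}(d)))/\Gm$ from \ref{Nygaard stratification on perferctoids example}. The plan is to produce a descendable cover of $(\A^{n}_{R})^{\Nyg}$ by an affine formal scheme that is of \textsuperscript{+}finite type over this base, and then apply \ref{From formal to analytic prop}\textbf{C}. For this I would cover $\A^{n}_{R}$ by the generic infinite $p$-root cover $S=R\langle x_{i}^{1/p^{\infty}}\rangle$. By \ref{Inducing descendable covers}(c) the map $S^{\Nyg,\Solid}\to(\A^{n}_{R})^{\Nyg,\Solid}$ is a proper descendable cover, and $S$ is perfectoid. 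Hence $S^{\Nyg}$ is the Rees stack of $\Ainf(S)=\Ainf(R)\langle [x_{i}]^{1/p^{\infty}}\rangle$ with the same $t,u$.

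The map $\Ainf(R)[t,u]/(\dots)\to\Ainf(S)[t,u]/(\dots)$ factors as the polynomial algebra in the $[x_{i}]$, followed by the integral map adjoining all $p$-power roots. The composite is therefore of \textsuperscript{+}finite type in the sense of the definition preceding \ref{From formal to analytic prop}. This gives a descendable cover of $(\A^{n}_{R})^{\Nyg}$ by an affine formal scheme, and shows the map is locally of \textsuperscript{+}finite type in the éid-sense. Then \ref{From formal to analytic prop}\textbf{C}, applied over the affine chart $\Spf(A[t,u]/(\dots))$ of the base and transported along the $\Gm$-quotient and the étale cover, yields $!$-ability.

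The main obstacle I expect is the precise compatibility of the chart with the $\Gm$-action and with the Rees description of $(\A^{n}_{R})^{\Nyg}$ itself, since $\A^{n}_{R}$ is not semiperfectoid. One must check that the descendable-cover and \textsuperscript{+}finite-type hypotheses of \ref{From formal to analytic prop}\textbf{C} hold over the base $R^{\Nyg}$ rather than only over $\Spf(\Znumb_{p})$. I would handle this by working after pulling back along $\A^{2}\to\A^{2}/\Gm$, where everything is honestly affine.
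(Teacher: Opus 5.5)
Your proposal follows the paper's proof. After reducing to a perfectoid base $R$, you cover $\A^{n}_{R}$ by $R\langle x_{i}^{1/p^{\infty}}\rangle$ and use $\Ainf(R\langle x_{i}^{1/p^{\infty}}\rangle)\iso\Ainf(R)\langle[x_{i}]^{1/p^{\infty}}\rangle$ (an affine space followed by an integral map) to get \textsuperscript{+}finite type, then conclude via \ref{From formal to analytic prop}\textbf{C} as in the proof of \ref{Inducing descendable covers}(c). The paper only reduces faster, by base change to $X=\Spf(\Znumb_{p})$.

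One minor imprecision: \ref{Inducing descendable covers}(c) assumes a semiperfectoid target, and $\A^{n}_{R}$ is not semiperfectoid. For that cover you should instead use that $(\_)^{\Nyg,\Solid}$ sends root covers to effective epimorphisms by construction, applying (c) only to the semiperfectoid terms of the \v{C}ech nerve. The paper leaves this same step implicit.
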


\begin{proof}
    As the assertion is stable under base change, we can assume $X\iso \Spf(\Znumb_{p})$.
    Now $\Spf(\Znumb_{p})$ can be resolved by an integral perfectoid $R$ via an infinite $p$-root 
    cover. Thus we can assume $X\iso \Spf(R)$ with $R$ perfectoid. 

    Now write 
       $$\Spf(R^{\infty}) := \Spf(R[x_{1}^{\frac{1}{p^{\infty}}},\dots, x_{n}^{\frac{1}{p^{\infty}}}]).$$
    Then we have the infinite $p$-root cover 
       $$\Spf(R^{\infty}) \to \A^{n}_{R}$$
    and it suffices to see that the map 
       $$(R^{\infty})^{\text{syn},\Solid} \to R^{\text{syn},\Solid}$$
    is of \textsuperscript{+}finite type. As in the proof of \ref{Inducing descendable covers}[(c)]
    for this it is enough to argue that the map 
       $$\Spf(\Ainf(R^{\infty})) \to \Spf(\Ainf(R))$$
    is of \textsuperscript{+}finite type. But as 
      $$\Ainf(R^{\infty})\iso \Ainf(R)[x_{1}^{\frac{1}{p^{\infty}}},\dots,x_{n}^{\frac{1}{p^{\infty}}}]$$
    this is clear. 
\end{proof}

\begin{proof}[Proof of \ref{Main theorem}(A),(C)]
    Using \ref{Main theorem}$(D)$, $(A)$ follows from \ref{affine space is shreakable} and 
    \ref{closed immersions are proper}. 

    We now prove $(C)$. Consider a proper map $X\to S$, then the diagonal is a closed immersion, such that the induced map becomes weakly 
    cohomologically proper by \ref{closed immersions are proper}. Thus by \ref{weakly proper from diagonal and cosmooth}
    is suffices to see that the induced map is co-smooth.
    
    By \ref{Main theorem}$(D)$, we can assume 
    $S$ to be affine and resolving $S$ by a semiperfectoid, we can even assume $S$ to be semiperfectoid. 
    As a proper map is of finite type, we can use \ref{Main theorem}$(D)$, \ref{affine space is shreakable}
    and \ref{closed immersions are proper}, to see that the induced map 
       $$X^{\Nyg,\Solid} \to S^{\Nyg,\Solid}$$
    is locally of \textsuperscript{+}finite type. Thus by \ref{From formal to analytic prop}$(E)$, we can check the 
    assertion on a Zariski stratification of the target. Using the stratification \ref{stratification of Nygaard filtered Prismatisation}
    the claim now follows from \ref{Properness of Hodge stacks} and \ref{Properness on HodgeTate}. 
\end{proof}

\subsection{The additive orientation}

To prove the rest, we will construct an additive orientation. 

\begin{non}\label{additive orientation for syn}
   Using \ref{Mapping spectrum computes syn cohomology remark}, we recall that there is 
   a map 
      $$\text{c}_{1}^{\text{syn}}\from R\Gamma_{\text{\'et}}(X,\Gm)[-1] \to \Hom_{X^{\text{syn}}}(\Oh_{X^{\text{syn}}},\Oh_{X^{\text{syn}}}\{1\}[-2])$$ 
   coming from the prismatic logarithm \cite{BhattLurieAPC}[7.5.2]. This map 
   is natural in $X$ and identifies the target with the derived $p$-completion 
   of the source \cite{BhattLurieAPC}[7.5.6]. In particular, this gives us a theory 
   of first Chern classes.
\end{non}

\begin{constr}\label{Projective bundle morphism construction for syn}
   Using the first Chern classes from \ref{additive orientation for syn}, as in \ref{Projective bundle formula morphism Construction}
   we can construct a morphism 
      $$\sum_{i=0}^{d} c^{\text{syn}}_{1}(\Oh(1))^{i}\{d-i\}[2(i-d)]\from \bigoplus_{i=0}^{d}\Oh_{\Znumb_{p}^{\text{syn}}}\{d-i\}[2(i-d)] \to f_{\ast}\Oh_{(\ProS^{d})^{\text{syn}}}\{d\}[-2d]$$
   where we write $f\from \ProS^{d}\to \Spf(\Znumb_{p})$ for the projection. 
\end{constr}

\begin{prop}\label{Projective bundle formula for syn proposition}
   The morphism 
      $$\sum_{i=0}^{d} c^{\text{syn}}_{1}(\Oh(1))^{i}\{d-i\}[2(i-d)]\from \bigoplus_{i=0}^{d}\Oh_{\Znumb_{p}^{\text{syn}}}\{d-i\}[2(i-d)] \to f_{\ast}\Oh_{(\ProS^{d})^{\text{syn}}}\{d\}[-2d]$$
   is an isomorphism. 
\end{prop}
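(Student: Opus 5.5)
To check that the map is an isomorphism in $\FGauge_{\Prism}^{\Solid}(\Spf(\Znumb_{p}))=\QCoh(\Znumb_{p}^{\text{syn},\Solid})$, I will reduce to the solid Nygaard filtered prismatisation and then to a semiperfectoid base. By \ref{from NFP to syn remark} the map $\Znumb_{p}^{\Nyg,\Solid}\to \Znumb_{p}^{\text{syn},\Solid}$ is an étale surjection, and the square relating $(\ProS^{d})^{\Nyg,\Solid}$ and $(\ProS^{d})^{\text{syn},\Solid}$ is Cartesian. Pushforward along the weakly proper map $f$ (weakly proper by \ref{Main theorem}(C)) satisfies base change, so it suffices to check the statement after pullback to $\Znumb_{p}^{\Nyg,\Solid}$. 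Next choose an integral perfectoid $R$, for instance $R=\Znumb_{p}^{\text{cyc}}$, together with an infinite $p$-root cover $\Spf(R)\to\Spf(\Znumb_{p})$. By \ref{Inducing descendable covers}(c), the map $R^{\Nyg,\Solid}\to \Znumb_{p}^{\Nyg,\Solid}$ is a proper descendable cover, so $\QCoh^{\ast}$ is conservative along it. Using proper base change once more, I am reduced to proving the projective bundle formula for $\ProS^{d}_{R}\to \Spf(R)$ on $R^{\Nyg,\Solid}\iso \Rees(Fil^{\bullet}_{\Nyg}\Ainf(R))$. The Chern classes are compatible with this base change by naturality of $c_{1}^{\text{syn}}$.

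Over $R^{\Nyg,\Solid}$, the object $f_{\ast}\Oh\{d\}$ is computed by descent along the infinite root (semiperfectoid) hypercover of $\ProS^{d}_{R}$. As a filtered $\Ainf$-module it is the relative Nygaard filtered prismatic cohomology $Fil^{\bullet}_{\Nyg}F^{\ast}\Prism_{\ProS^{d}_{R}/\Ainf}$ with its Breuil--Kisin twist. Since the associated graded functor is conservative on complete filtered objects (\ref{associated graded is conservative on complete objects}), and both sides are Nygaard complete because the source is a finite sum of twisted units, it suffices to check the statement on associated graded pieces. I therefore plan to use the stratification \ref{stratification of Nygaard filtered Prismatisation} instead. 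By \ref{From formal to analytic prop}(E) and the weak properness established above, isomorphy can be tested on the three strata $t\neq0$, $t=0,u\neq0$, and $u=t=0$. On these strata the statement becomes, respectively, the projective bundle formula for relative prismatic cohomology over $\Ainf$, for Hodge--Tate cohomology over $\varphi_{\ast}R$, and for graded Hodge cohomology over $\BGm_{\varphi_{\ast}R}$.

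Each of these is the classical computation. The Hodge case follows from \ref{affines of Hodge structure map} and the standard calculation $H^{j}(\ProS^{d},\Omega^{i})=R\cdot c_{1}(\Oh(1))^{i}$ for $i=j$ and $0$ otherwise, where the Hodge Chern class is $d\log$. The Hodge--Tate case reduces to the Hodge case via the conjugate filtration. This uses \ref{Hodge Tate map is a gerbe}: the Hodge--Tate stack is a $\V(\CoTan\{1\})^{\sharp}$-gerbe over $\ProS^{d}\times R^{HT}$, and its cohomology carries a conjugate filtration with Hodge graded pieces. The prismatic case either follows from the Hodge--Tate case by $I$-adic completeness (Nakayama) or is cited from \cite{BhattLurieAPC}/\cite{Bhatt2019PrismsAP}.

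The main obstacle is identifying the map on each stratum with the classical Chern class map. Concretely, one must check that the restriction of the prismatic logarithm $c_{1}^{\text{syn}}$ to the Hodge and Hodge--Tate strata agrees, up to a unit, with the $d\log$ class in $H^{1}(\Omega^{1})$. A unit discrepancy is harmless, since the class still generates; the real care is bookkeeping of twists and shifts $\{i\}[-2i]$ versus the Rees grading $t^{\ast}\Oh(-1)$. I would first try to deduce this from \cite{BhattLurieAPC}[7.5], where the prismatic logarithm is shown to reduce to $d\log$ on the Hodge--Tate/de Rham comparison. Multiplicativity of cup products then handles the powers $c_{1}^{i}$.
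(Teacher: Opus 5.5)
Your reductions to $\Znumb_{p}^{\Nyg,\Solid}$ and then to a perfectoid base are fine. There is, however, a genuine gap at exactly the point where the statement concerns solid stacks. You assert without argument that $f_{\ast}\Oh\{d\}$ on $R^{\Nyg,\Solid}$ is the relative Nygaard filtered prismatic cohomology of $\ProS^{d}_{R}$. On each stratum you then quote classical computations (Hodge cohomology of $\ProS^{d}$, the conjugate filtration, and so on). This identification is not formal. The pushforward $f_{\ast}$ is a totalization over a semiperfectoid \v{C}ech nerve, computed in solid modules. You must show it agrees with the solid module attached to the classical cohomology before any classical projective bundle formula tells you anything.

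The paper's proof is essentially devoted to this comparison:
\begin{itemize}
\item Since $f_{\ast}$ commutes with limits, both sides are $(p,I)$-complete, so one may work modulo $(p,I)$.
\item There one tests on the compact projective generators $\Znumb[S]$, with $S=\limit_{n}S_{n}$ profinite, and computes $\Hom(\Znumb[S],-)$ as $\colimit_{n}\Hom(\Znumb[S_{n}],-)$.
\item The target is written via the finite Zariski presentation of $\ProS^{d}$ by affine spaces and tori, each resolved by semiperfectoids, and the filtered colimit is commuted past the totalization.
\end{itemize}

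Second, using \ref{From formal to analytic prop}(E) to test isomorphy on strata is a misapplication. That statement characterizes co-smoothness of a map via a Zariski stratification of the target; it is not a conservativity criterion for maps of sheaves. Moreover, the stratification in \ref{stratification of Nygaard filtered Prismatisation} is only established for the classical stacks. To make your route work you would still need three things: joint conservativity of the stratum restrictions, base change of $f_{\ast}$ along them, and identifications of the strata on the solid side.

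The paper avoids strata entirely. The equalizer description \ref{Qcoh of syntomic} reduces the claim to $R^{\Prism}$ and $R^{\Nyg}$. After the comparison above, it cites \cite{BhattLurieAPC}[9.1.4.(4)] and \cite{BhattLurieAPC}[9.1.4.(5)], the projective bundle formulas for absolute and Nygaard filtered prismatic cohomology. These are already stated with the prismatic first Chern class. This also removes what you call the main obstacle, namely matching $c_{1}^{\text{syn}}$ with $d\log$ stratum by stratum, which your proposal leaves open.

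A minor point: $\Znumb_{p}^{\text{cyc}}$ is not an infinite $p$-root cover of $\Znumb_{p}$ in the sense of \ref{Definition of infinite root cover}, so \ref{Inducing descendable covers}(c) does not apply to it directly. Use $\Znumb_{p}\langle p^{1/p^{\infty}}\rangle$ instead.
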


\begin{proof}
    By resolving $\Spf(\Znumb_{p})$ by a semiperfectoid (one can even find a perfectoid), we can assume 
    that the base is given by $\Spf(R)$ for $R$ a semiperfectoid. 
    Using the formula of mapping spectra recalled in \ref{dualizable objects in Fgauges}, it suffices to 
    check that the maps 
      \begin{itemize}
         \item[(a)] $\bigoplus_{i=0}^{d}\Oh_{R^{\Prism}}\{d-i\}[2(i-d)] \to f_{\ast}\Oh_{(\ProS^{d})^{\Prism}}\{d\}[-2d]$ in $\QCoh(R^{\Prism})$
         \item[(b)] $\bigoplus_{i=0}^{d}\Oh_{R^{\Nyg}}\{d-i\}[2(i-d)] \to f_{\ast}\Oh_{(\ProS^{d})^{\Nyg}}\{d\}[-2d]$ in $\QCoh(R^{\Nyg})$
      \end{itemize}
   are isomorphisms.
   As the lower-$\ast$ functor commutes with limits, both sides of these maps are 
   $(p,I)$-adically complete. Thus we can check these isomorphisms modulo $(I,p)$. We will 
   do so but omit the modulo notation in the following. 
   Now in $\QCoh(R^{\Prism,\Solid})$ mapping out of a free module on a profinite set $S\iso \limit_{n}S_{n}$
   is conservative and similar mapping out of shifted versions of such a free guy is 
   conservative on $\QCoh(R^{\Nyg,\Solid})$. We now claim that on both sides of the map we can compute 
   mapping out of such a free guy by 
      $$\Hom_{\QCoh(R^{\Prism,\Solid})}(S,\_) \iso \colimit_{n}\Hom_{\QCoh(R^{\Prism,\Solid})}(S_{n},\_)$$
    and similar for $\QCoh(R^{\Nyg,\Solid})$.
    For the domains of the maps, that is clear. For the targets, we can first choose the standard 
    Zariski presentation of projective spaces, which writes the target as a finite limit 
    of cohomologies of affine spaces and circles. Those we can now resolve by semiperfectoids, and the claim 
    follows as filtered colimits commute with totalisations in non-connective spectra. 
    Using this formula, the claims now follow from \cite{BhattLurieAPC}[9.1.4.(4)] and 
    \cite{BhattLurieAPC}[9.1.4.(5)] respectively. 
\end{proof}

\begin{non}\label{geometrically smooth p-adic morphisms}
   On the category of (derived) $p$-adic formal schemes, the collection of étale 
   and smooth morphisms define a class of geometrically étale and smooth morphisms in the sense of 
   \ref{definition of geometrically smooth morphisms}. To construct blow-ups, we can locally 
   $p$-adically complete the construction from (derived) algebraic geometry \cite{KhanVirtualCD}
   \cite{Tang2026TheG}[A] and then glue. All the other assertions in the definition are clear.    
\end{non}

\begin{prop}\label{blowup excision syn}
   $\FGauge_{\Prism}^{\Solid}$-cohomology satisfies blow-up excision in the sense of \ref{blowup excision definition}.
\end{prop}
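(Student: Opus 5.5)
Blow-up excision here means: for $n\ge 1$ and the blow-up square of $0\from B\to \A^{n}_{B}$ with exceptional divisor $\ProS^{n-1}_{B}$ and blow-up $\V_{\ProS^{n-1}_{B}}(\Oh(1))$, the square of $\ast$-pushforwards of units is (co)cartesian in $\FGauge_{\Prism}^{\Solid}(\A^{n})$. The plan is to reduce this to a statement about syntomic, in fact prismatic and Nygaard filtered, cohomology over semiperfectoids. Then we use the projective bundle formula just proven, together with the argument of \ref{Elementary blowup corollary}.

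\textbf{Step 1: reduce to Nygaard pieces over a perfectoid.} By \ref{from NFP to syn remark} and the equalizer description \ref{Qcoh of syntomic}, it suffices to check cartesianness after pulling back to $(\A^{n})^{\Nyg,\Solid}$. The restriction $j_{dR}^{\ast}$ recovers the prismatic square, so it suffices to treat the Nygaard filtered one. Cartesianness of a square in $\QCoh$ can be checked locally. Resolving $\Spf(\Znumb_p)$ by a perfectoid $R$ via an infinite $p$-root cover (\ref{Inducing descendable covers}(c)), and using the Zariski cover of $\A^n$ by itself, we reduce to the square over $R$. Proper base change then lets us push everything forward to $R^{\Nyg,\Solid}$, provided we keep track of the $\A^{n}$-module structure. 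Since all maps in the square except the zero section are not proper, the cleanest route is the following reformulation.

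\textbf{Step 2: use the compactification trick.} Following \ref{Elementary blowup corollary}, embed the square into the projective blow-up square with $\ProS^{n-1}\to \ProS_{\ProS^{n-1}}(\Oh(1)\oplus\Oh)$ over $B\to \ProS^{n}$. The complements $U$, $W$ of the zero sections are identified, so Zariski descent (available by \ref{Main theorem}(D)) reduces excision on $\A^{n}$ to excision on $\ProS^{n}$. Now all four maps are weakly cohomologically proper by \ref{Main theorem}(C), so the square can be tested after $p_{\ast}$ along $p\from\ProS^{n}\to B$ and after twisting by arbitrary coefficients. The worry is that detecting cartesianness after $p_\ast$ requires testing against enough objects. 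To handle this, apply the projective bundle formula with coefficients $\E$ for all $\E$ pulled back from the base, and also after tensoring with $\Oh(k)$. Beilinson-type generation of $\QCoh(\ProS^{n})$ by the $\Oh(k)\otimes p^{\ast}\E$ should suffice. Alternatively, work with the $\ProS^{n}$-linear statement directly via the projective bundle formula for the exceptional divisor as a projective bundle over $B$ and for $\ProS_{\ProS^{n-1}}(\Oh(1)\oplus\Oh)\to\ProS^{n-1}$.

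\textbf{Step 3: the explicit computation.} Using \ref{Projective bundle formula for syn proposition} (and its twisted/relative versions, which hold by base change), each corner becomes a sum of Tate twists $\Oh(-i)$. The square becomes
\[\bigoplus_{0}^{n}\Oh(-i)\to \Oh,\qquad \bigoplus_{0}^{n-1}\Oh(-i)\oplus\bigoplus_{1}^{n}\Oh(-i)\to\bigoplus_{0}^{n-1}\Oh(-i),\]
which is visibly cartesian once one checks that the maps are the expected ones. This check uses $f^{\ast}\Oh(1)\iso\Oh(1)$ and the compatibility of $c_1^{\text{syn}}$ with pullback. The identification of the middle term requires the projective bundle formula for a nontrivial bundle $\Oh(1)\oplus\Oh$ over $\ProS^{n-1}$, which follows Zariski locally on $\ProS^{n-1}$ and by descent.

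\textbf{Main obstacle.} The hard step is Step 2: passing from the cohomological identity after $p_\ast$ (essentially \ref{Elementary blowup corollary}) to an honest cartesian square in $\QCoh(\A^{n})$. I would first try to verify it after pullback to perfectoid covers of $\A^{n}$ itself (e.g. $R\langle x_i^{1/p^\infty}\rangle$), where $\QCoh$ is modules over explicit Rees algebras. There, the fiber of the square can be computed on associated graded pieces modulo $(p,I)$ via the conjugate/Hodge filtration, reducing to classical blow-up excision for Hodge cohomology (cf. \ref{affines of Hodge structure map}), which is known.
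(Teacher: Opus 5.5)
\paragraph{Step 2 does not reach the sheaf-level statement.} Steps 2--3 establish the square only after pushing forward to the base, with coefficients pulled back from the base. That is exactly \ref{Elementary blowup corollary}, which holds in any additively oriented formalism because it uses only the projective bundle formula and Zariski descent. Definition \ref{blowup excision definition}, however, asks for a cartesian square in $\FGauge_{\Prism}^{\Solid}(\A^{n})$ itself. The paper explicitly remarks that it does not know how to deduce this from the projective bundle formula.

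The bridge you propose is not available here:
\begin{itemize}
\item $(\ProS^{n})^{\text{syn},\Solid}$ has no map to $\ProS^{n}$. Already $X^{\Prism}(R)=X(\Witt(R)/I)$ does not map to $X(R)$ away from the Hodge--Tate locus.
\item Consequently $\Oh(1)$ on $\ProS^{n}$ gives only a class $c_{1}^{\text{syn}}(\Oh(1))\from \Oh\to\Oh\{1\}[-2]$ via the prismatic logarithm. It gives no line bundle $\Oh(k)$ on the syntomification to twist by.
\item Even granting some twists, nothing suggests that objects of the form $p^{\ast}\E\otimes(\text{twist})$ generate $\QCoh((\ProS^{n})^{\text{syn},\Solid})$, which is far from a projective bundle over the base.
\end{itemize}
So testing against such objects, equivalently applying $p_{\ast}$, is not conservative. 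Your Step 1 move of pushing forward to $R^{\Nyg,\Solid}$ ``keeping track of the $\A^{n}$-module structure'' has the same defect.

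\paragraph{Your fallback is the paper's route, with the key input missing.} Your last paragraph is essentially the paper's proof. All maps in the square are weakly proper by \ref{Main theorem}(C), so proper base change applies. Infinite $p$-root covers induce descendable covers on Nygaard stacks by \ref{Inducing descendable covers}(c). Together these let you pull back along a semiperfectoid cover $\Spf(T)\to\A^{n}_{R}$, for instance $T=R\langle x_{i}^{1/p^{\infty}}\rangle$. The claim then becomes a statement about modules over the Rees algebra of $Fil^{\bullet}_{\Nyg}\Prism_{T}$: blow-up excision for (Nygaard filtered) prismatic cohomology of the blow-up of $\Spf(T)$ along the regular sequence $x_{1},\dots,x_{n}$.

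The paper does not prove this last statement. It imports it from \cite{BhattLurieAPC}[9.4.6], passing from global cohomology to solid modules as in the proof of \ref{Projective bundle formula for syn proposition}. The projective bundle formula plays no role.

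If you want to prove it yourself via graded pieces, three things are needed, and your proposal only sketches them:
\begin{itemize}
\item completeness of the Nygaard filtration on all four corners;
\item the identification $gr^{i}_{\Nyg}\iso Fil^{\text{conj}}_{i}\overline{\Prism}\{i\}$;
\item blow-up excision for $R\Gamma(-,\wedge^{j}\CoTan_{-/\Znumb_{p}})$ on the blow-up of the non-noetherian perfectoid $T$, which is not the classical smooth Hodge statement. It can plausibly be reduced to the universal square over $\A^{n}$, using $\CoTan_{T}|_{B}\to\CoTan_{B}\to\CoTan_{B/T}$ and base change for the blow-up pushforward, but that argument has to be supplied.
\end{itemize}
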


\begin{proof}
   We have to see that the square 
\[\begin{tikzcd}
	\ProS_{\Znumb_{p}}^{n-1} & \V_{\ProS^{n-1}_{\Znumb_{p}}}(\Oh(1)) \\
	\{0\} & \A^{n}_{\Znumb_{p}}
	\arrow[from=1-1, to=1-2]
	\arrow[from=1-1, to=2-1]
	\arrow["p", from=1-2, to=2-2]
	\arrow["0"', from=2-1, to=2-2]
	\arrow["f", from=1-1, to=2-2]
\end{tikzcd}\]
   induces a (co)cartesian square on cohomology in $\FGauge_{\Prism}^{\Solid}(\A^{n}_{\Znumb_{p}})$.
   Using proper base change, we can check this after pulling back along an infinite 
   $p$-root cover. Thus by resolving $\Spf(\Znumb_{p})$ by a semiperfectoid $\Spf(R)$
   and then resolving $\A^{n}_{R}$ by a semiperfectoid, it suffices to check the analogous claim 
   for a blow-up square 
\[\begin{tikzcd}
	E & B \\
	Z & {\Spf(R)}
	\arrow[from=1-1, to=1-2]
	\arrow[from=1-1, to=2-1]
	\arrow[from=1-2, to=2-2]
	\arrow[from=2-1, to=2-2]
\end{tikzcd}\]
   where $R$ is semiperfectoid (even perfectoid if the reader likes). Note that such resolutions exist in such a way 
   that all $p$-adic formal schemes in the square are the $p$-adic completions of classical schemes. Now, similar to the 
   proof of \ref{Projective bundle formula for syn proposition} the claim follows from 
   \cite{BhattLurieAPC}[9.4.6].
\end{proof}

\begin{proof}[Proof of \ref{Main theorem}(E),(F)]
   Assertion $(E)$ easily follows from the projective bundle formula \ref{Projective bundle formula for syn proposition}.
   To deduce assertion $(F)$, we note that the construction of first Chern classes 
   \ref{additive orientation for syn} together with the projective bundle formula \ref{Projective bundle formula for syn proposition}
   and the class of étale and smooth morphisms \ref{geometrically smooth p-adic morphisms} give an additively 
   oriented six-functor formalism (for the other assertions we use the rest of \ref{Main theorem}). Furthermore, it 
   satisfies blow-up excision by \ref{blowup excision syn}. Thus we can apply 
   \ref{dualizing complex theorem} and \ref{dualizing complex corollary}.
\end{proof}

This finishes the proof of \ref{Main theorem}.

\subsection{Realizations}

We will now discuss some realizations. 

\begin{non}\label{realizations construction}
    We consider the following situation. Assume we have given an analytic stack $\BS$ 
    together with a map 
       $$\BS \to \Znumb_{p}^{\text{syn}\Solid}.$$
    Then we automatically obtain a well-behaved six-functor formalism for $p$-adic 
    formal schemes by the following construction. 

    For a $p$-adic formal scheme $X$, we first consider the cartesian square 
\[\begin{tikzcd}
	{X_{\BS}} & {X^{\text{syn},\Solid}} \\
	\BS & {\Znumb_{p}^{\text{syn},\Solid}}
	\arrow[from=1-1, to=1-2]
	\arrow[from=1-1, to=2-1]
	\arrow[from=1-2, to=2-2]
	\arrow[from=2-1, to=2-2]
\end{tikzcd}\]
   and then associate to $X$ the category 
      $$\QCoh(X_{\BS}).$$
    As all the assertions (A)-(F) of \ref{Main theorem} are stable under base change, this six-functor formalism will satisfy
    analogous assertions, where one defines the Tate-twist by base change. We will now discuss some examples of this phenomenon.
\end{non}

\begin{ex}
    For any $p$-adic formal scheme, we have a cartesian square 
\[\begin{tikzcd}
	{X^{\Prism,\Solid}} & {X^{\text{syn},\Solid}} \\
	{\Znumb_{p}^{\Prism,\Solid}} & {\Znumb_{p}^{\text{syn},\Solid}}
	\arrow["{j_{dR}}", from=1-1, to=1-2]
	\arrow[from=1-1, to=2-1]
	\arrow[from=1-2, to=2-2]
	\arrow["{j_{dR}}"', from=2-1, to=2-2].
\end{tikzcd}\]
    In particular, using the deRham point in \ref{realizations construction}, we obtain a well-behaved six-functor formalism 
    for Prismatic cohomology.
\end{ex}

\begin{ex}
    Similar to the last example, we can use the map 
       $$\Znumb_{p}^{\text{HT},\Solid} \to \Znumb_{p}^{\text{syn},\Solid}$$
    to obtain a six-functor formalism for Hodge-Tate cohomology. 
\end{ex}

\begin{constr}
    Recall that we had the structure map 
       $$\Znumb_{p}^{\Prism,\Solid} \to \Disc^{1}/\Gm^{\text{an}}$$
    which came from base changing a Cartier-Witt divisor along the map $\Witt\to \Ga$. 

    Note that the action of $\Gm^{\text{an}}$ restricts to the substack 
       $$\colimit_{n}\Spa(\Znumb_{p}[x]/(x^{n})) \subset \Disc^{1}$$
    such that we can base change this structure map along this map quotient 
    by $\Gm^{\text{an}}$. Further base changing along the inclusion 
      $$\colimit_{n}\Spa(\Znumb/p^{n}) \to \Spa(\Znumb_{p})$$
    we obtain an (open) substack 
      $$\Znumb_{p}^{\Prism} \subset \Znumb_{p}^{\Prism,\Solid}.$$ 
    To also modify the solid Nygaard filtered prismatisation, we consider the 
    cartesian square 
\[\begin{tikzcd}
	{\Znumb_{p}^{\Nyg}} & {\Znumb_{p}^{\Nyg,\Solid}} \\
	{\Znumb_{p}^{\Prism}} & {\Znumb_{p}^{\Prism,\Solid}}
	\arrow[from=1-1, to=1-2]
	\arrow[from=1-1, to=2-1]
	\arrow["\pi", from=1-2, to=2-2]
	\arrow[from=2-1, to=2-2]. 
\end{tikzcd}\]
   And finally, analogously to the solid Syntomification, we construct the pushout 
\[\begin{tikzcd}
	{\Znumb_{p}^{\Prism}\amalg \Znumb_{p}^{\Prism}} & {\Znumb_{p}^{\Nyg}} \\
	{\Znumb_{p}^{\Prism}} & {\Znumb_{p}^{\text{syn}}}
	\arrow[from=1-1, to=1-2]
	\arrow[from=1-1, to=2-1]
	\arrow[from=1-2, to=2-2]
	\arrow[from=2-1, to=2-2].
\end{tikzcd}\]
   Note that we have a map $\Znumb_{p}^{\text{syn}} \to \Znumb_{p}^{\text{syn},\Solid}$.
\end{constr}

\begin{defn}
    For a $p$-adic formal scheme $X$, we define the \emph{classical Syntomification} to be 
    the fiber product 
\[\begin{tikzcd}
	{X^{\text{syn}}} & {X^{\text{syn},\Solid}} \\
	{\Znumb_{p}^{\text{syn}}} & {\Znumb_{p}^{\text{syn},\Solid}}
	\arrow[from=1-1, to=1-2]
	\arrow[from=1-1, to=2-1]
	\arrow[from=1-2, to=2-2]
	\arrow[from=2-1, to=2-2].
\end{tikzcd}\]
\end{defn}

We now obtain the following corollary. 

\begin{thm}\label{Classsical syntomification theorem}
   The functor 
      $$\QCoh((\_)^{\text{syn}})\from \fSch_{\Spf(\Znumb_{p})}^{\text{op}} \to \PrL$$
   can be extended to a six-functor formalism on the category of (derived) $p$-adic formal 
   schemes satisfying (A)-(F) of \ref{Main theorem}. Furthermore, assertion (G) can be extended to:
   There is a symmetric monoidal and fully faithful natural transformation 
      $$\FGauge_{\Prism}(\_) \to \QCoh((\_)^{\text{syn}})$$
   where the left-hand side denotes the category of prismatic F-Gauges as defined in 
   \cite{FGauges}. 
\end{thm}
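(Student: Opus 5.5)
The first part is an instance of the realization construction \ref{realizations construction}. I apply it with $\BS := \Znumb_{p}^{\text{syn}}$ and the canonical map $\Znumb_{p}^{\text{syn}} \to \Znumb_{p}^{\text{syn},\Solid}$ from the preceding construction. By definition $X^{\text{syn}} \iso X^{\text{syn},\Solid}\times_{\Znumb_{p}^{\text{syn},\Solid}}\Znumb_{p}^{\text{syn}}$, so $X_{\BS}\iso X^{\text{syn}}$. The six-functor formalism is then the composite of $X\mapsto X^{\text{syn}}$ with the six-functor formalism on $\AnStack$. Since $X\mapsto X^{\text{syn},\Solid}$ preserves finite limits over the base, so does this base change, and spans of formal schemes go to spans of analytic stacks. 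Properties (A)--(F) are stable under base change: $!$-ability, cohomological étaleness, weak cohomological properness, étale/open covers, $\otimes$-invertibility of the Tate twist and the identification $f^{!}\Oh\iso \Oh\{d\}[-2d]$ all pull back along $\BS\to\Znumb_{p}^{\text{syn},\Solid}$. For (E) and (F) I would also note that pushforward along the proper map $(\ProS^{1})^{\text{syn}}\to X^{\text{syn}}$ commutes with this base change, by proper base change.

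For the extension of (G), I would first construct the comparison functor on semiperfectoid inputs and then descend. I would use the open substacks $\Znumb_{p}^{\Prism}\subset \Znumb_{p}^{\Prism,\Solid}$, $\Znumb_{p}^{\Nyg}$ and $\Znumb_{p}^{\text{syn}}$, together with the equalizer description \ref{Qcoh of syntomic}. This reduces the problem to producing compatible fully faithful symmetric monoidal functors $\QCoh(R^{\Nyg})^{\text{classical}}\to \QCoh(R^{\Nyg}_{\text{new}})$ and $\QCoh(R^{\Prism})^{\text{classical}}\to\QCoh(R^{\Prism}_{\text{new}})$ for $R$ semiperfectoid, natural in $R$. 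For $R$ semiperfectoid, $R^{\Nyg,\Solid}$ is the solidified Rees stack of $Fil^{\bullet}_{\Nyg}\Prism_{R}$ by \ref{Nygaard filtered prismatisation of a semiperfectoid example}. Restricting to the locus where $(p,I)$ is formally nilpotent, i.e.\ the colimit of the $\Spa(\Znumb/p^{n})$ and the formal completion at $x=0$, should identify $\QCoh$ with a colimit of categories of solid modules over $\Prism_{R}/(p,I)^{n}$ (graded, for Nygaard). On these $(p,I)$-nilpotent pieces the discrete, classical modules embed fully faithfully into solid modules, as a base change of $\QCoh(A)\hookrightarrow \DSolid(A)$ for discrete $A$.

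Assembling these, $(p,I)$-complete classical modules, which are what \cite{FGauges} uses, are the limit over $n$ of modules over $\Prism_{R}/(p,I)^{n}$. These map to the corresponding limit on the new side. This gives a symmetric monoidal functor, fully faithful because limits of fully faithful functors are fully faithful. Naturality in $R$ and descent along infinite $p$-root and étale covers then follow from \ref{Inducing descendable covers}, applied to both the classical and the new stacks. The same holds for the $\Prism$ version. Since $j_{dR}$ and $j_{HT}$ are compatible on both sides, the construction passes to the equalizer and hence to F-gauges.

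The step I expect to be hardest is identifying $\QCoh$ of the new formally completed open locus with the limit of solid module categories over $\Prism_{R}/(p,I)^{n}$. This is needed to compare with the classical $(p,I)$-complete category. Here the open substack is a colimit, and $\QCoh$ turns that colimit into a limit. The issue is that full faithfulness of the inclusion of discrete into solid modules is only clear for nilpotent, discrete rings, so I would first check it on the Rees algebra modulo $(p,I)^{n}$ and then pass to the limit.
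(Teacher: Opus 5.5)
Your proposal is correct and follows the paper's proof. The first part is the realization construction with $\BS=\Znumb_{p}^{\text{syn}}$. For (G) the paper likewise descends to semiperfectoid $R$, uses the equalizer presentation to reduce to $R^{\Nyg}$ and $R^{\Prism}$, identifies $\QCoh(R^{\Nyg})$ with the $(p,I)$-complete objects of $\QCoh(R^{\Nyg,\Solid})$ (your limit over $n$), and concludes from full faithfulness and symmetric monoidality of $\QCoh(A)^{\text{cl}}\to\QCoh(\Spa(A))$. Two small corrections: that embedding holds for any animated ring with its discrete condensed structure, so nilpotence is not needed there; and in your second paragraph ``colimit of categories'' should read ``limit of categories''.
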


\begin{proof}
   As explained in \ref{realizations construction}, the first assertion follows from \ref{Main theorem}. 
   To prove the second one, by descent, it suffices to evaluate on a semiperfectoid $R$. Then, using the analogous 
   equalizer diagrams as in \ref{Qcoh of syntomic}, it suffices to check the analogous claims for 
      $$R^{\Nyg} \text{ and } R^{\Prism}.$$
   Note that $\QCoh(R^{\Nyg})$ identifies with the subcategory of $\QCoh(R^{\Nyg,\Solid})$ consisting of 
   $(p.I)$-adically complete objects. Thus the claim reduces to the assertion that for an animated ring $A$ 
   the functor 
      $$\QCoh(A)^{\text{cl}} \to \QCoh(\Spa(A)),$$
   where we denote by $\QCoh(\_)^{\text{cl}}$ the classical derived category, is symmetric monoidal and fully faithful. 
\end{proof}

Let us now consider an étale realization. For this we will work over $\Spf(\Znumb_{p}^{\text{cyc}})$.

\begin{constr}
   Recall from \cite{BhattLurieAPC}[8.5] that choosing a compatible system of $p$-th power roots of unity 
   $(1, \xi_{p},\xi_{p}^{2},\dots)$ in the Tate module of $\Znumb_{p}^{\text{cyc}}$
   determines a map 
      $$\epsilon \from \Oh_{(\Znumb_{p}^{\text{cyc}})^{\text{syn}}}\{-1\} \to \Oh_{(\Znumb_{p}^{\text{cyc}})^{\text{syn}}}.$$
   Thus from this we obtain a section 
      $$\upsilon_{1}= \epsilon^{p-1} \from (\Znumb_{p}^{\text{cyc}})^{\text{syn},\Solid} \to \overline{\V(\Oh\{-(p-1)\})}\footnote{The vector bundle here is interpreted purly algebraically. In particular, it is proper.}.$$
   Looking at the algebraic non-vanishing locus\footnote{Also this is a closed substack. That is, locally we just invert an element algebraically.} of this section 
   and further base changing along $\colimit_{n}\Spa(\Znumb/p^{n}) \to \Spa(\Znumb_{p})$, we obtain 
   a map 
     $$(\Qnumb_{p}^{\text{cyc}})^{\text{ét},\Solid} \to (\Znumb_{p}^{\text{cyc}})^{\text{syn},\Solid}.$$
\end{constr}

\begin{defn}
   For a $p$-adic formal scheme $X$ over $\Znumb_{p}^{\text{cyc}}$, we will refer to the fibre product 
\[\begin{tikzcd}
	{X_{\eta}^{\text{ét},\Solid}} & {X^{\text{syn},\Solid}} \\
	{(\Qnumb_{p}^{\text{cyc}})^{\text{ét},\Solid}} & {(\Znumb_{p}^{\text{cyc}})^{\text{syn},\Solid}}
	\arrow[from=1-1, to=1-2]
	\arrow[from=1-1, to=2-1]
	\arrow[from=1-2, to=2-2]
	\arrow[from=2-1, to=2-2]
\end{tikzcd}\]
   as the \emph{étale locus} of the solid Syntomification of $X$. 
\end{defn}

\begin{non}\label{computation of etale locus of semiperfectoid}
   Let us compute how $R_{\eta}^{\text{ét},\Solid}$ looks like for $R$ a semiperfectoid. 
   First, using \cite{BhattLurieAPC}[8.5.3+2.6.1], we see that the fibre product 
\[\begin{tikzcd}
	Q & {R^{\Nyg,\Solid}} \\
	{(\Qnumb_{p}^{\text{cyc}})^{\text{ét},\Solid}} & {(\Znumb_{p}^{\text{cyc}})^{\text{syn},\Solid}}
	\arrow[from=1-1, to=1-2]
	\arrow[from=1-1, to=2-1]
	\arrow[from=1-2, to=2-2]
	\arrow[from=2-1, to=2-2]
\end{tikzcd}\]
   is given by 
      $$Q\iso \colimit_{n}\Spa(\Prism_{R}[\frac{1}{I}]/p^{n},\Prism_{R})$$
   and we also get the same formula for the Prismatisation. 
   As we identify the Frobenius with the identity, we thus obtain the following 
   pushout formula 
\[\begin{tikzcd}
	{Q^{\text{perf}}\amalg Q^{\text{perf}}} & {Q^{\text{perf}}} \\
	{Q^{\text{perf}}} & {R_{\eta}^{\text{ét},\Solid}}
	\arrow["{(id,\varphi)}", from=1-1, to=1-2]
	\arrow["can"', from=1-1, to=2-1]
	\arrow[from=1-2, to=2-2]
	\arrow[from=2-1, to=2-2]
\end{tikzcd}\]
   where we write 
      $$Q^{\text{perf}}\iso \colimit_{n}\Spa(\Prism_{R,\text{perf}}[\frac{1}{I}]/p^{n},\Prism_{R,\text{perf}})$$
   using perfect prismatic cohomology \cite{Bhatt2019PrismsAP}[7+8].
\end{non}

We now obtain the following. 

\begin{thm}\label{Etale locus theorem}
   The functor 
      $$\QCoh((\_)_{\eta}^{\text{ét},\Solid})\from \fSch_{/\Spf(\Znumb_{p}^{\text{cyc}})}^{\text{op}} \to \PrL$$
   extends to a six-functor formalism on $p$-adic formal schemes over $\Znumb_{p}^{\text{cyc}}$, which satisfies 
   analogous assertions to (A)-(F). Furthermore, the dualizable objects naturally identify 
      $$\QCoh((\_)_{\eta}^{\text{ét},\Solid})^{\text{dual}}\iso \QCoh_{\text{lisse}}^{\text{b}}((\_)_{\eta},\Znumb_{p})$$
   with the category of lisse étale sheaves on the generic fiber. 
\end{thm}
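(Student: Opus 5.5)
The plan has two parts: first the six-functor formalism with (A)--(F), then the identification of dualizable objects.

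\textbf{Six-functor formalism.} I would apply \ref{realizations construction} with $\BS=(\Qnumb_{p}^{\text{cyc}})^{\text{ét},\Solid}$ and base $(\Znumb_{p}^{\text{cyc}})^{\text{syn},\Solid}$ in place of $\Znumb_{p}^{\text{syn},\Solid}$. Since everything in \ref{Main theorem} is stable under base change over $\Znumb_{p}^{\text{cyc}}$, the functor $X\mapsto \QCoh(X_{\eta}^{\text{ét},\Solid})$ extends to a six-functor formalism. Properties (A)--(F) then hold, with the Tate twist defined by pullback. Along the way I will record that on the étale locus $\epsilon$ becomes invertible, so $\Oh\{1\}$ trivializes. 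Hence the twist is just a shift, which matches the étale expectation.

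\textbf{Reduction of the dualizable statement.} The functor $(\_)^{\text{dual}}$ commutes with limits (\cite{HA}[4.6.1.11]). Both sides satisfy descent for the $\sqrt{\infty}^{\text{ét}}$-topology: the left side by \ref{Inducing descendable covers} and (D) after base change, and the right side by $v$-descent for lisse $\Znumb_{p}$-sheaves on generic fibers, since infinite root covers induce $v$-covers of generic fibres. So it suffices to treat $X=\Spf(R)$ with $R$ semiperfectoid over $\Znumb_{p}^{\text{cyc}}$, and by refining, even $R$ perfectoid.

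\textbf{The perfectoid case.} By \ref{computation of etale locus of semiperfectoid}, $\QCoh(R_{\eta}^{\text{ét},\Solid})$ is the equalizer of $\mathrm{id}$ and $\varphi^{\ast}$ on $\QCoh(Q^{\text{perf}})$, i.e. $\varphi$-modules. Here $Q^{\text{perf}}=\colimit_{n}\Spa(\Ainf(R)[1/I]/p^{n},\Ainf(R))$. Taking dualizable objects commutes with the equalizer and with the limit over $n$. By the lemma on dualizable objects of $\Spa(A,A^{+})$, the dualizable objects in each term are perfect complexes over $\Ainf(R)[1/I]/p^{n}$. So the dualizable part becomes the category of perfect $\varphi$-modules over $\colimit$-completed $W(R^{\flat}[1/d])/p^{n}$, i.e. perfect $\varphi$-modules over $W(R^{\flat}[1/d])$, $p$-completely. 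On the other side, by the Fontaine--Katz / Bhatt--Scholze étale comparison (\cite{BhattLurieAPC}[8.5], or \cite{Bhatt2019PrismsAP}), this category is equivalent to $\QCoh^{\text{b}}_{\text{lisse}}(\Spa(R[1/p]),\Znumb_{p})$. I would then check that these identifications are natural in $R$, so they glue under descent.

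The main obstacle is the perfectoid step. One must make sure that dualizable objects in the colimit-over-$n$ and $\varphi$-equalizer presentation really are $p$-complete perfect $\varphi$-modules over $W(R^{\flat})[1/[d]]$, and not something with more analytic content. The concern is that inverting $I$ only algebraically inside $\Spa(-,\Ainf)$ could alter the category. I would handle this by first reducing mod $p$, where the statement becomes one about perfect $\varphi$-modules over $R^{\flat}[1/d]$ and discrete nuclear objects (via \cite{Andreychev2023KTheorieAR}), and then lifting with the limit argument already used in the dualizable-objects lemma.
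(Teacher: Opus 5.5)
Your reduction steps are fine and match the paper. These are the six-functor formalism via \ref{realizations construction}, commuting $(\_)^{\text{dual}}$ with limits, and descent on both sides down to perfectoids; you use $v$-descent where the paper uses arc-descent \cite{Bhatt2018TheA}[5.13], which makes no difference. The gap is in the perfectoid step, exactly where you suspected. The lemma following \ref{dualizable objects in Fgauges} only treats rings $A$ that are derived $I$-adically complete for some ideal $I$, i.e.\ non-analytic Huber pairs. Its proof first handles discrete $A$, where dualizable objects are nuclear and nuclear objects are discrete, and then passes to general $A$ via $M\iso\limit_{n}M/I^{n}$. The terms $\Spa(\Ainf(R)[\tfrac{1}{d}]/p^{n},\Ainf(R))$ of $Q^{\text{perf}}$ are Tate. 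There $d$ is a topologically nilpotent \emph{unit}, and the ring carries the condensed structure coming from the $d$-adic topology on $W_{n}(R^{\flat})$. Completing along $(d)$ gives zero, so the lemma does not apply. Your proposed repair fails for the same reason. Modulo $p$ you land on the Tate ring $R^{\flat}[\tfrac{1}{d}]$ with its analytic structure, and over it nuclear modules are not discrete: already the unit, or any Banach module, is nuclear but not discrete. So \cite{Andreychev2023KTheorieAR}[3.19] gives nothing there. Separately, the limit argument in that lemma passes from $A$ to all quotients $A/I^{k}$ at once; it does not lift a statement from $W_{1}$ to $W_{n}$, which would need its own d\'evissage.

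The missing ingredient is the genuinely analytic theorem: for an analytic Huber pair $(A,A^{+})$, the dualizable objects of $\QCoh(\Spa(A,A^{+}))$ are exactly $\Perf(A)$. This is the input the paper takes from \cite{GregPseudocoherentAP}[5.50]. Apply it directly to each $\Ainf(R)[\tfrac{1}{d}]/p^{n}$; no reduction mod $p$ is needed. Then take the limit over $n$ and the $\varphi$-equalizer from \ref{computation of etale locus of semiperfectoid}, which yields $\Perf(\Ainf(R)[\tfrac{1}{d}]_{\widehat{p}})^{\varphi=\text{id}}$. From there your appeal to tilting and the Artin--Schreier sequence (equivalently the \'etale comparison) finishes the argument as in the paper.
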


\begin{proof}
   As explained in \ref{realizations construction}, the first assertion follows from \ref{Main theorem}. 
   To compute the dualizable objects, we first claim that it is enough to construct this equivalence 
   on integral perfectoids. On the left, this follows from descending to semiperfectoids and then using 
   \ref{computation of etale locus of semiperfectoid}. For the right, this follows from arc-descent 
   \cite{Bhatt2018TheA}[5.13] for constructible étale sheaves together with the fact that 
   perfectoids form a basis for the arc-topology \cite{Bhatt2019PrismsAP}[8.8].

   Now using \cite{GregPseudocoherentAP}[5.50] and \ref{computation of etale locus of semiperfectoid}, we see that on a perfectoid $R$
   the left-hand side is computed by 
      $$\Perf(\Ainf(R)[\frac{1}{d}]_{\widehat{p}})^{\varphi = id}.$$
   Thus the claim follows from Tilting and the Artin-Schreier sequence (see \cite{BhattPrismaticFcristals}[3.7]).
\end{proof}

\begin{rem}
   If one is happy to consider $\Fp$-coefficients, \ref{Etale locus theorem} can be extended to all $p$-adic 
   formal schemes \cite{Bhatt2022SyntomicCA}[2.7].
\end{rem}

\begin{rem}
   The étale realization considered here is somewhat suboptimal. For example 
      $$\QCoh((\_)_{\eta}^{\text{ét},\Solid})$$
   does not satisfy excision\footnote{This is also known as Localization or Kashiwara's lemma.} and thus cannot 
   capture all étale sheaves. This issue will be addressed elsewhere. 
\end{rem}

\bibliographystyle{alpha}
\bibliography{Paperbib}  

\end{document}